\numberwithin{equation}{section}
\newcommand{\Fp}{{\text{\<p>}}}
\newtheorem{theo}{Theorem}[section]
\newtheorem{prop}[theo]{Proposition}
\newtheorem{conj}[theo]{Conjecture}
\newtheorem{corollary}[theo]{Corollary}
\theoremstyle{definition}
\newtheorem{defn}{Definition}
\newtheorem{eg}[theo]{Example}
\newtheorem{rmk}[theo]{Remark}
\newcommand\mathcircled[1]{%
  \mathpalette\@mathcircled{#1}%
}
\newcommand\@mathcircled[2]{%
  \tikz[baseline=(math.base)] \node[draw,circle,inner sep=3pt] (math) {$\m@th#1#2$};%
}
\newcommand{\C}{{\mathbb C}}
\DeclareMathOperator{\Hom}{Hom}
\newcommand{\K}{{\bb K}}
\newcommand{\bb}{\mathbb}
\newcommand{\mc}{\mathcal}
\newcommand{\mf}{\mathfrak}
\newcommand{\MM}{{\textbf{M}}}
\newcommand{\ch}{{\textup{ch}}}
\newcommand{\nnn}{{\textbf{n}}}
\newcommand{\kk}{{\textbf{k}}}
\newcommand{\bl}{{\textbf{l}}}
\newcommand{\dd}{{\textbf{d}}}
\newcommand{\Gl}{{\textup{Gl}}}
\newcommand{\DD}{{\textup{D}}}
\newcommand{\Hilb}{\textup{Hilb}}
\newcommand{\Coh}{\textup{Coh}}
\newcommand{\op}{\textup{op}}
\newcommand{\cs}{\textup{cs}}
\newcommand{\id}{{\textup{id}}}
\newcommand{\Perv}{\textup{PervCoh}}
\newcommand{\pt}{\textup{pt}}
\newcommand{\Z}{\bb Z}
\newcommand{\red}{\textup{red}}
\newcommand{\Pic}{\textup{Pic}}
\newcommand{\End}{{\textup{End}}}
\newcommand{\Span}{{\textup{Span}}}
\newcommand{\g}{{\mf{g}}}
\newcommand{\h}{{\mf{h}}}
\newcommand{\V}{{\mathbb{V}}}
\newcommand{\one}{{\textbf{1}}}
\newcommand{\gl}{{\mf{gl}}}
\newcommand{\spl}{{\mf{sl}}}
\newcommand{\ghat}{{\hat{\mf{g}}}}
\newcommand{\Lz}{{{(\!(z)\!)}}}
\newcommand{\Pz}{{{[\![z]\!]}}}
\newcommand{\LFz}{{{[[z^{\pm 1}]]}}}
\newcommand{\prin}{{\textup{prin}}}
\newcommand{\del}{{{\partial}}}
\newcommand{\nol}{{ : }}
\newcommand{\nor}{{ : }}
\newcommand{\Vir}{{\textup{Vir}}}
\newcommand{\e}{{\varepsilon}}
\newcommand{\f}{{\textup{f}}}
\newcommand{\rk}{{\textup{rk}}}
\newcommand{\VW}{{\textup{VW}}}
\newcommand{\rr}{{{\textbf{r}}}}
\newcommand{\qq}{{{\textbf{q}}}}
\newcommand{\glh}{{{\widehat{\mathfrak{gl}}}}}
\newcommand{\sss}{{{\textup{ss}}}}
\newcommand{\lP}{{ [\![ }}
\newcommand{\rP}{{ ]\!] }}
\begin{document}

\title{Vertex algebras from divisors on Calabi-Yau threefolds}

\author{Dylan Butson}
\address{Mathematical Institute, University of Oxford, Andrew Wiles Building, Radcliffe Observatory Quarter (550), Woodstock Road, Oxford, OX2 6GG}
\email{dylan.butson@maths.ox.ac.uk}

\maketitle

\begin{abstract} We construct vertex algebras $\mathbb{V}(Y,S)$ from divisors $S$ on toric Calabi-Yau \mbox{threefolds $Y$,} satisfying conjectures of Gaiotto-Rapcak and Feigin-Gukov, as the kernel of screening operators on lattice vertex algebras determined by the GKM graph of $Y$ and a filtration on $\mathcal{O}_S$. We prove that there are representations of $\mathbb{V}(Y,S)$ on the homology groups of various moduli spaces of coherent sheaves on $Y$ supported on $S$ constructed in a companion paper with Rapcak, defined by certain Hecke modifications of these sheaves along points and curve classes in the divisor $S$. This generalizes the common mathematical formulation of a conjecture of Alday-Gaiotto-Tachikawa, the special case in which $Y=\mathbb{C}^3$ and $S=r[\mathbb{C}^2]$, to toric threefolds and divisors as proposed by Gaiotto-Rapcak. We outline an approach to the general conjecture and prove many special cases and partial results using tools developed in the companion paper, following the proof of the original conjecture by Schiffmann-Vasserot and its generalization to divisors in $\mathbb{C}^3$ by Rapcak-Soibelman-Yang-Zhao.

The vertex algebras $\mathbb{V}(Y,S)$ conjecturally include $W$-superalgebras $ W_{f_0,f_1}^\kappa(\mathfrak{gl}_{m|n})$ and genus zero class $\mc S$ chiral algebras $\mathbb{V}^{\mathcal{ S}}_{\textup{Gl}_m;f_1,...,f_k}$, each for general nilpotents $f_i$. By definition, this implies the existence of a family of compatible free field realizations of these vertex algebras, relevant to their parabolic induction and inverse quantum Hamiltonian reduction. We prove these conjectures in the examples of lowest non-trivial rank for each case, and outline the proof in general for some cases.
\end{abstract}

\begingroup
\hypersetup{hidelinks}
\tableofcontents
\endgroup

\section{Introduction}

The goal of this paper is to construct vertex algebras $\V(Y,S)$ associated to divisors $S$ on toric Calabi-Yau threefolds $Y$, and explain a conjectural correspondence between the representation theory of these vertex algebras and the enumerative geometry of the threefolds $Y$ and their divisors. The first indication of this correspondence was a certain algebraic refinement of the formula of Gottsche \cite{Gott} for the generating function for the Betti numbers of the Hilbert schemes $\Hilb_n(S)$ of zero dimensional, length $n$ subschemes of a smooth projective surface $S$:
\begin{equation}\label{Gotteqn}
	\sum_{n=0}^\infty q^n P_t(\Hilb_n(S)) = \prod_{j=1}^\infty \frac{ (1+ t^{2m-1}q^m)^{b_1(S)} (1+t^{2m+1}q^m )^{b_3(S)}}{(1- t^{2m-2}q^m)^{b_0(S)}(1- t^{2m}q^m)^{b_2(S)}(1- t^{2m+2}q^m)^{b_4(S)}} \ .
\end{equation}
This refinement was discovered independently by Grojnowski \cite{Groj} and Nakajima \cite{Nak}, and is a defining example of geometric representation theory; we now briefly describe their results.

Let $\pi(S)$ denote the Heisenberg vertex algebra generated by the homology $H_\bullet(S,\bb Z)$ with respect to the intersection pairing, and let
\begin{equation}\label{piSeqn}
	 \pi_S = \bigoplus_{n\in \bb N} H_\bullet( \Hilb_n(S), \C) \ . 
\end{equation}
It was observed in \emph{loc. cit.} that for each class $[C] \in H_\bullet(S,\C)$ there exist natural correspondences
\begin{equation}\label{Nakcorreqn}
	\vcenter{\xymatrix{ & \Hilb_{n,n+k}^{[C]}(S) \ar[dr]^p \ar[dl]_q & \\ \Hilb_n(S) && \Hilb_{n+k}(S) }}   \ ,
\end{equation}
for $k\in \bb Z$ and $n\in \bb N$ in the compatible range, inducing natural operators
\begin{equation}\label{Nakopeqn}
\alpha_{-k}^n([C])=p_*\circ q^*: H_\bullet(\Hilb_n(S)) \to H_\bullet(\Hilb_{n+k}(S)) \quad\quad \text{and}\quad\quad	\alpha_k([C]) = \sum_{n\in \bb N} \alpha_k^n([C]) 
\end{equation}
for each $k\in \bb Z$, which satisfy the relations implicit in the following theorem:

\begin{theo}\label{Nakthm} \cite{Groj,Nak} There exists a natural representation
	\[ \mc U(\pi(S)) \to \End( \pi_S)  \quad\quad \text{defined by} \quad\quad b_k^i \mapsto \alpha_k([C_i])   \ ,\]
	of the algebra of modes $\mc U(\pi(S))$ of the vertex algebra $\pi(S)$ on the vector space $\pi_S$, such that $\pi_S$ is identified with the vacuum module of the vertex algebra $\pi(S)$.
\end{theo}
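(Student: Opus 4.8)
The plan is to prove the theorem by identifying the Fock-space structure on $\pi_S$ with the vacuum module of $\pi(S)$, and then verifying that the geometrically-defined operators $\alpha_k([C])$ satisfy the Heisenberg commutation relations $[\alpha_k([C]),\alpha_l([C'])] = k\,\delta_{k+l,0}\,\langle [C],[C']\rangle\,\id$ dictated by the intersection pairing. Since the universal enveloping algebra $\mc U(\pi(S))$ of the Heisenberg vertex algebra is (up to completion) the Heisenberg algebra on $H_\bullet(S,\C)\lL z\rL$, producing a representation amounts to exhibiting operators satisfying exactly these relations, together with the statement that $\pi_S$ is generated from a vacuum vector $\one$ by the creation operators $\alpha_{-k}([C])$ for $k>0$, freely. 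My first step would therefore be to fix the vacuum vector as the fundamental class of $\Hilb_0(S) = \pt$, and to record that the creation operators $\alpha_{-k}$ raise the length $n$ while the annihilation operators $\alpha_k$ for $k>0$ lower it, with $\alpha_k([C])\cdot\one = 0$ for $k>0$ by dimension/support reasons.

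The main computation is the commutation relation, and I would establish it by reducing to an intersection-theoretic calculation on the fiber products of the nested Hilbert scheme correspondences in \eqref{Nakcorreqn}. Concretely, the composite $\alpha_{-l}([C'])\circ\alpha_k([C])$ and its reverse are push-pull operators along the two ways of composing the spans, and their difference can be computed by analyzing the excess intersection on the fiber product of the two correspondences over $\Hilb_n(S)$. The clean way to organize this is to handle first the case $[C]=[C']=[S]$ being the fundamental class (the purely length-changing ``diagonal'' Heisenberg algebra), where the geometry is that of adding/removing a single point repeatedly, and then to propagate the class dependence through the factor $\langle [C],[C']\rangle$ by observing that the incidence loci carry the relevant cohomology class pulled back from $S$ along the support map. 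One then verifies that off-diagonal commutators ($k+l\neq 0$) vanish because the two correspondences can be made transverse, or because the relevant operator factors through a push-pull that drops cohomological/dimension degree.

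The hard part will be controlling the excess intersection precisely enough to pin down the coefficient $k\,\langle[C],[C']\rangle$ rather than merely showing the commutator is a scalar. This is where the delicate local geometry of the nested Hilbert schemes enters: the fiber products are singular, and one must either resolve them or use a virtual/normal-bundle argument to extract the correct multiplicity, which is the genuine content beyond the formal Heisenberg bookkeeping. I would address this by localizing near the punctual Hilbert scheme supported at a single point of $S$, where the computation becomes a local model whose excess bundle can be identified explicitly, and then assembling the global answer via the projection formula and the structure of the support maps to $S$. The final step, freeness of $\pi_S$ as a module (i.e. the identification with the full vacuum module and not a proper quotient), follows from Gottsche's formula \eqref{Gotteqn}: comparing the graded dimensions of $\pi_S$ with the character of the Fock space generated by the $\alpha_{-k}([C_i])$ shows the two Hilbert series agree, forcing the representation to be free and hence an isomorphism onto the vacuum module.
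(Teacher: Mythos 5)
This theorem is recalled from \cite{Groj,Nak} and the paper offers no proof of its own, so there is nothing internal to compare against; your outline is essentially the standard Grojnowski--Nakajima argument (vacuum = fundamental class of $\Hilb_0(S)$, commutators computed as excess intersections on fiber products of the nested correspondences, dimension estimates killing the off-diagonal commutators and forcing the $k+l=0$ commutator to be a multiple of the identity, and Gottsche's independently proven formula \eqref{Gotteqn} used to upgrade the resulting Fock-space injection to an isomorphism with $\pi_S$). You correctly identify the genuinely hard step: Nakajima's original paper only pinned the central constant down to a nonzero scalar, and the precise value $k$ required a separate local computation (Ellingsrud--Str\o mme, or Nakajima's later reduction to the $\C^2$ model), which is exactly the localization near the punctual Hilbert scheme you propose; as long as you treat Gottsche's formula as an external input rather than a consequence of the theorem, there is no circularity.
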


\noindent In particular, this implies Equation \ref{Gotteqn} as the refined vacuum character of the Heisenberg algebra.

The first main construction of this paper gives vertex algebras $\V(Y,S)$ associated to divisors $S$ on certain toric Calabi-Yau threefolds $Y$, generalizing $\pi(S)$ above. We also recall several results of the companion paper \cite{BR1}, which provide moduli spaces $\mc M_\nnn(Y,S)$ generalizing $\Hilb_n(S)$, homology groups $\bb V_S=\bigoplus_\nnn H_\bullet(\mc M_\nnn(Y,S),\varphi_{W_S})$
generalizing $\pi_S$ of Equation \ref{piSeqn}, and certain natural correspondences inducing endomorphisms of $\bb V_S$ as in Equations \ref{Nakcorreqn} and \ref{Nakopeqn}, respectively. Moreover, these constructions satisfy the following schematic analogue of Theorem \ref{Nakthm}, and the goal of this paper is to formulate several precise variants of this and give proofs in some families of examples:

\begin{conj}\label{VWintroconj} There exists a natural representation
	\begin{equation}\label{repstrmapeqn}
		\rho: \mc U(\V(Y,S)) \to \End(\V_S )
	\end{equation}
	of the algebra of modes $\mc U(\V(Y,S))$ of the vertex algebra $\V(Y,S)$ on the vector space $\V_S$.
\end{conj}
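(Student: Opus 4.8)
The plan is to construct the representation $\rho$ of \eqref{repstrmapeqn} by specifying the images of a generating set of fields of $\V(Y,S)$ as explicit endomorphisms of $\V_S$ built from the Hecke correspondences recalled from the companion paper \cite{BR1}, and then verifying that these operators satisfy the defining operator product expansion relations of $\V(Y,S)$. Since $\V(Y,S)$ is by construction the kernel of a collection of screening operators acting on a lattice vertex algebra attached to the GKM graph of $Y$ and the filtration on $\mathcal{O}_S$, I would first produce a representation of the ambient lattice vertex algebra on an enlarged space and then cut down to the subalgebra $\V(Y,S)$, exhibiting the geometric operators as landing in the kernel of the corresponding geometric screenings.

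The first step is to establish the ``free field'' part of the action, directly generalizing Theorem \ref{Nakthm}. Each vertex of the GKM graph contributes a local affine model, and the correspondences along the curve and point classes of $S$, as in \eqref{Nakcorreqn} and \eqref{Nakopeqn}, produce operators on $\V_S$ that I expect to realize the Heisenberg and lattice generators. Their commutation relations reduce, by $T$-equivariant localization on the toric threefold $Y$, to intersection computations on the correspondence varieties; on each fixed-point basis --- indexed by tuples of plane partitions or generalized Young diagrams adapted to $S$ --- these become explicit combinatorial identities, following the computations of Schiffmann--Vasserot for $Y=\C^3$, $S=r[\C^2]$ and their extension by Rapcak--Soibelman--Yang--Zhao to general divisors in $\C^3$.

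The decisive step is to promote this free field action to an action of the full vertex algebra $\V(Y,S)$, that is, to verify the higher OPE relations among the generating fields and to check that the geometric screening operators annihilate the image of $\rho$. I would proceed locally-to-globally: at each vertex of the GKM graph the required relations are the $\C^3$ statements already accessible via the affine Yangian presentation of the local $W$-superalgebra, and along each edge one glues the two local actions using the compatibility of the correspondences across the torus-fixed loci. Matching the screenings geometrically amounts to identifying the simplest point-Hecke operators with the screening currents and showing that the commutant they cut out coincides with the image of $\rho$.

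The main obstacle is precisely this gluing and the global verification of the OPE: it requires a uniform presentation of $\V(Y,S)$ by generators and relations, which is not available for arbitrary $(Y,S)$ because the vertex algebra need not be freely or finitely generated and the kernel-of-screenings description does not directly yield its OPEs. For this reason I expect a complete proof only in the cases of lowest non-trivial rank and for geometries where the local models and their gluing can be controlled explicitly; in general I would settle for establishing the free field action together with the screening compatibility as partial results, thereby reducing the full conjecture to a precise combinatorial statement about matrix coefficients of the generating operators on the fixed-point basis.
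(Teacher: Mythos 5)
Your proposal follows essentially the same route as the paper: a free-field action of Heisenberg and lattice generators built from the Hecke correspondences of \cite{BR1}, verified by $T$-equivariant localization on fixed-point bases of (tuples of) partitions, then glued local-to-globally over the GKM graph with the screening operators matched geometrically — this is precisely the strategy of Section \ref{GRsec}, carried out in full only for reduced irreducible divisors (Theorem \ref{abGRthm}) and reduced to factorization/locality statements in higher rank (Section \ref{generalGRsec}). Your honest assessment of where the argument becomes conditional — the global OPE verification and the higher-rank gluing — matches where the paper itself leaves the statement conjectural.
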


The vertex algebras $\V(Y,S)$ are defined as the subalgebras of lattice-type vertex algebras $\Pi(Y,S)$ given by the intersection of the kernels of certain maps of vertex algebra modules:
\[ \V(Y,S) = \cap_k \ker(Q_k) \quad\quad \textup{for}\quad\quad Q_k : \Pi(Y,S) \to \Pi(Y,S)_{\lambda_k} \ . \]

The lattice-type vertex algebra $\Pi(Y,S)$ is determined by the multiplicities in the toric divisor $S$ of the reduced, irreducible components $\{S_d\}_{d\in \mf D_S}$, as follows: for a Cartier divisor $S$ satisfying
\[ [S]=\sum_{d\in \mf D_S} r_d [S_d] \quad \quad \textup{we let} \quad\quad \Pi(Y,S)= \bigotimes_{d\in \mf D_S} \Pi(Y,S_d)^{\otimes r_d} \quad\quad \text{where}\quad\quad \Pi(Y,S_d)=\bigoplus_{\lambda\in H_2(S_d,\Z)} \pi(S_d)_\lambda  \]
is a natural vertex algebra extension determined by the lattice $H_2(S_d,\Z)$ equipped with negative the intersection pairing, of the ($T$-equivariant) Grojnowski-Nakajima Heisenberg algebra $\pi(S_d)$, over $F$ the field of fractions of $H^\bullet_T(\pt)$ for $T$ the subtorus preserving the Calabi-Yau form on $Y$.

The \emph{screening operators} $Q_k$ are determined by a certain Jordan-Holder filtration on $\mc O_S$: for
\[ \mc O_S = \left[ \mc O_{S_{d_1}} < ... <\mc O_{S_{d_k}} < ... < \mc O_{S_{d_N}} \right]  \quad\quad \text{we define} \quad\quad  \V(Y,S) = \bigcap_{k=1}^{N-1} \ker (Q_k) \ \subset \Pi(Y,S) \ , \]
where $Q_k$ is defined in terms of the non-compact, toric curve classes contained in $S_{d_k}\cap S_{d_{k+1}}$. This construction satisfies the following \emph{factorization} property: for any strictly coarser composition series
\begin{equation}\label{VOAembintroeqn}
	\mc O_S = \left[ \mc O_{S_{1}} < ... <\mc O_{S_{l}} < ... < \mc O_{S_{h}} \right]\quad\quad \text{we have} \quad\quad   \bb V(Y,S) \to \bigotimes_{l=1}^h \V(Y,S_l)
\end{equation}
an embedding of vertex algebras, giving a natural system of partial free field realizations of $\V(Y,S)$.

Each such embedding determines a natural module $\bb V^\f(Y,S) = \otimes_{l=1}^h \V(Y,S_l)_0$ over $\V(Y,S)$ by restricting the vacuum module of the codomain, labelled by the partial semisimplification $\oplus_{l=1}^h \mc O_{S_l}$, which determines a \emph{framing structure} $\f$ in the sense of the companion paper \cite{BR1}, a choice required to define the desired moduli spaces $\mc M_\nnn^\f(Y,S)$ and homology groups $\V^\f_S=\oplus_\nnn H_\bullet^T(\mc M_\nnn^\f(Y,S),\varphi_{W_S^\f})$.

For example, the trivial extension $\oplus_{k=1}^N \mc O_{S_{d_k}}$ determines a framing structure $0_S$, and similarly $\mc O_S$ determines $\f_S$. In this setting, we have the following precise statement of Conjecture \ref{VWintroconj} above:
\begin{conj}\label{VOAmodintroconj} For each framing structure $\f$, there is a representation $\rho_S^\f: \mc U(\V(Y,S)) \to \End(\V_S^\f )$
such that $\V_S^\f$ identifies with the corresponding module $\V^\f(Y,S)$ above. In particular, $\V_S^{\f_S}$ and $\V_S^{0_S}$ are identified with the vacuum module $\V(Y,S)_0$ and free field module $\Pi(Y,S)$, respectively.
\end{conj}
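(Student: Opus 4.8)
The plan is to construct $\rho_S^\f$ from geometric correspondences generalizing those of Theorem \ref{Nakthm}, and to match the purely algebraic passage from the free field algebra $\Pi(Y,S)$ to its screening subalgebra $\V(Y,S)=\cap_k \ker Q_k$ with a geometric passage from the unframed moduli problem to the framed one. I would begin with the extreme case $\f=0_S$, given by the trivial extension $\oplus_k \mc O_{S_{d_k}}$. Here I expect the $T$-fixed loci of $\mc M_\nnn^{0_S}(Y,S)$ to organize by irreducible component $S_d$, so that the Hecke correspondences along torus-fixed points and non-compact toric curves of each $S_d$ produce operators generalizing the Nakajima--Grojnowski operators $\alpha_k([C])$ of Equation \ref{Nakopeqn}. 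Assembling these over all $d\in\mf D_S$, together with the curve-class shifts encoded by the lattice $H_2(S_d,\Z)$, should yield a map $\rho_S^{0_S}: \mc U(\Pi(Y,S))\to \End(\V_S^{0_S})$ identifying $\V_S^{0_S}$ with the free field module $\Pi(Y,S)$.

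The essential verification is that these correspondences satisfy the mode relations of $\Pi(Y,S)$, which I would prove by $T$-equivariant localization along the GKM graph of $Y$. Each torus-fixed point and one-dimensional orbit should reduce the computation of the composites $p_*\circ q^*$ to a local model on $\C^2\times\C$, where the relations are those established by Rapcak--Soibelman--Yang--Zhao; the gluing of these local contributions along the edges of the graph should then reproduce the lattice structure on $\oplus_d H_2(S_d,\Z)$ equipped with negative the intersection pairing. This is the step in which the full OPE structure of $\Pi(Y,S)$ is forced out of the geometry.

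Next I would realize the screening operators geometrically. Since $Q_k$ is built from the toric curve classes in $S_{d_k}\cap S_{d_{k+1}}$, it should correspond to a Hecke correspondence modifying a sheaf across the adjacent Jordan--Holder pieces $\mc O_{S_{d_k}}$ and $\mc O_{S_{d_{k+1}}}$, while the framing datum $\f$ rigidifies exactly the extensions that split in the partial semisimplification $\oplus_l \mc O_{S_l}$. The claim to establish is that imposing this framing cuts $\V_S^{0_S}$ down to the simultaneous kernel of the corresponding screenings, compatibly with $\rho_S^{0_S}$. For $\f=\f_S$ this recovers the full kernel $\V(Y,S)_0=\V(Y,S)$, the vacuum module; for intermediate $\f$, the factorization embedding of Equation \ref{VOAembintroeqn} should identify $\V_S^\f$ with $\otimes_l \V(Y,S_l)_0 = \V^\f(Y,S)$, reducing the general statement to the component algebras $\V(Y,S_l)$ and allowing a bootstrap from lower-rank cases.

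The \textbf{main obstacle} is the third step: proving that the framing structure geometrically implements the kernel of the screenings. This demands both an explicit identification of the screening charge with a correspondence and an argument in equivariant vanishing cycle cohomology --- presumably a stratification of $\mc M_\nnn^\f(Y,S)$ by the framing data together with a purity or long-exact-sequence comparison of $H_\bullet^T(-,\varphi_{W_S^\f})$ for successive framings --- showing that the framed homology is precisely the screening kernel. The difficulty is genuinely global: compatibility of the GKM gluing with the screening OPEs has no counterpart in the $\C^3$ case, so I would first establish the result for $Y=\C^3$ (recovering the Alday--Gaiotto--Tachikawa statement via Schiffmann--Vasserot and its divisor generalization) and for the lowest non-trivial ranks, and only then attempt the general toric case.
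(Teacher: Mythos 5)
Your overall architecture (build the $0_S$ action by localization, then bootstrap to general $\f$ via the factorization embeddings of Equation \ref{VOAembintroeqn}) matches the strategy of Sections \ref{latticegeosec} and \ref{generalGRsec}, but two of your steps would fail or diverge essentially from what the paper actually does. First, your localization step asserts that the composites $p_*\circ q^*$ all reduce to local models on $\C^2\times\C$ where the relations are those of Rapcak--Soibelman--Yang--Zhao. That is true only for the Heisenberg (fixed-point) part. The lattice part of $\Pi(Y,S_d)$ --- the shift operators $V_\ell(z)$ indexed by $H_2(S_d,\Z)$ --- comes from Hecke modifications along \emph{compact} toric curves, whose local models are the two-chart threefolds $Y_{1,1}$ and $Y_{2,0}$ with divisors $|\mc O_{\bb P^1}(-1)|$, $|\mc O_{\bb P^1}|$, $|\mc O_{\bb P^1}(-2)|$; these are not of the form $\C^2\times\C$, are not covered by the $\C^3$ results, and require the new computations of Example \ref{Om1GReg}, where the operators $e^\ell$ induced by tensoring with $\mc O(\ell)$ are identified with the shift operators $S_\ell$. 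Gluing only $\C^3$-local contributions along the GKM graph cannot produce the lattice extension.

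Second, your mechanism for general $\f$ --- realizing each screening $Q_k$ as a Hecke correspondence across adjacent Jordan-Holder pieces and arguing that the framing ``cuts $\V_S^{0_S}$ down to'' the screening kernel --- is not the paper's route and has a structural problem: the spaces $\mc M^\f_\nnn(Y,S)$ for different $\f$ are moduli of representations of genuinely different framed quivers with potential (Figure \ref{fig:Walgframingsfig}), not substrata of $\mc M^{0_S}_\nnn(Y,S)$, so there is no evident map of moduli spaces inducing the inclusion $\cap_k\ker Q_k\subset\Pi(Y,S)$, and the screenings are never given a geometric incarnation anywhere in the paper. Instead the screenings stay entirely on the algebraic side: the paper conjectures a factorization $\V_S^{\f_{R,T}}\cong\V_R^{\f_R}\otimes\V_T^{\f_T}$ of $\mc Y_S(Y)$-representations (Conjecture \ref{geofactconj}), bootstraps down to vacuum modules of the reduced irreducible pieces handled by Theorem \ref{abGRthm}, and then argues in the style of Schiffmann--Vasserot that the geometric action automatically factors through the mode algebra of the subalgebra $\cap_k\ker Q_k$ singled out by Proposition \ref{factprop}. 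If you want to pursue your version, the missing ingredients are a construction of the screening charge as a correspondence and the purity/stratification comparison of $H_\bullet^T(-,\varphi_{W_S^\f})$ across framings that you allude to; neither exists in the paper, and you have correctly identified the latter as the genuinely hard global input.
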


For $Y=\C^3$, $S=r[\C^2]$, the above construction is that of \cite{FF1}, so that $\V(\C^3,r[\C^2]) \cong  W^\kappa_{f_\prin}(\gl_r)$, the principal affine $W$-algebra of $\gl_r$ over $F$ at level $\kappa=-h^\vee-\frac{\e_2}{\e_1}$, and for $\f=0_{r[\C^2]}$, Conjecture \ref{VOAmodintroconj} is precisely the mathematical formulation of a conjecture of \cite{AGT} proved in \cite{SV}, \cite{MO}, and \cite{BFN5}. Similarly, for $\f=\f_{r[\C^2]}$, Conjecture \ref{VOAmodintroconj} is the main result of \cite{CCDS}.

For $Y=Y_{m,n}$ a resolution of $X_{m,n}=\{xy-z^mw^n\}$, a pair of partitions $\mu$ and $\nu$ of $M$ and $N$, of lengths $m$ and $n$, determines a divisor $S_{\mu,\nu} \subset Y_{m,n}$ as in Figure \ref{fig:Smunufig}. We conjecture $\V(Y_{m,n},S_{\mu,\nu})\cong W_{f_\mu,f_\nu}^\kappa(\gl_{M|N})$, the affine $W$-algebra of $\gl_{M|N}$ for nilpotents $f_\mu \in \gl_M$ and $f_\nu\in \gl_N$, and we prove:

\begin{theo}[\ref{WLStheo}] There is a natural isomorphism of vertex algebras
	\begin{equation}
W_{f_\mu}^\kappa(\gl_{M}) \xrightarrow{\cong} \V(Y_{m,0},S_{\mu})
\end{equation}
identifying $\Pi(Y_{m,0},S_{\mu})$ with the generalized Wakimoto realization, and the maps of Equation \ref{VOAembintroeqn} with the parabolic induction maps of \cite{Gen2} and inverse reduction maps generalizing \cite{Sem}.
\end{theo}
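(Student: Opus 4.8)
The plan is to exhibit both sides as the joint kernel of one and the same family of screening operators acting on a single lattice-type free field vertex algebra, and then to match these data term by term. First I would compute the GKM graph of $Y_{m,0}$, the minimal resolution of the product of the $A_{m-1}$ surface with $\C_w$, together with the filtration on $\mc O_{S_\mu}$ induced by the multiplicities $\mu_1\geq\dots\geq\mu_m$ of the reduced toric divisors $S_{d_1},\dots,S_{d_m}$, so that $[S_\mu]=\sum_{i=1}^m\mu_i[S_{d_i}]$. By construction $\Pi(Y_{m,0},S_\mu)=\bigotimes_i\Pi(Y_{m,0},S_{d_i})^{\otimes\mu_i}$, each factor being the lattice vertex algebra extension by $H_2(S_{d_i},\Z)$ of the $T$-equivariant Heisenberg algebra $\pi(S_{d_i})$ over $F$. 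I would then make explicit an isomorphism of this tensor product with the free field algebra underlying the generalized Wakimoto realization of $W^\kappa_{f_\mu}(\gl_M)$: the total Heisenberg rank must match the Cartan of $\gl_M$, the negative intersection form on $\bigoplus_i H_2(S_{d_i},\Z)^{\oplus\mu_i}$ must reproduce the weight pairing rescaled by $\kappa=-h^\vee-\e_2/\e_1$, and the remaining lattice directions must furnish exactly the bosonic ghosts attached to the roots outside the Levi of $f_\mu$. The combinatorial heart of this step is that the effective (exceptional) curve classes of the resolved $A_{m-1}$ geometry are in bijection with these roots, so that the ghost content grows with $m$ and degenerates to the pure Heisenberg of Feigin--Frenkel in the smooth case $\mu=(M)$, $Y_{1,0}=\C^3$.

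\emph{Next I would identify the screening operators.} Each $Q_k$ is an integral of a vertex operator labelled by the non-compact toric curve classes in $S_{d_k}\cap S_{d_{k+1}}$, and I would read these classes off the GKM graph and verify that, under the identification of the previous step, the resulting screening momenta coincide with the simple-root screenings of \cite{FF1} within each principal block together with the additional screenings of \cite{Gen2} relating the adjacent blocks of $f_\mu$. Granting this dictionary, those results give $\bigcap_k\ker(Q_k)\cong W^\kappa_{f_\mu}(\gl_M)$. Two extreme cases serve as consistency checks: $\mu=(M)$, where $Y_{1,0}=\C^3$ and $S_{(M)}=M[\C^2]$, recovers the principal isomorphism $\V(\C^3,M[\C^2])\cong W^\kappa_{f_\prin}(\gl_M)$ already recorded in the excerpt, while $\mu=(1^M)$, where $f_\mu=0$ and every screening is of across-block type, should recover the affine vertex algebra $V^\kappa(\gl_M)$ with its full Wakimoto realization.

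\emph{For the remaining assertions on the factorization maps,} a strictly coarser composition series of $\mc O_{S_\mu}$ corresponds to grouping the components $S_{d_i}$ into blocks, i.e. to a choice of Levi $\mf l\subseteq\gl_M$ compatible with $f_\mu$. The embedding $\V(Y_{m,0},S_\mu)\to\bigotimes_l\V(Y_{m,0},S_l)$ of Equation \ref{VOAembintroeqn} is then obtained by discarding the screenings internal to each block while retaining those between blocks, and I would identify it with the parabolic induction map of \cite{Gen2}, either by comparing the two maps on a generating set or, more structurally, by characterizing both as the unique vertex algebra map extending the inclusion of free field algebras and commuting with the retained screenings. The elementary case of a single corank-one block is an inverse quantum Hamiltonian reduction, which I would match with the map of \cite{Sem} and its $\gl_M$ generalization; naturality then follows since every coarser series factors through a chain of such elementary steps.

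\emph{The main obstacle} lies in the first two steps: establishing the precise, level-dependent dictionary between the toric curve classes and intersection form of $Y_{m,0}$ and the root-theoretic screening momenta of $W^\kappa_{f_\mu}(\gl_M)$, tracking carefully the equivariant parameters $\e_1,\e_2$ in the normalization over $F$, and then concluding that the joint kernel is exactly the $W$-algebra with no higher screening cohomology. I expect the cleanest route is to reduce the latter vanishing to \cite{Gen2} once the momenta are matched, so that the genuine content is the explicit comparison of screening data in the toric normalization and the verification that the degree-zero concentration (resolution) hypotheses underlying that reduction continue to hold there.
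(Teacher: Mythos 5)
Your overall strategy --- identify $\Pi(Y_{m,0},S_\mu)$ with a bosonization of the generalized Wakimoto free-field algebra, match the geometric screenings with Wakimoto and parabolic-induction screenings, and read off the maps of Equation \ref{VOAembintroeqn} as parabolic induction and inverse reduction --- is the strategy of the paper. But the paper organizes the reduction differently, and the difference matters. The proof factorizes along the \emph{columns} of the pyramid $\pi_\mu$: each column of height $m_l$ contributes a dominant summand $(1^{m_l},0,\dots,0)$ of $\mu$, an affine factor $V^\kappa(\gl_{m_l})$, and a Levi block $N_l$ of $N/N_{\mu}\cong\prod_l N_l$ in the generalized hook-type form of $f_\mu$; the $h-1$ inter-column screenings are then the parabolic-induction ones and the remaining $M-h$ are Wakimoto screenings internal to the Levi factors. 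Your ``Feigin--Frenkel screenings within each principal block'' reads as the transposed, row-wise decomposition into Jordan blocks; its summands $(0,\dots,0,\mu_i,0,\dots,0)$ are not dominant, so they do not fit the factorization framework of Proposition \ref{factprop} or the parabolic induction of \cite{Gen2}, and they do not line up with Genra's $\mc D^{\ch}(N/N_\mu)\otimes\pi_{\mf h}$. Relatedly, two of your bookkeeping claims are off: the total Heisenberg rank of $\pi_{S_\mu}$ is $\sum_d r_d|\mf F_d|$, which strictly exceeds $M$ (the excess directions pair with the lattice directions into half-lattice algebras whose screened subalgebras are the ghosts, as in Proposition \ref{CDOprop}, $\V(Y_{m,0},S_{\textbf{M}_j})\cong\mc D^{\ch}(\bb A^j)\otimes\pi$), and the ghosts are \emph{not} in bijection with exceptional curve classes --- per column one gets $\sum_{j=0}^{m_l-1}j=\dim N_l$ of them, versus $m-1$ compact curves in all of $Y_{m,0}$.

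The genuine gap is in the step you flag as the main obstacle but then propose to ``reduce to \cite{Gen2} once the momenta are matched.'' Matching the momenta is the hard part and does not follow from \cite{Gen2}: the geometric screenings $Q_{s_k}$ are by construction pure exponential lattice operators, whereas the generalized Wakimoto screening currents of \cite{Gen1,Gen2} are, in the standard coordinates on $T^*N$, polynomial in the $\beta\gamma$ fields. One must exhibit a coordinate system (equivalently, a distinguished bosonization) in which all $M-1$ Wakimoto screenings simultaneously take the canonical linear exponential form; the paper imports exactly this from the work in progress \cite{BBN} (coordinates built from multiplication maps on affine Grassmannian slices) and states Theorem \ref{WLStheo} as conditional on it. Without that input your argument identifies the free-field algebras and the factorization structure, but not the kernels.
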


In \cite{BR1} we construct geometric representations of algebras $\mc Y_S(Y)$ on $\V_S^\f$, and we conjecture
\begin{equation}\label{yangfacteqn}
	\rho_{S}^\f: \mc Y_S(Y) \twoheadrightarrow \mc U(\V(Y,S)) \to \End(\bb V^\f_S)  \ ,
\end{equation}
a factorization inducing the desired representation of $\mc U(\V(Y,S))$ geometrically by correspondences generalizing those of Equation \ref{Nakcorreqn}, where the surjection holds in an appropriate completion.

Further, we conjecture that there exist generalized coproduct maps
\[\Delta_{S_1,...,S_h} :\mc Y_S(Y) \to \bigotimes_{l=1}^h \mc Y_{S_l}(Y)   \quad\quad \text{such that the diagrams} \quad\quad   \vcenter{\xymatrix{\mc Y_S(Y)  \ar[r]\ar[d]^{} &  \bigotimes_{l=1}^h \mc Y_{S_l}(Y)  \ar[d] \\  \mc U(\V(Y,S)) \ar[r] &  \bigotimes_{l=1}^h  \mc U(\V(Y,S_l)) }} \ ,\]
commute, where the lower horizontal arrows are the vertex algebra embeddings of Equation \ref{VOAembintroeqn}.

For $S_{\mu,\nu} \subset Y_{m,n}$ as above, following \cite{CosMSRI} we conjecture in \cite{BR1} an isomorphism of algebras
\[ \mc Y_{S_{\mu,\nu}}(Y_{m,n}) \cong   \mc Y_{\sigma_{\mu,\nu}}(\glh_{m|n})  \ ,\]
with the shifted affine Yangian of $\gl_{m|n}$, for shift matrix $\sigma_{\mu,\nu}$ determined by the intersection numbers of $S_{\mu,\nu}$ with the compact curves in $Y_{m,n}$, identified with simple roots of $\gl_{m|n}$ as in Figure \ref{fig:Smunufig}.

In this case, the factorization of Equation \ref{yangfacteqn} induces a generalization to type $A$ affine superalgebras of the Brundan-Kleshchev isomorphism \cite{BrKl} between truncated, shifted Yangians and W-algebras in finite type $A$, such that the conjectural generalized coproducts on shifted affine Yangians are compatible with parabolic induction and inverse reduction maps for affine $W$-algebras.

The genus zero class $\mc S$ chiral algebras $\V^{\mc S}_{\Gl_m;f_1,...,f_k}$ defined in \cite{Ar} following \cite{BeemS} are conjecturally contained in the class of algebras $\V(Y,S)$ for general nilpotents $f_i$ and $k\leq 2$, where the threefold $Y$ and divisor $S$ are as in Figure \ref{fig:classSfig}. For general $k\in \bb N$, we also propose a variant of the construction, summarized in Figure \ref{fig:classSkfig}. This would imply a family of compatible free field realizations of these algebras, inducing analogous inverse reduction and parabolic induction maps.

\smallskip

We now give a succinct summary of the contents of each of the sections of this paper:

\begin{itemize}
 \item In Section \ref{backgroundsec} we recall some background on previous results motivating the present work, and in Section \ref{overviewsec} we give a narrative overview of our main results.

\smallskip

\item In Section \ref{prelimsec}, we recall some preliminaries from the theory of vertex algebras: in \ref{VOAsec} we recall the most basic definitions and constructions, in \ref{VOAextsec} we recall the notions of vertex algebra extensions and lattice vertex algebras, and in \ref{screeningsec} we recall the notion of screening operators.
	
	\smallskip
	
\item	 In Section \ref{screensec}, we explain the algebraic construction of $\V(Y,S)$: in \ref{ablatticesec} we define the free field algebras $\Pi(Y,S)$, in \ref{geoscreensec} we define the canonical screening operators associated to non-compact toric curve classes in $Y$, and in \ref{VOAYSsec} we define the vertex algebras $\V(Y,S)$ and prove the key factorization and locality properties.
	 
	 \smallskip
	 
\item  In Section \ref{GRsec}, we explain the conjecturally equivalent geometric construction of the various partial free field representations of the vertex algebra $\V(Y,S)$: in \ref{GRintrosec} we recall from \cite{BR1} the construction of the moduli spaces $\mc M^\f(Y,S)$, their homology groups $\V^\f_S$, and geometric representation of $\mc Y_S(Y)$ on $\V^\f_S$, in \ref{latticegeosec} we prove the main conjectures in the case that $S$ has a single reduced, irreducible component, and in \ref{generalGRsec} we outline an approach in general.

\smallskip

\item  In Section \ref{egsec}, we explain these constructions in several examples: in \ref{egabsec} we describe basic examples where $S$ is reduced and irreducible, and in \ref{Walgsec} and \ref{classSsec} we explain the conjectural applications to general type $A$ affine $W$-superalgebras and genus zero class $\mc S$ chiral algebras. 
\end{itemize}

\subsection{Background}\label{backgroundsec}
The first essential step required for the generalization of Theorem \ref{Nakthm} towards Conjecture \ref{VWintroconj} is to consider moduli spaces of sheaves of higher rank. The Hilbert scheme $\Hilb_n(S)$ viewed as a moduli space of ideal sheaves can equivalently be understood as the moduli space $\mc M_S^\zeta(1,n)$ of $\zeta$-semistable torsion free sheaves of rank 1 with trivial determinant, and it is natural to ask if there is an analogous algebraic structure governing moduli spaces of torsion free sheaves of higher rank. In fact, the analogous generating function to that of Equation \ref{Gotteqn} at $t=-1$,
\begin{equation}\label{VWeqn}
	\mc Z^\VW_{r,S}(q) = \sum_{n=0}^\infty \chi( \mc M_S^\zeta(r,n) )  \ , 
\end{equation}
where $\chi( \mc M_S^\zeta(r,n) )$ denotes the Euler characteristic of the moduli space $\mc M_S^\zeta(r,n)$, was considered in the theoretical physics literature by Vafa-Witten \cite{VW}, who predicted certain modular properties of this generating function based on considerations in string theory. However, such moduli spaces are much less well understood in higher rank and a general closed-form computation of the Vafa-Witten partition function $\mc Z^\VW_{r,S}$ analogous to that of Equation \ref{Gotteqn} does not appear to be possible.

The main idea that has been used to generalize the results of Grojnowksi and Nakajima to higher rank also came from theoretical physics, in work of Nekrasov \cite{Nek1} and Alday-Gaiotto-Tachikawa \cite{AGT}. In the context of their relation to Seiberg-Witten theory \cite{SW1,SW2}, Nekrasov proposed 
that for a class of generating functions of invariants of surfaces including that of Equation \ref{VWeqn}, there is a natural local analogue defined in this case by considering the moduli spaces $\mc M(r,n)$ of framed torsion free sheaves on $S=\C^2$, and computing the Euler characteristic of their localized equivariant homology $H_\bullet^{\tilde T}(\mc M(r,n))\otimes_{H_{\tilde T}^\bullet(\pt)} F$ with respect to the action of $\tilde T=(\bb C^\times)^2$ on $S=\C^2$.

Further, it was realized by Alday-Gaiotto-Tachikawa that the analogous vertex algebra which governs the local structure of the equivariant homology of higher rank torsion free sheaves is the principal affine W-algebra $W_{\rho_\prin}^\kappa(\gl_r)$ of $\gl_r$ at level
\begin{equation}\label{agtleveleqn}
	\kappa = -h^\vee -\frac{\e _2}{\e _1}    \ , 
\end{equation}
where we have identified $H_{\tilde T}^\bullet(\pt)=\bb K[\e_1,\e_2]$, and $h^\vee=r$ denotes the dual Coxeter number. The analogue of the result of Grojnowski and Nakajima above in this setting was one natural mathematical formulation of the predictions of \cite{AGT}, which was proved independently by Schiffmann-Vasserot \cite{SV}, Maulik-Okounkov \cite{MO}, and Braverman-Finkelberg-Nakajima \cite{BFN5}. For
\begin{equation}\label{V0rC2eqn}
	 \bb V_{r[\C^2]}^0= \bigoplus_{n\in\bb N} H_\bullet^{A}(\mc M(r,n))\otimes_{H_{A}^\bullet(\pt)} F \ ,  
\end{equation}
where $A=\tilde T\times T_\f$ for $T_\f\subset \Gl_r$ the maximal torus and $F$ is the field of fractions of $H_A^\bullet(\pt)$, we have:

\begin{theo}\label{SVAGTthm}\cite{SV,MO,BFN5} There exists a natural representation
	\[ \mc U(W^\kappa_{f_{\textup{prin}} }(\gl_r)) \to \End(\V_{r[\C^2]}^0)  \ ,\]
	such that $\V_{r[\C^2]}^0$ is identified with the universal Verma module $\bb M_r$ for $W^\kappa_{f_{\textup{prin}} }(\gl_r)$.
\end{theo}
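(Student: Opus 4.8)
The plan is to follow the free-field strategy underlying the later sections of the paper, specializing the geometric construction of \cite{BR1} to $Y=\C^3$, $S=r[\C^2]$ and matching it with the Feigin--Frenkel realization of $W^\kappa_{f_\prin}(\gl_r)$ as a kernel of screening operators. First I would reduce to the free-field picture by equivariant localization with respect to the framing torus $T_\f\subset \Gl_r$: since the $A=\tilde T\times T_\f$-fixed points of $\mc M(r,n)$ are isolated and indexed by $r$-tuples of partitions $(\lambda^{(1)},\dots,\lambda^{(r)})$ with $\sum_i|\lambda^{(i)}|=n$, the normalized fixed-point classes furnish an isomorphism
\[ \V_{r[\C^2]}^0 \ \xrightarrow{\ \cong\ }\ \textstyle\bigotimes_{i=1}^r \pi_i \ =\ \Pi(\C^3,r[\C^2]) \ , \]
where each factor $\pi_i=\bigoplus_n H_\bullet^{\tilde T}(\Hilb_n(\C^2))\otimes F$ is the rank-one Fock space. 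This step rests only on the equivariant refinement of Theorem \ref{Nakthm} for $S=\C^2$, which supplies the Heisenberg action on each factor, and realizes the $\f=0_S$ case of Conjecture \ref{VOAmodintroconj}.

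Next I would construct the operators geometrically. The elementary Hecke correspondences on $\mc M(r,n)$ modifying a framed sheaf along the point $0\in\C^2$ induce endomorphisms of $\V_{r[\C^2]}^0$ that, together with their descendants, generate the (spherical, shifted) affine Yangian $\mc Y_{r[\C^2]}(\C^3)$ of \cite{BR1}. The substantive computation here is to verify the defining relations of this algebra by evaluating the relevant equivariant integrals over the correspondence varieties, i.e.\ computing the matrix coefficients of the operators between fixed-point classes and checking that the resulting rational functions satisfy the Yangian's quadratic and Serre relations. This is the geometric heart of the argument, and together with the identification in the following step is where I expect the main difficulty to lie.

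I would then show that this Yangian action factors through the $W$-algebra modes, realizing the factorization \ref{yangfacteqn}
\[ \rho: \mc Y_{r[\C^2]}(\C^3) \twoheadrightarrow \mc U(W^\kappa_{f_\prin}(\gl_r)) \to \End(\V_{r[\C^2]}^0) \ . \]
Concretely, using the free-field realization $W^\kappa_{f_\prin}(\gl_r)=\bigcap_k\ker(Q_k)\subset \Pi(\C^3,r[\C^2])$, it suffices to exhibit the generating currents of $W^\kappa_{f_\prin}(\gl_r)$ as explicit normally-ordered polynomials in the geometric Heisenberg fields and to check that the corresponding operators commute with the screenings $Q_k$; equivalently one may invoke the Harish-Chandra-type homomorphism from the affine Yangian to the $W$-algebra. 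A bookkeeping computation with the equivariant parameters must reproduce the level $\kappa=-h^\vee-\e_2/\e_1$ of Equation \ref{agtleveleqn}.

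Finally, to identify the module I would take the vacuum class $\one=[\mc M(r,0)]$, corresponding to the empty $r$-tuple of partitions, show that it is a highest-weight vector for $W^\kappa_{f_\prin}(\gl_r)$, and read off its highest weight from $\e_1,\e_2$ and the framing weights. Proving that $\V_{r[\C^2]}^0$ is generated by $\one$ under the $W$-algebra action --- cyclicity --- then identifies it with the universal Verma module $\bb M_r$; freeness over the base together with a comparison of the graded characters of $\V_{r[\C^2]}^0$ and $\bb M_r$ upgrades this to an isomorphism. The principal obstacle throughout is the relation-matching of the two middle steps: establishing that the geometrically defined correspondences generate \emph{exactly} $\mc U(W^\kappa_{f_\prin}(\gl_r))$, neither too few operators to be cyclic nor operators violating the screening constraints.
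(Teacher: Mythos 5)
Your proposal follows essentially the same route as the argument the paper recalls from \cite{SV}: factorize $\V_{r[\C^2]}^0$ into $r$ Fock spaces via the framing torus, realize the Hecke correspondences as a representation of the (spherical) cohomological Hall algebra of $\C^2$ --- which is your affine Yangian in different clothing, cf.\ Equation \ref{QYeqn} --- and show the image coincides with the algebra of modes of $\bigcap_l\ker(Q_l)\subset(\pi^k)^{\otimes r}$, which Feigin--Frenkel identifies with $W^\kappa_{f_\prin}(\gl_r)$. Your closing highest-weight/cyclicity argument for the Verma identification is a reasonable completion of a step the paper's sketch leaves implicit, and you correctly locate the hard work in the relation-matching between the geometric correspondences and the screened subalgebra.
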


As the notation suggests, in the special case $Y=\C^3$ and $S=r[\C^2]$ our general construction produces precisely the module $\bb V_{S}^0=\bb V_{r[\C^2]}^0$ and the vertex algebra $\V(\C^3,r[\C^2])$ is given by
\[ \bb V(\C^3,r[\C^2]) \cong \mc W^\kappa_{\rho_\prin}(\gl_r)  \ , \]
so that Theorem \ref{SVAGTthm} can be understood as a special case of one variant of Conjecture \ref{VWintroconj} above.

We now describe some aspects of the proof of Theorem \ref{SVAGTthm} given in \cite{SV}, on which our general approach is modeled. Afterwards, we will explain our proposed generalization in detail and outline its implications for the correspondence between representations of the vertex algebras $\bb V(Y,S)$ and the enumerative geometry of the threefold $Y$.

One essential benefit of the restriction to $S=\C^2$ is that the moduli spaces $\mc M(r,n)$ admit a description as the moduli spaces of stable representations of the following Nakajima quiver 
\begin{equation}\label{ADHMeqn}
	   \begin{tikzcd}\boxed{\bb C^r}  \arrow[r, shift left=0.5ex, "I"] & \arrow[l, shift left=0.5ex, "J"]
	\mathcircled{V}	\arrow[out=70,in=110,loop,swap,"B_1"]
	\arrow[out=250,in=290,loop,swap,"\normalsize{B_2}"]\end{tikzcd}
\quad\quad\text{with relations}\quad\quad  [B_1,B_2]+IJ=0   \ ,
\end{equation}
of dimension $\dim V=n$ and framing dimension $r$. This description of the moduli space of framed instantons of rank $r$ and charge $n$ on $\bb R^4=\C^2$ was discovered by Atiyah-Drinfeld-Hitchin-Manin in \cite{ADHM}, and was the motivating example of a Nakajima quiver variety \cite{Nak1}.

It was observed in \cite{SV} that the correspondences of the type in Equation \ref{Nakcorreqn} are naturally parameterized by representations of the unframed variant of the above quiver
\begin{equation}\label{C2quiveqn}
	 \begin{tikzcd}\mathcircled{V} \arrow[out=160,in=200,loop,swap,"B_1"]\arrow[out=340,in=20,loop,swap,"B_2"] \end{tikzcd}\quad\quad\text{with relations}\quad\quad  [B_1,B_2]=0   \ , 
\end{equation}
as we now explain. The stack of representations of this quiver with relations is given by
\[\mf M_n(\C^2) = \left[ C_n / \Gl_n  \right]  \quad\quad \text{where}\quad\quad  C_n=\{ (B_1,B_2) \in \gl_n^{\times 2}\ | \ [B_1,B_2]=0 \} \ ,\]
and we denote the corresponding equivariant Borel-Moore homology groups by
\[\mc H(\C^2) =  \bigoplus_{n\in \bb N}\mc H_n(\C^2) = \bigoplus_{n\in \bb N} H_\bullet^A( \mf M_n(\C^2))\otimes_{H_{A}^\bullet(\pt)} F \ . \]

There are analogous correspondences between the spaces $\mf M_n(\C^2)$, defined by the stacks of short exact sequences $\mf M_{k,l}(\C^2)$ of representations of the unframed quiver of dimension $k+l$, with a subobject of dimension $k$ and quotient object of dimension $l$,
\begin{equation}\label{cohaC2multcorreqn}
	\vcenter{\xymatrix{ & \mf M_{k,l}(\C^2) \ar[dr]^p \ar[dl]_q  & \\ \mf M_{k}(\C^2) \times \mf M_{l}(\C^2)  && \mf M_{k+l}(\C^2) }}
\end{equation}
which induce maps
\begin{equation}\label{cohaC2multeqn}
	 p_*\circ q^*: \mc H_k(\C^2) \otimes \mc H_l(\C^2) \to \mc H_{k+l}(\C^2)  \quad\quad \text{and thus}\quad\quad m: \mc H(\C^2)^{\otimes 2} \to \mc H(\C^2) \ , 
\end{equation}
which defines an associative algebra structure on $\mc H(\C^2)$; the resulting algebra $\mc H(\C^2)$ is called the \emph{preprojective cohomological Hall algebra} of $\C^2$.

Further, the stacks of short exact sequences of representations of the framed quiver with potential define analogous correspondences
\begin{equation}\label{cohacoreqn}
	\vcenter{\xymatrix{ & \mc M(r,(k,n)) \ar[dr]^p \ar[dl]_q  & \\ \mf M_{k}(\C^2) \times \mc M(r,n)  && \mc M(r,n+k) }}
\end{equation}
and induce a representation of $\mc H(\C^2)$ on $\bb V_{r[\C^2]}$, which we denote by
\begin{equation}\label{SVrepeqn}
	 \rho_{r[\C^2]} : \mc H(\C^2) \to \End_F( \V_{r[\C^2]})  \ . 
\end{equation}
In particular, for $r=1$ the equivariant analogue of the Nakajima operator $\alpha_{-1}:\V_{\C^2} \to \V_{\C^2}$ of Equation \ref{Nakopeqn} is given by the image of the fundamental class of $[\mf M_1(\C^2)] \in \mc H_1(\C^2)$ under this representation. More generally, the image of the spherical subalgebra $\mc{SH}(\C^2)\subset \mc H(\C^2)$ generated by $\mc H_1(\C^2)$ includes the equivariant Nakajima operators $\alpha_k$ for all $k> 0$, and we obtain an isomorphism
\[ \mc U(\pi^k)_+  \xrightarrow{\cong} \rho_{\C^2}( \mc{SH}(\C^2))  \ , \]
between the positive half $\mc U(\pi^k)_+$ of the algebra of modes $\mc U(\pi^k)$ of the vertex algebra $\pi^k$, the Heisenberg algebra over $F$ at level
\[k = - \langle [\C^2] , [\C^2]\rangle = - \e_A(T_0\C^2)^{-1} =- \frac{1}{\e_1\e_2} \ \in F  \ ,\]
and the image of the spherical subalgebra $\mc{SH}(\C^2)$ under the above representation $\rho_{\C^2}$.  The above Theorem \ref{SVAGTthm} of \cite{SV} establishes that $\rho_{r[\C^2]}$ of Equation \ref{SVrepeqn} also induces an isomorphism
\begin{equation}\label{SVequiveqn}
	  \mc U(W^\kappa_{f_{\textup{prin}} }(\gl_r))_+ \xrightarrow{\cong} \rho_{r[\C^2]}^0( \mc{SH}(\C^2))   \ .
\end{equation}
Similarly, the action of the negative half of the algebra of modes is defined by taking adjoints of the endomorphisms in $\rho_{r[\C^2]}^0( \mc{SH}(\C^2))$, with respect to the equivariant intersection pairing on $\bb V_{r[\C^2]}$.

The proof of \emph{loc. cit.} proceeds as follows: the tensor product structure on the framing equivariant cohomology of Nakajima quiver varieties provides natural isomorphisms
\begin{equation}\label{C2facteqn}
	 \bb V_{r[\C^2]}^0 \cong \bb V_{\C^2}^{\otimes r}  \quad\quad \text{defining} \quad\quad \mc U((\pi^k)^{\otimes r}) \to \End_F(\bb V_{\C^2}^{\otimes r})\cong \End_F( \bb V^0_{r[\C^2]}) \ , 
\end{equation}
and the action of $\mc{SH}(Y)$, generated by correspondences of the type in Equation \ref{cohacoreqn}, factors through a subalgebra of $\mc U((\pi^k)^{\otimes r})$. Moreover, it is proved that this subalgebra is precisely the algebra of modes of a vertex subalgebra $\bb V(\C^3,r[\C^2])\subset (\pi^k)^{\otimes r}$ defined by
\begin{equation}\label{screeneqn}
	\bb V(\C^3,r[\C^2]) = \bigcap_{l=1}^{r-1} \ker(Q_l) \quad\quad \text{where}\quad\quad Q_l=\int V^l(z) dz \ : (\pi^k)^{\otimes r} \to  (\pi^k)^{\otimes r}_{\lambda_l}   \ ,
\end{equation}
and $V^l(z)$ denotes a certain lattice vertex operator with coefficients in $F$; we recall the formalism of lattice vertex algebras and screening currents in Section \ref{prelimsec} below. In particular, we recall the famous results of Feigin-Frenkel \cite{FF1}, which imply that the resulting vertex algebra is given by
\begin{equation}\label{}
	 \bb V(\C^3,r[\C^2]) \cong W^\kappa_{f_{\textup{prin}} }(\gl_r)   \quad\quad \text{for} \quad\quad \kappa= - h^\vee - \frac{\e_2}{\e_1}\ , 
\end{equation}
as claimed, completing the proof of Theorem \ref{SVAGTthm} from \cite{SV}. The goal of this paper is to generalize the constructions of Equations \ref{screeneqn} and \ref{SVrepeqn}, and the proof of their equivalence outlined above.

\subsection{Overview of results}\label{overviewsec} We now give a narrative overview of the new results of this paper. A succinct summary of the contents of each of the sections was provided just before Section \ref{backgroundsec} above.

In Sections \ref{screensec} and \ref{GRsec}, respectively, we give the two main constructions of this paper, establish some of their corresponding properties, and prove several variants of Conjecture \ref{VWintroconj} :
\begin{enumerate}
	\item We construct vertex algebras $\bb V(Y,S)$, as the kernel of certain screening operators on lattice vertex algebras determined by the GKM graph of $Y$ and a Jordan-Holder filtration on $\mc O_S$, generalizing the Feigin-Frenkel realization \cite{FF1} of $ W^\kappa_{f_{\textup{prin}} }(\gl_r)$ as in Equation \ref{screeneqn}.
	\item We construct analogous moduli spaces $\mc M^0(Y,S)$, homology groups $\bb V_S^0$, algebras $\mc H(Y)$, representations $\rho_S:\mc H(Y)\to \End_F(\bb V_S^0)$, and extensions thereof to vertex algebras, generalizing those in the construction described above of the representation in Theorem \ref{SVAGTthm} of \cite{SV}.
\end{enumerate}

\noindent We begin by recalling the details of the latter \emph{geometric} construction, following the results of the companion paper \cite{BR1}, and afterwards describe the former \emph{algebraic} construction motivated by this and outline some partial results towards the proof of their equivalence.

Let $Y\to X$ be a toric Calabi-Yau threefold resolution of the class considered in \emph{loc. cit.} and let $S$ be a toric divisor on $Y$. We define algebraic stacks
\[ \mf M(Y)=\mf M_{\Perv_\cs(Y)} \quad\quad \text{and}\quad\quad \mf M^0(Y,S)=\mf M^{0_S}(Y,\mc O^\sss_{S^\red}[1]) \ , \]
parameterizing certain complexes of coherent sheaves on $Y$. The stack $\mf M^0(Y,S)$ is defined so that for a suitable choice of stability condition $\zeta=\zeta^\VW$, the moduli space of $\zeta$-stable objects
\begin{equation}\label{M0eqn}
	\mc M^0(Y,S)=\mf M^{0,\zeta}(Y,S)
\end{equation}
provides a model in algebraic geometry for the moduli space of instantons on the divisor $S$, generalizing the role of $\mc M(r,n)$ in the usual AGT conjecture. The stack $\mf M(Y)$ parameterizes complexes of coherent sheaves with compactly supported cohomology, contained in the heart of a certan exotic t-structure of $\DD^b\Coh(Y)$, which parameterize the natural correspondences generalizing those of Equation \ref{cohacoreqn}, analogously to the spaces $\mf M_n(\C^2)$ in the argument above.

The first of the two main Theorems in \cite{BR1} establishes an equivalence between the stacks $\mf M(Y)$ and $\mf M^0(Y,S)$ and stacks of representations of a quiver with potential $(Q_Y,W_Y)$ and a framed variant thereof $(Q^0_S,W^0_S)$, respectively. For example, in the case $Y=\C^3$ and $S=r[\C^2]$ from \cite{RSYZ}:
\begin{equation}\label{C3quiveqn}\hspace*{-1cm}
	Q_{\C^3} = \begin{tikzcd}
		\mathcircled{V}\arrow[out=340,in=20,loop,swap,"B_3"]
		\arrow[out=220,in=260,loop,swap,"B_2"]
		\arrow[out=100,in=140,loop,swap,"\normalsize{B_1}"] \end{tikzcd}
	\quad\quad \quad Q_{r[\C^2]}^0 =\quad \begin{tikzcd} \boxed{\C^r} \arrow[r, shift left=0.5ex, "I"] & \arrow[l, shift left=0.5ex, "J"]
		\mathcircled{V}\arrow[out=340,in=20,loop,swap,"B_3"]
		\arrow[out=220,in=260,loop,swap,"B_2"]
		\arrow[out=100,in=140,loop,swap,"\normalsize{B_1}"] \end{tikzcd}  \quad\quad\quad   \begin{cases} W_Y &  = B_1[B_2,B_3]  \\  W_S^0 &  = B_1[B_2,B_3] + B_3I J  \end{cases}  \ .  
\end{equation}

In this setting, the natural homology theory to consider is the Borel-Moore homology with coefficients in the sheaf of vanishing cycles determined by the potential, so that the vector spaces over $F$ underlying the algebra $\mc H(Y)$ and module $\bb V_S^0$ are given by
\begin{equation}\label{cohaandmoddefn}
	 \mc H(Y) = \bigoplus_{\nnn \in \bb N^{V_{Q_Y}}} H_\bullet^A( \mf M_\nnn(Y), \varphi_{W_Y})\otimes_{H_{A}^\bullet(\pt)} F  \quad\quad \text{and}\quad\quad \bb V_S^0 =\bigoplus_{\nnn \in \bb N^{V_{Q_Y}}} H_\bullet^A( \mc M^0_\nnn(Y,S), \varphi_{W_S^0}) \otimes_{H_{A}^\bullet(\pt)} F   \ .
\end{equation}

The second main Theorem in \cite{BR1} constructs the analogous representation
\begin{equation}\label{rhoSeqn}
	 \rho_S:\mc H(Y) \to  \End_F(\bb V_S^0) \ , 
\end{equation}
which conjecturally induces a representation of the algebra of modes of the vertex algebra $\bb V(Y,S)$, by the same mechanism described above in the proof of Theorem \ref{SVAGTthm} from \cite{SV}.

In examples such as $Y=\C^3$ and $S=r[\C^2]$, the homology groups $\bb V_S^0$ are related by \emph{dimensional reduction}, in the sense of Appendix A of \cite{Dav1} for example, to the ordinary Borel-Moore homology groups defining $\mc H(\C^2)$ and $\bb V^0_{r[\C^2]}$ in the preceding section: there is a natural isomorphism
\begin{equation}\label{dimredeqn}
	H_\bullet( \mf M_\dd(Q,W),\varphi_W ) \xrightarrow{\cong} H_\bullet(\mf M_\dd(\tilde Q,R) )  \ ,
\end{equation}
between the homology of the stack of representations of the quiver with potential $(Q,W)$, with coefficients in the sheaf of vanishing cycles determined by $W$, and the ordinary Borel-Moore homology of the corresponding dimensionally reduced quiver with relations $(\tilde Q,R)$. In particular, we have
\[  H_\bullet^A( \mc M^0_n(\C^3,r[\C^2]), \varphi_{W_{r[\C^2]}^0})\xrightarrow{\cong } H_\bullet^A(\mc M(r,n)) \ ,  \]
so that the preceding definition of $\bb V_S^0$ in this case agrees with that of $\bb V_{r[\C^2]}^0$ in Equation \ref{V0rC2eqn} above:
\begin{equation}\label{dimredrC2eqn}
	  \bb V^0_{r[\C^2]} = \bigoplus_{n\in \bb N}  H_\bullet^A( \mc M^0_n(\C^3,r[\C^2]), \varphi_{W_{r[\C^2]}^0}) \otimes_{H_A^\bullet(\pt)} F  \xrightarrow{\cong } \bigoplus_{n\in \bb N} H_\bullet^A(\mc M(r,n)) \otimes_{H_A^\bullet(\pt)} F   \ . 
\end{equation}

\noindent Similarly, this extends to an ismorphism of representations under an analogous isomorphism of associative algebras $\mc H(\C^3)\cong \mc H(\C^2)$, which follows from the results of \cite{YZ1} and the appendix to \cite{RenS}, so that our general construction of the representation in Equation \ref{rhoSeqn}, in the case $Y=\C^3$ and $S=r[\C^2]$, is equivalent to that of Equation \ref{SVrepeqn}, from the proof of Theorem \ref{SVAGTthm} of \cite{SV}.

We now outline the definition of the vertex algebras $\V(Y,S)$ for general divisors $S$ in toric Calabi-Yau threefolds $Y$, and outline our approach to the proof of Conjecture \ref{VWintroconj} in this case. We begin with the case that $S$ has a single smooth, reduced, irreducible component, for which the corresponding moduli space $\mc M^0(Y,S)$ parameterizes rank 1 sheaves, as in the classical setting of Grojnowski and Nakajima, though our construction differs from \emph{loc. cit.} whenever $H_2(S;\bb Z)\neq 0$.

An argument is outlined in the final Chapter of \cite{NakLec} that for moduli spaces of rank 1 torsion free sheaves of arbitrary first Chern class on a surface $S$, the Heisenberg algebra action on the homology defined in \emph{loc. cit.} should naturally extend to an action of
\[ \Pi(S):=  \pi_{H_0(S,\C)} \otimes V_{H_2(S,\Z)}   \ ,  \]
the tensor product of the usual Heisenberg algebra $\pi_{H_0(S,\C)}=\pi$ on $H_0(S,\C)=\C$ with the lattice vertex algebra $V_{H_2(S,\C)}$ extending $\pi_{H_2(S,\C)}$. In the localized, $A$-equivariant setting, as in the definitions of Equation \ref{cohaandmoddefn}, for $\mf F_S=S^T$ the localization theorem implies
\[ H_\bullet^A(S,\C)  \otimes_{H_A^\bullet(\pt)} F \xrightarrow{\cong}  \bigoplus_{y\in \mf F_S} F \quad\quad \text{and thus} \quad\quad \pi_{H_\bullet^A(S,\C)} \xrightarrow{\cong} \bigotimes_{y\in \mf F_S} \pi_{k_y} \ ,\]
where $\pi_{k_y}$ denotes the Heisenberg algebra over the base field $F$ at level $k_y= -\frac{1}{\e_{A}(T_yS)}$ . From this perspective, the analogous lattice extension to $\Pi(S)$ is equivalent to extending $ \pi_{H_\bullet^A(S,\C)}$ by 
\[ V_{i}(z) = \nol \exp \left( \e_T(N_{C_i,0_i} S) \phi^{0_i}(z) + \e_T(N_{C_i,\infty_i}S) \phi^{\infty_i}(z) \right) \nor  \quad  \in \Hom( \pi_{m}, \pi_{m+[i]})\LFz \ , \]
a lattice vertex operator for each toric curve class $C_i\subset S$ between two fixed points $0_i,\infty_i\in \mf F_S$. In Section \ref{ablatticesec} we explain this presentation of the lattice vertex algebra over $F$, which we denote $\Pi(Y,S)$, towards giving the general definition of $\V(Y,S)$ for $S$ not necessarily reduced or irreducible.

The general definition of $\V(Y,S)$ is modeled on the Feigin-Frenkel realization of the principal affine W-algebra at generic level $\kappa = - h^\vee - \frac{\e_2}{\e_1}$, by which we defined $\V(\C^3,r[\C^2])$ in Equation \ref{screeneqn}. Given a divisor $S$, we write
\[ S^\red = \bigcup_{d\in \mf D_S} S_d \quad\quad \text{so that}\quad\quad S = \sum_{d\in \mf D_S} r_d [S_d] \]
for some positive integers $r_d\in \bb N$. Then, generalizing Equation \ref{C2facteqn} above, we expect that the vector space $\bb V_S^0$ admits a factorization
\[  \bb V_S^0 = \bigotimes_{d\in \mf D_S} (\bb V_{S_d}^0)^{\otimes r_d} \quad\quad \text{and thus}\quad\quad \mc U(\bigotimes_{d\in \mf D_S} \Pi_d^{\otimes r_d} ) \to \End_F( \V_S^0 ) \ , \]
a representation of the algebra of modes of the vertex algebra
\[ \Pi(Y,S) = \bigotimes_{d\in \mf D_S} \Pi_d^{\otimes r_d} \quad\quad \text{where}\quad\quad  \Pi_d=\Pi(Y,S_d) \ , \]
the lattice vertex algebra over $F$ corresponding to the reduced, irreducible divisor $S_d$, as above.

The vertex algebra $\V(Y,S)$ is then defined as the kernel of a certain collection of screening operators on $\Pi(Y,S)$ constructed as follows: we fix a presentation of $\mc O_S$ as an iterated extension
\[ \mc O_S = \left[ \mc O_{S_{d_1}} < ... <\mc O_{S_{d_k}} < ... < \mc O_{S_{d_N}} \right] \ , \]
for some ordered list of elements $d_k\in \mf D$, in which each $d\in \mf D$ occurs $r_d$ times, and define
\[ \V(Y,S) = \bigcap_{k=1}^{N-1}  \bigcap_{s_k} \ker(Q_{s_k})\quad\quad \text{where}\quad\quad \begin{cases}
	Q_{s_k} =& \int Q_{s_k}(z)dz \ : \Pi(Y,S) \to \Pi(Y,S)_{\lambda_{s_k}} \\  Q_{s_k}(z) =&  \nol \exp \left( \e_T(N_{C_s,y_s} S_{d_+}) \phi^{j_{d_+}}_{y_s}(z) - \e_T(N_{C_s,y_s}S_d) \phi^{j_{d_-}}_{y_s}(z) \right)\nor  \end{cases}  \]
is a canonical screening operator defined for each non-compact curve class $C_{s_k} \subset S_{d_k} \cap S_{d_{k+1}}$.

As we explain in Section \ref{egsec}, the vertex algebras $\V(Y,S)$ conjecturally include $W$-superalgebras $ W_{f_0,f_1}^\kappa(\gl_{m|n})$ and genus zero class $\mc S$ chiral algebras $\V^{\mc S}_{\Gl_m;f_1,...,f_k}$ with $k\leq 2$ marked points, each for general nilpotents $f_i$, and we give a proof of these conjectures in several low rank examples. By definition, this implies the existence of free field realizations of these vertex algebras, which for $W$-algebras $W_f^\kappa(\gl_N)$ appears to give a certain canonical bosonized presentation of the generalized Wakimoto resolutions defined by Genra \cite{Gen1}. More generally, these free field realizations satisfy a natural compatibility as the divisor $S$ varies within a fixed threefold $Y$, which implies natural parabolic induction and inverse quantum Hamiltonian reduction maps for this class of algebras.

In particular, if $S=S_1+S_2$ then we evidently have $\Pi(Y,S)=\Pi(Y,S_1)\otimes \Pi(Y,S_2)$, and for composition series of $\mc O_{S}$, $\mc O_{S_1}$ and $\mc O_{S_2}$ in terms of the sheaves $\mc O_{S_d}$, compatible in the sense that
\[ \mc O_{S_1} \to \mc O_S \to  \mc O_{S_2}   \  , \]
defines a short exact sequence compatible with the filtrations, we obtain
\begin{equation}\label{voaembeqn}
	\bb V(Y,S) \to \V(Y,S_1)\otimes \V(Y,S_2)  
\end{equation}
an embedding of vertex algebras, with image given by the kernel of a single screening operator.

For example, in the case that $Y=|\mc O_{\bb P^1}\oplus \mc O_{\bb P^1}(-2)|$ and $S=S_{2,1,0,0}=2 [\bb A^2_{xy}] +[|\mc O_{\bb P^1}|]$, we explicitly compute $\bb V(Y,S)=V^\kappa(\spl_2)\otimes \pi$, identify the resulting free field realizations with a bosonisation of the Wakimoto realization, and deduce the $\spl_2$ inverse reduction result of Semikhatov \cite{Sem}; this is the content of Theorem \ref{affinesl2theo}, and is summarized in Figure \ref{fig:sl2factfig}. The four natural modules appearing in this calculation correspond under Conjecture \ref{VOAmodintroconj} to the four framed quivers of Figure \ref{fig:Walgframingsfig}.

\section{Preliminaries}\label{prelimsec}

\subsection{Vertex algebras and modules}\label{VOAsec}

\begin{defn} An element $a(z) \in \End(V)[[z^{\pm 1}]]$ is called a field if $a(z)v \in V((z))$ for each $v\in V$.
\end{defn}

\begin{rmk} More explicitly, $A=\sum_{n\in\Z} a_{n}z^{-n-1} \in \End(V)[[z^{\pm 1}]]$ is a field if for each $v\in V$ there exists $N\in \Z$ such that $a_n(v)=0$ for all $n>N$.
	
\end{rmk}

\begin{defn}\label{voadefn} A vertex algebra is a tuple $(V,\O,T,Y)$ consisting of
	\begin{itemize}
		\item a vector space $V$,
		\item an element $\one \in V$, called the vacuum,
		\item a linear map $T\in\End(V)$, called the translation operator, and
		\item a linear map $Y(\cdot,z):V\to \End(V)[[z^{\pm 1}]]$, called the vertex operator,
	\end{itemize}
	such that:
	\begin{itemize}
		\item $Y(\one,z)=\id_V$,
		
		\item $Y(a,z)=\sum_{n\in\Z} a_{n}z^{-n-1} \in \End(V)[[z^{\pm 1}]]$ is a field for each $a\in V$ and $v\in V$,
		
		\item $Y(a,z)(\one)\in V[[z]]\subset V((z))$ for each $a\in V$, and the evaluation satisfies $Y(a,z)(\one)|_{z=0}=a$,
		
		\item $[T,Y(a,z)]=\del_z Y(a,z)$ for each $a\in V$,
		
		\item $T\one=0$, and
		
		\item for each $a,b\in V$, the fields $a(z)=Y(a,z)\in \End(V)[[z^{\pm 1}]]$ and $b(w)=Y(b,w)\in \End(V)[[w^{\pm 1}]]$ are mutually local with respect to one another, that is, there exists $N\in \bb N$ such that
		\[ (z-w)^N [a(z),b(w)] = 0 \ , \]
		as elements of $\End(V)[[z^{\pm 1},w^{\pm 1}]]$.
	\end{itemize}
\end{defn}

The mutual locality of the fields $a(z)$ and $b(w)$ implies that
\begin{equation}\label{localeqn}
	 [a(z),b(w)] = \sum_{j=0}^{N-1} \frac{1}{j!} \gamma_j(w) \del^j_w \delta(z-w) \ ,
\end{equation}
for some collection of fields $\gamma_j(w)\in \End(V)[[w^{\pm 1}]]$ defined for $j=0,...,N-1$. In fact, these fields can be explicitly identified with the vertex operators $\gamma_j(w)=Y(a_jb,w)$ and the above expression implies
\[ a(z)b(w) =  \sum_{j=0}^{N-1} \frac{Y(a_jb,w)}{(z-w)^{j+1}} + \nol a(z) b(w) \nor \quad\quad \text{which we abbreviate} \quad\quad a(z)b(w) \sim \sum_{j=0}^{N-1} \frac{Y(a_jb,w)}{(z-w)^{j+1}} \ ,  \]
where $ \nol a(z) b(w) \nor$ denotes the normally ordered product of the fields $a(z)$ and $b(w)$, which is in particular regular at $z-w=0$.

Let $\g$ a simple Lie algebra over $\bb C$ equipped with a non-degenerate, invariant bilinear pairing $\kappa:\g\otimes \g \to \bb C$. We denote by $\ghat$ the canonical central extension
\[ 0 \to \bb C_\kappa \to \ghat \to \g\Lz \to 0 \ , \]
defined by the 2-cocycle $c:\g(\mc K)\otimes \g(\mc K) \to \bb C$, where $\g\Lz=\g\otimes_{\bb C} \bb C\Lz$ denotes the loop algebra, defined by $c(f\otimes a,g\otimes b)=-\kappa(a,b)\textup{Res}(f\cdot dg)$ for $f,g\in \bb C\Lz$ and $a,b\in \g$, where $\textup{Res}:\Omega^1_{\mc K} \to \bb C$ denotes the residue pairing.

Let $U(\ghat)$ denote the universal enveloping algebra of $\ghat$, $\mc U(\ghat)$ its completion with respect to the usual topology, and define
\[V_k(\g)= U(\ghat) \otimes_{U(\g\Pz \oplus \bb C_\kappa )} \bb C_k  \ , \]
where $\bb C_k$ for $k\in \bb C$ is the one dimensional module with basis vector $v_k$ on which $\g(\mc O)$ acts trivially and $\bb C_\kappa$ acts by $k$.
For elements $a\in \g$, there are corresponding fields
\[ J^a(z) = \sum_{n\in \bb Z} J^a_n z^{-n-1} \ \in \End(V_k(\g))\LFz \quad\quad \text{satisfying} \quad\quad   J^a(z) J^b(w)\sim   \frac{J^{[a,b]}(z)}{z-w} + \frac{k \kappa(a,b)}{(z-w)^2} \ , \]
which generate a vertex algebra structure on $V_k(\g)$ such that
\[ V_k(\g)=\Span_\C\{\  J^{a_1}_{n_1}  \cdots J^{a_j}_{n_j} v_k \ |  a_i \in \mc B_\g, \ n_1 \leq \cdots \leq n_j \leq -1 , \ j \in \bb N \  \}    \ , \]
where $\mc B_\g$ denotes some choice of basis for the Lie algebra $\g$.

Moreover, the topological associative algebra $\mc U(V_k(\g))$ associated to the vertex algebra $V_k(\g)$ is isomorphic to $\mc U_k(\ghat)$, the completion with respect to the standard topology of the quotient $U_k(\ghat)$ of $U(\ghat)$ by the two sided ideal generated by $1_\kappa - k$ where $1_\kappa\in \bb C_\kappa$ and $k\in \C$.

Consider the case that $\g=\h$ is an abelian Lie algebra of dimension $r$ and define
\[\pi^k=V_k(\h) = U(\hat \h) \otimes_{ U(\h\Pz \oplus \bb C_\kappa)} \bb C_{k} \ . \]

The vertex algebra structure on $\pi^k(\h)$ is generated by fields
\[J^a(z) = \sum_{n\in \bb Z} b_n^a z^{-n-1} \ \in \End(\pi^k)\LFz  \quad\quad \text{satisfying} \quad\quad   J^a(z)J^b(w)\sim \frac{k \kappa(a,b)}{(z-w)^2} \ \ , \]
and we denote the corresponding basis
\begin{equation}\label{fockmodeqn}
	 \pi^k = \Span_{\bb C}\{\  b_{n_1}^{a_1}  \cdots b_{n_j}^{a_j} \one^k \ |  a_i \in \mc B_\h, \ n_1 \leq \cdots \leq n_j \leq -1 , \ j \in \bb N \  \} \cong \bb C[ b_n^a]_{n\leq 1}^{ a\in \mc B_\h}  \one^k   \ .  
\end{equation}

The topological associative algebra $\mc U(\pi^k)=\mc U_k(\hat\h)$ contains a natural commutative subalgebra $\mc U_k(\hat\h)_+\cong \bb C[b_n^a]_{n\geq 0}^{a\in \mc B_\h}$ such that
\[ \pi^k = \mc U_k(\hat\h) \otimes_{\mc U_k(\hat\h)_+} \bb C_0\cong \bb C[ b_n^a]_{n\leq 1}^{ a\in \mc B_\h}  \one^k  \ ,\]
where $\bb C_0$ denotes the one dimensional $\mc U_k(\hat\gl_1)_+$ module on which all $b_n^a$ act by zero.

More generally, for $\lambda \in \h^\vee$, we define
\begin{equation}\label{heismodeqn}
	\pi_\lambda^k  = \mc U_k(\hat\h) \otimes_{\mc U_k(\hat\h)_+} \bb C_\lambda \cong \bb C[ b_n^a]_{n\leq 1}^{ a\in \mc B_\h}  \one^k_\lambda  \ , 
\end{equation}
where $\bb C_\lambda$ denotes the one dimensional $\mc U_1(\h)_+$ module on which $b_0^a$ acts by $k \lambda(a)$, and the remaining generators $b_{n}^a$ act by zero for $n\geq 1$. For each $\lambda \in \bb C$ there is a canonical vertex algebra module structure on $\pi^k_\lambda$ over the vertex algebra $\pi^k$ described above.

\subsection{Vertex operators and extensions}\label{VOAextsec}

Let $\pi=\pi^k(\h)$ denote the Heisenberg vertex algebra on $\h$ at level $k$, and for each $\lambda \in \h^\vee$ let $\pi_\lambda=\pi_\lambda^k$ denote the corresponding module as in Equation \ref{heismodeqn} above. Further, fix an orthogonal basis $\mc B=\mc B_\h$ for $\h$.

For each $\lambda \in \h^\vee$, we define the induced `exponential vertex operator'
\[ V_\lambda(z) =  \ \nol  \prod_{a\in \mc B} \exp(-\lambda(a) \phi^a(z) ) \nor    \   \in \Hom(\pi_\mu , \pi_{\lambda+\mu})\LFz \]
where the above expression is a shorthand notation for the field defined by
\begin{equation}\label{vertexopeqn}
	V_\lambda(z) = S_\lambda  \prod_{a\in \mc B} z^{\lambda(a) b_0^a} \exp\left( - \lambda(a) \sum_{n<0} \frac{b_n^a}{n} z^{-n} \right)  \exp\left(- \lambda(a) \sum_{n>0} \frac{b_n^a}{n} z^{-n} \right) \ , 
\end{equation}
where $S_\lambda:\pi_\mu \to  \pi_{\lambda+\mu}$ is the standard shift operator defined by the relations
 \begin{equation}\label{shiftopeqn}
 	S_\lambda \one_\mu = \one_{\lambda+\mu}  \quad\quad \text{and} \quad\quad [S_\lambda, b_n]=0 \quad\quad \text{for $n\neq 0$,} 
 \end{equation}
and the expontentials are defined in terms of their power series expansions, noting the sum is locally finite on $\pi_\mu$ at each order in $z$. In particular, we can write
\[ V_\lambda(z) = \sum_{n\in \bb Z} V_\lambda[n] z^{-n-\frac{\lambda^2}{2}}  \ \in \Hom(\pi_\mu, \pi_{\lambda+\mu})\LFz  \quad\quad \text{with}\quad\quad V_\lambda[n] \in \Hom(\pi_\mu, \pi_{\lambda+\mu}) \]
for each $n\in \bb N$, where we have assumed for simplicity $\lambda^2\in 2 \bb Z$. The fields $V_\lambda$ extend to define vertex algebra module intertwiners $Y:\pi_\lambda \to \Hom(\pi_\mu,\pi_{\lambda+\mu})\LFz$ for each $\mu$ by the formula
\begin{equation}\label{expintereqn}
	 Y(b_{n_1}^{a_1} \cdots b_{n_j}^{a_j} \one_\lambda,z) =  \nol  \left( \prod_{i=1}^j \frac{1}{(-n_i-1)!}\del_z^{-n_i-1}b^{a_i}(z)  \right) V_\lambda(z)\  \nor \quad \in \Hom(\pi_\mu , \pi_{\lambda+\mu})\LFz \ .  
\end{equation}

We recall that the translation operator acts on these fields according to the simple expression
\[ \del_z V_\lambda(z) = Y(T\cdot \one_\lambda, z) = \sum_{a\in \mc B} \lambda(a) Y( b_{-1}^a \one_\lambda, z) \ , \]
and moreover that the standard relations between the fields $V_\lambda, V_\nu$ and $J$ are given by
\[J^a(z) V_\lambda(w) \sim  \frac{k \lambda(a) V_\lambda(w)}{z-w} \quad\quad \text{and}\quad\quad V_\lambda(z) V_\nu(w) =(z-w)^{k \kappa^{-1}(\lambda,\nu)} \nol V_{\lambda}(z) V_{\nu}(w) \nor  \ , \]
where the field $\nol  V_{\lambda}(z) V_{\nu}(w) \nor \in \Hom(\pi_\mu , \pi_{\lambda+\nu+\mu})\LFz$ is defined by
\begin{equation}\label{expNOeqn}
	\hspace*{-1cm} \nol V_{\lambda}(z) V_{\nu}(w) \nor = S_\lambda S_\nu   \prod_{a\in \mc B} z^{\lambda(a) b_0^a} w^{\nu(a) b_0^a}\exp\left( - \sum_{n<0} \lambda(a)\frac{b_n^a}{n} z^{-n} + \nu(a) \frac{b_n^a}{n} w^{-n} \right)  \exp\left(- \sum_{n>0} \lambda(a) \frac{b_n^a}{n} z^{-n}+ \nu(a) \frac{b_n^a}{n} w^{-n} \right) \ . 
\end{equation}

Let $V_0$ be a vertex algebra, and $\{V_\lambda\}_{\lambda \in \Lambda}$ a collection of $V_0$ modules parameterized by a countable abelian group $\Lambda$, such that the module corresponding to the identity element $0\in \Lambda$ is the rank 1 free module $V_0$.

\begin{defn} A vertex algebra extension of $V_0$ by the collection of vertex modules $\{V_\lambda\}_{\lambda \in \Lambda}$ is a collection of vertex algebra intertwiners
\[ Y_\lambda^\mu: V_\lambda \to \Hom(V_\mu, V_{\mu+\lambda})\LFz  \quad\quad \text{defined for each $\lambda,\mu \in \Lambda$} \]
such that their direct sum defines a vertex algebra structure
\[ Y= \bigoplus_{\lambda,\mu \in \Lambda} Y_\lambda^\mu : V \to \Hom(V,V)\LFz \quad\quad \text{where} \quad\quad V= \bigoplus_{\lambda\in \Lambda} V_\lambda   \  . \]
\end{defn}

Note that for any subgroup $\tilde \Lambda\subset \Lambda$, the restricted direct sum $\tilde V = \oplus_{\lambda \in \tilde \Lambda} V_\lambda$ is a sub vertex algebra of the vertex algebra extension $V$. In particular, $V_0$ is canonically a sub vertex algebra of any vertex algebra extension $V$.

\begin{eg}\label{sqzeroeg} Let $V$ be a vertex algebra and $M$ a vertex module over $V$. The square zero extension of $V$ by $M$ is the vertex algebra extension $V_M=V\oplus M$ indexed by $\Lambda=\bb Z/ 2 \bb Z$ defined by the vertex module intertwiners
\begin{align*}
Y_0^0=Y_V & : V \to \Hom(V,V)\LFz  & Y_0^1 = Y_M & : V \to \Hom(M,M)\LFz \\
Y_1^0= \tilde Y_M & : M \to \Hom(V,M)\LFz & Y^1_1=0 &: M \to \Hom(M,V)\LFz  & , 
\end{align*}
where $Y_V$ denotes the vertex algebra structure map for $V$, $Y_M$ the vertex module structure map for $M$, and $\tilde Y_M$ is defined by
\[ \tilde Y_M(m,z)v = e^{z T} Y_M(v,-z)m \ . \]
\end{eg}

\begin{eg}
Let $L$ be a lattice and let $\chi:L\times L \to \bb Z$ denote the symmetric bilinear pairing. Further, let $\h=L \otimes_{\bb Z} \bb C$ with the induced pairing, and $\pi=\pi(\h)$ the corresponding Heisenberg vertex algebra at level $1$. Note that each point $l\in L$ defines $\lambda_l=\chi(l,\cdot)\in \h^\vee$ and thus a vertex algebra module $\pi_l=\pi_{\lambda_l}$ over the vertex algebra $\pi$, as in Equation \ref{heismodeqn}.

The lattice vertex algebra $\Pi_L$ corresponding to $L$, when it exists, is the vertex algebra extension of $\pi$ given by
\[ \Pi_L = \bigoplus_{l\in L} \pi_l  \]
together with the intertwiners induced by the fields $V_l(z)=V_{\lambda_l}(z) \in \Hom(\pi_m, \pi_{l+m})\LFz$ for each $l,m\in L$, as in Equation \ref{expintereqn}. Thus, the lattice vertex algebra is generated by fields $J^a$ for $a\in \mc B_\h$ some basis of $\h$ as well as fields $V_l$ for each $l\in L$, with the following operator product expansions
\[ J^a(z) J^b(w) \sim \frac{\chi(a,b)}{(z-w)^2} \quad\quad  J^a(z) V_{l}(w) \sim  \frac{ \chi(a,l) V_{l}(w)}{(z-w)}\quad\quad V_l(z) V_m(w) = (z-w)^{\chi(l,m)}\nol V_l(z)V_m(w)\nor \ .  \]
\end{eg}

\subsection{Screening operators}\label{screeningsec}

Let $V$ be a vertex algebra, $v\in V$ an element of the underlying vector space, and denote the corresponding field $A(z)=Y(v,z)\in \End(V)\LFz$. The formal residue $A_0$ of the field $A(z)$ is defined by
\[ \int A(z) dz = \int \sum_{n\in \Z} A_n z^{-n-1} = A_0 \quad \in \End(V) \ . \]
The endomorphism $A_0\in \End(V)$ is a vertex algebra derivation, in the sense that it satisfies
\[ [A_0, Y(v,z)] = Y(A_0 v, z) \ ,\]
for each element $v\in V$; this follows immediately from the formula of Equation \ref{localeqn} above. In particular, this clearly implies:

\begin{corollary} The subspace $W=\ker A_0 \subset V$ is a vertex subalgebra.
\end{corollary}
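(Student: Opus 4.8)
The plan is to show that $A_0$ preserves the kernel, which is immediate once we exploit the derivation property stated just above. Concretely, I would argue as follows. Suppose $w\in W=\ker A_0$, so that $A_0 w=0$. I want to show that $W$ is closed under all the vertex algebra operations, namely that it contains the vacuum, is preserved by the translation operator, and is closed under the state-field correspondence in the sense that $Y(w,z)w'\in W\LFz$ for all $w,w'\in W$.

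First I would check the vacuum and translation. Since $A(z)=Y(v,z)$ is a field and $A_0$ is its residue, the fact that $Y(v,z)\one\in V[[z]]$ with no polar part forces $A_0\one=0$, so $\one\in W$. For the translation operator, the axiom $[T,Y(v,z)]=\del_z Y(v,z)$ shows upon taking residues that $[T,A_0]=\int \del_z A(z)\,dz=0$, since the residue of a total derivative vanishes; hence $T$ commutes with $A_0$ and preserves $W$.

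The main point is closure under the vertex operators. Here I would use the derivation identity $[A_0,Y(v',z)]=Y(A_0v',z)$ directly. For $w,w'\in W$, apply $A_0$ to the coefficients of $Y(w,z)w'$: by the derivation property,
\[ A_0\, Y(w,z)w' = Y(A_0 w,z)w' + Y(w,z)\,A_0 w' = 0 \ , \]
since both $A_0w$ and $A_0w'$ vanish. Reading off coefficients of $z^{-n-1}$, this says $A_0(w_n w')=0$ for every $n$, i.e. $w_n w'\in W$. Thus every mode $w_n$ preserves $W$, so $Y(w,z)$ restricts to a field on $W$, and $W$ is a vertex subalgebra.

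I do not anticipate a serious obstacle, as the derivation property does all the work; the only care needed is the bookkeeping to confirm that the three structural axioms (vacuum, translation, closure under modes) all follow from $A_0$ being a derivation that annihilates $\one$. The one mild subtlety is verifying $A_0\one=0$ and $[T,A_0]=0$, but both are routine consequences of the vertex algebra axioms recalled above, namely the regularity of $Y(v,z)\one$ at $z=0$ and the vanishing of the residue of a total $z$-derivative.
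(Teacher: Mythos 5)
Your proof is correct and follows exactly the route the paper intends: the derivation identity $[A_0,Y(v,z)]=Y(A_0v,z)$ (which the paper derives from the commutator formula) does all the work for closure under modes, and your verifications that $A_0\one=0$ and $[T,A_0]=0$ are the routine checks the paper leaves implicit in "this clearly implies." Nothing to add.
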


Now, let $M$ be a module over a vertex algebra $V$, $m\in M$ an element of the underlying vector space, and denote the corresponding field
\[Q(z)=\tilde Y_M(m,z) \in \Hom(V,M)\LFz  \ , \]
induced by the vertex algebra structure on the square zero extension of $V$ by $M$, as in Example \ref{sqzeroeg}. Then the formal residue of the field $Q(z)$ defines a linear map
\[ Q = \int Q(z) dz \quad \in \Hom(V,M) \]
such that $W=\ker(Q)\subset V$ is a vertex subalgebra of $V$. In this context, the field $j$ is called the screening current, its formal residue $Q$ is called the screening charge, and we say that the vertex subalgebra $W$ is screened by the current $Q(z)$.

\begin{eg}\label{FFsl2eg} Let $\g=\spl_2$ so that the principal affine $W$-algebra $W_\kappa(\spl_2)\cong \Vir_{c(\kappa)}$ is isomorphic to the Virasoro vertex algebra of central charge
\[ c(\kappa) = 1- 6 \frac{(\kappa+1)^2}{(\kappa+2)} \ . \]

The Heisenberg algebra $\pi=\pi^k$ at level $k$ admits Virasoro currents
\[ T^d(z)= \frac{1}{2 k} \nol b(z) b(z) \nor + \frac{d}{\sqrt{k}} \del_z b(z) \]
of central charge $c(d)=1-12d^2$ for each $d\in \bb K$, defining a Virasoro subalgebra $\Vir_{c(d)} \to \pi$. In fact, this subalgebra is the kernel of a screening operator
\[ Q_\beta=\int V_\beta(z)dz : \pi \to \pi_\beta  \ ,\]
for some appropriately chosen $\beta$, conditions on which we determine below. Indeed, the operator product expansions against each term are given by
\[  \frac{1}{2 k} \nol b(z) b(z) \nor V_\beta(w) \sim \frac{k \beta^2}{2} \frac{V_\beta(w)}{(z-w)^2} + \frac{\del_w V_\beta(w)}{z-w} \quad\quad \text{and}\quad\quad  \frac{d}{\sqrt{k}} \del_z b(z) V_\beta(w) \sim - d \sqrt{k} \beta \frac{V_\beta(w)}{(z-w)^2} \ , \]
so that after integrating by parts we have
\[ T^d(z) V_\beta(w) \sim \del_w\left( \frac{V_\beta(w)}{z-w}\right) +  \left( \frac{ k \beta^2}{2} - d\sqrt{k}\beta  - 1\right)\frac{V_\beta(w)}{(z-w)^2}  \ .\]
Thus, we find that $\Vir_{c(d)}\subset \ker Q_\beta$ if and only if $ \frac{ k \beta^2}{2} - d\sqrt{k}\beta - 1=0$, which uniquely determines
\[ d = \frac{\sqrt{k}\beta}{2}-\frac{1}{\sqrt{k}\beta}  \quad\quad \textup{and thus} \quad\quad T^\beta(z) = \frac{1}{2 k} \nol b(z) b(z) \nor + \left( \frac{\beta}{2} - \frac{1}{k\beta}   \right) \del_z b(z)  \  \]
is the induced form of $T^d(z)$. Note that this expression is evidently invariant under $\beta\mapsto -\frac{2}{k\beta}$.

In particular, the Feigin-Frenkel screening operator for $\spl_2$ at level $\kappa$ identifies with the residue $Q_\beta$ of the exponential vertex operator $V_\beta(z)$ for $\beta=-\sqrt{ \frac{2}{k(\kappa+2)}}$, which indeed screens a Virasoro subalgebra of central charge $c(\kappa)$ in $\pi^k$.

\end{eg}

Generalizing the preceding example, we have the following seminal result of Feigin-Frenkel \cite{FF1}, which provides a free field realization of the principal affine $W$-algebra of $\gl_r$. The screening operator constructions for the family of vertex algebras $\V(Y,S)$ from divisors $S$ on toric Calabi-Yau threefolds $Y$ defined in Section \ref{screensec} below should be understood as a broad generalization of this result, which corresponds to the case $Y=\C^3$ and $S=r[\C^2]$, as we will soon explain:

\begin{theo}\cite{FF1}\label{FFtheo} There is a canonical embedding of vertex algebras
\[ W^\kappa_{f_\prin}(\g) \to \pi_{\mf h} \]
with image characterized as the intersection of kernels
\[W^\kappa_{f_\prin}(\g) = \bigcap_{i=1}^r \ker(Q_{\alpha_i}) \quad\quad \textup{for}\quad\quad Q_{\alpha_i}= \int V_{\alpha_i}(z) \ dz \ : \pi_{\mf h} \to \pi_{\mf h, \alpha_i} \]
where $\pi_{\mf h, \alpha_i}$ and $V_{\alpha_i}(z)$ are the Heisenberg algebra module and exponential vertex operator determined by the positive simple roots $\alpha_i$ for $i=1,...,r=\rk(\g)$, as in Equations \ref{heismodeqn} and \ref{vertexopeqn}, respectively.
\end{theo}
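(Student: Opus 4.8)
The plan is to realize $W^\kappa_{f_\prin}(\g)$ through quantum Drinfeld--Sokolov reduction and to obtain both the embedding and the kernel description by transporting the Wakimoto free field realization of the affine vertex algebra $V^\kappa(\g)$ through this reduction. Recall that $W^\kappa_{f_\prin}(\g)$ is by definition the degree-zero BRST cohomology $H^0_{\mathrm{DS}}(V^\kappa(\g))$ with respect to the principal nilpotent; at generic $\kappa$ the higher cohomology vanishes, so the Euler--Poincar\'e principle computes its graded character. First I would recall the Wakimoto embedding $V^\kappa(\g)\hookrightarrow M_{\mf n}\otimes \pi_{\mf h}$, where $M_{\mf n}$ is the $\beta\gamma$-system attached to the nilpotent radical $\mf n$, which expresses each current $J^a(z)$ in free fields, together with the fact that the affine screening operators associated to the simple roots control the image.

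Next I would apply the reduction functor to this free field picture. The key structural input is that Drinfeld--Sokolov reduction eliminates the $\beta\gamma$-system: the BRST differential pairs the fermionic ghosts against the generators of $M_{\mf n}$, so that the reduction of $M_{\mf n}\otimes\pi_{\mf h}$ is quasi-isomorphic to $\pi_{\mf h}$ with a shifted conformal structure, the Miura stress tensor. This produces the canonical Miura map $W^\kappa_{f_\prin}(\g)\to\pi_{\mf h}$, whose injectivity follows from the nondegeneracy of the reduction on the free field side. Under this reduction the affine screening currents descend precisely to the exponential vertex operators $V_{\alpha_i}(z)$ of Equation \ref{vertexopeqn}, so that the screening charges $Q_{\alpha_i}=\int V_{\alpha_i}(z)\,dz$ commute with the Miura map and hence annihilate its image. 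Verifying that each $Q_{\alpha_i}$ is a well-defined intertwiner $\pi_{\mf h}\to\pi_{\mf h,\alpha_i}$, and that the image lies in $\ker(Q_{\alpha_i})$, is the direct generalization of the operator product computation carried out for $\spl_2$ in Example \ref{FFsl2eg}: one checks that the residue of the single pole in the product of the stress tensor against $V_{\alpha_i}(w)$ is a total $w$-derivative, using $\langle\alpha_i,\alpha_i\rangle$ and the Weyl vector to fix the background charge.

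The reverse containment, that $\bigcap_i\ker(Q_{\alpha_i})$ lies in the image, is the main obstacle, and I would handle it at generic level by a character comparison organized through a Felder-type resolution. The screening charges satisfy the quantum Serre relations of $\mf n_-$, so the $Q_{\alpha_i}$ assemble into a complex of Fock modules $\pi_{\mf h,\lambda}$ indexed by the Weyl group, of which the intersection of kernels is the degree-zero cohomology. At generic $\kappa$ the relevant Fock modules are irreducible and the complex is exact away from degree zero, so its Euler characteristic computes the character of $\bigcap_i\ker(Q_{\alpha_i})$; comparing this with the Euler--Poincar\'e character of $H^\bullet_{\mathrm{DS}}(V^\kappa(\g))$ shows the two graded dimensions agree. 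Together with the already-established injection into the intersection, this forces the Miura image to be exactly $\bigcap_i\ker(Q_{\alpha_i})$. The delicate points are precisely the genericity hypotheses required both for the vanishing of higher BRST cohomology and for the exactness of the Felder complex, and the verification that the screenings satisfy the quantum Serre relations so that they form a complex at all.
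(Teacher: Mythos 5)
First, a point of calibration: the paper does not prove this statement. Theorem \ref{FFtheo} is quoted from \cite{FF1}, and the only computation the text offers is the rank-one operator product check of Example \ref{FFsl2eg}. Your proposal should therefore be measured against the Feigin--Frenkel argument itself. Your treatment of the forward inclusion is essentially that argument: the Wakimoto embedding $V^\kappa(\g)\to M_{\mf n}\otimes \pi_{\mf h}$, the collapse of the $\beta\gamma$-system under the Drinfeld--Sokolov differential producing the Miura map $W^\kappa_{f_\prin}(\g)\to\pi_{\mf h}$, and the descent of the affine screenings to the exponential operators $V_{\alpha_i}(z)$, with the single-pole/total-derivative verification being exactly the computation of Example \ref{FFsl2eg} for each $\spl_2$-triple. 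This part is sound as a sketch, though injectivity of the Miura map at generic $\kappa$ deserves more than the word ``nondegeneracy'': the usual justification is to pass to the associated graded, where it becomes the classical Miura embedding, or to invoke vanishing of higher Wakimoto cohomology.

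The genuine gap is in the reverse inclusion. You assert that ``at generic $\kappa$ the relevant Fock modules are irreducible and the complex is exact away from degree zero.'' The Fock modules occurring in a Felder-type complex are resonant by construction --- the screenings $Q_{\alpha_i}\colon\pi_{\mf h,\lambda}\to\pi_{\mf h,\lambda+\alpha_i}$ are nonzero precisely because these modules are linked --- so irreducibility is not available and cannot be the source of exactness. Moreover, establishing that the screening charges satisfy quantum Serre relations and assemble into an exact BGG-type resolution by Fock modules is itself a substantial theorem of Feigin--Frenkel, of comparable depth to the statement being proved; invoking it without proof makes the argument circular in spirit. The standard, more elementary route for this step is a semicontinuity argument: in each conformal-weight graded piece the dimension of $\bigcap_{i}\ker(Q_{\alpha_i})$ can only jump up under specialization of $\kappa$, so for generic $\kappa$ it is bounded above by its value in a suitably rescaled classical limit, where the joint kernel is computed explicitly and has character $\prod_{i=1}^{r}\prod_{n\geq d_i+1}(1-q^n)^{-1}$ with $d_i$ the exponents of $\g$; combined with the forward inclusion and the Euler--Poincar\'e computation of the character of the reduction, this forces equality. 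If you wish to keep the cohomological formulation, you must either prove exactness of the Felder complex independently (and only its exactness in degree zero and one is actually needed) or replace it by this upper-bound argument.
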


The geometric interpretation of this result provided by the proof of the AGT conjecture in \cite{SV}, \cite{MO}, and \cite{BFN5} is the primary motivation for the construction of $\V(Y,S)$ explained in the succeeding section below. We provide an analogous interpretation for the generalized Wakimoto realization of arbitrary affine $W$-algebras in Section \ref{Walgsec} below.

\section{The screening operator construction of $\V(Y,S)$}\label{screensec}

Let $Y$ be a resolution of an affine, toric Calabi-Yau threefold $X$ satisfying the hypotheses of the companion paper \cite{BR1}. Denote by $(\bb C^\times)^3$ the torus acting on $Y$, $ T=(\bb C^\times)^2 \subset (\bb C^\times)^3$ the maximal subtorus that preserves the Calabi-Yau structure, and define $R=H_T^\bullet(\pt)$, $K=\tilde H^\bullet_T(\pt) $ its field of fractions, so that we have
\begin{equation}\label{baseringeqn}
	 R = \bb K[\e_1,\e_2,\e_3]/I_T  \quad\quad \text{and}\quad\quad  K = \bb K(\e_1,\e_2,\e_3)/ I_T  \ , 
\end{equation}
where $I_T$ denotes the ideal generated by weights of $(\bb C^\times)^3$ which vanish on $T$. We choose conventions such that $I_T=(\e_1+\e_2+\e_3)$ and thus $\e_3=-\e_1-\e_2$, and further we let
\begin{itemize}
	\item $\mf D$ be the set of irreducible toric surfaces $S_d$ in $Y$,
	\item $\mf C$ the set of compact, irreducible toric curves $C_i$ in $Y$,
	\item $\mf S$ the set of non-compact, irreducible toric curves $C_s$ in $Y$, and
	\item $\mf F$ the set of torus fixed points $y$ in $Y$.
\end{itemize}

\noindent 
For each irreducible toric surface $S_d$ corresponding to $d\in \mf D$, denote by $\mf C_d \subset \mf C$ the set of compact, irreducible toric curves $C_i$ in $S_d$, and similarly $\mf S_d\subset \mf S$ and $\mf F_d\subset \mf F$ the sets of non-compact, irreducible toric curves and torus fixed points in $S_d$. Similarly, for each toric curve $C_i$ or $C_s$ corresponding to $i\in\mf C$ or $s \in \mf S$, let $\mf F_i$ or $\mf F_s\subset \mf F$ denote the set of torus fixed points in $C_i$ or $C_s$. In particular, note that for $i\in \mf C$ we have that $\mf F_i=\{0_i,\infty_i\}$ is given by two points corresponding to $0$ and $\infty$ under the identification $C_i\cong \bb P^1$.

Let $S$ be a toric effective Cartier divisor, with decomposition into irreducible components given by
\begin{equation}\label{divisoreqn}
	 S = \sum_{d\in \mf D} r_d\  [ S_d ] \ , 
\end{equation}
for some tuple of non-negative integers $\rr=(r_d) \in \bb N^{\mf D}$.

\begin{figure}[b]
		\caption{The resolution $ Y_{2,0}\to X_{2,0}=\{xy-z^2\} \times \bb C $ and its toric subvarieties}
	\begin{overpic}[width=.5\textwidth]{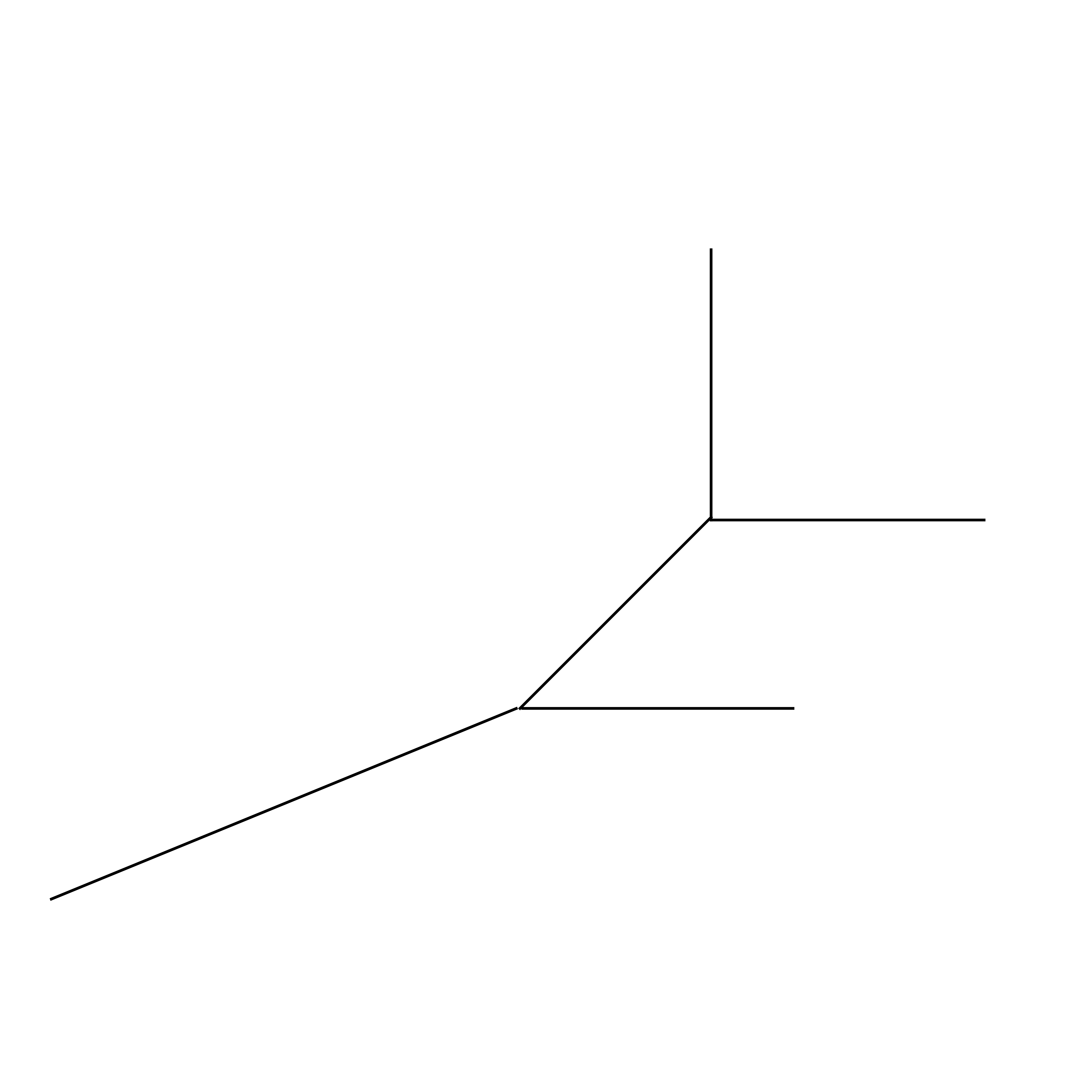}
		\put(75,60) {$S_{d_1} $}
		\put(72,42) {$S_{d_2} $}
		\put(45,25) {$S_{d_3} $}
		\put(36,53) {$S_{d_4} $}
		\put(51,46) {$ C_{i_1}$}
		\put(65,49) {$y_1$}
		\put(52,37) {$y_2$}
		\put(91,51) {$C_{s_2}$}
		\put(74,33) {$C_{s_3}$}
		\put(0,15) {$C_{s_4}$}
		\put(64,78) {$C_{s_1}$}

		\put(-40,70) {$ \mf D = \{ d_1,d_2,d_3,d_4\} $}
		\put(-40,60) {$ \mf C = \{ i_1\} $}
		\put(-40,50) {$ \mf S = \{ s_1,s_2,s_3,s_4\} $}
		\put(-40,40) {$ \mf F = \{ y_1,y_2\} $}
		
	\end{overpic}
	\label{fig:toriclabelsfig}
\end{figure}

\subsection{Lattice vertex algebras associated to irreducible divisors}\label{ablatticesec}

For each irreducible toric surface $S_d$ in $Y$, we define the Heisenberg algebra
\[\pi^d = \bigotimes_{y\in \mf F_d} \pi^y \quad\quad \text{where each} \quad\quad \pi^y = \pi^{k_y}=\mc U_{k_y}(\hat\gl_1) \otimes_{\mc U_{k_y}(\hat\gl_1)_+} K \]
denotes a standard one dimensional Heisenberg algebra over the base field $K$, at level
\begin{equation}\label{Heisleveleqn}
	 k_y=-\frac{1}{\e_T(T_yS_d)} \in K 
\end{equation}
given by the inverse of the $T$-equivariant Euler class $\e_T$ of the tangent space $T_yS_d$ to $S_d$ at $y$. We also define the abelian Lie algebra $\h_d$ over $K$ by
\[ \h_d = \tilde H_T^\bullet(S_d^T)= \bigoplus_{y \in \mf F_d} K_y \quad\quad \text{where}\quad\quad  K_y= \tilde H^\bullet_T(y)  \]
denotes the $T$ equivariant cohomology of the point $y$, equipped with the pairing
\[ (\cdot,\cdot): \h_d\otimes_K \h_d \to K \quad\quad \text{defined by} \quad\quad (1_y,1_{y'}) = \begin{cases} k_y & \textup{if $y=y'$} \\ 0 & \textup{otherwise}\end{cases} \ , \]
and we have the equivalent definition
\[ \pi^d =  V_1(\h_d) = \mc U_1(\hat\h_d) \otimes_{\mc U_1(\hat\h_d)_+}  K \cong K [ b_n^y]_{n\leq -1}^{y\in \mf F_d} \one \ ,\]
where we recall that ${\mc U_1(\hat\h_d)_+}\subset {\mc U_1(\hat\h_d)}$ denotes the commutative subalgebra
\[  {\mc U_1(\hat\h_d)_+} = K[b_n^y]_{n\geq 0 }^{y\in\mf F_d} \ . \]

For each compact, irreducible toric curve $C_i$ in $S_d$ corresponding to $i\in \mf C_d$, let $N_{C_i}S_d$ denote the normal bundle to $C_i$ in $S_d$, and define
\[ \lambda_i : \h_d \to K \quad\quad \text{by}\quad\quad \lambda_i(1_y) = \begin{cases} \e_T(N_{C_i,y} S_d) & \textup{if $y\in \mf F_i$} \\
	0 & \textup{otherwise} \end{cases}	 \ ,   \]
where we recall that $\mf F_i = \{ 0_i,\infty_i\} \subset \mf F$ denotes the set of fixed points $y\in Y^T$ contained in the curve class $C_i$ corresponding to $i \in \mf C$. Thus, for each $l\in \bb Z^{\mf C_d}$ there is a vertex algebra module over $\pi^d$ defined by
\[ \pi^d_{l} = \mc U_1(\hat \h_d) \otimes_{\mc U_1(\hat\h_d)_+}  K_{l} \cong K [ b_n^y]_{n\leq -1}^{y\in \mf F_d} \one_{l} \]
where $K_{l}$ denotes the one dimensional $\mc U_1(\hat\h_d)_+$ module on which $b_0^y$ acts by $\lambda_l(1_y)=\sum_i l_i\lambda_i(1_y)$ and $b_n^y$ acts by zero for all $n\geq 1$.

We define for each $i\in \mf C_d$ and $m\in \bb Z^{\mf C_d}$, the corresponding fields
\begin{equation}\label{curveextopeqn}
	 V^d_{i}(z) = \nol \exp \left( \e_T(N_{C_i,0_i} S_d) \phi^{0_i}(z) + \e_T(N_{C_i,\infty_i}S_d) \phi^{\infty_i}(z) \right) \nor  \quad  \in \Hom( \pi^d_{m}, \pi^d_{m+[i]})\LFz \ , 
\end{equation}
and more generally, for $l=\sum_i l_i [i] \in \bb Z^{\mf C_d}$ where $[i]\in \bb Z^{\mf C_d}$ denotes the generator corresponding to $i\in \mf C_d$, the fields
\[V^d_{l}(z)  = \nol  \prod_{i\in \mf C_d} V^d_i(z)^{l_i} \nor = \nol \exp( \sum_{y\in \mf F_d} \lambda_l(1_y) \phi^y(z) ) \nor   \in \Hom( \pi^d_{m}, \pi^d_{m+l})\LFz \ , \]
where $\phi^y$ denotes the formal bosonic fields corresponding to the Heisenberg fields $J^y$ which generates the subalgebras $\pi^y$ of $\pi^d$ for each torus fixed point $y\in \mf F_d$, and we recall that $\mf F_i=\{0_i,\infty_i\} \subset \mf F_d$ denotes the subset of the fixed points which are contained in the compact curve class $C_i$ for each $i\in \mf C_d$; again, we remind the reader that the expression in each of the preceding equations is a shorthand notation for the fields defined as in Equations \ref{vertexopeqn} and \ref{expNOeqn}.

In summary, we obtain fields $J^y(z)$ for each $y\in \mf F_d$ and $V_i(z)$ for each $i\in \mf C_d$ satisfying
\begin{equation}\label{geoheisopeeqn}
 J^y(z) J^{y'}(w) \sim -\frac{\delta_{y=y'}}{\e_T(T_yS_d)} \frac{1}{(z-w)^2} \quad\quad  J^y(z) V_{i}^d(w) \sim  \left( \frac{\e_T(N_{C_i,y} S_d) }{\e_T(T_yS_d)}\delta_{y=0_i} +\frac{\e_T(N_{C_i,y} S_d) }{\e_T(T_yS_d)}\delta_{y=\infty_i} \right) \frac{ V_{i}(w)}{z-w}
\end{equation}
as well as for each pair $i,j\in \mf C_d$ the relation
\[ V_{i}^d(z) V_{j}^d(w) = (z-w)^{\chi(i,j)} \nol  V_{i}^d(z) V_{j}^d(w) \nor  \quad\quad \text{where}\quad\quad \chi(i,j) = \sum_{y\in \mf F_i, y'\in \mf F_j} \delta_{y=y'} \frac{\e_T(N_{C_i,y}S_d)\e_T(N_{C_j,y}S_d)}{\e_T(T_yS_d)} \ . \]

In fact, we have the following elementary identification, which motivates the above conventions:
\begin{prop}\label{pairingprop} Let $i,j\in \mf C_d$ corresponding to compact, irreducible toric curves $C_i,C_j$ in $S_d$. Then
\[\chi(i,j) = - \langle [C_i] , [C_j] \rangle_{S_d} \ \in \bb Z\]
where $[C_i]\in H_2(S_d;\Z)$ denotes the fundamental class of $C_i$ and $\langle\cdot,\cdot\rangle_{S_d}:H_2(S_d;\Z)^{\times 2}\to \Z$ denotes the intersection pairing.
\end{prop}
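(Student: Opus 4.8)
The statement is a purely $T$-equivariant localization computation, comparing the combinatorial definition of $\chi(i,j)$ with the geometric intersection pairing on $S_d$. The plan is to compute $\langle [C_i],[C_j]\rangle_{S_d}$ by the Atiyah--Bott localization formula in $H^\bullet_T(S_d)\otimes_R K$ and match it termwise against $\chi(i,j)$. I would rely on two standard inputs. First, the excess/Thom-class formula: for a compact $T$-invariant curve $C_i\subset S_d$, the equivariant class Poincar\'e dual to $C_i$ restricts at a fixed point $y\in \mf F_d$ to
\[ [C_i]|_y = \begin{cases} \e_T(N_{C_i,y}S_d) & y\in \mf F_i \\ 0 & y\notin \mf F_i \end{cases} \ . \]
Second, since the $C_i$ are compact, the product $[C_i]\cup[C_j]$ is a compactly supported class whose support meets $S_d^T$ in the isolated fixed points, so Bott localization applies and
\[ \langle [C_i],[C_j]\rangle_{S_d} = \int_{S_d} [C_i]\cup[C_j] = \sum_{y\in\mf F_d} \frac{[C_i]|_y\cdot [C_j]|_y}{\e_T(T_yS_d)} \ . \]

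Substituting the restriction formula, only the common fixed points contribute, so that
\[ \langle [C_i],[C_j]\rangle_{S_d} = \sum_{y\in \mf F_i\cap \mf F_j} \frac{\e_T(N_{C_i,y}S_d)\,\e_T(N_{C_j,y}S_d)}{\e_T(T_yS_d)} \ . \]
To conclude I would compare this with $\chi(i,j)$ computed directly from the Heisenberg levels $k_y = -\tfrac{1}{\e_T(T_yS_d)}$ of Equation \ref{Heisleveleqn}: since the pairing on $\h_d$ satisfies $(1_y,1_{y'})=k_y\delta_{y=y'}$ and $V^d_i$ is built from the coefficients $\lambda_i(1_y)=\e_T(N_{C_i,y}S_d)$, the OPE exponent is $\chi(i,j)=\sum_{y}\lambda_i(1_y)\lambda_j(1_y)k_y = -\sum_{y\in\mf F_i\cap\mf F_j}\tfrac{\e_T(N_{C_i,y}S_d)\,\e_T(N_{C_j,y}S_d)}{\e_T(T_yS_d)}$, giving $\chi(i,j)=-\langle [C_i],[C_j]\rangle_{S_d}$, with the sign reflecting that the lattice $H_2(S_d;\Z)$ carries \emph{negative} the intersection pairing.

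I would include two sanity checks that also make the geometry transparent. For $i\neq j$, two distinct invariant curves in a smooth toric surface meet transversally in at most one fixed point; at a common $y$ one has $T_yS_d = T_yC_i\oplus T_yC_j$, whence $N_{C_i,y}S_d\cong T_yC_j$ and $N_{C_j,y}S_d\cong T_yC_i$ as $T$-representations and the summand collapses to $1$, recovering $\langle [C_i],[C_j]\rangle_{S_d}=|\mf F_i\cap\mf F_j|$. For $i=j$, using the splitting $T_yS_d = T_yC_i\oplus N_{C_i,y}S_d$ the formula reduces to $\sum_{y\in\{0_i,\infty_i\}} \e_T(N_{C_i,y}S_d)/\e_T(T_yC_i) = \deg_{C_i} N_{C_i}S_d = C_i^2$, the usual localization of the degree of a line bundle on $\bb P^1$.

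The one genuine subtlety, and the step I expect to require the most care, is the non-compactness of $S_d$: I must justify that $\int_{S_d}[C_i]\cup[C_j]$ is defined and equals the honest integer intersection number, and that Bott localization is valid in this setting. This is handled by observing that $[C_i]\cup[C_j]$ is compactly supported with support contained in the isolated fixed locus of $C_i\cup C_j$ — equivalently one may pass to a $T$-equivariant tubular neighborhood of $C_i\cup C_j$ or a $T$-compactification of $S_d$ without affecting the pairing. Finally, although the individual summands lie in $K$, their sum is regular and independent of the equivariant parameters, since the left-hand side is manifestly an integer; this cancellation of spurious poles is the expected internal consistency check.
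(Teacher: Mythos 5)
Your argument is correct. The paper in fact offers no proof of Proposition \ref{pairingprop} — it is introduced as an ``elementary identification'' and left unjustified — so there is nothing to compare against; your localization computation is the standard argument one would supply, and both sanity checks (transverse intersection at a common fixed point for $i\neq j$ via $N_{C_i,y}S_d\cong T_yC_j$, and the self-intersection $C_i^2=\deg_{C_i}N_{C_i}S_d$ via localization on $\bb P^1$) are sound, as is your treatment of the non-compactness of $S_d$ through the compact support of $[C_i]\cup[C_j]$.

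One point worth flagging explicitly: your derivation of the OPE exponent, $\chi(i,j)=\sum_y\lambda_i(1_y)\lambda_j(1_y)k_y$ with $k_y=-\e_T(T_yS_d)^{-1}$, carries an overall minus sign relative to the formula for $\chi(i,j)$ displayed in the paper just before the Proposition, which reads $\sum_{y}\delta_{y=y'}\e_T(N_{C_i,y}S_d)\e_T(N_{C_j,y}S_d)/\e_T(T_yS_d)$ with no minus. Taken literally, that displayed formula equals $+\langle[C_i],[C_j]\rangle_{S_d}$ by your localization identity, which would contradict the Proposition. Your version — the one that actually follows from the two-point functions $J^y(z)J^{y}(w)\sim k_y(z-w)^{-2}$ — is the one consistent with the Proposition and with the stated convention that $H_2(S_d;\Z)$ carries negative the intersection pairing, so the discrepancy is a sign typo in the paper's displayed definition of $\chi(i,j)$ rather than an error in your argument; it would be worth stating this explicitly rather than leaving the reader to reconcile the two formulas.
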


For each irreducible divisor $S_d$ for $d\in \mf D$ we define the total Heisenberg subalgebra $\pi_d$ by
\begin{equation}\label{Heiscompeqn}
	 \pi_d = \pi^d_0 = \otimes_{y \in \mf F_d}  \pi^y  \ . 
\end{equation}
Moreover, we introduce the $\pi_d$ module
\[ \Pi_d = \bigoplus_{l\in \bb Z^{\mf C_d}} \pi^d_l \cong  \bigoplus_{l\in \bb Z^{\mf C_d}} K [ b_n^y]_{n\leq -1}^{y\in \mf F_d} \one_{l}  \]

\noindent and extend the above fields to define vertex algebra intertwiners
\[ Y(b_{n_1}^{y_1} \cdots b_{n_j}^{y_j} \one_{l},z) =  \nol  \left( \prod_{k=1}^j \frac{1}{(-n_k-1)!}\del_z^{-n_k-1}b^{y_k}(z)  \right) V_l^d(z)\  \nor \quad \in \Hom(\pi^d_m , \pi^d_{l+m})\LFz \ ,\]
which define a lattice type vertex algebra on $\Pi_d$, noting the required integrality follows from Proposition \ref{pairingprop}, and moreover by \emph{loc. cit.} we have:

\begin{corollary} There is a natural isomorphism
\[	\Pi_d \cong \pi_K \otimes_K \left( V_{H_2(S_d;\bb Z),\chi}\otimes_{\bb C} K\right) \ . \]
\end{corollary}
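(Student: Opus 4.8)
The plan is to realise the stated identity as a repackaging of the lattice-type vertex algebra structure on $\Pi_d$, separating the ``momentum'' directions that carry the curve classes from a single complementary direction corresponding to the fundamental class. Concretely, two pieces of data must be matched: the underlying lattice together with its pairing, and the Heisenberg algebra $\pi^d=\pi(\h_d)$ together with an orthogonal decomposition adapted to the lattice.

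First I would treat the lattice. Proposition \ref{pairingprop} already identifies $\chi$ on $\bb Z^{\mf C_d}$ with minus the intersection form, so it suffices to observe that for the non-compact toric surfaces $S_d$ occurring here the compact curve classes $[C_i]$ form a $\bb Z$-basis of $H_2(S_d;\bb Z)$; this is a standard consequence of the GKM description of $S_d$. Under it $\bb Z^{\mf C_d}\xrightarrow{\ \cong\ }H_2(S_d;\bb Z)$, $[i]\mapsto[C_i]$, intertwines $\chi$ with the pairing used to define $V_{H_2(S_d;\bb Z),\chi}$, identifying the lattice-vertex-algebra factor as this algebra over $\bb C$, then base-changed along $\bb C\hookrightarrow K$.

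Next I would decompose $\h_d=\bigoplus_{y\in\mf F_d}K\,1_y$. Each charge $\lambda_i$ corresponds under the (nondegenerate) pairing to the momentum vector $m_i=\sum_{y\in\mf F_i}\lambda_i(1_y)\,1_y$, whose Gram matrix is $-\chi$ by Proposition \ref{pairingprop}. The complementary direction is the class $u=\sum_{y\in\mf F_d}1_y$, the localisation of $1\in\tilde H^0_T(S_d)$. The key computation is that $u$ is orthogonal to every $m_i$:
\[ \langle u,m_i\rangle=-\sum_{y\in\mf F_i}\frac{\e_T(N_{C_i,y}S_d)}{\e_T(T_yS_d)}=-\Big(\frac{1}{\e_T(T_{0_i}C_i)}+\frac{1}{\e_T(T_{\infty_i}C_i)}\Big)=0, \]
using $T_yS_d=T_yC_i\oplus N_{C_i,y}S_d$ and that the two tangent weights of $C_i\cong\bb P^1$ at $0_i,\infty_i$ are opposite. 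When the intersection form on $H_2(S_d)$ is nondegenerate, $M:=\Span_K\{m_i\}$ is a nondegenerate subspace, so $\h_d=M\perp M^{\perp}$; since $u\perp M$, $u\neq0$ and $\dim_K M^{\perp}=|\mf F_d|-\rk H_2(S_d;\bb Z)=1$, we get $M^{\perp}=K\,u$. This orthogonal splitting of $\h_d$ into nondegenerate summands gives $\pi^d\cong\pi_K\otimes\pi(M)$ with $\pi_K=\pi(K\,u)$, and summing over the lattice promotes $\pi(M)$ to $V_{H_2(S_d;\bb Z),\chi}\otimes_{\bb C}K$, yielding the isomorphism.

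The step I expect to be the main obstacle is the degenerate case, where the intersection form on $H_2(S_d;\bb Z)$ is degenerate — for instance $S_d=|\mc O_{\bb P^1}|$, where the zero section has self-intersection $0$ and $\chi$ vanishes. There $m_i$ becomes proportional to $u$ and $\langle u,u\rangle=\sum_{y}k_y$ also vanishes, so the splitting above collapses: $\h_d$ is a nondegenerate hyperbolic plane while $\Span_K\{m_i\}$ is a null line, and a nondegenerate form admits no orthogonal decomposition with a factor carrying the degenerate form $\chi$. To handle this I would invoke the half-lattice (``$\Pi$-type'', i.e. $\beta\gamma$-) interpretation of $V_{H_2(S_d;\bb Z),\chi}$ in the presence of null classes: the null lattice direction together with its pairing partner in $\h_d$ assemble into the hyperbolic pair of such an algebra, with the complementary generator $\pi_K=\pi(K\,u)$ supplying the missing dual direction. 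I would then prove the isomorphism by exhibiting an explicit change of generating fields from $\{J^y,V_i^d\}$ to the standard generators of $\pi_K\otimes V_{H_2(S_d;\bb Z),\chi}$ and checking directly that it preserves all operator product expansions. This bookkeeping of OPEs across null directions — the same bosonisation underlying the $\spl_2$ inverse-reduction statement of Theorem \ref{affinesl2theo} — is the technical heart of the argument.
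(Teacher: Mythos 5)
The paper offers essentially no proof of this corollary: it is recorded as an immediate consequence of Proposition \ref{pairingprop} (``by \emph{loc.\ cit.}\ we have''), so your write-up is supplying content the paper omits rather than paralleling an existing argument. In the case where the intersection form on $H_2(S_d;\bb Z)$ is nondegenerate, your proof is correct and is the natural one: the identification $\bb Z^{\mf C_d}\cong H_2(S_d;\bb Z)$ with $\chi$ matching minus the intersection form is exactly Proposition \ref{pairingprop}, and your key computation $\langle u,m_i\rangle=0$ is the right mechanism for splitting off $\pi_K$ — it is the localization statement that the equivariant pushforward of $1$ along the compact curve $C_i\cong\bb P^1$ vanishes, using $\e_T(T_yS_d)=\e_T(T_yC_i)\e_T(N_{C_i,y}S_d)$ and the opposite tangent weights at $0_i,\infty_i$. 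The dimension count $\dim_K M^{\perp}=|\mf F_d|-|\mf C_d|=1$ then pins down $M^\perp=Ku$, and the orthogonal splitting of nondegenerate Heisenberg algebras gives the tensor factorization.

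Your flagged obstacle in the degenerate case is a genuine issue, not a technicality you are inventing: $S_d=|\mc O_{\bb P^1}|$ does occur among the local models (Equation \ref{localmodeleqn}), and there $m_i$ is proportional to $u$, $\langle u,u\rangle=k_0+k_\infty=0$, and no orthogonal complement to the lattice directions exists inside the nondegenerate plane $\h_d$. A literal tensor product $\pi_K\otimes_K(V_{H_2(S_d;\bb Z),\chi}\otimes_{\bb C}K)$ with commuting factors cannot reproduce $\Pi_d$ in that case, since the vertex operators $V_l$ must have nontrivial OPE with the Heisenberg direction dual to the null lattice vector. The corollary as literally stated therefore requires either a nondegeneracy hypothesis or the half-lattice reading of $V_{H_2(S_d;\bb Z),\chi}$ that you propose — and the latter is evidently what the paper intends in practice, since it later treats $\Pi(Y_{2,0},|\mc O_{\bb P^1}|)$ as the half-lattice algebra $\Pi_0$ of Proposition \ref{chantidomegprop} and Theorem \ref{affinesl2theo} rather than as a tensor product with a level-zero lattice algebra. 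Your plan to finish by an explicit change of generating fields matching all OPEs is the correct (and only) way to make the degenerate case precise; the paper does not address it.
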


\begin{defn} Let $S$ be a toric divisor in $Y$ given by the decomposition of Equation \ref{divisoreqn}. We define the free field vertex algebra $\Pi(Y,S)$ associated to $Y$ and $S$ to be
	\[ \Pi(Y,S) = \bigotimes_{d\in \mf D} \Pi_d^{\otimes r_d}  = \bigotimes_{d\in \mf D,j_d=1,...,r_d}\Pi_d^{j_d} \ ,\]
and similarly define the total Heisenberg subalgebra $\pi_S$ to be
\[ \pi_S = \bigotimes_{d\in \mf D} \pi_d^{\otimes r_d} = \bigotimes_{d\in \mf D, \ j_d=1,...,r_d}  \pi_d^{j_d} \  . \]
\end{defn}

It will also be useful to introduce the following more detailed notation to describe the vertex algebra $\Pi(Y,S)$: define the abelian Lie algebra
\[ \mf h_S = \bigoplus_{d\in \mf D} \tilde H^\bullet_T(S_d^T)^{\oplus r_d} =  \bigoplus_{d\in \mf D, y\in \mf F_d} K_y^{\oplus r_d} = \bigoplus_{d\in \mf D, y\in \mf F_d,   j_d=1,...,r_d} K_y^{j_d}  \]
equipped with the pairing
\[ (\cdot,\cdot): \h_S\otimes_K \h_S \to K \quad\quad \text{defined by} \quad\quad (1_y^{j_d},1_{y'}^{j_{d'}} ) = \begin{cases} k_y & \textup{if $d=d'$, $y=y'$ and $j_d=j_{d'}$} \\ 0 & \textup{otherwise}\end{cases} \ , \]
and we have the equivalent definition
\begin{equation}\label{HeisSeqn}
	 \pi_S =  V_1(\h_S) = \mc U_1(\hat\h_S) \otimes_{\mc U_1(\hat\h_S)_+}  K \cong K [ b_n^{y,j_d}]_{n\leq -1}^{d \in \mf D, y\in \mf F_d,j_d=1,...,r_d} \one \ . 
\end{equation}

To simplify this notation, we introduce the index set
\[ \mf F_S = \bigsqcup_{d\in \mf D} \bigsqcup_{j=1,...,r_d} \mf F_d \ , \]
for which the elements $y\in \mf F_S$ can be interpreted as $T$-fixed points in the disjoint union of the irreducible components of the divisor $S$, in terms of which we have simply
\[\mf h_S = \bigoplus_{y\in \mf F_S} K_y \quad\quad \text{and}\quad\quad \pi_S = \bigotimes_{y\in \mf F_S} \pi_y=\bigotimes_{y\in \mf F_S} K[b_n]_{n\leq 1}\one_y  = K[b_n^y]_{n\leq 1}^{y\in \mf F_S} \one  \ . \]

\begin{figure}[b]
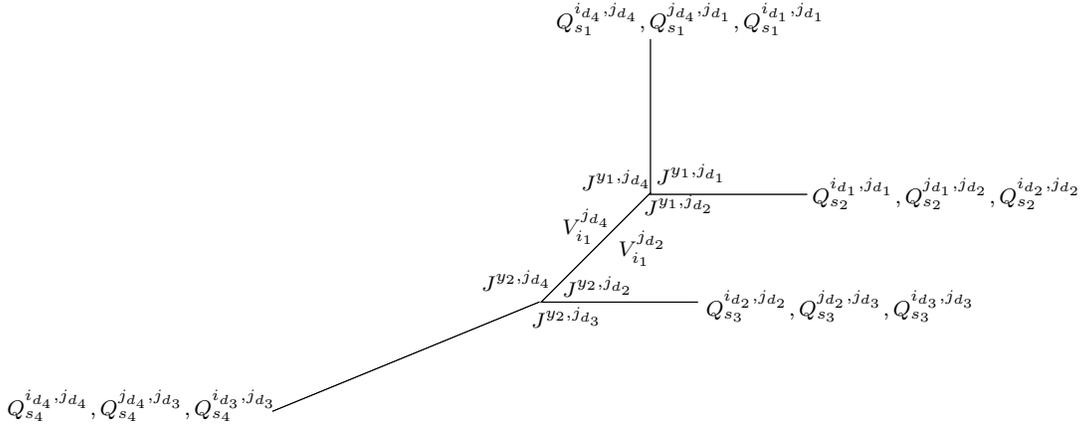

	\caption{The resolution $ Y_{2,0}\to X_{2,0}=\{xy-z^2\} \times \bb C $ and its vertex algebra data}
	\begin{overpic}[width=.5\textwidth]{picOm2}
		\put(51,46) {\scriptsize{$ V_{i_1}^{j_{d_4}}$}}
		\put(60,42.5) {\scriptsize{$ V_{i_1}^{j_{d_2}}$}}
		\put(66,54) {\scriptsize{${J^{y_1,j_{d_1}}}$}}
		\put(64,49) {\scriptsize{${J^{y_1,j_{d_2}}}$}}
		\put(54,53) {\scriptsize{${J^{y_1,j_{d_4}}}$}}
		\put(51,36) {\scriptsize{$J^{y_2,j_{d_2}}$}}
		\put(46,31) {\scriptsize{$J^{y_2,j_{d_3}}$}}
		\put(38,37) {\scriptsize{$J^{y_2,j_{d_4}}$}}
		\put(91,51) {\scriptsize{$Q_{s_2}^{i_{d_1},j_{d_1}},Q_{s_2}^{j_{d_1},j_{d_2}},Q_{s_2}^{i_{d_2},j_{d_2}}$}}
		\put(74,33) {\scriptsize{$Q_{s_3}^{i_{d_2},j_{d_2}},Q_{s_3}^{j_{d_2},j_{d_3}},Q_{s_3}^{i_{d_3},j_{d_3}}$}}
		\put(-38,17) {\scriptsize{$Q_{s_4}^{i_{d_4},j_{d_4}},Q_{s_4}^{j_{d_4},j_{d_3}},Q_{s_4}^{i_{d_3},j_{d_3}}$}}
		\put(50,79) {\scriptsize{$Q^{i_{d_4},j_{d_4}}_{s_1},Q^{j_{d_4},j_{d_1}}_{s_1},Q^{i_{d_1},j_{d_1}}_{s_1}$}}
	\end{overpic}
	\label{fig:vertexlabelsfig}
\end{figure}

Further, for each $d \in \mf D$, $j_d\in\{1,...,r_d\}$, and $i\in \mf C_d$ corresponding to a compact, irreducible curve $C_i$ contained in (the $j_d^{\textup{th}}$ copy of) $S_d$, we define
\[ \lambda_i^{j_d} : \mf h_S \to K \quad\quad \text{by}\quad\quad \lambda_i^{j_d} ( \one_y^{j_{d'}}) =  \begin{cases} \e_T(N_{C_i,y} S_d) & \textup{if $d=d'$, $y\in \mf F_i$ and $j_d=j_{d'}$} \\
	0 & \textup{otherwise} \end{cases}	 \ ,   \]
where we recall that $\mf F_i = \{ 0_i,\infty_i\} \subset \mf F$ denotes the set of fixed points $y\in Y^T$ contained in the curve class $C_i$, and that $\e_T(N_{C_i,y} S_d)$ denotes the equivariant Euler class of the fibre of the normal bundle to $C_i$ in $S_d$ at the point $y$.

To simplify this notation, we introduce the index set
\[ \mf C_S = \bigsqcup_{d\in \mf D} \bigsqcup_{j=1,...,r_d} \mf C_d \ , \]
for which the elements $i\in \mf C_S$ can be interpreted as compact, irreducible toric curves $C_i$ in the disjoint union of the irreducible components of the divisor $S$, and note that for each $i \in \mf C_S$ we have the corresponding subset $\mf F_i = \{ 0_i, \infty_i\} \subset \mf F_S$. In terms of this notation, for each $i\in \mf C_S$ we can equivalently define 
\begin{equation}\label{Cscochareqn}
	 \lambda_i : \mf h_S \to K \quad\quad \text{by} \quad\quad \lambda_i ( \one_y) =  \begin{cases} \e_T(N_{C_i,y} S_d) & \textup{$y\in \mf F_i$} \\
	0 & \textup{otherwise} \end{cases}	 \ ,   
\end{equation}
and moreover for each $l\in \bb Z^{\mf C_S}$ we can define the corresponding module
\[ \pi_{S,l} = \mc U_1(\hat{\mf h}_S)\otimes_{\mc U_1(\hat{\mf h}_S)_+} K_l \cong K[b_n^y]_{n\leq -1}^{y\in \mf F_S} \one_l  \]
where $K_l$ denotes the one dimensional $\mc U_1(\hat{\mf h}_S)_+$ module on which $b_0^y$ acts by $\lambda_l(\one_y)=\sum_{i\in \mf C_S} l_i \lambda_i(\one_y)$ for each $y\in \mf F_S$. In terms of these modules, we can decompose $\Pi(Y,S)$ as a module over $\pi_S$ by
\[ \Pi(Y,S) = \bigoplus_{l\in \bb Z^{\mf C_S}} \pi_{S,l} \ .  \]

\subsection{Screening operators associated to curve classes}\label{geoscreensec}

For each non-compact, irreducible toric curve $C_s$ in $Y$ for $s\in \mf S$, and each pair $S_{d_+}$ and $S_{d_-}$ of irreducible components of the divisor $S$ corresponding to $d_+,d_-\in \mf D$, $j_{d_+}\in\{1,...,r_{d_+}\}$ and $j_{d_-}'\in\{1,...,r_{d_-}\}$, we also define
\[ \lambda_s^{j_{d_+},j_{d_-}}: \mf h_S \to K \quad\quad \textup{by}\quad\quad  \lambda_s^{j_{d_+},j'_{d_-}}( \one_y^{j_e}) =  \begin{cases} \e_T(N_{C_i,y} S_{d_+}) & \textup{if $d_+=e$, $j_{d_+}=j_e$, and $y\in \mf F_i \cap \mf F_{d_+}$} \\
-\e_T(N_{C_i,y} S_{d_-}) & \textup{if $d_-=e$, $j_{d_-}=j_e$, and $y\in \mf F_i\cap \mf F_{d_-}$} \\
	0 & \textup{otherwise} \end{cases}	 \ .  \]
We also write simply $\lambda_s=\lambda_s^{j_{d_+},j_{d_-}}$ when there is no risk of confusion. We define the analogous module shifted by $\lambda_s$ for each $l\in \bb Z^{\mf F_S}$ by
\[ \pi_{S,l+\lambda_s} = \mc U_1(\hat{\mf h}_S)\otimes_{\mc U_1(\hat{\mf h}_S)_+} K_{l+\lambda_s} \cong K[b_n^y]_{n\leq -1}^{y\in \mf F_S} \one_{l+\lambda_s} \]
where $K_{l+\lambda_s}$ denotes the one dimensional $\mc U_1(\hat{\mf h}_S)_+$ module on which $b_0^y$ acts by $\lambda_l(\one_y)+\lambda_s^{j_{d_+},j_{d_-}}(\one_y)$, and introduce the corresponding field $Q_s^{j_{d_+},j_{d_-}}(z) \in \Hom( \pi_{S,l}, \pi_{S,l+\lambda_s})\LFz$ defined by
\[Q_s^{j_{d_+},j_{d_-}}(z) =\nol \exp ( \sum_{y\in\mf F_S} \lambda_s^{j_{d_+},j_{d_-}}(\one_y) \phi_y(z) ) = \nol \exp \left( \e_T(N_{C_s,y_s} S_{d_+}) \phi^{j_{d_+}}_{y_s}(z) - \e_T(N_{C_s,y_s}S_d) \phi^{j_{d_-}}_{y_s}(z) \right)\nor  \quad  \ , \] 
where we note that for each $s\in \mf S$ the non-compact irreducible toric curve $C_s$ contains at most a unique fixed point which we have denoted $y_s$. This field defines a map of $\pi_S$ modules, and extends to define a map of vertex algebra modules
\[W_s^{j_{d_+},j_{d_-}}(z) \ \in \Hom( \Pi(Y,S) , \Pi(Y,S)_{\lambda_s})\LFz \quad\quad \text{where}\quad\quad \Pi(Y,S)_{\lambda}= \bigoplus_{l\in\bb Z^{\mf C_S}} \pi_{S,l+\lambda} \]
is the $\Pi(Y,S)$ module defined for each $\lambda \in \mf h_S^\vee$, and we note that $\Pi(Y,S)_0=\Pi(Y,S)$ is the usual vacuum module for the lattice vertex algebra. Thus, we obtain a putative screening operator
\[ Q_s^{j_{d_+},j_{d_-}} = \int Q_s^{j_{d_+},j_{d_-}}(z) dz \ : \Pi(Y,S) \to \Pi(Y,S)_{\lambda_s} \ , \]
for each non-compact, irreducible toric curve $C_s$ in $Y$ for $s\in \mf S$, and each pair $S_{d_+}$ and $S_{d_-}$ of irreducible components of the divisor $S$.

\subsection{The vertex algebras $\V(Y,S)$, factorization, and locality}\label{VOAYSsec}

Let $S$ be a effective, toric Cartier divisor, with decomposition into irreducible, reduced components given by
\begin{equation}
 S^\red = \bigsqcup_{d\in \mf D_S} S_d \quad\quad \text{so that}\quad\quad 	[S] = \sum_{d\in \mf D} r_d\  [ S_d ] \ , 
\end{equation}
for some tuple of non-negative integers $\rr_S=(r_d) \in \bb N^{\mf D}$. Further, fix a filtration $\mc F$ of the structure sheaf $\mc O_S$ of the divisor $S$, with subquotients $\mc F^k/\mc F^{k-1}$ given by structure sheaves of the irreducible components $\mc O_{S_{d_k}}$, and such that each induced extension $\mc O_{S_{d_k}} \to E \to \mc O_{S_{d_{k+1}}}$ is non-trivial, denoted
\[ \mc O_S = \left[ \mc O_{S_{d_1}} < ... <\mc O_{S_{d_k}} < ... < \mc O_{S_{d_N}} \right] \ , \]
for some ordered list of elements $d_k\in \mf D$ in which each element $d\in \mf D$ occurs $r_{d}$ times; we interpret the index $k$ as an element of the ordered set $\{1,..., N\}$ where $N=\sum_{d \in \mf D} r_d$ is the total number of irreducible components.

Now, for each $k=1,...,N-1$, and each non-compact, irreducible toric curve $C_s$ contained in $S_{d_k}\cap S_{d_{k+1}}$, which corresponds to a point $s_k \in \mf S_{d_k}\cap \mf S_{d_{k+1}}$, let $y\in \mf F_s$ denote the unique toric fixed point contained in $C_s$, and note that it defines two distinct elements $y_k$ and $y_{k+1} \in \mf F_S$ coming from the fixed points in $S_{d_k}$ and $S_{d_{k+1}}$, respectively. We let $\lambda_{s_k}:\mf h_S \to K$ denote the corresponding cocharacter and define the field $Q_{s_k} \in \Hom( \Pi(Y,S), \Pi(Y,S)_{\lambda_{s_k}})\LFz$ by
\[ Q_{s_k}(z) =\nol \exp ( \sum_{y\in\mf F_S} \lambda_{s_k}(\one_y) \phi_y(z) ) = \nol \exp \left( \e_T(N_{C_s,y} S_{d_k}) \phi_{y_k}(z) - \e_T(N_{C_s,y}S_{d_{k+1}}) \phi_{y_{k+1}}(z) \right)\nor  \ , \]
as above, and similarly the corresponding screening operator
\[ Q_{s_k}=\int Q_{s_k}(z)dz \ : \Pi(Y,S) \to \Pi(Y,S)_{\lambda_{s_k}} \ . \]
In summary, we define the total screening module and operator
\[ \Pi_1(Y,S) = \bigoplus_{k=1}^{N-1} \bigoplus_{s_k \in \mf S_{d_k}\cap \mf S_{d_{k+1}}} \Pi(Y,S)_{\lambda_{s_k}} \quad\quad \text{and}\quad\quad Q= \sum_{k=1}^{N-1}\sum_{s_k \in \mf S_{d_k}\cap \mf S_{d_{k+1}}}  Q_{s_k} : \Pi(Y,S) \to \Pi_1(Y,S) \]
and we can now state the main definition of this section, inspired by \cite{GaiR}, \cite{PrR} and \cite{GaiR2}:
\begin{defn} Let $Y$ be a toric Calabi-Yau threefold and $S$ be an effective, toric Cartier divisor as above. The vertex algebra $\V(Y,S)$ associated to $Y$ and $S$ is defined by
	\[ \V(Y,S) = \ker(Q) \subset \Pi(Y,S) \ .\]
\end{defn}

\noindent We now explain two key properties of this construction, which we call factorization and locality:

The factorization property is a simple consequence of the definition, which holds formally in any free field realization type construction, but it will play a crucial role in computations of examples and in the comparison with the geometric definition of $\V(Y,S)$.

To begin, we fix $k_0 \in \{ 1,..., N-1\}$ and consider the expression for $\mc O_S$ as an extension
\[ 0\to \mc O_{R} \to \mc O_S \to  \mc O_{T} \to 0 \]
induced by the filtration $\mc F$ by defining the subsheaf and $\mc O_{R}=\mc F^{k_0}$ and quotient sheaf $\mc O_{T}=\mc O_S / \mc F^{k_0}$. Then $\mc O_{R}$ and $\mc O_{T}$ are themselves structure sheaves of effective toric Cartier divisors, with induced filtrations 
\[ \mc O_R = \left[ \mc O_{S_{d_1}} < ... <\mc O_{S_{d_{k_0}}} \right] \quad\quad \text{and}\quad\quad \mc O_T  = \left[ \mc O_{S_{d_{k_0+1}}} < ...  < \mc O_{S_{d_N}} \right]  \ ,  \]
and thus define vertex algebras $\V(Y,R)$ and $\V(Y,S)$, respectively. The factorization property is given by the following proposition:

\begin{prop}\label{factprop} There is a canonical embedding of vertex algebras
\[ \V(Y,S) \to \V(Y,R) \otimes_K \V(Y,T) \ , \]
with image equal to the kernel of a screening operator $Q_{s_{k_0}}:\V(Y,R) \otimes_K \V(Y,T) \to \Pi(Y,S)_{\lambda_{k_0}} $.
\end{prop}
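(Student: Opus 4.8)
The plan is to deduce the factorization directly from the tensor-product structure of the free field algebra, together with the observation that the total screening operator splits into a part supported on $R$, a part supported on $T$, and a single crossing operator attached to the index $k_0$.

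First I would record the free field factorization. Since the ordered list $d_1,\dots,d_{k_0}$ enumerates the irreducible components of $R$ with multiplicity, and $d_{k_0+1},\dots,d_N$ those of $T$, the defining tensor product $\Pi(Y,S)=\bigotimes_{d\in\mf D}\Pi_d^{\otimes r_d}$ regroups canonically as $\Pi(Y,S)=\Pi(Y,R)\otimes_K\Pi(Y,T)$, and correspondingly $\mf h_S=\mf h_R\oplus\mf h_T$ as orthogonal summands. Thus the bosonic field $\phi_y$ attached to a fixed point $y$ of an $R$-component acts only on the first tensor factor, and those attached to $T$-components only on the second. Next I would decompose the screening operator: by construction $\V(Y,S)=\ker Q=\bigcap_{k}\bigcap_{s_k}\ker Q_{s_k}$, since the $Q_{s_k}$ map to distinct summands $\Pi(Y,S)_{\lambda_{s_k}}$ of $\Pi_1(Y,S)$. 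I would partition the pairs $(k,s_k)$ according to $k<k_0$, $k=k_0$, $k>k_0$. For $k<k_0$ both components $S_{d_k}$ and $S_{d_{k+1}}$ lie in $R$, so the exponent of $Q_{s_k}(z)$ involves only $\phi_y$ for $R$-fixed points; hence $Q_{s_k}(z)=Q_{s_k}^R(z)\otimes\id_{\Pi(Y,T)}$ and its residue factors as $Q_{s_k}=Q_{s_k}^R\otimes\id$, where $Q_{s_k}^R$ is exactly a screening operator defining $\V(Y,R)$. Symmetrically, $Q_{s_k}=\id\otimes Q_{s_k}^T$ for $k>k_0$. Therefore
\[ \bigcap_{k<k_0}\bigcap_{s_k}\ker Q_{s_k}=\V(Y,R)\otimes_K\Pi(Y,T)\quad\text{and}\quad \bigcap_{k>k_0}\bigcap_{s_k}\ker Q_{s_k}=\Pi(Y,R)\otimes_K\V(Y,T)\ . \]

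I would then invoke the elementary identity $(A\otimes_K W)\cap(V\otimes_K B)=A\otimes_K B$ for subspaces $A\subseteq V$, $B\subseteq W$ over the field $K$, concluding that the intersection of the two displayed spaces equals $\V(Y,R)\otimes_K\V(Y,T)$. Intersecting finally with the kernels of the remaining crossing operators $Q_{s_{k_0}}$ for $s_{k_0}\in\mf S_{d_{k_0}}\cap\mf S_{d_{k_0+1}}$ yields
\[ \V(Y,S)=\big(\V(Y,R)\otimes_K\V(Y,T)\big)\cap\ker Q_{s_{k_0}}\ , \]
which is the claimed description of the image, with $Q_{s_{k_0}}$ understood as the restriction of the crossing screening to this subspace, mapping to $\Pi(Y,S)_{\lambda_{k_0}}$. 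To upgrade the resulting subspace inclusion $\V(Y,S)\hookrightarrow\V(Y,R)\otimes_K\V(Y,T)$ to an embedding of vertex algebras, I would use that the kernel of a screening charge is a vertex subalgebra (Section \ref{screeningsec}): $\V(Y,R)$ and $\V(Y,T)$ are vertex subalgebras of $\Pi(Y,R)$ and $\Pi(Y,T)$, so their tensor product is a vertex subalgebra of $\Pi(Y,S)$ containing $\V(Y,S)$, and the inclusion is then the restriction of $\id_{\Pi(Y,S)}$, hence automatically a vertex algebra morphism.

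The main obstacle I anticipate is the bookkeeping in the middle step: verifying cleanly that a screening current whose exponent involves only $R$-fields genuinely acts as $Q_{s_k}^R(z)\otimes\id$ on $\Pi(Y,R)\otimes_K\Pi(Y,T)$, and that passing to residues commutes with this factorization. This rests on the orthogonality of $\mf h_R$ and $\mf h_T$ inside $\mf h_S$, so that no cross-contractions or normal-ordering corrections couple the two tensor factors, together with the compatible factorization of the target modules $\Pi(Y,S)_{\lambda_{s_k}}$ for $k\neq k_0$. Once these identifications are secured, the factorization and the vertex-algebra statement are formal consequences of the linear-algebra identity above and the screening-kernel corollary of Section \ref{screeningsec}.
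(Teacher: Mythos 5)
Your proposal is correct and follows essentially the same route as the paper's proof: both factor $\Pi(Y,S)=\Pi(Y,R)\otimes_K\Pi(Y,T)$, split the screening operators into those supported entirely on $R$, those on $T$, and the crossing operators at $k_0$, identify $\bigcap_{k\neq k_0}\ker Q_{s_k}$ with $\V(Y,R)\otimes_K\V(Y,T)$ via the tensor-product kernel identity over the field $K$, and characterize the image as the kernel of the restricted crossing screening operator. The bookkeeping point you flag (that an $R$-supported current acts as $Q_{s_k}^R\otimes\id$, by orthogonality of $\mf h_R$ and $\mf h_T$) is exactly the content of the paper's observation that $\tilde Q_R|_{\Pi(Y,T)}=\tilde Q_T|_{\Pi(Y,R)}=0$.
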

\begin{proof} Let $\V(Y,S)=\ker(Q_S) \subset \Pi(Y,S)$ be as above, and similarly for $R$ and $T$, and note there is a canonical isomorphism
\[ \Pi(Y,S) = \Pi(Y,R) \otimes_K \Pi(Y,T)  \ .\]
	It is a tautological property of the free field realization construction that there is an embedding
\begin{equation}\label{FFRmaptauteqn}
		 \ker(Q_S) = \bigcap_{k=1}^{n-1} \bigcap_{s_k \in \mf S_{d_k}\cap \mf S_{d_{k+1}}} \ker(Q_{s_k}) \to \bigcap_{k\neq k_0}  \ker(Q_{s_k}) = \ker( \sum_{k\neq k_0} Q_{s_k} ) \ . 
\end{equation}

\begin{figure}[b]
	\caption{The factorization property on the resolution $ Y_{2,0}\to X_{2,0}=\{xy-z^2\} \times \bb C $}
	\begin{overpic}[width=1\textwidth]{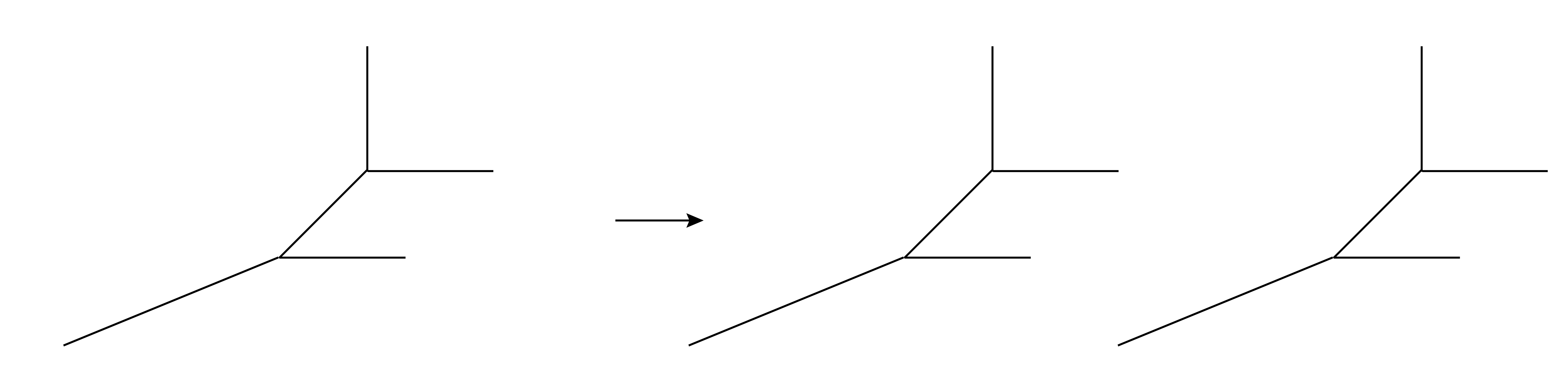}
		\put(24,15) {$M_S= M_R + M_T$}
		\put(22,10) {$L_S= L_R + L_T$}
		\put(5,13) {$N_S= N_R + N_T $}
		\put(15,4) {$K_S = K_R + K_T$}
		
		\put(64,15) {$M_R$}
		\put(63,10) {$L_R$}
		\put(56,13) {$N_R$}
		\put(55,4) {$K_R$}
		
		\put(73,10) {$\bigotimes$}
		
		\put(91.5,15) {$M_T$}
		\put(90.5,10) {$L_T$}
		\put(83.5,13) {$N_T $}
		\put(82.5,4) {$K_T$}
	\end{overpic}
	\label{fig:factfig}
\end{figure}

Similarly, we note that the decomposition
	\[ \tilde Q_R = \sum_{k=1}^{k_0-1} \sum_{\in \mf S_{d_k}\cap \mf S_{d_{k+1}}}  Q_{s_k} \quad\quad \textup{and} \quad\quad \tilde Q_T = \sum_{k=k_0+1}^N \sum_{\in \mf S_{d_k}\cap \mf S_{d_{k+1}}} Q_{s_k}\]
	 satisfies the analogous property
	\[ \ker(\tilde Q_R)\cap \ker(\tilde Q_T) = \ker( \sum_{k\neq k_0} Q_{s_k} )  \]
	and moreover we have that
	\[ \tilde Q_R|_{\Pi(Y,T)} = \tilde Q_T|_{\Pi(Y,R)} = 0 \quad\quad \text{and}\quad\quad  \tilde Q_R|_{\Pi(Y,R)} = Q_R \quad\quad \tilde Q_T|_{\Pi(Y,T)} = Q_T  \ , \]
	so that we obtain the desired map composing that of Equation \ref{FFRmaptauteqn} with the natural isomorphisms
	\[\ker(\tilde Q_R)\cap \ker(\tilde Q_T)  \cong  (\ker(Q_R)\otimes \Pi(Y,T) ) \cap (\Pi(Y,R)\otimes \ker(Q_T)) \cong \ker(Q_R) \otimes \ker(Q_T) \ . \]
	Evidently the image of $\V(Y,S)=\ker(Q_S)$ is given by the kernel of the induced screening operator
	\[ Q_{s_{k_0}}|_{\ker(\tilde Q_R)\cap \ker(\tilde Q_T)} :  \V(Y,R) \otimes_K \V(Y,T) \to  \Pi(Y,S)_{\lambda_{k_0}}   \ .  \]
\end{proof}

We now formulate and prove the locality principle: Suppose $Y$ admits a cover by two open toric subvarieties $Y_1$ and $Y_2$ each satisfying the same hypotheses required of $Y$ throughout, and such that the intersection
\[ Y_1\cap Y_2\cong  (\bb A^1\setminus \{0\}) \times \bb A^2 \]
and the unique toric curve class $C=(\bb A^1\setminus \{0\})\times \{(0,0)\}\subset Y_1\cap Y_2$ has compact closure $\overline{C}$ in $Y$. Define
\[S_1=Y_1\cap S \quad\quad\text{and}\quad\quad  S_2 = Y_2\cap S \ , \]
and consider the corresponding vertex algebras $V(Y_1,S_1)$ and $V(Y_2,S_2)$.
The locality property of the vertex algebras $\V(Y,S)$ is given by the following proposition:

\begin{prop}\label{localityprop} There is a canonical embedding of vertex algebras
	\[ \V(Y_1,S_1)\otimes \V(Y_2,S_2) \to \V(Y,S)  \]
	such that $\V(Y,S)$ is a vertex algebra extension of $ \V(Y_1,S_1)\otimes \V(Y_2,S_2)$, defined by a decomposition
	\[ \V(Y,S) = \bigoplus_{\lambda \in P_+} \V(Y_1,S_1)_\lambda \otimes \V(Y_2,S_2)_{\lambda}   \ , \]
	where $P_+ \subset \bb Z^{r}$ is defined in Equation \ref{Ppeqn} below.
\end{prop}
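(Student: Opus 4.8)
The plan is to reduce the statement to the same tensor-factorization mechanism used in the proof of Proposition \ref{factprop}, after first accounting for the new \emph{compact} toric curve $\overline C$ created by the gluing. The geometric input is an identification of combinatorial data. Since $Y_1\cap Y_2\cong (\bb A^1\setminus\{0\})\times \bb A^2$ contains no torus fixed points, the fixed loci partition as $\mf F_{S}=\mf F_{S_1}\sqcup \mf F_{S_2}$, and consequently $\pi_S=\pi_{S_1}\otimes_K \pi_{S_2}$. The curve $C$ meets each $Y_i$ in a copy of $\bb A^1$ containing a single fixed point, one of the two endpoints $0_{\overline C}\in \mf F_{S_1}$ and $\infty_{\overline C}\in \mf F_{S_2}$ of $\overline C\cong \bb P^1$; in particular each half $C\cap Y_i$ lies in a single reduced component of $S_i$ and is therefore \emph{not} a screening curve. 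Thus the non-compact curves giving screening operators for $Y$ are exactly those of $Y_1$ together with those of $Y_2$, so the collection of screening operators defining $\V(Y,S)$ is the union of those defining $\V(Y_1,S_1)$ and $\V(Y_2,S_2)$. The only new datum is that $\overline C$ is now compact, contributing one generator per copy of its ambient component to the lattice $\bb Z^{\mf C_S}$, together with the lattice vertex operator $V_{\overline C}$ of Equation \ref{curveextopeqn}.

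First I would record the decomposition of the free field algebra. Writing $\bb Z^{\mf C_S}\cong \bb Z^{\mf C_{S_1}}\oplus \bb Z^{\mf C_{S_2}}\oplus \bb Z^r$, where the last factor is spanned by the $r$ copies of $\overline C$, the cocharacter $\lambda_{\overline C}$ of Equation \ref{Cscochareqn} is supported on the two endpoints, so a lattice vector $\mu\in \bb Z^r$ shifts the Heisenberg module index at $\mf F_{S_1}$ by its restriction $\mu|_1$ and at $\mf F_{S_2}$ by $\mu|_2$. This gives the bigraded decomposition
\[ \Pi(Y,S)=\bigoplus_{\mu\in \bb Z^r} \Pi(Y_1,S_1)_{\mu|_1}\otimes_K \Pi(Y_2,S_2)_{\mu|_2} \ , \]
whose $\mu=0$ summand is $\Pi(Y_1,S_1)\otimes_K\Pi(Y_2,S_2)$, and in which $V_{\overline C}$ raises $\mu$ by one while acting as a product of a field on the first tensor factor and a field on the second. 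Since both shifts $\mu|_1,\mu|_2$ are the restrictions of the single cocharacter $\lambda_{\overline C}$, both tensor factors are labelled by the common weight $\lambda:=\mu$; the integrality of this new lattice direction is exactly Proposition \ref{pairingprop}.

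Next I would run the factorization argument on each graded piece. Because the screening currents of $Y_1$ involve only the bosons at $\mf F_{S_1}$ and those of $Y_2$ only the bosons at $\mf F_{S_2}$, they act respectively on the first and second tensor factor alone, each as a module intertwiner of the $\lambda$-shifted module $\Pi(Y_i,S_i)_\lambda$. Exactly the computation of Proposition \ref{factprop}, valid because $K$ is a field and tensor product is therefore exact, then gives
\[ \ker(Q)\cap \big(\Pi(Y_1,S_1)_\lambda\otimes_K \Pi(Y_2,S_2)_\lambda\big)=\V(Y_1,S_1)_\lambda\otimes_K \V(Y_2,S_2)_\lambda \ , \]
where $\V(Y_i,S_i)_\lambda$ denotes the kernel of the surviving screenings on the $\lambda$-shifted module, a module over $\V(Y_i,S_i)$. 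Summing over $\mu$ and letting $P_+$ be the set of $\lambda\in \bb Z^r$ for which both shifted kernels are non-zero (the cone of Equation \ref{Ppeqn}) yields the asserted decomposition; inclusion of the $\lambda=0$ summand is the claimed embedding, and the $\bb Z^r$-grading together with the compatibility of $V_{\overline C}$ with the surviving screenings exhibits $\V(Y,S)$ as a vertex algebra extension of $\V(Y_1,S_1)\otimes_K \V(Y_2,S_2)$.

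The main obstacle I anticipate is the final identification: pinning down $P_+$, that is, determining precisely which shifted modules $\V(Y_i,S_i)_\lambda$ carry non-trivial screening invariants and verifying that these weights close up into the semigroup of Equation \ref{Ppeqn}. This is the genuinely representation-theoretic point, analogous to identifying which Fock modules contribute in the Feigin--Frenkel picture of Theorem \ref{FFtheo}, and it must be matched with the effectivity of the compact class $\overline C$ so that $0\in P_+$ and the extension is by effective curve classes. The remaining steps are formal consequences of the tensor decomposition above and of the exactness of $\otimes_K$.
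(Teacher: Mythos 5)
Your proposal follows the paper's proof step for step: the fixed-point decomposition $\mf F_S=\mf F_{S_1}\sqcup\mf F_{S_2}$ giving $\pi_S=\pi_{S_1}\otimes_K\pi_{S_2}$; the splitting $\bb Z^{\mf C_S}\cong\bb Z^{\mf C_{S_1}}\oplus\bb Z^{\mf C_{S_2}}\oplus\bb Z^{\mf C_{S,\cap}}$ isolating the $r=r_{d_+}+r_{d_-}$ copies of $\overline C$; the resulting decomposition $\Pi(Y,S)=\bigoplus_{\lambda}\Pi(Y_1,S_1)_\lambda\otimes\Pi(Y_2,S_2)_\lambda$; the splitting of the total screening operator into pieces supported on the two factors; the gradewise application of the factorization mechanism of Proposition \ref{factprop}; and the definition of $P_+$ as the support of the nonvanishing summands. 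So the architecture is exactly the paper's.

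One justification you give is wrong, and it is worth isolating because it hides the only non-formal point in the argument. You claim that each half $\overline C\cap Y_i$ "lies in a single reduced component of $S_i$ and is therefore not a screening curve." First, $C=\mathring S_+\cap\mathring S_-$ is precisely the intersection of the two toric divisors of $Y_1\cap Y_2$, so whenever both $S_{d_+}$ and $S_{d_-}$ occur in $S$ the half-curve lies in the intersection of two reduced components of $S_i$. Second, even when it lies in a single component $S_d$ with multiplicity $r_d\geq 2$, consecutive filtration steps with $d_k=d_{k+1}=d$ attach a screening operator to \emph{every} non-compact toric curve in $S_d\cap S_d=S_d$; this is exactly how the two Feigin--Frenkel screenings of $\V(\C^3,2[\C^2])$ in Proposition \ref{Virscreg} arise from the two coordinate axes of $\C^2$. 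Concretely, for $Y=Y_{2,0}$ and $S=2[|\mc O_{\bb P^1}|]$ as in Example \ref{MRRVireg}, $\V(Y,S)$ is cut out by two screenings (the two non-compact fibers), while each local algebra $\V(\C^3,2[\C^2])$ is cut out by two screenings of which one is attached to $\overline C\cap Y_i$ — so the sets of screening operators on the two sides of your identification do not agree. The proposition survives because the kernels of the extra local screenings coincide with kernels already imposed (in Proposition \ref{Virscreg}, $\ker Q_1=\ker Q_2$), but that is a representation-theoretic input, not the combinatorial bookkeeping you invoke. To be fair, the paper's own proof elides the same point when it asserts $\mf S_Y=\mf S_{Y_1}\sqcup\mf S_{Y_2}$ and $\tilde Q_{S_i}|_{\Pi(Y_i,S_i)}=Q_{S_i}$; but your stated reason for the step is false, whereas the remainder of your argument is sound and matches the paper.
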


\begin{proof}

Note that $Y_1\cap Y_2$ contains no torus fixed points so that $\mf F_Y= \mf F_{Y_1} \sqcup \mf F_{Y_2}$, and similarly $\mf F_S = \mf F_{S_1} \sqcup \mf F_{S_2}$ so that we have
\[ \mf h_S = \mf h_{S_1} \oplus \mf h_{S_2} \quad\quad \text{and}\quad\quad  \pi_{S} = \bigotimes_{y \in \mf F_S} \pi_y = (\bigotimes_{y\in \mf F_{S_1}} \pi_y ) \otimes ( \bigotimes_{y\in \mf F_{S_2}} \pi_y) = \pi_{S_1}\otimes \pi_{S_2} \ , \]
where $\pi_{S_1}\subset \Pi(Y_1,S_1)$ and $\pi_{S_2} \subset \Pi(Y_2,S_2)$ denote total Heisenberg subalgebras.

There are only two toric divisors in $Y_1\cap Y_2$ given by
 \[ \mathring{S}_+\cong (\bb A^1\setminus \{0\})\times \{0\} \times \bb A^1 \quad\quad \text{and}\quad\quad  \mathring{S}_-\cong (\bb A^1\setminus \{0\})\times \bb A^1 \times \{0\}  \ ,\]
and their closures $S_+=S_{d_+}$ and $S_-=S_{d_-}$ correspond to some elements $d_+,d_- \in \mf D$. The compact curve class $\overline{C}$ is given by $C_i$ for some $i\in \mf C$, and corresponds to $r=r_{d_+}+r_{d_-}$ distinct points
\[ i \in   \mf C_{S,\cap}:=  (\bigsqcup_{j_{d_+}=1,...,r_{d_+}} \{ i_{d_+} \}  )  \sqcup ( \bigsqcup_{j_{d_-}=1,...,r_{d_-}} \{i_{d_-}\}) \quad \subset  \quad \mf C_S = \bigsqcup_{d\in \mf D,j_d=1,...,r_d}  \mf C_d \ \ , \]
each of which defines $\lambda_i:\mf h_S \to K$ as in Equation \ref{Cscochareqn}, where $i_{d_\pm} \in \mf C_{d_{\pm}}$ denotes the element corresponding to $\overline{C}$. Note that all of the remaining compact curve classes $C_i$ in $Y$ are contained completely in either $Y_1$ or $Y_2$, so that we have
\[ \mf C_Y = \{y\} \sqcup \mf C_{Y_1} \sqcup \mf C_{Y_2} \quad\quad \textup{and similarly} \quad\quad \mf C_S = \mf C_{S,\cap}\sqcup \mf C_{S_1} \sqcup \mf C_{S_2}  \ .\]

\begin{figure}[t]
	\begin{overpic}[width=1\textwidth]{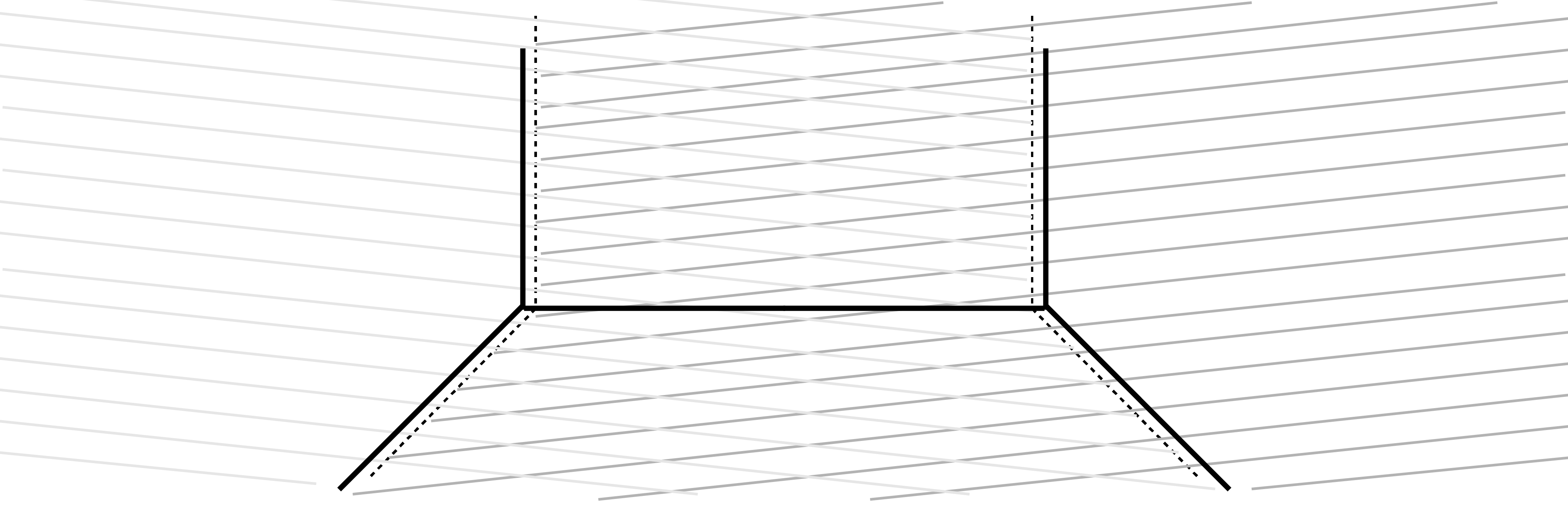}
		
		\put(5,30) {$Y_1$}
		\put(93,30) {$Y_2$}
		\put(49,24) {$ S_{d_+}$}
		\put(49,6) {$ S_{d_-}$}
		\put(22,18) {$S_{d_1}$}
		\put(75,18) {$S_{d_2}$}
		\put(49,14) {$\overline{C}$}
	\end{overpic}
	\label{fig:conlocalityfig}
		\caption{The locality principle for $Y_{2,0}\to X_{2,0}=\{xy-z^2\}\times \bb A^1 $}
\end{figure}

\noindent Thus, we can decompose
\[ \Pi(Y,S) = \bigoplus_{l\in \bb Z^{\mf C_S}} \pi_{S,l}  = \bigoplus_{ \lambda \in \bb Z^{\mf C_{S,\cap}}} \Pi(Y,S_1,S_2)_\lambda \quad\quad \text{where}\quad\quad \Pi(Y,S_1,S_2)_\lambda = \bigoplus_{ l_1 \in \mf C_{S_1}, l_2 \in \mf C_{S_2}} \pi_{S,(\lambda,l_1,l_2)}  \]
and moreover, we have a canonical identification of modules over $\pi_S =\pi_{S_1} \otimes \pi_{S_2}$ given by
\[ \Pi(Y,S_1,S_2)_0 =  \bigoplus_{ l_1 \in \mf C_{S_1}, l_2 \in \mf C_{S_2}} \pi_{S,(0,l_1,l_2)} =  \bigoplus_{ l_1 \in \mf C_{S_1}} \bigoplus_{l_2 \in \mf C_{S_2}} \pi_{S_1,l_1}\otimes \pi_{S_2,l_2}  = \Pi(Y_1,S_1) \otimes \Pi(Y_2,S_2) \ , \]
noting that $\lambda_i:\mf h_S \to K$ for $i\in \mf C_{S_1}$ vanishes on the subalgebra $\mf h_{S_2}$ and vice versa. More generally, we have the canonical identifications
\[ \Pi(Y,S_1,S_2)_\lambda = \bigoplus_{ l_1 \in \mf C_{S_1}} \bigoplus_{l_2 \in \mf C_{S_2}} \pi_{S_1,l_1+\lambda}\otimes \pi_{S_2,l_2+\lambda}  = \Pi(Y_1,S_1)_\lambda \otimes \Pi(Y_2,S_2)_\lambda \]
where we have denoted both the restrictions $\lambda|_{\mf h_{S_1}}:\mf h_{S_1}\to K$ and  $\lambda|_{\mf h_{S_2}}:\mf h_{S_2}\to K$ by $\lambda$.

Finally, since none of the non-compact irreducible toric curve classes $C_s$ for $s\in \mf S$ intersect $Y_1\cap Y_2$, we have the decomposition $\mf S_Y = \mf S_{Y_1} \sqcup \mf S_{Y_2}$ and we have the corresponding decomposition of the screening operator
\[ Q_S = \tilde Q_{S_1} +  \tilde Q_{S_2} \quad\quad \textup{where} \quad\quad \tilde Q_{S_1} = \sum_{k_1=1}^{N_1} \sum_{s_{k_1} \in \mf S_{d_{k_1}}\cap \mf S_{d_{{k_1}+1}}} Q_{s_{k_1}} \quad\quad \tilde Q_{S_2} = \sum_{k_2=1}^{N_2} \sum_{s_{k_2} \in \mf S_{d_{k_2}}\cap \mf S_{d_{{k_2}+1}}}  Q_{s_k}   \ .\]
Moreover, it follows that we have
\[ \tilde Q_{S_1} |_{\Pi(Y_2,S_2)} = \tilde Q_{S_2} |_{\Pi(Y_1,S_1)} = 0 \quad\quad\text{and}\quad\quad \tilde Q_{S_1}|_{\Pi(Y_1,S_1)} = Q_{S_1} \quad\quad  \tilde Q_{S_2}|_{\Pi(Y_2,S_2)} = Q_{S_2}  \ , \]
so that the preceding decomposition yields the desired identification
\[ \ker(Q_S|_{\Pi(Y,S_1,S_2)_0}) \cong ( \ker(Q_{S_1}) \otimes \Pi(Y_2,S_2)) \cap ( \Pi(Y_1,S_1)\otimes \ker(Q_{S_2})) \cong  \V(Y_1,S_1) \otimes \V(Y_2,S_2)  \]
and in turn we obtain the claimed vertex algebra embedding
\[ \V(Y_1,S_1) \otimes \V(Y_2,S_2) \cong  \ker(Q_S|_{\Pi(Y,S_1,S_2)_0})  \to  \ker(Q_S)  = \V(Y,S)  \ .\]

More generally, for each $\lambda \in \bb Z^{\mf C_{S,\cap}}$ we have
\[  (\ker Q_S |_{\Pi(Y,S_1,S_2)_\lambda})\cong ( \ker(\tilde Q_{S_1}|_{\Pi(Y,S_1)_\lambda}) \otimes \Pi(Y_2,S_2)) \cap ( \Pi(Y_1,S_1)\otimes  \ker(\tilde Q_{S_2}|_{\Pi(Y_2,S_2)_\lambda})) \cong  \V(Y_1,S_1)_\lambda \otimes \V(Y_2,S_2)_\lambda \ , \]
where we have defined
\[  \V(Y_1,S_1)_\lambda  = \ker(\tilde Q_{S_1}|_{\Pi(Y,S_1)_\lambda}) \quad\quad \text{and}\quad\quad  \V(Y_2,S_2)_\lambda = \ker(\tilde Q_{S_2}|_{\Pi(Y_2,S_2)_\lambda})  \ , \]
so that the direct sum is parameterized by the subset
\begin{equation}\label{Ppeqn}
	 P_+ = \{ \lambda \in \bb Z^{\mf C_{S,\cap}}\cong \bb Z^r \ | \ \V(Y_1,S_1)_\lambda \neq \{0\} ,\  \V(Y_2,S_2)_\lambda \neq \{0\}   \} \ \subset \bb Z^{\mf C_{S,\cap}}\cong \bb Z^r , 
\end{equation}
and we obtain the desired result.
\end{proof}

\begin{rmk}\label{localityprop}

This shows that the vertex algebras $\V(Y,S)$ satisfy one of key properties predicted in \cite{GaiR} and in more detail in \cite{PrR} in this setting. Similarly, in the case that $S$ consists of a single reduced, irreducible component $S_0$, the vertex algebra $\V(Y,r[S_0])$ appears to provide a construction of the vertex algebra $\textup{VOA}[M_4;\g]$ associated to a choice of four-manifold $M_4$ and ADE type $\g$ conjectured to exist in \cite{FeiG}, in type $A_{r-1}$ and for the four-manifold $M_4=S_0^{\textup{an}}$ underlying the analytification of $S_0$, and it was explained in \emph{loc. cit.} that for a four-manifold given by the union along a common boundary $M_3$ of two four-manifolds with boundary $M_4^\pm$,
\[M_4=M_4^+ \cup_{M_3} M_4^- \ , \quad\quad \textup{one expects} \quad\quad \textup{VOA}[M_4] = \bigoplus_{\lambda \in \Lambda(M_3,r)}   \textup{VOA}[M_4^+]_{\lambda_+} \otimes \textup{VOA}[M_4^-]_{\lambda_-}     \ , \]
the associated vertex algebra is given by an extension of the tensor product $\textup{VOA}[M_4^+] \otimes \textup{VOA}[M_4^-]$, parameterized by $\Lambda(M_3,r)\subset H_1(M_3;\Z)^{\oplus r}$. Thus, the preceding proposition restricted to the case $[S]=r[S_0]$ gives a proof that this property holds for $\V(Y,r[S_0])$, as illustrated in Figure \ref{fig:FGfig} below. 

\end{rmk}

\begin{figure}[b]
	\caption{Differential geometric interpretation of the locality property}
	\vspace*{1cm}
	\begin{overpic}[width=1\textwidth]{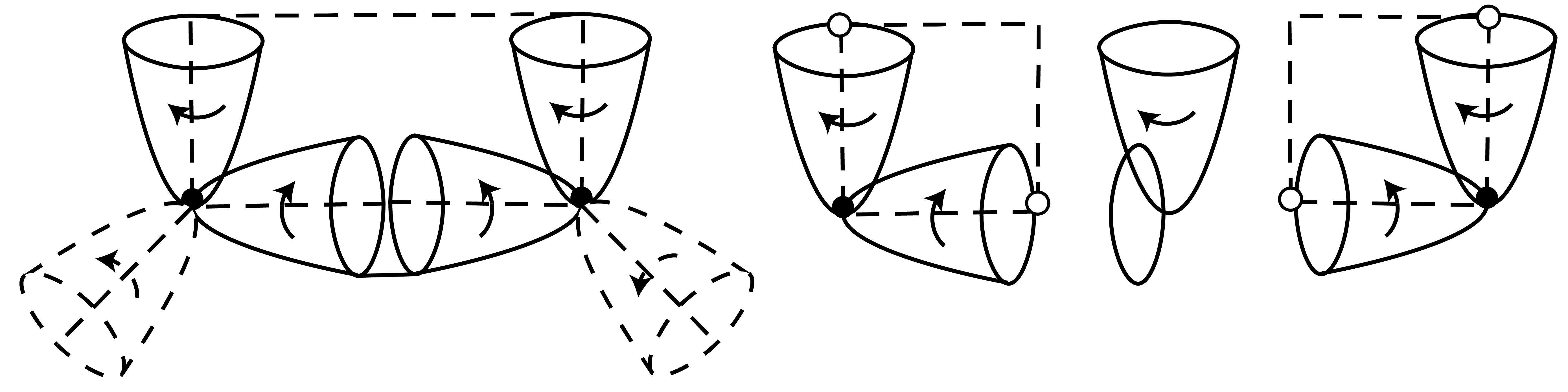}
		\put(18,0) {$M_4=\C\bb P^1\times \C$}
		\put(52,0) {$M_4^+=\C^2$}		
		\put(68,0) {$M_3=S^1\times \C$}		
		\put(87,0) {$M_4^-=\C^2$}	
	\end{overpic}
	\label{fig:FGfig}
	\vspace*{-1cm}
\end{figure}

\section{The Gaiotto-Rapcak conjecture}\label{GRsec}

\subsection{The geometric construction of $\V(Y,S)$ and its representations}\label{GRintrosec}

In this section, we explain a conjecturally equivalent geometric construction of the vertex algebras $\V(Y,S)$ , following the constructions of Grojnowksi \cite{Groj} and Nakajima \cite{Nak}, the mathematical formulation of the AGT conjecture \cite{AGT}, its proof by Schiffmann-Vasserot \cite{SV}, and the generalization to divisors in $\C^3$ of Rapcak-Soibelman-Yang-Zhao \cite{RSYZ}, as outlined in the introduction.

Our proposed generalization of these constructions to divisors $S$ in toric Calabi-Yau threefolds $Y$ relies on the results of the companion paper \cite{BR1}, which we now recall: for $M\in \DD^b\Coh(Y)^T$ satisfying some hypotheses, we define a stack $\mf M(Y,M)$ parameterizing \emph{perverse coherent extensions} of $M$, iterated extensions of $M$ and the compactly supported perverse coherent sheaves of Bridgeland \cite{Bdg1}. We define framed variants $\mf M^\f(Y,M)$, and prove that they are equivalent to stacks of representations of framed quivers with potential $(Q^\f_M,W^\f_M)$,so that we obtain natural cohomological enumerative invariants as vanishing cycles homology groups of $\zeta$-stable points
\[ \bb V^{\f,\zeta}(Y,M) =  \bigoplus_{\nnn \in  \bb N^{V_{Q^\f_M}}} H_\bullet(\mf M^{\f,\zeta}_\nnn(Y,M),\varphi_{W^\f_\nnn}) \quad\quad \text{and}\quad\quad \mc Z^\zeta_M(\qq) =  \sum_{\nnn \in \bb N^{Q_M^\f}} \qq^\nnn \chi(H_\bullet( \mf M_\nnn^{\f,\zeta}(Y,M),\varphi_{W^{\f}_\nnn}))    \]
corresponding generating functions for these invariants, under some additional hypotheses.
In sum:
\begin{theo}\label{Athm}\cite{BR1} Let $M\in \DD^b\Coh(Y)^T$ as above and let $\f$ be a framing structure for $M$.
	There is a canonical framed quiver with potential $(Q_M^\f,W_M^\f)$ and an equivalence of algebraic stacks
	\begin{equation}
		\mf M(Q_M^\f,W_M^\f) \xrightarrow{\cong} \mf M^{\f}(Y,M) \ ,
	\end{equation}
	such that the equivalence of groupoids of $\bb K$-points is defined on objects by a monad presentation.
\end{theo}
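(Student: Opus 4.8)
The plan is to reduce the geometric problem to representation theory by means of a tilting equivalence, and then to produce the quiver data by a Beilinson-type monad resolution. First I would fix a tilting generator $\mc T = \bigoplus_{d} \mc T_d$ on $Y$ built from the line bundles attached to the toric divisors $S_d$, so that $A = \End_Y(\mc T)$ is the completed Jacobi algebra of the quiver with potential $(Q_Y, W_Y)$ encoded in the brane-tiling / dimer data of $Y$; this is the content of Van den Bergh's noncommutative crepant resolution theory in the toric Calabi-Yau setting, and yields a derived equivalence $\DD^b\Coh(Y) \cong \DD^b(A\modd)$. Under this equivalence the heart of the exotic t-structure of Bridgeland \cite{Bdg1} whose compactly supported objects define $\mf M(Y)$ corresponds to the category of finite-dimensional $A$-modules, while the object $M$ together with the framing structure $\f$ determines a distinguished framing vertex and a pair of framing maps $I, J$. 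The upshot is that a perverse coherent extension of $M$ becomes, after applying $\mathbb{R}\Hom_Y(\mc T,-)$, a representation of the framed quiver, its multiplicity spaces assembling into the dimension vector $\nnn \in \bb N^{V_{Q_M^\f}}$.

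The heart of the argument is the monad presentation asserted in the statement. Here I would resolve each summand of $\mc T$ by a Koszul-type complex adapted to the toric coordinates at each fixed point, producing for any $\mc E \in \mf M^\f(Y,M)$ a canonical three-term complex
\[ \mc T_{-1}\otimes W_{-1} \xrightarrow{a} \mc T_{0}\otimes W_{0} \xrightarrow{b} \mc T_{1}\otimes W_{1}, \qquad b\circ a = 0, \]
whose middle cohomology recovers $\mc E$ and whose maps $a,b$ are built from the arrows $B_\bullet, I, J$ of the quiver. The existence and uniqueness of this presentation rests on the degeneration of the Beilinson spectral sequence associated to $\mc T$, which reduces to the vanishing $\Ext^{>0}_Y(\mc T,\mc T)=0$ guaranteed by the tilting property, together with the stability and compact-support hypotheses on $M$ that kill the outer cohomology of the monad. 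Conversely, given a representation of $(Q_M^\f, W_M^\f)$ one forms the same complex and takes cohomology; the two constructions are manifestly mutually inverse on objects, yielding the claimed bijection of groupoids of $\bb K$-points.

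It then remains to match the relations and to upgrade the pointwise bijection to an equivalence of stacks. The relation $b\circ a = 0$ defining the monad, together with the higher syzygies among the $\mc T_d$, should translate into precisely the Jacobian relations $\del_\alpha W_M^\f = 0$ of the superpotential, where $\alpha$ ranges over the arrows of $Q_M^\f$; the Calabi-Yau threefold condition is what forces these relations to be cyclically derived from a single potential, and the framing correction to $W$ (as in $W^0_S = W_Y + B_3 IJ$ of Equation \ref{C3quiveqn}) is dictated by the requirement that the framing maps $I,J$ enter the monad symmetrically. Because all of the $\Hom$ and $\Ext$ groups entering the monad are computed by $\mathbb{R}\Hom_Y(\mc T,-)$, they form locally free sheaves over any base over which the family deforms flatly; the monad construction is therefore functorial in families and identifies the moduli functor of monads with the representation stack $\mf M(Q_M^\f, W_M^\f)$, giving the equivalence of algebraic stacks rather than merely of their $\bb K$-points.

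The step I expect to be the main obstacle is the matching of the superpotential: one must verify not only that the monad relation $b\circ a=0$ lies in the Jacobian ideal of $W_M^\f$, but that it generates it, i.e. that there are no further independent relations and no missing arrows. This requires careful bookkeeping of the $\Ext$-quiver of $\mc T$ against the combinatorics of the GKM graph of $Y$ and of the filtration on $\mc O_S$ defining $M$, and it is precisely here that the equivariance --- working over $T$ and tracking the weights at each fixed point $y\in \mf F$ --- does the essential work of rigidifying the construction so that no relation is over- or under-counted.
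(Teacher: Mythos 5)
This statement is quoted from the companion paper \cite{BR1} and is not proved anywhere in the present paper, so there is no in-text proof to compare your attempt against; all the paper records about the argument is the single clause that ``the equivalence of groupoids of $\bb K$-points is defined on objects by a monad presentation.'' Your overall strategy --- a tilting generator realizing the noncommutative crepant resolution, the identification of Bridgeland's compactly supported perverse coherent heart with finite-dimensional modules over the Jacobi algebra of $(Q_Y,W_Y)$, and a Beilinson-type monad to pass between sheaves and quiver representations --- is the natural and almost certainly the intended route, and it is consistent with the one piece of information the paper does give.

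That said, two points in your sketch are genuinely underdeveloped relative to what the theorem asserts. First, the framing object $M=\mc O^\sss_{S^\red}[1]$ is not compactly supported and does not lie in $\Perv_\cs(Y)$, so it is not simply ``a distinguished framing vertex with a pair of framing maps $I,J$'': the framed quiver is governed by the $\Ext$-groups between $M$ and the simple perverse coherent sheaves, and in general this produces several framing arrows in both directions and several framing vertices (one per irreducible component of $S^\red$, with multiplicities recorded by the framing structure $\f$). Compare Figure \ref{fig:Walgquivfig}, where the framing contribution to the potential is $IJ_1A + IJ_2C + I_0J_0D + I_0KJ_1 + I_0GJ_0$ rather than a single term $B_3IJ$; your heuristic that the correction to $W$ is ``dictated by symmetry of $I,J$ in the monad'' does not determine these terms. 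Second, your claim that the monad relation $b\circ a=0$ together with the syzygies ``should translate into precisely the Jacobian relations'' is the entire content of the superpotential-matching step, and you correctly identify it as the main obstacle without resolving it: one needs the $A_\infty$/Koszul-dual description of the $\Ext$-algebra of the simples (plus $M$) to see that all relations are cyclic derivatives of a single $W_M^\f$, and the Calabi-Yau property alone guarantees this only for the unframed part. These are the places where a complete proof would have to do real work beyond what you have written.
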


In fact, we must consider the localized, $A$-equivariant analogue of the homology groups $\V^{\f,\zeta}(Y,M)$, where $A=T_\f\times T$ for $T_\f$ the maximal torus of the group of automorphisms of the framing structure $\f$ and localized with respect to the field of fractions $K$ of $H_T^\bullet(\pt)$, in the notation of Equation \ref{baseringeqn}.

For $S$ an effective, toric Cartier divisor, with decomposition into irreducible components
\begin{equation}
	S=\bigsqcup_{d\in \mf D_S} S_d \quad\quad \textup{so that}\quad\quad [S] = \sum_{d\in \mf D_S} r_d\  [ S_d ] \ , 
\end{equation}
for some tuple of non-negative integers $\rr_S=(r_d) \in \bb N^{\mf D_S}$, we let $M=\mc O^\sss_{S^\red}[1]=\oplus_{d\in \mf D_S}\mc O_{S_d}[1]$ be the sum of structure sheaves of irreducible components $S_d\subset S^\red$. The space
\begin{equation}\label{MS0defeqn}
	 \mc M^0(Y,S)=\mf M^{0_S,\zeta_\VW}(Y,\mc O^\sss_{S^\red}[1]) \ , 
\end{equation}
of (rank $\rr_S$, trivially framed, $\zeta_\VW$-stable) perverse coherent extensions of $\mc O^\sss_{S^\red}[1]$ provides the desired model in algebraic geometry for the space of rank $\rr_S$ instantons on $S^\red$, and correspondingly
\begin{equation}\label{VS0defeqn}
	 \V_S^0=\V^{0_S,\zeta_\VW}(Y,\mc O^\sss_{S^\red}[1])=  \bigoplus_{\nnn \in \bb N^{V_{Q^{0_S}}} } H_\bullet^A( \mc M_\nnn^0(Y,S),\varphi_{W^{0_S}_\nnn}) \otimes_{H_T^\bullet(\pt)} K \ , 
\end{equation}
provides the vector space on which we will define the geometric representation conjecturally identified with the representation of $\V(Y,S)$ on the module $\Pi(Y,S)$, and similarly
\[ \mc Z^\VW_S(\qq) := \mc Z^{0_S,\zeta_\VW}_{\oplus_d\mc O_{S_d}[1]}(\qq)= \sum_{\nnn \in \bb N^{Q^{0_S}}} \qq^\nnn \chi(H_\bullet( \mc M_\nnn^0(Y,S),\varphi_{W^{0_S}_\nnn}))   \]
defines a local Vafa-Witten-type invariant \cite{VW}, generalizing that of Equation \ref{Gotteqn} to describe torsion free sheaves of higher rank $r$ and moreover with arbitrary first Chern class $c_1 \in H^2(S^\red,\Z)$.

The second main result of \cite{BR1} is the general construction of a representation on the cohomological invariant $\V^{\f,\zeta}(Y,M)$ introduced above, of a certain universal algebra $\mc H(Y)$ called the \emph{Kontsevich-Soibelman cohomological Hall algebra}. These representations induce the desired representations of the algebra of modes $\mc U(\V(Y,S))$ of the vertex algebra $\V(Y,S)$.

The algebra $\mc H(Y)$ has underlying vector space given by the vanishing cycles homology groups of the full stack of representations of the quiver with potential $(Q_Y,W_Y)$ corresponding to $Y$,
\[  \mc H(Y) = \bigoplus_{\nnn \in \bb N^{V_{Q_Y}}}  H_\bullet(\mf M_\nnn(Y),\varphi_{W_{Y,\nnn}})  \quad \quad \text{with}\quad\quad \mf M_\nnn(Y) = \mf M_\nnn(Q_Y,W_Y)\ , \] 
where $(Q_Y,W_Y)$ is the unframed variant of the quivers with potential in Theorem \ref{Athm} above, corresponding to the trivial object $M=0$, and $\mf M(Q_Y,W_Y)$ its stack of representations. For example, for the threefold $Y=| \mc O_{\bb P^1}(-1)\oplus\mc O_{\bb P^1}(-1) |$ and divisor $S=r|\mc O_{\bb P^1}(-1) |$, we have
\begin{equation}\label{quiveregeqn}
 \begin{cases}  Q_Y & =  \begin{tikzcd}
		\mathcircled{V_0} \arrow[r, bend left=25 ] \arrow[r, bend left=40 ,  "A\ C"]  & \arrow[l, bend left=25 ] \arrow[l, bend left=40 ,  "B\ D"]  \mathcircled{V_1}
	\end{tikzcd} \\
	W_Y &  = ABCD-ADBC   \end{cases} \quad\quad \text{and}\quad\quad 	\begin{cases}
	Q_{\mc O_{S^\red}[1] }^{0_S} & =  \begin{tikzcd}
		\boxed{\C^{r}}\arrow[d,shift left=0.5ex, "I"] \\ 
		\mathcircled{V_0} \arrow[r, bend left=25 ] \arrow[r, bend left=40 ,  "A\ C"]  & \arrow[l, bend left=25 ] \arrow[l, bend left=40 ,  "B\ D"] \mathcircled{V_1}\ar[ul, bend right=40, swap,"J"]    
	\end{tikzcd}\\    W_{\mc O_{S^\red}[1] }^{0_S} & = ABCD-ADBC + IJC
	\end{cases}
	\ .
\end{equation}

The algebra structure on $\mc H(Y)$ is constructed in terms of natural correspondences
\[ \vcenter{\xymatrix{ & \mf M_{\kk,\bl}(Y) \ar[dl] \ar[dr] \\ \mf M_{\kk}(Y)\times \mf M_\bl(Y) & & \mf M_{\kk+\bl}(Y) }} \quad\quad \text{inducing}\quad \quad m: \mc H(Y)^{\otimes 2} \to \mc H(Y)  \ ,\]
generalizing the correspondences and induced multiplication of Equations \ref{cohaC2multcorreqn} and \ref{cohaC2multeqn}, respectively. Similarly, following results of Soibelman \cite{Soi} we define analogous correspondences
\[ \vcenter{\xymatrix{ & \mf M^{\f,\zeta}_{\kk,\bl}(Y,M) \ar[dl] \ar[dr] \\ \mf M_{\kk}(Y)\times \mf M^{\f,\zeta}_\bl(Y,M) & & \mf M^{\f,\zeta}_{\kk+\bl}(Y,M) }}  \]
and prove the following theorem constructing representations of $\mc H(Y)$ on $\V^{\f,\zeta}(Y,M)$:
\begin{theo} \cite{BR1} \label{Bthm}
	There exists a natural representation
	\[ \rho_M: \mc H(Y) \to \End( \V^{\f,\zeta}(Y,M) )  \]
	of the Kontsevich-Soibelman cohomological Hall algebra $\mc H(Y)$ on the invariant $\V^{\f,\zeta}(Y,M) $.
\end{theo}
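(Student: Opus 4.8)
The plan is to reduce the statement to a purely quiver-theoretic construction via Theorem \ref{Athm}, and then to build $\rho_M$ as a convolution (pull-push) operation along the correspondences indicated just after the theorem statement, following the framework of Kontsevich-Soibelman and its equivariant refinement. First I would invoke the equivalence $\mf M(Q_M^\f,W_M^\f)\cong \mf M^\f(Y,M)$ of Theorem \ref{Athm}, together with its unframed specialization $\mf M(Q_Y,W_Y)\cong \mf M(Y)$, to transport the problem entirely to moduli of representations of a quiver with potential. Under this translation $\mc H(Y)$ becomes the vanishing-cycle cohomological Hall algebra of $(Q_Y,W_Y)$, and $\V^{\f,\zeta}(Y,M)$ becomes the vanishing-cycle homology of the $\zeta$-stable locus of framed representations of $(Q_M^\f,W_M^\f)$. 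The correspondence stack $\mf M^{\f,\zeta}_{\kk,\bl}(Y,M)$ is then identified with the stack of short exact sequences whose subobject is an unframed representation of dimension $\kk$ and whose quotient is a $\zeta$-stable framed object of dimension $\bl$, carrying the two maps $q$ (remembering sub and quotient) and $p$ (remembering the total object).

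Second, I would define the operator $\rho_M(a)$ for $a\in \mc H_\kk(Y)$ by the convolution $\rho_M(a)(v)=p_*\, q^*(a\boxtimes v)$. This requires three functorialities for vanishing-cycle homology in the $A$-equivariant, $K$-localized setting: a smooth (Gysin) pullback along $q$, a proper pushforward along $p$, and a Thom-Sebastiani isomorphism identifying the vanishing-cycle sheaf on the flag stack with the external product of those on the sub and quotient factors. The essential geometric inputs are that $q$ is an affine fibration, so $q^*$ is defined and shifts degree by the rank of the relevant extension bundle; that the potential $W_M^\f$ restricts on the flag locus to $W_Y \boxplus W_M^\f$ up to terms controlled by Thom-Sebastiani and the dimensional reduction of Equation \ref{dimredeqn}; and that $p$ is representable and proper once restricted to the $\zeta$-stable locus, which uses Soibelman's analysis of stability for framed objects to guarantee that the stable locus is an open substack on which $p$ has proper fibers.

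Third, and this is the crux, I would verify the representation axiom $\rho_M(m(a_1\otimes a_2))=\rho_M(a_1)\rho_M(a_2)$ by introducing the two-step flag stack $\mf M^{\f,\zeta}_{\kk_1,\kk_2,\bl}(Y,M)$ parameterizing filtrations with unframed subquotients of dimensions $\kk_1,\kk_2$ and $\zeta$-stable framed quotient of dimension $\bl$. Both sides are computed as pull-push along this common roof: the composite $\rho_M(a_1)\rho_M(a_2)$ factors through it by a Cartesian-square argument relating the iterated correspondence to the flag stack, while $\rho_M(m(a_1\otimes a_2))$ factors through it because the $\mc H(Y)$ multiplication is itself convolution along the analogous unframed flag stack. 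The comparison of the two then reduces to the base-change and projection formulas for vanishing-cycle homology applied to the fiber square, together with the associativity of the splitting of $W_M^\f$ across the three-step filtration.

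The main obstacle is establishing this entire six-functor calculus — proper pushforward, Gysin pullback, Thom-Sebastiani, dimensional reduction, and base change — for vanishing-cycle cohomology on the relevant Artin stacks, in the $T_\f\times T$-equivariant, $K$-localized setting, with all of the support and finiteness conditions needed for the pushforward along $p$ to be well-defined. In the ordinary Borel-Moore homology setting of \cite{SV} one works with smooth Nakajima quiver varieties where these properties are classical; here the moduli are singular and stacky and the potential is essential, so the genuine content lies in controlling the vanishing-cycle functoriality and the interaction of the stability condition $\zeta$ with the correspondences. This is precisely the technical heart carried out in \cite{BR1}.
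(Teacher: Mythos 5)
Your proposal matches the construction the paper actually outlines for this theorem: the paper does not prove it here but imports it from the companion paper \cite{BR1}, describing exactly the correspondences $\mf M^{\f,\zeta}_{\kk,\bl}(Y,M)$ between $\mf M_{\kk}(Y)\times \mf M^{\f,\zeta}_{\bl}(Y,M)$ and $\mf M^{\f,\zeta}_{\kk+\bl}(Y,M)$ (following Soibelman \cite{Soi}), with the module structure given by pull-push in vanishing-cycle homology after translating to framed quivers with potential via Theorem \ref{Athm}. Your identification of the real content — the six-functor/Thom--Sebastiani calculus for $\varphi_W$ on these stacks, the interaction with the stability condition $\zeta$, and the two-step flag stack verifying the module axiom — is exactly where the paper says the work of \cite{BR1} lies, so the proposal is the intended argument.
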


In particular, in the example $M=\mc O^\sss_{S^\red}[1]=\oplus_{d\in \mf D_S}\mc O_{S_d}[1]$ and $\f=0_S$ described above, we have:

\begin{corollary}\label{Nakopcoro} There exists a natural representation
	\[ \rho_S^0:\mc H(Y) \to \End_F(\V_S^0)  \ , \]
of the Kontsevich-Soibelman cohomological Hall algebra $\mc H(Y)$ on $\V_S^0=\V^{0_S,\zeta_\VW}(Y,\mc O^\sss_{S^\red}[1])$.
\end{corollary}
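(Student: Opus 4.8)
The plan is to deduce the statement directly from Theorem \ref{Bthm} by specializing its data. I take $M=\mc O^\sss_{S^\red}[1]=\oplus_{d\in \mf D_S}\mc O_{S_d}[1]$, the framing structure $\f=0_S$ associated to the trivial extension $\oplus_{d\in\mf D_S}\mc O_{S_d}$ as described after Equation \ref{VOAembintroeqn}, and the stability condition $\zeta=\zeta_\VW$. First I would check that this triple is an admissible instance of the general construction: the object $M$ lies in $\DD^b\Coh(Y)^T$, being a finite direct sum of shifts of structure sheaves of the $T$-invariant reduced, irreducible components $S_d\subset S^\red$, and $0_S$ is a framing structure for $M$ in the sense of \cite{BR1}. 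Granting this, Theorem \ref{Athm} supplies the framed quiver with potential $(Q_M^\f,W_M^\f)$ presenting $\mf M^{0_S}(Y,M)$, and Theorem \ref{Bthm} then produces a representation $\rho_M:\mc H(Y)\to \End(\V^{0_S,\zeta_\VW}(Y,M))$ on the associated vanishing-cycle homology.

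It then remains only to pass to the localized, $A$-equivariant setting and to reconcile notation. By Equation \ref{VS0defeqn}, the space $\V_S^0$ is by definition the base change of $\V^{0_S,\zeta_\VW}(Y,M)$ along the localization map $H_T^\bullet(\pt)\to F$. Since the representation $\rho_M$ of Theorem \ref{Bthm} is assembled from $A$-equivariant correspondences and is in particular $H_T^\bullet(\pt)$-linear, it commutes with this base change; I would therefore set $\rho_S^0=\rho_M\otimes_{H_T^\bullet(\pt)}F$ and observe that it is a well-defined algebra homomorphism landing in $\End_F(\V_S^0)$, with naturality inherited from that of $\rho_M$. This yields the asserted representation.

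The substance of the corollary is thus entirely carried by Theorem \ref{Bthm}, so there is no genuine obstacle here. The only points that warrant care are the two bookkeeping checks above — that $(\mc O^\sss_{S^\red}[1],0_S,\zeta_\VW)$ meets the hypotheses of the general construction, and that the Hall-algebra action is compatible with base change to $F$ — and the latter follows from the $H_T^\bullet(\pt)$-linearity of the vanishing-cycle pushforward-pullback operators defining $\rho_M$ in \cite{BR1}.
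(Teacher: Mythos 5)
Your proposal is correct and matches the paper's own treatment: the corollary is stated there as an immediate specialization of Theorem \ref{Bthm} to $M=\mc O^\sss_{S^\red}[1]$, $\f=0_S$, $\zeta=\zeta_\VW$, with the passage to the localized $A$-equivariant groups over $F$ handled implicitly exactly as in your base-change remark. No further comment is needed.
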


We define the subspace of spherical generators $\mc H_1(Y)\subset \mc H(Y)$ by
\[ \mc H_1(Y) = \bigoplus_{|\nnn|=1} \mc H_\nnn(Y)  = \bigoplus_{i\in V_{Q_Y}} \mc H_{\one_i}(Y) \]
where $\one_i\in \bb N^{V_{Q_Y}}$ denotes the $i^{th}$ standard basis vector, and the spherical subalgebra
\[ \mc{SH}(Y)= \langle \mc H_1(Y) \rangle  \subset \mc H(Y) \ ,\]
as the subalgebra generated by this subspace over the base ring, which in the present setting is given by $F$, the field of fractions of $H^\bullet_A(\pt)$.

Let $\mf N \subset \mf M_{\one_i}(Y)\subset \mf M^1(Y)=\bigsqcup_{i \in V_{Q_Y}} \mf M_{\one_i}(Y)$ be a closed substack admitting fundamental class 
\[ [\mf N] \ \in H_\bullet(\mf M_{\one_i}(Y),\varphi_{W_{Y,\one_i}})  \subset \mc H_1(Y)=\bigoplus_{i\in V_{Q_Y}} H_\bullet(\mf M_{\one_i}(Y),\varphi_{W_{Y,\one_i}})  \ , \]
for example given by $\mf N=\mf M_{\one_i}(Y)$, and consider the endomorphism $\rho([\mf N]) \in \End_F(\V_S^0) $ induced by the representation of Corollary \ref{Nakopcoro}. There are natural correspondences
\begin{equation}\label{Nakcorrmaineqn}
	\vcenter{\xymatrix{ & \mc M_{\nnn,\nnn+\one_i}^{[\mf N]}(Y,S) \ar[dr]^p \ar[dl]_q & \\ \mc M_\nnn^0(Y,S) && \mc M_{\nnn+\one_i}^0(Y,S) }}  \ ,
\end{equation}
where $\mc M_{\nnn,\nnn+\one_i}^{[\mf N]}(Y,S)$ is the moduli space of short exact sequences of $\zeta_\VW$-stable representations
\begin{equation}\label{seseqn}
	 0 \to F \to \mc E \to \mc F \to 0  \ ,
\end{equation}
such that $\mc E\in \mc M_{\nnn+\one_i}^0(Y,S)$, $\mc F \in\mc M_{\nnn}^0(Y,S) $, and such that the subobject $F$, which thus necessarily determines a $\K$-point of $\mc M_{\one_i}(Y)$, is an element of the substack $\mf N$. This is the desired generalization of the correspondences of Nakajima recalled in Equation \ref{Nakcorreqn} for $k=1$, and we have:

\begin{prop}\label{Nakopprop} The endomorphism $\rho([\mf N]) \in \End_F(\V_S^0) $ is given by the direct sum of the maps
\begin{equation}\label{Nakendmainkeqn}
		\alpha^n([\mf N]) =p_*\circ q^*: H_\bullet^A( \mc M_\nnn^0(Y,S),\varphi_{W^{0_S}_\nnn}) \otimes_{H_T^\bullet(\pt)} K \to H_\bullet^A( \mc M_{\nnn+\kk}^0(Y,S),\varphi_{W^{0_S}_{\nnn+\kk}}) \otimes_{H_T^\bullet(\pt)} K  ,
\end{equation}
over $\nnn \in \bb N^{V_{Q_Y}}$, induced by the correspondences of Equation \ref{Nakcorrmaineqn}.
\end{prop}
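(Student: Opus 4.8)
The plan is to unwind the definition of the representation $\rho_S^0$ from Theorem \ref{Bthm} and Corollary \ref{Nakopcoro}, and specialize it to the spherical generator $[\mf N] \in \mc H_1(Y)$. Recall that the representation $\rho_M$ is defined, for $a \in \mc H_\kk(Y)$ and $v \in \V^{0_S,\zeta_\VW}_\nnn(Y,M)$ with $M = \mc O^\sss_{S^\red}[1]$, by the convolution $\rho_M(a)(v) = p_*\, q^*(a \boxtimes v)$ along the correspondence
\[ \mf M_\kk(Y) \times \mc M^0_\nnn(Y,S) \xleftarrow{\ q\ } \mf M^{0_S,\zeta_\VW}_{\kk,\nnn}(Y,M) \xrightarrow{\ p\ } \mc M^0_{\kk+\nnn}(Y,S) \ , \]
where $\mf M^{0_S,\zeta_\VW}_{\kk,\nnn}(Y,M)$ is the stack of short exact sequences $0 \to F \to \mc E \to \mc F \to 0$ with $F$ a representation of the unframed quiver $(Q_Y,W_Y)$ of dimension $\kk$ and $\mc F,\mc E$ framed $\zeta_\VW$-stable objects of dimensions $\nnn$ and $\kk+\nnn$; here $q$ records the pair $(F,\mc F)$, the map $p$ records $\mc E$, and the external product, pullback, and pushforward are taken in the vanishing-cycle homology theory determined by the potentials, which are Thom--Sebastiani compatible along this correspondence by the construction of \cite{BR1}.

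First I would set $\kk = \one_i$ and $a = [\mf N]$. Since $[\mf N]$ is the fundamental class of the closed substack $\mf N \subset \mf M_{\one_i}(Y)$, the class $[\mf N]\boxtimes v$ is supported on $\mf N \times \mc M^0_\nnn(Y,S)$, and hence $q^*([\mf N]\boxtimes v)$ is supported on the closed substack $q^{-1}\big(\mf N \times \mc M^0_\nnn(Y,S)\big)$. By the definition of $\mc M^{[\mf N]}_{\nnn,\nnn+\one_i}(Y,S)$ via the short exact sequences of Equation \ref{seseqn}, whose subobject $F$ is required to lie in $\mf N$, this preimage is exactly $\mc M^{[\mf N]}_{\nnn,\nnn+\one_i}(Y,S)$, and the restrictions of $q$ and $p$ to it are precisely the maps $q$ and $p$ of Equation \ref{Nakcorrmaineqn}. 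Because $[\mf N]$ is a fundamental class and, by the construction of the correspondence in \cite{BR1}, the left projection restricts over $\mf N$ with preimage of the expected dimension, the pullback $q^*([\mf N]\boxtimes v)$ is identified with the ordinary pullback $q^* v$ along the restricted correspondence; applying $p_*$ then yields $p_* q^* v = \alpha^n([\mf N])(v)$, as claimed.

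The main obstacle is the bookkeeping for the vanishing-cycle coefficients, namely verifying that the abstract convolution genuinely reduces to the naive pull-push $p_*\circ q^*$ on the homology groups $H_\bullet^A(\mc M^0_\bullet(Y,S),\varphi_{W^{0_S}})$. This rests on three compatibilities, all supplied by \cite{BR1}: the Thom--Sebastiani identification relating the external product of $\varphi_{W^{0_S}_\nnn}$ with $\varphi_{W_{Y,\one_i}}$ on the source of $q$ to the sheaf of vanishing cycles of the restricted potential on $\mc M^{[\mf N]}_{\nnn,\nnn+\one_i}(Y,S)$; the base-change and support estimates ensuring that $q^*$ is defined on $\varphi$-homology and commutes with restriction to the locus $F \in \mf N$; and the properness of $p$ on the $\zeta_\VW$-stable locus, which makes $p_*$ a well-defined proper pushforward in vanishing-cycle homology. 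Granting these, the argument of the preceding paragraph is a pure manipulation of supports and fundamental classes.

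Assembling these observations, the endomorphism $\rho_S^0([\mf N])$ acts on each summand $H_\bullet^A(\mc M^0_\nnn(Y,S),\varphi_{W^{0_S}_\nnn})\otimes_{H_T^\bullet(\pt)} K$ by the operator $\alpha^n([\mf N]) = p_*\circ q^*$ of Equation \ref{Nakendmainkeqn}, and taking the direct sum over $\nnn \in \bb N^{V_{Q_Y}}$ gives the stated identity, completing the proof.
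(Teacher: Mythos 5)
Your proposal is correct and takes essentially the same route as the paper, which disposes of this proposition in one line by citing a straightforward generalization of the proof of Theorem 5.6 of \cite{YZ2}; your unwinding of the convolution, the support argument identifying $q^{-1}(\mf N\times \mc M^0_\nnn(Y,S))$ with $\mc M^{[\mf N]}_{\nnn,\nnn+\one_i}(Y,S)$, and the appeal to Thom--Sebastiani, base change, and properness of $p$ on the stable locus is precisely the content of that cited argument. You supply more detail than the paper does, and you correctly flag the expected-dimension hypothesis needed to replace the refined pullback of $[\mf N]$ by the fundamental class of its preimage.
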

\begin{proof} This follows from a straightforward argument generalizing the proof of Theorem 5.6 in \cite{YZ2}.
\end{proof}
These are the desired generalizations of the endomorphisms of Equation \ref{Nakopeqn} for $k=1$. More generally, for $k>0$ the analogous elements given by products of fundamental classes
\[  [\mf N_1] \cdot \hdots  \cdot [\mf N_k]  \ \in \mc{SH}_k(Y) = \mc H_1(Y) \cdot \hdots \cdot \mc H_1(Y) \subset \mc H(Y) \]
define endomorphisms induced by the analogous correspondences
\begin{equation}\label{Nakcorrmainkeqn}
	\vcenter{\xymatrix{ & \mc M_{\nnn,\nnn+\kk}^{ [\mf N_1] \cdot \hdots  \cdot [\mf N_k] }(Y,S) \ar[dr]^p \ar[dl]_q & \\ \mc M_\nnn^0(Y,S) && \mc M_{\nnn+\kk}^0(Y,S) }}  \ ,
\end{equation}
where $ \mc M_{\nnn,\nnn+\kk}^{ [\mf N_1] \cdot \hdots  \cdot [\mf N_k] }(Y,S)$ is the moduli space of short exact sequences as in Equation \ref{seseqn}, such that $\mc E\in \mc M_{\nnn+\kk}^0(Y,S)$, $\mc F \in\mc M_{\nnn}^0(Y,S) $, and such that the subobject $F$, which thus necessarily determines a $\K$-point of $\mf M_{\kk}(Y)$, admits a composition series
\[F = \left[ F_0 < F_1 < \hdots <  F_k \right]  \]
with each object $F_i$ determining a $\bb K$-point in $\mf N_i \subset \mf M_{\one_i}(Y) $.

To define the analogous Nakajima operators for $k<0$, we need to make the following additional assumption, which appears to hold in all examples of interest: For the remainder of this section, we assume that the $A$-fixed subvariety $ \mc M_\nnn^{0}(Y,S)^A$ is given by a set $\Fp_\nnn$ of isolated fixed points. In particular, the pullback map on Borel-Moore homology groups, with coefficients in the sheaf of vanishing cycles, gives an isomorphism
\[\bb V_{S,\nnn}^0 \cong H_\bullet^{A}(\mc M_\nnn^{0}(Y,S), \varphi_{\overline{W}^{0_S}_\nnn})\otimes_{H^\bullet_A(\pt)}F  \xrightarrow{\iota^*}  H_\bullet^A(\Fp_\nnn,\varphi_{\overline{W}^{0_S}_\nnn\circ \iota}) \otimes_{H^\bullet_A(\pt)}F = \bigoplus_{\lambda\in \Fp_\nnn} H^A_\bullet(\pt_\lambda)\otimes_{H^\bullet_A(\pt)}F \ , \]
for each $\nnn\in \bb N^{V_{Q_Y}}$, and letting $F_\lambda$ denote a copy of the base field $F$, we have an identification
\[  \bigoplus_{\lambda\in \Fp_\nnn}  F_\lambda   \xrightarrow{\cong}  \bigoplus_{\lambda\in \Fp_\nnn}   H^A_\bullet(\pt_\lambda)\otimes_{H^\bullet_A(\pt)}F \quad\quad \text{defined by} \quad\quad P \mapsto P\cap [ \pt_\lambda ]  \ ,\]
for each $P\in F_\lambda$ so that we obtain a natural basis for the module $\bb V^\zeta_\dd$ given by
\[ \bb V_{S,\nnn}^0 =\bigoplus_{\lambda\in \Fp_\nnn}  F_\lambda  \quad\quad \text{and thus}\quad\quad \bb V_{S}^0  = \bigoplus_{\lambda \in \Fp} F_\lambda  \quad\quad\text{for}\quad\quad  \Fp = \bigsqcup_{\nnn\in \bb N^{V_{Q_Y}}} \Fp_\nnn\ . \] 
In particular, there is a natural pairing
\begin{equation}\label{pairingeqn}
	(\cdot,\cdot):\V^\zeta \otimes_F \V^\zeta \to F \quad\quad \text{defined by} \quad\quad  ([\pt_\lambda],[\pt_\mu]) = \delta_{\lambda,\mu}\textup{Eu}_A(T_\lambda)  \ , 
\end{equation}
where $T_\lambda$ denotes the tangent space to $ \mf M^\zeta_\dd(Q_M^\f, W_M^\f)$ at the fixed point $\lambda \in \Fp$ and $\textup{Eu}_A$ denotes the $A$-equivariant Euler class. Let $\mc{SH}^\op(Y)$ denote the opposite algebra of $\mc{SH}(Y)$ and we have:

\begin{prop}\label{opactprop} There exists a natural action right action
	\begin{equation}\label{shoprepmapeqn}
		(\rho_S^0)^*:\mc{SH}^\op(Y) \to \End_F(\V^\zeta) \quad\quad \text{defined by} \quad \quad f=e^\op \mapsto \rho(e)^*  
	\end{equation}
	for each $f=e^\op \in \mc{SH}^\op$, where $\rho:\mc{SH}\to \End_F(\V^\zeta)$ is the representation of Theorem \ref{Bthm} and $(\cdot)^*:\End_F(\V^\zeta)\to \End_F(\V^\zeta)$ denotes the adjoint with respect to the pairing of Equation \ref{pairingeqn} above.
\end{prop}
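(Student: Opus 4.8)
The plan is to reduce the statement to the formal fact that taking adjoints with respect to a non-degenerate symmetric bilinear form is an order-reversing (anti-)automorphism of an algebra of endomorphisms, and then to precompose this anti-automorphism with the algebra homomorphism $\rho$ of Theorem \ref{Bthm}. Before doing so I would make everything compatible with the grading by $\bb N^{V_{Q_Y}}$: the module $\V^\zeta = \bigoplus_\nnn \bb V_{S,\nnn}^0$ is graded, each graded piece $\bb V_{S,\nnn}^0 = \bigoplus_{\lambda\in\Fp_\nnn} F_\lambda$ is \emph{finite dimensional} by the isolated fixed point hypothesis, the pairing of Equation \ref{pairingeqn} pairs $\bb V_{S,\nnn}^0$ only with itself (since the factor $\delta_{\lambda,\mu}$ forces $\lambda,\mu$ into a common $\Fp_\nnn$), and the Hall algebra $\mc{SH}(Y)$ is graded with $\rho$ a graded representation, so that a homogeneous $e\in \mc{SH}_\dd(Y)$ acts by $\rho(e):\bb V_{S,\nnn}^0 \to \bb V_{S,\nnn+\dd}^0$.

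The essential input is non-degeneracy. The diagonal entries $\textup{Eu}_A(T_\lambda)$ of the pairing are invertible in $F$: this is exactly the invertibility of the equivariant Euler classes of the tangent spaces at the isolated fixed points which underlies the localization isomorphism $\iota^*$ already used to produce the fixed-point basis. Hence the pairing restricts to a non-degenerate symmetric $F$-bilinear form on each finite dimensional piece $\bb V_{S,\nnn}^0$. For homogeneous $e\in \mc{SH}_\dd(Y)$ the relation $(\rho(e)v,w)=(v,\rho(e)^*w)$ then determines $\rho(e)^*$ unambiguously on each graded piece, giving a well-defined map $\rho(e)^*:\bb V_{S,\nnn+\dd}^0 \to \bb V_{S,\nnn}^0$; summing over $\nnn$ and extending $F$-linearly over $\mc{SH}(Y)$ yields $\rho(e)^*\in \End_F(\V^\zeta)$ for every $e$, despite $\V^\zeta$ being infinite dimensional. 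Non-degeneracy further makes $A\mapsto A^*$ an $F$-linear involutive anti-automorphism on this algebra of graded endomorphisms, so that $(AB)^*=B^*A^*$ and $(cA+B)^*=cA^*+B^*$ for $c\in F$.

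Finally I would combine this with the fact from Theorem \ref{Bthm} (and Corollary \ref{Nakopcoro}) that $\rho$ is an algebra homomorphism: for homogeneous $e_1,e_2$ we get $\rho(e_1e_2)^*=(\rho(e_1)\rho(e_2))^*=\rho(e_2)^*\rho(e_1)^*$, which is precisely the statement that $e\mapsto \rho(e)^*$ reverses multiplication. Since the product of $e_1^\op$ and $e_2^\op$ in $\mc{SH}^\op(Y)$ is $(e_2e_1)^\op$, this says that $(\rho_S^0)^*$ is an algebra homomorphism out of $\mc{SH}^\op(Y)$, which is canonical since no choices enter the adjoint; this gives the claimed right action of $\mc{SH}(Y)$ on $\V^\zeta$. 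The only genuine obstacle is the non-degeneracy step, i.e. the invertibility of the tangent Euler classes at the fixed points; granting the isolated-fixed-point hypothesis this is immediate, and the remainder is the standard functoriality of adjoints with respect to a non-degenerate bilinear form, the grading ensuring that all adjoints exist even though $\V^\zeta$ is infinite dimensional.
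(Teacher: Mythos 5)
Your argument is correct and is essentially the proof the paper invokes: the paper's own proof is a one-line citation to Proposition 3.6 of \cite{SV}, whose content is exactly the formal observation you spell out, namely that localization at isolated fixed points makes the pairing of Equation \ref{pairingeqn} non-degenerate on each finite-dimensional graded piece (the tangent Euler classes are invertible in $F$ precisely because the fixed points are isolated), so the adjoint is a well-defined $F$-linear anti-automorphism of the graded endomorphisms and precomposing it with the algebra homomorphism $\rho$ gives a homomorphism out of $\mc{SH}^\op(Y)$, i.e.\ a right action. No gap.
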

\begin{proof}
	This follows from the proof of Proposition 3.6 of \cite{SV}, \emph{mutatis mutandis}.
\end{proof}

\noindent Analogously to Corollary \ref{Nakopcoro}, the endomorphisms $(\rho_S^0)^*([ \mf N ]) \in \End_F(\V^\zeta) $ are induced by analogous correspondences to those of Equation \ref{Nakcorrmaineqn}.

Following the proof of the AGT conjecture from \cite{SV} and its generalization to divisors $S
\subset \C^3$ in \cite{RSYZ}, in \cite{BR1} we prove that the representations of Corollary \ref{Nakopcoro} and Proposition \ref{opactprop}, which are the analogues of the raising and lowering operators in the Heisenberg algebra construction of Nakajima, respectively,  extend to define a representation of a larger algebra $\mc Y_S(Y)$ where
\begin{equation}\label{QYeqn}
	\mc Y_S(Y)=\mc Y_S(Y)_- \otimes \mc Y_S(Y)_0 \otimes \mc Y_S(Y)_+ \quad\quad \mc Y_S(Y)_+ \cong \mc{SH}(Y) \quad\quad \mc Y_S(Y)_-\cong \mc{SH}(Y)^\op  \ , 
\end{equation}
that is, there exists a natural representation $\rho_S^0: \mc Y_S(Y)\to \End_F(\bb V_S^0)$ such that the restrictions to $\mc Y_S(Y)_+$ and $\mc Y_S(Y)_- $ identify with the representations of $\mc{SH}(Y)$ and $\mc{SH}(Y)^\op$ above.

We can now state the generalization to this setting of the standard mathematical formulation of the AGT conjecture \cite{AGT}, proved in \cite{SV} and \cite{MO}. We will call this statement the Gaiotto-Rapcak conjecture, as although the authors did not explicitly conjecture this statement, the study of this family of vertex algebras associated to divisors in toric Calabi-Yau threefolds, understood to be generalizing those featuring in the original AGT conjecture, was initiated in \cite{GaiR} and many important aspects of the correspondence were studied in the subsequent work of Gaiotto, Rapcak and their collaborators (see for example \cite{RSYZ}, \cite{GaiR2}, \cite{Rap}, \cite{PrR}, \cite{CrG}, and \cite{GRZ}):

\begin{conj}[Gaiotto-Rapcak]\label{GRconj} There exists a natural representation
	\[\rho: \mc U(\V(Y,S)) \to \End_F(\V_S^0 )\]
	of the algebra of modes $\mc U(\V(Y,S))$ of the vertex algebra $\V(Y,S)$ on $\V_S^0$, inducing an isomorphism
	\[ \mc U(\V(Y,S)) \xrightarrow{\cong} \rho( \mc U(\V(Y,S))) \xrightarrow{\cong} \rho_S^0(\mc Y_S(Y)) \]
and such that $\V_S^0$ identifies with $\Pi(Y,S)$, the free field module for $\V(Y,S)$.
\end{conj}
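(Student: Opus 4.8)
The plan is to follow the strategy of Schiffmann--Vasserot \cite{SV} recalled in Section \ref{backgroundsec}, now organized around the factorization and locality properties of $\V(Y,S)$ established in Propositions \ref{factprop} and \ref{localityprop}, together with the geometric representation $\rho_S^0:\mc Y_S(Y)\to\End_F(\V_S^0)$ furnished by Theorem \ref{Bthm} and Corollary \ref{Nakopcoro}. The first step is to establish the vector space identification $\V_S^0\cong\Pi(Y,S)$. The tensor product structure on the framing-equivariant homology of the moduli spaces $\mc M^0_\nnn(Y,S)$ should yield a factorization $\V_S^0\cong\bigotimes_{d\in\mf D}(\V_{S_d}^0)^{\otimes r_d}$, compatible with localization to the fixed points $\Fp_\nnn$, reducing the identification to the case that $S$ is reduced and irreducible. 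In that base case there are no screening operators, so $\V(Y,S_d)=\Pi_d$, and the desired statement is a lattice-extended, $A$-equivariant refinement of the Grojnowski--Nakajima theorem (Theorem \ref{Nakthm}), which is the content of Section \ref{latticegeosec}: there the operators $\rho_{S_d}^0([\mf N])$ of Proposition \ref{Nakopprop} realize the Heisenberg generators $J^y$ and the lattice vertex operators $V_i^d$ of Equation \ref{curveextopeqn}, while their adjoints under the pairing of Equation \ref{pairingeqn}, via Proposition \ref{opactprop}, supply the negative modes.

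For general $S$ the argument proceeds by induction on the number of irreducible components $N=\sum_d r_d$, using the filtration $\mc O_S=[\mc O_{S_{d_1}}<\cdots<\mc O_{S_{d_N}}]$. Splitting off the top subquotient as in Proposition \ref{factprop} produces a short exact sequence $\mc O_R\to\mc O_S\to\mc O_T$ and a corresponding embedding $\V(Y,S)\hookrightarrow\V(Y,R)\otimes_K\V(Y,T)$ realized as the kernel of a single screening charge $Q_{s_{k_0}}$. On the geometric side, the conjectural coproduct $\Delta_{R,T}:\mc Y_S(Y)\to\mc Y_R(Y)\otimes\mc Y_T(Y)$ should intertwine $\rho_S^0$ with $\rho_R^0\otimes\rho_T^0$ under the factorization $\V_S^0\cong\V_R^0\otimes\V_T^0$, so that the image $\rho_S^0(\mc Y_S(Y))$ is determined by the images for $R$ and $T$ together with the single remaining correspondence across the curve $C_{s_{k_0}}$. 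This reduces the generation statement to understanding how one additional screening charge cuts down the product $\mc U(\V(Y,R))\otimes\mc U(\V(Y,T))$.

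The containment $\rho_S^0(\mc Y_S(Y))\subseteq\mc U(\V(Y,S))$ is the more tractable direction: it suffices to check that the image of each spherical generator commutes with every screening charge $Q_{s_k}$, which can be verified by computing, via equivariant localization on the correspondences of Equation \ref{Nakcorrmaineqn}, the operator product of the geometric field with the screening current $Q_{s_k}(z)$ and showing that the relevant residue vanishes, exactly as in the Virasoro computation of Example \ref{FFsl2eg}. Faithfulness of $\rho$ on $\mc U(\V(Y,S))$, yielding the first isomorphism in the statement, follows from the nondegeneracy of the pairing of Equation \ref{pairingeqn} and the fact that $\Pi(Y,S)$ is a faithful module over its own modes.

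The opposite containment---that the geometric operators generate all of $\mc U(\V(Y,S))$---is the main obstacle. Unlike the case $Y=\C^3$, $S=r[\C^2]$, where Feigin--Frenkel (Theorem \ref{FFtheo}) supplies an explicit W-algebra presentation against which to match, $\V(Y,S)$ is here defined only as a kernel of screenings, so surjectivity cannot be read off from generators and relations. I expect the resolution to combine the inductive factorization above with a graded dimension comparison: the character of $\V_S^0$ is computed by a refinement of the Vafa--Witten generating function $\mc Z^\VW_S(\qq)$, while the character of $\V(Y,S)$ is computed from the free field realization, and matching these---together with the injectivity already secured above---would force the surjection. In the type $A$ cases this matching is made concrete by the conjectural identification $\mc Y_{S_{\mu,\nu}}(Y_{m,n})\cong\mc Y_{\sigma_{\mu,\nu}}(\glh_{m|n})$ with a shifted affine Yangian and the compatibility of its coproduct with parabolic induction, reducing the problem to the representation theory of the affine $W$-superalgebras appearing in Section \ref{Walgsec}.
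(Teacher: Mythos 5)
The statement you are proving is stated in the paper as a \emph{conjecture}, and the paper only establishes it when $S$ has a single reduced, irreducible component (Theorem \ref{abGRthm}, proved in Section \ref{latticegeosec} via the local models of Equation \ref{localmodeleqn} and Proposition \ref{localsubrepprop}); for general $S$ the paper gives exactly the conditional outline you describe in Section \ref{generalGRsec}. Your strategy — factorize $\V_S^0$ into rank-one pieces, reduce to the reduced irreducible case, and induct on the number of components using Proposition \ref{factprop} and a coproduct on $\mc Y_S(Y)$ — coincides with the paper's own roadmap, and therefore inherits its open gaps rather than closing them. Concretely: the isomorphism $\V_S^0\cong\bigotimes_d(\V_{S_d}^0)^{\otimes r_d}$ of underlying vector spaces can often be checked by localization and the tensor product structure on fixed points, but the substantive claim — that there is a canonical normalization of this isomorphism intertwining the geometric $\mc Y_S(Y)$-action with the tensor product of the rank-one representations restricted along the factorization embedding — is precisely Conjecture \ref{geofactconj}, which you assume ("should yield") without argument. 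Likewise the coproduct $\Delta_{R,T}$ intertwining $\rho_S^0$ with $\rho_R^0\otimes\rho_T^0$ is itself conjectural in the paper, so your inductive step rests on two unproven inputs.

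The surjectivity step is where your proposal genuinely diverges from the paper, and the new ingredient does not work as stated. Matching the graded character of $\V_S^0$ against that of $\Pi(Y,S)$, even granted the injectivity of $\rho$ on $\mc U(\V(Y,S))$, does not force $\rho_S^0(\mc Y_S(Y))=\rho(\mc U(\V(Y,S)))$: equality of characters of the \emph{module} says nothing about which subalgebra of $\End_F(\V_S^0)$ is being generated, and $\Pi(Y,S)$ is neither irreducible nor cyclic over $\mc U(\V(Y,S))$ (it is the full free field module, a direct sum of Fock-type modules), so one cannot leverage a Jacobson-density or highest-weight argument from the character alone. Indeed the same character is realized by the strictly larger algebra $\mc U(\Pi(Y,S))$ acting on the same space. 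In the $\C^3$ case this step is supplied by the explicit Feigin--Frenkel presentation of $W^\kappa_{f_\prin}(\gl_r)$ matched against explicit Yangian generators; absent an analogous presentation of $\V(Y,S)$, the generation statement remains the essential open problem, which is why the paper leaves the general statement as a conjecture. A smaller circularity: your faithfulness argument invokes the identification $\V_S^0\cong\Pi(Y,S)$ as a module, which is part of the conclusion being established.
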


Following the observations above, the algebra $\rho_S^0(\mc Y_S(Y))\subset \End_F(\V_S^0)$ is by definition the subalgebra generated over $F$ by endomorphisms induced by correspondences of the type in Equation \ref{Nakcorrmaineqn} above. Thus, this conjecture asserts that the algebra of modes $\mc U(\V(Y,S)) $ of the vertex algebras $\V(Y,S)$ defined in Section \ref{screensec} is equivalent to the algebra generated by the Nakajima-type operators introduced above, generalizing the proof of the AGT conjecture in \cite{SV}. In the example $Y=\C^3$, this is the generalization proved in \cite{RSYZ}, on which our general approach is modelled.

We now describe the analogous conjectural geometric construction of more general vertex algebra modules over $\V(Y,S)$, as outlined in the introduction. Recall that in the definition of $\V_S^0$ in Equation \ref{VS0defeqn}, we fixed the choice of trivial framing structure $\f=0_S$ of rank $\rr_S$ determined by the divisor $S$. In fact, for each choice of framing structure $\f$ of rank $\rr_S$ satisfying some appropriate hypotheses, there exists a moduli space of $\f$-framed, $\zeta_\VW$-stable, perverse coherent extensions of $M=\mc O_{S^\red}^\sss[1]$,
\begin{equation}\label{MSfdefeqn}
	 \mc M^\f(Y,S)=\mf M^{\f,\zeta_\VW}(Y,\mc O_{S^\red}^\sss[1]) 
\end{equation}
generalizing those of Equation \ref{MS0defeqn}, and their corresponding homology groups
\begin{equation}\label{VSfdefeqn}
	\V_S^\f=\V^{\f,\zeta_\VW}(Y,\mc O_{S^\red}^\sss[1])=  \bigoplus_{\nnn \in \bb N^{V_{Q^{\f}_S}} } H_\bullet^A( \mc M_\nnn^\f(Y,S),\varphi_{W^{\f}_\nnn}) \otimes_{H_T^\bullet(\pt)} K \ , 
\end{equation}
where $(Q^{\f}_S,W^{\f})$ denotes the corresponding framed quiver with potential as in Theorem \ref{Athm}.

These satisfy the following analogue of Corollary \ref{Nakopcoro} of Theorem \ref{Bthm}:

\begin{corollary}\label{Nakopgencoro} There exists a natural representation
	\[ \rho:\mc H(Y) \to \End_F(\V_S^\f)  \ , \]
	of the Kontsevich-Soibelman cohomological Hall algebra $\mc H(Y)$ on $\V_S^\f=\V^{\f,\zeta_\VW}(Y,\mc O_{S^\red}^\sss[1])$.
\end{corollary}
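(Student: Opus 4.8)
The plan is to obtain this statement as a direct specialization of Theorem \ref{Bthm}, exactly as Corollary \ref{Nakopcoro} was obtained in the case of the trivial framing structure $\f = 0_S$. Theorem \ref{Bthm} produces a representation $\rho_M : \mc H(Y) \to \End(\V^{\f,\zeta}(Y,M))$ for \emph{any} object $M \in \DD^b\Coh(Y)^T$ satisfying the hypotheses of \cite{BR1} and \emph{any} framing structure $\f$ for $M$; passing from $0_S$ to an arbitrary $\f$ of rank $\rr_S$ therefore requires no new input beyond verifying that the data at hand lie within the scope of that theorem.

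First I would fix the object $M = \mc O_{S^\red}^\sss[1] = \oplus_{d\in \mf D_S} \mc O_{S_d}[1]$, the stability condition $\zeta = \zeta_\VW$, and the given framing structure $\f$, and record that by the defining Equations \ref{MSfdefeqn} and \ref{VSfdefeqn} one has tautologically
\[ \mc M^\f(Y,S) = \mf M^{\f,\zeta_\VW}(Y,\mc O_{S^\red}^\sss[1]) \quad\quad \text{and}\quad\quad \V_S^\f = \V^{\f,\zeta_\VW}(Y,\mc O_{S^\red}^\sss[1]) \ . \]
The representation $\rho$ asserted above is then simply $\rho_M$ for this choice of data, constructed by vanishing-cycle pushforward--pullback along the correspondences
\[ \vcenter{\xymatrix{ & \mf M^{\f,\zeta_\VW}_{\kk,\bl}(Y,M) \ar[dl] \ar[dr] \\ \mf M_{\kk}(Y)\times \mf M^{\f,\zeta_\VW}_\bl(Y,M) & & \mf M^{\f,\zeta_\VW}_{\kk+\bl}(Y,M) }} \]
supplied by Theorem \ref{Bthm}.

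The only step carrying content beyond Corollary \ref{Nakopcoro}, and hence the main (if mild) obstacle, is checking that $\f$, together with $M = \mc O_{S^\red}^\sss[1]$ and $\zeta = \zeta_\VW$, satisfies the appropriate hypotheses under which $\mc M^\f(Y,S)$ was defined, so that Theorem \ref{Athm} produces the framed quiver with potential $(Q^\f_S,W^\f)$ and Theorem \ref{Bthm} applies verbatim. Concretely I would confirm that $\f$ of rank $\rr_S$ defines an admissible framing structure in the sense of \cite{BR1}, that the moduli of $\zeta_\VW$-stable $\f$-framed perverse coherent extensions is well-behaved enough for the correspondences above to carry fundamental classes, and that the induced maps satisfy the associativity compatibility making them an $\mc H(Y)$-action. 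As these are precisely the conditions under which Theorem \ref{Bthm} is established in the companion paper, the proof is complete once they are verified for the general framing $\f$.
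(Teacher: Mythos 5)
Your proposal is correct and follows exactly the paper's own route: Corollary \ref{Nakopgencoro} is presented as an immediate specialization of Theorem \ref{Bthm} to the data $M=\mc O_{S^\red}^\sss[1]$, $\zeta=\zeta_\VW$, and a general framing structure $\f$ of rank $\rr_S$, precisely parallel to how Corollary \ref{Nakopcoro} handles the case $\f=0_S$. Your added remark that one must verify $\f$ satisfies the "appropriate hypotheses" deferred to the companion paper is a reasonable point of care, but it does not change the argument.
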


\noindent Further, Proposition \ref{opactprop} again applies in this case, so that we obtain the putative geometric construction of the conjectural representation, precisely as above:

\begin{conj}\label{voamodconj} There exists a natural representation
	\[\rho_S^\f: \mc U(\V(Y,S)) \to \End_F(\V_S^\f )\]
	of the algebra of modes $\mc U(\V(Y,S))$ of the vertex algebra $\V(Y,S)$ on $\V_S^\f$, inducing an isomorphism
	\[ \rho_S^\f ( \mc U(\V(Y,S))) \xrightarrow{\cong} \rho(\mc Y_S(Y))  \ .\]
\end{conj}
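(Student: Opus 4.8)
The plan is to follow the structure of the proof of Theorem \ref{SVAGTthm} in \cite{SV} and its extension to $Y=\C^3$ in \cite{RSYZ}: factor the geometric module $\V_S^\f$ compatibly with the framing structure, transport the single-component base case through this factorization to obtain a free field action, and then identify the image of the cohomological Hall algebra with the modes of the screened subalgebra $\V(Y,S)\subset\Pi(Y,S)$. Throughout I would use Theorems \ref{Athm} and \ref{Bthm} of \cite{BR1}, the factorization and locality Propositions \ref{factprop} and \ref{localityprop}, and the single-component case proved in Section \ref{latticegeosec}.

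First I would establish a geometric factorization of $\V_S^\f$ indexed by the partial semisimplification $\oplus_{l=1}^h\mc O_{S_l}$ determined by $\f$, namely
\[ \V_S^\f \cong \bigotimes_{l=1}^h \V(Y,S_l)_0 \ , \]
generalizing the isomorphism $\V_{r[\C^2]}^0\cong\V_{\C^2}^{\otimes r}$ of Equation \ref{C2facteqn}. This should follow from the framed quiver description of Theorem \ref{Athm} together with the decomposition of the $A=T_\f\times T$ fixed locus $\Fp_\nnn$ according to how the charge is distributed among the summands $\mc O_{S_l}$, since the tangent Euler classes in the pairing of Equation \ref{pairingeqn} factor accordingly; the dimensional reduction of Equation \ref{dimredeqn} makes the relevant localization computations explicit. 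Iterating all the way down to the irreducible reduced components embeds $\V_S^\f$ into the free field module $\Pi(Y,S)=\bigotimes_{d}\Pi_d^{\otimes r_d}$.

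Granting this, each irreducible component factor is identified in Section \ref{latticegeosec} with the lattice vertex algebra $\Pi_d=\Pi(Y,S_d)$, on which $\mc H(Y)$ acts through $\mc U(\Pi_d)$ with full image. Combining this with the vertex algebra embedding $\V(Y,S)\to\bigotimes_l\V(Y,S_l)$ of Equation \ref{VOAembintroeqn} associated to the semisimplification, the module $\V_S^\f$ carries an action of $\mc U(\bigotimes_l\V(Y,S_l))$, and restricting along this embedding defines the required representation $\rho_S^\f:\mc U(\V(Y,S))\to\End_F(\V_S^\f)$. I would then show that the geometric action of $\mc Y_S(Y)$ from Corollary \ref{Nakopgencoro} and Proposition \ref{opactprop} factors through the free field modes: the spherical generators $\mc H_1(Y)$ are computed by the correspondences of Equation \ref{Nakcorrmaineqn}, and evaluating $p_*\circ q^*$ by equivariant localization should express each as a mode of the generating Heisenberg fields $J^y$ and vertex operators $V_i$ of $\Pi(Y,S)$, with the lowering operators supplied by the adjoints of Proposition \ref{opactprop}.

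The main obstacle is the precise identification $\rho(\mc Y_S(Y))\cong\rho_S^\f(\mc U(\V(Y,S)))$, which I would split into two inclusions. The inclusion $\rho(\mc Y_S(Y))\subseteq\rho_S^\f(\mc U(\V(Y,S)))$ should reduce to checking that the localized generators lie in $\bigcap_k\ker(Q_{s_k})$, a residue computation generalizing Example \ref{FFsl2eg} in which the normal bundle weights $\e_T(N_{C_{s_k},y}S_{d_\pm})$ defining $\lambda_{s_k}$ are balanced, using the Calabi-Yau relation $\e_1+\e_2+\e_3=0$, so that the operator product of each geometric current with $Q_{s_k}(w)$ is a total $w$-derivative. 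The reverse inclusion, surjectivity onto the full screened algebra, is the genuinely hard step, since $\V(Y,S)$ is defined implicitly as an intersection of kernels with no a priori generating set; here I would induct on the number $N$ of components, using Proposition \ref{factprop} to strip off one screening operator at a time and matching it against a coproduct $\Delta_{S_1,\dots,S_h}$ on $\mc Y_S(Y)$ as in the commuting square of the introduction, with base case the single-component result and inductive step the single-screening comparison exemplified by Theorem \ref{affinesl2theo}. Closing the induction in general requires a Poincar\'e series matching, identifying the Vafa--Witten character $\mc Z^\VW_S(\qq)$ with the vacuum character of $\V(Y,S)$, together with control of the completion in which the surjection of Equation \ref{yangfacteqn} holds; these are precisely the points that remain conjectural beyond the examples of Section \ref{egsec}.
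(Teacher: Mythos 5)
The statement you are addressing is stated in the paper as a conjecture, not a theorem: the paper offers no complete proof, only the strategy outlined in Section \ref{generalGRsec} (geometric factorization via Conjecture \ref{geofactconj}, reduction to the single-component case of Theorem \ref{abGRthm}, and compatibility with generalized coproducts on $\mc Y_S(Y)$) together with the special cases worked out in Sections \ref{latticegeosec} and \ref{egsec}. Your proposal reproduces essentially that same outline, and you correctly identify as the open points exactly what the paper itself leaves conjectural --- the canonical normalization of the factorization isomorphism $\V_S^\f\cong\bigotimes_l\V(Y,S_l)_0$, the surjectivity onto the full screened subalgebra, and the completion in which the map of Equation \ref{yangfacteqn} is a surjection --- so your write-up is an accurate account of the state of the argument rather than a proof, matching the paper's own treatment.
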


\begin{figure}[t]
	\hspace*{-1cm}
	\begin{overpic}[width=.4\textwidth]{picOm2}
		\put(0,90) {$ Y_{2,0}\to X_{2,0}=\{xy-z^2\} \times \bb C $}
		\put(0,84) {\rotatebox[origin=c]{90}{$\subset$}}
		\put(0,78) {$S_{\mu}$}
		\put(67,57) {$M_0=\mu_1+\mu_2$}
		\put(62,38) {$M_1=\mu_2$}
	\end{overpic}\quad\quad
	\begin{overpic}[width=.55\textwidth]{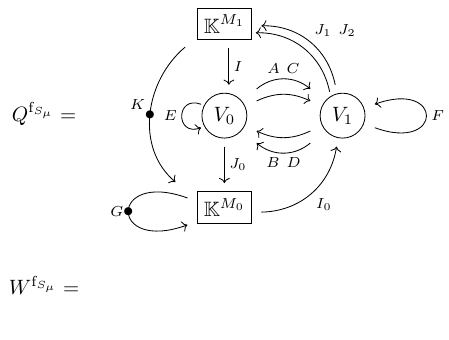}
		\put(22,10) { $  E(BC-DA)+ F(AD-CB) + IJ_1A  $} 
		\put(42,5) {$ + IJ_2C + I_0J_0D+I_0 K J_1 + I_0 G J_0 $}
	\end{overpic}
	\caption{The toric divisor $S_\mu$ in $Y_{2,0}$ and framed quiver with potential $(Q^{\f_{S_\mu}},W^{\f_{S_\mu}})$}
	\label{fig:Walgquivfig}
\end{figure}

Inspired by the construction of \cite{CCDS}, in \cite{BR1} we observed that for each divisor $S$ there is a natural framing structure $\f_S$ of rank $\rr_S$, determined by the object $\mc O_S[1]$ itself, considered as an iterated extension of $r_d$ copies of each of the objects $\mc O_{S_d}[1]$, or equivalently the summands of $\mc O_{S^\red}^\sss[1]=\oplus_{d \in \mf D_S} \mc O_{S_d}[1]$, where we recall that $\rr_S=(r_d)_{d\in\mf D_S}$ is determined by the coefficients of the decomposition $[S]=\sum_{d\in \mf D_S} r_d[ S_d]$. We denote the corresponding representation by simply 
\[\V_S=\V_S^{\f_S}=\V^{\f_S,\zeta_\VW}(Y,\mc O_{S^\red}^\sss[1]) \ , \]
and we have the following variant of the preceding conjecture, which gives a geometric construction of the vacuum module of $\V(Y,S)$:

\begin{conj}\label{vacuumconj} The representation of Conjecture \ref{voamodconj} above,
\[ \rho_S: \mc U(\V(Y,S)) \to \End_F(\V_S)  \ , \]
identifies $\V_S$ with the vacuum module for the vertex algebra $\V(Y,S)$.
\end{conj}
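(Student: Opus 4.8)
The plan is to deduce the statement from Conjecture \ref{voamodconj}, which we assume throughout, by exhibiting an explicit vacuum vector in $\V_S$, proving it is cyclic for the raising part of the geometric action, and matching graded characters to rule out extra relations. The feature distinguishing the framing $\f_S$ from $0_S$ is that $\f_S$ is determined by the object $\mc O_S[1]$ itself rather than its semisimplification $\mc O_{S^\red}^\sss[1]$; geometrically this forces $\mc M_0^{\f_S}(Y,S)$, the moduli space at the trivial dimension vector $\nnn=0$, to be the single reduced point $\{\mc O_S[1]\}$, and I take its fundamental class $v_0=[\mc M_0^{\f_S}(Y,S)]\in \V_S$ as the candidate image of the vacuum $\one\in \V(Y,S)_0$.

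First I would verify the vacuum conditions for $v_0$. Under the isomorphism $\rho_S(\mc U(\V(Y,S)))\cong \rho(\mc Y_S(Y))$ of Conjecture \ref{voamodconj}, the positive modes of $\V(Y,S)\subset \Pi(Y,S)$ are realized by the lowering operators $\mc Y_S(Y)_-\cong \mc{SH}(Y)^\op$ of Proposition \ref{opactprop}, which are adjoint to the raising correspondences of Equation \ref{Nakcorrmaineqn} and strictly decrease $\nnn$; since $\mc M_\nnn^{\f_S}(Y,S)=\varnothing$ for $\nnn$ not effective, these annihilate $v_0$. This produces the canonical $\V(Y,S)$-module map
\[ \Phi:\V(Y,S)_0 \to \V_S \ , \qquad \one \mapsto v_0 \ , \]
from the vacuum module by its universal property. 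The next step is cyclicity of $v_0$, equivalently surjectivity of $\Phi$: I would show that $v_0$ generates $\V_S$ under the image of $\mc Y_S(Y)_+\cong \mc{SH}(Y)$, i.e. that every fixed-point class of $\V_S$ arises from $v_0$ by iterating the raising correspondences of Equation \ref{Nakcorrmainkeqn}. Following the localization argument of \cite{SV}, one orders the fixed points $\Fp_\nnn$ by a suitable partial order induced by the torus weights and shows the matrix of each spherical generator is triangular with invertible diagonal entries, so that the spherical subalgebra acts transitively on the fixed-point basis.

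The main obstacle is injectivity of $\Phi$, i.e. the absence of relations beyond those of the vacuum module. This amounts to matching the graded character of $\V_S$---the refined, $\f_S$-framed Vafa--Witten generating function $\mc Z^{\f_S}_S(\qq)$ obtained by localization as in Section \ref{GRintrosec}---with the vacuum character of $\V(Y,S)=\ker(Q)\subset \Pi(Y,S)$. The latter requires control of the cohomology of the screening complex: for a single reduced, irreducible component $\V(Y,S_d)=\Pi_d$ carries no screening and the character is simply that of the lattice vertex algebra, which is exactly the computation carried out in Section \ref{latticegeosec} and provides the base case; in general $\ch\ker(Q)$ is not an Euler characteristic, because the screening operators need not assemble into an acyclic complex, so new input is needed to compute it.

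I would organize the general case as an induction on the length $N$ of the filtration using the factorization property of Proposition \ref{factprop}. Writing $\mc O_S=[\mc O_R<\mc O_T]$ and assuming the result for the shorter filtrations of $R$ and $T$, one has $\V(Y,S)_0 \hookrightarrow \V(Y,R)_0\otimes_K \V(Y,T)_0$ with image $\ker(Q_{s_{k_0}})$ of a single screening operator, reducing the whole problem to a single-curve screening at each step. The crux---and the point where I expect genuinely new input beyond \cite{SV}, \cite{RSYZ}, and \cite{CCDS}---is to match this algebraic screening against the geometry: one must identify the correspondence encoding the passage from $\V_R\otimes \V_T$ to $\V_S$ with the geometric avatar of $Q_{s_{k_0}}$, compatibly with the conjectural generalized coproduct $\Delta_{R,T}$ on $\mc Y_S(Y)$. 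This is precisely the commuting square left open in the introduction, and I expect it to be provable in full only in the $W$-superalgebra and class $\mc S$ families of Section \ref{egsec}, where the relevant screening cohomologies are governed by Feigin--Frenkel-type resolutions and the characters are independently computable.
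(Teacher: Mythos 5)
The first thing to say is that the statement you are proving is stated in the paper as Conjecture \ref{vacuumconj} and is not proved there: the paper establishes it only in the case of a single reduced, irreducible component (Theorem \ref{abGRthm}, where $\f_S=0_S$ and the vacuum and free field modules coincide), and otherwise offers a strategy in Section \ref{generalGRsec} based on factorization (Conjecture \ref{geofactconj}), locality (Proposition \ref{rlocalsubrepprop}), and conjectural generalized coproducts on $\mc Y_S(Y)$. Your outline parallels that strategy quite closely --- induction on the length of the filtration via Proposition \ref{factprop}, reduction to a single screening operator at each step, and the coproduct compatibility square --- and you are candid about where new input is required, so as a research plan it is reasonable. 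But it is not a proof, and one step as written would fail.

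The step that fails is the verification of the vacuum conditions. Your argument that $v_0=[\mc M^{\f_S}_0(Y,S)]$ is annihilated by the lowering half $\mc Y_S(Y)_-\cong\mc{SH}(Y)^{\op}$ uses only the $\bb N^{V_{Q_Y}}$-grading and the emptiness of moduli spaces at non-effective $\nnn$; it therefore applies verbatim to \emph{every} framing structure, including $0_S$. But for $0_S$ the module is supposed to be the free field module $\Pi(Y,S)$ (Conjecture \ref{GRconj}), and in the prototype case of Theorem \ref{SVAGTthm} it is the universal Verma module $\bb M_r$ --- neither of which is the vacuum module. So your step 1 cannot be what distinguishes $\V_S^{\f_S}$ from $\V_S^{0_S}$: the distinction must enter through the action of the zero modes $\mc Y_S(Y)_0$ on $v_0$ (generic highest weight versus vacuum highest weight, governed by the framing torus $T_\f$ and its equivariant parameters) and through the surjectivity step, where the fixed-point combinatorics of the $\f_S$-framed quiver must be genuinely smaller than in the trivially framed case. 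Relatedly, your induction conflates two different modules: the restriction of the vacuum module of $\V(Y,R)\otimes_K\V(Y,T)$ along Proposition \ref{factprop} is the module $\V^{\f_{R,T}}_S$ of Conjecture \ref{FFRconj}, of which the vacuum module $\V(Y,S)_0$ is only a proper submodule (the kernel of the residual screening $Q_{s_{k_0}}$), so the inductive step requires comparing the two distinct framed moduli problems $\f_S$ and $\f_{R,T}$ and realizing that kernel geometrically --- which is exactly the open content of the conjecture, not something the induction supplies. Finally, the character-matching step for injectivity requires computing $\ch\ker(Q)$, which, as you note, is not an Euler characteristic of the screening complex in general; this is an independent open problem and the paper resolves it only in the families of Section \ref{egsec} where Feigin--Frenkel-type resolutions are available.
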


In the example $Y=Y_{2,0}=\widetilde{A}_1\times \bb A^1 = |\mc O_{\bb P^1}(-2)\oplus\mc O_{\bb P^1}|$ with divisor $S_\mu$ determined by the labelling of the faces of the moment polytope as on the left side of Figure \ref{fig:Walgquivfig}, the corresponding quiver with potential is that given the right side of \emph{loc. cit.}. This is a special case of the family of examples considered in Section \ref{Walgsec}, for which the corresponding vertex algebra is given by $W_{f_{\mu}}^\kappa(\gl_{M_0})$, the affine $W$-algebra determined by $f_\mu \in \mc N_{\gl_{M_0}}$ with two Jordan blocks of sizes $\mu_1$ and $\mu_2$.

More generally, in the setting of Proposition \ref{factprop}, given an expression for $\mc O_S$ as an extension
\[ 0\to \mc O_{R} \to \mc O_S \to  \mc O_{T} \to 0 \]
we obtain an alternative framing structure $\f_{R,T}$ of rank $\rr_S$ corresponding to the iterated extension of the summands of $\mc O_{S^\red}^\sss[1]$ given by the partial semisimplification $\mc O_R[1]\oplus\mc O_S[1]$, and we conjecture:

\begin{conj}\label{FFRconj} The representation of Conjecture \ref{voamodconj} above,
	\[ \rho_S^{\f_{R,T}}: \mc U(\V(Y,S)) \to \End_F(\V_S^{\f_{R,T}})  \ , \]
	identifies $\V_S^{\f_{R,T}}$ with the restriction of the vacuum module for the vertex algebra $\V(Y,R) \otimes_K \V(Y,T)$.
\end{conj}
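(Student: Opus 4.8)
The plan is to deduce this from Conjecture \ref{vacuumconj}, applied separately to $R$ and $T$, by producing a geometric factorization of the module $\V_S^{\f_{R,T}}$ that is compatible with the vertex algebra factorization of Proposition \ref{factprop}. The guiding principle is that the framing $\f_{R,T}$, being the direct sum $\mc O_R[1]\oplus\mc O_T[1]$ of the canonical framings $\f_R$ and $\f_T$, should decouple the moduli problem into its $R$- and $T$-parts at the level of localized homology, exactly as the direct sum framing underlying Equation \ref{C2facteqn} produces the factorization $\V_{r[\C^2]}^0\cong\V_{\C^2}^{\otimes r}$ in the $\C^3$ case.

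First I would establish, at the level of localized $A$-equivariant vanishing cycle homology, a canonical isomorphism of graded vector spaces
\[ \V_S^{\f_{R,T}} \xrightarrow{\cong} \V_R^{\f_R} \otimes_K \V_T^{\f_T} \ . \]
The mechanism is that the maximal torus $T_\f$ of the framing automorphisms of $\f_{R,T}$ contains a $\bb C^\times$ scaling the $\mc O_R[1]$ summand against the $\mc O_T[1]$ summand; the corresponding component of the $A$-fixed locus of $\mc M^{\f_{R,T}}(Y,S)$ forces the cross maps between the two summands to vanish, so that $\mc M^{\f_{R,T}}(Y,S)^A \cong \mc M^{\f_R}(Y,R)^A \times \mc M^{\f_T}(Y,T)^A$ and the fixed-point bases multiply. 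Applying Conjecture \ref{vacuumconj} to $R$ and to $T$ then identifies $\V_R^{\f_R}=\V_R$ and $\V_T^{\f_T}=\V_T$ with the vacuum modules $\V(Y,R)_0$ and $\V(Y,T)_0$; since the vacuum module of a tensor product vertex algebra is the tensor product of the vacuum modules, the target is precisely the underlying vector space of the vacuum module $(\V(Y,R)\otimes_K\V(Y,T))_0$.

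The principal remaining step is to match the module structures: I must show that the $\V(Y,S)$-action on $\V_S^{\f_{R,T}}$ furnished by Conjecture \ref{voamodconj} agrees, under the above isomorphism, with the restriction along Proposition \ref{factprop} of the left-regular action of $\V(Y,R)\otimes_K\V(Y,T)$ on itself. Since this action factors geometrically through $\mc Y_S(Y)$ acting by the Hecke correspondences of Equation \ref{Nakcorrmaineqn}, the claim reduces to showing that this action factors through the conjectural coproduct $\Delta_{R,T}:\mc Y_S(Y)\to\mc Y_R(Y)\otimes_K\mc Y_T(Y)$ and that the induced diagonal action on $\V_R\otimes_K\V_T$ intertwines with the vertex algebra embedding. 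I would prove this by splitting a defining short exact sequence $0\to F\to\mc E\to\mc F\to 0$ of the Hecke correspondence compatibly with the two summands $\mc O_R[1]$ and $\mc O_T[1]$ of the framing, decomposing each Hecke modification into its $\mc O_R[1]$- and $\mc O_T[1]$-components together with the cross terms producing extensions between them, which are supported along the distinguished non-compact curve class $C_{s_{k_0}}\subset S_{d_{k_0}}\cap S_{d_{k_0+1}}$ appearing in Proposition \ref{factprop}; these cross terms are precisely what reproduce the screening current $Q_{s_{k_0}}(z)$, so that the factored action lands in $\ker(Q_{s_{k_0}})=\V(Y,S)$ and is intertwined with the embedding.

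The main obstacle is this last compatibility, which depends on two ingredients not yet available in general: the construction of the coproduct $\Delta_{R,T}$ on $\mc Y_S(Y)$ together with the commutativity of the square relating it to the vertex algebra embedding, and the exact identification of the geometric cross terms with the algebraic screening current. In the cases where $\V(Y,R)$ and $\V(Y,T)$ are already understood explicitly, such as the low-rank $W$-superalgebra examples of Section \ref{Walgsec}, I would instead verify the identification directly by comparing graded characters and checking the single screening condition on a generating set, establishing the conjecture in those examples before attempting the general coproduct argument.
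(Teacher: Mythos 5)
Note first that this statement is a conjecture in the paper, for which no proof is given; the closest the paper comes is the outline in Section \ref{generalGRsec}, where Conjecture \ref{geofactconj} refines the statement to the existence of an isomorphism $\V_S^{\f_{R,T}}\cong \V_R^{\f_R}\otimes \V_T^{\f_T}$ of representations, the underlying vector-space isomorphism is obtained from the tensor-product structure on the $A$-fixed points of the relevant moduli spaces, and the intertwining with the factorization embedding of Proposition \ref{factprop} is deferred to a conjectural generalized coproduct on $\mc Y_S(Y)$. Your proposal follows essentially this same route --- fixed-point factorization of the module followed by matching of the actions via the coproduct and the identification of the cross terms of the Hecke correspondences with the screening current --- and you correctly single out as the open gaps exactly the ingredients the paper itself leaves conjectural.
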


The inductive application of the preceding conjecture implies the existence of compatible geometric constructions of the vacuum modules of all the relative free field realizations of the algebras $\V(Y,S)$ defined by the analogous inductive application of Proposition \ref{factprop}. In particular, in the limiting case in which we consider the full semisimplification, the corresponding framing structure $\f$ is given by the trivial framing structure $0_S$ of rank $\rr_S$, that is,
\[ \mc O_S^\sss[1]=\oplus_{d\in\mf D_S} \mc O_{S_d}^{\oplus r_d}[1] \quad\quad \textup{determines the framing structure} \quad\quad \f=0_S \ ,  \]
 and the analogue of Conjecture \ref{FFRconj} is simply the original statement of the Gaiotto-Rapcak conjecture given in Conjecture \ref{GRconj} above; in the example of Figure \ref{fig:Walgframingsfig}, this is the bottom right quiver.

In the special case that $Y=Y_{2,0}$ and $S=S_\mu$ as in Figure \ref{fig:Walgquivfig} for $\mu=(\mu_1,\mu_2)=(1,1)$, for which we prove in Theorem \ref{affinesl2theo} that the corresponding vertex algebra is given by the affine algebra for $\spl_2$, that is $\V(Y_{2,0},S_{2,1,0,0})=V^\kappa(\gl_2)$, the quiver with potential corresponding to each of the partial free field realizations summarized in Figure \ref{fig:sl2factfig} are given in Figure \ref{fig:Walgframingsfig} below.

In the remainder of this section, we outline a general approach to these conjectures, and prove a few special cases thereof as well as several partial results towards the general statement.

\begin{figure}[b]
	\caption{The framings $\f$ and quivers $(Q^\f,W^\f)$ for the representations of Figure \ref{fig:sl2factfig}}
	\label{fig:Walgframingsfig}
	
	\vspace*{.25cm}
	\hspace*{-.5cm}
	\begin{overpic}[width=.5\textwidth]{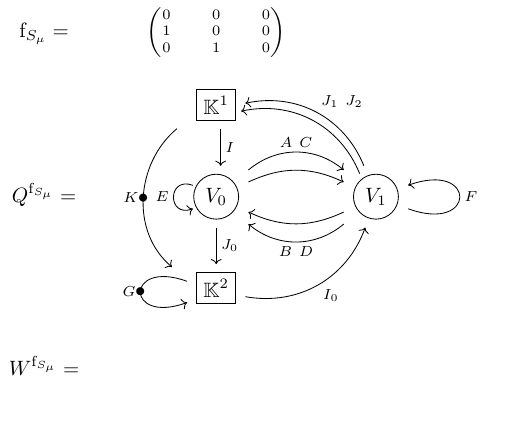}
		\put(16,10) {   $E(BC-DA)+ F(AD-CB) + IJ_1A $} 
		\put(30,4) {$\scriptsize {+ IJ_2C + I_0J_0D+I_0 K J_1 + I_0 G J_0 }$}
	\end{overpic}
	\begin{overpic}[width=.5\textwidth]{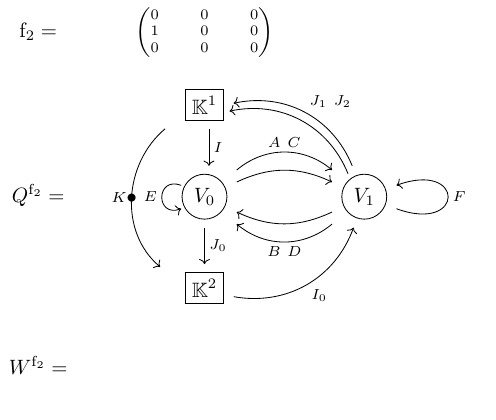}
		\put(16,10) {   $E(BC-DA)+ F(AD-CB) + IJ_1A $} 
	\put(30,4) {$\scriptsize {+ IJ_2C + I_0J_0D+I_0 K J_1 }$}
	\end{overpic}

\vspace*{.4cm}
\hspace*{-.5cm}
	\begin{overpic}[width=.5\textwidth]{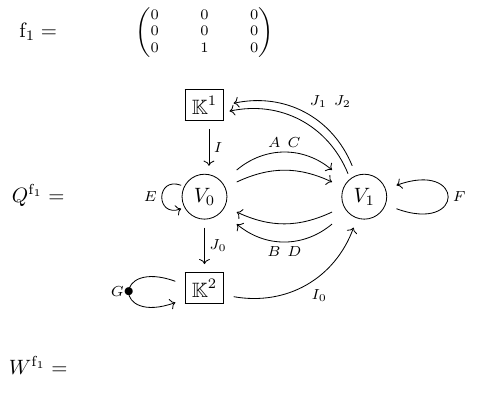}
			\put(16,10) {   $E(BC-DA)+ F(AD-CB) + IJ_1A $} 
		\put(30,4) {$\scriptsize {+ IJ_2C + I_0J_0D+ I_0 G J_0 }$}
	\end{overpic}
	\begin{overpic}[width=.5\textwidth]{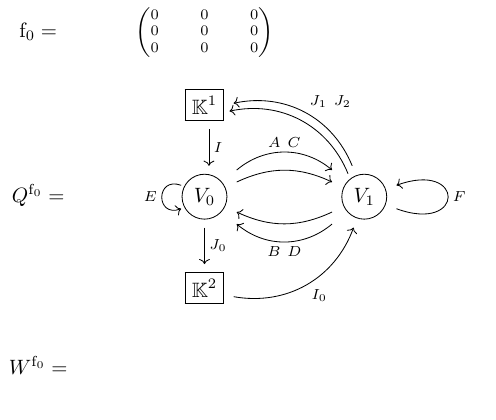}
		\put(16,10) {   $E(BC-DA)+ F(AD-CB) + IJ_1A $} 
	\put(30,4) {$\scriptsize {+ IJ_2C + I_0J_0D }$}
	\end{overpic}
\vspace*{-1.7cm}
\end{figure}

\subsection{Lattice vertex algebras and Hecke modifications along curves for rank 1 sheaves}\label{latticegeosec}
In this section, we outline a proof of Theorem \ref{abGRthm}, which establishes Conjectures \ref{GRconj} and \ref{vacuumconj} in the case that the divisor $S\subset Y$ consists of a single reduced, irreducible component $S=S_0$, and correspondingly the vertex algebra $\V(Y,S_0)=\Pi(Y,S_0)$ is given by a lattice-type vertex algebra generated by $H_\bullet(S_0;\Z)$. Note that there is a unique framing structure $\f_0:=\f_{S_0}=0_{S_0}$ of rank $1$ on any quiver, and correspondingly the vacuum and free field module for $\V(Y,S_0)=\Pi(Y,S_0)$ are by definition the same, so that the two aforementioned conjectures coincide in this case. Recall that
 \begin{equation}
	\mc M^{0}(Y,S_0)=\mf M^{\zeta_\VW,0}(Y,\mc O_{S_0}[1]) \quad\quad \textup{and} \quad\quad \V_{S_0}=\V^{0,\zeta_\VW}(Y,\mc O_{S_0}[1])=H_\bullet^T(\mc M^0(Y,S_0),\varphi_{W^{0}_{S_0}})
\end{equation}
denote the corresponding moduli spaces of $\zeta_\VW$-stable, rank 1, trivially- (or $\f_0$-) framed perverse coherent extensions of $\mc O_{S_0}[1]$, and their $T$-equivariant Borel-Moore homology groups with coefficients in the sheaf of vanishing cycles $\varphi_{W_{S_0}^{\f_0}}$ for the potential $W_{S_0}^{\f_0}$. We begin with some generalities:

\begin{rmk}\label{localityrmk}
We now explain a crucial principle that informs several properties of the geometric constructions of the preceding section, which will lead to the geometric analogues of the factorization and locality properties of $\V(Y,S)$. The basic idea, which is somewhat tautological, is the following:

\bigskip

\noindent \emph{Algebraic properties of the algebras $\mc H(Y)$ and $\mc Y_S(Y)$ and their representations $\V_S^\f$ which depend on the geometry of the threefold $Y$ and divisor $S$ only in some open set $U\subset Y$ can by computed in terms of the algebras $\mc H(U)$ and $\mc Y_{S\times_Y U}(U)$ and their representations $\V_{S\times_Y U}^{\f_U}$.}
\end{rmk}

A systematic explanation of the precise statement and implications of this general principle is beyond the scope of the present work, but we will use several statements of this form throughout the proofs of various partial results towards establishing Conjecture \ref{voamodconj} and its variants from the preceding section. The first example of this is the following:

Let $Y\to X$ be a toric Calabi-Yau threefold resolution satisfying the hypotheses of the companion paper \cite{BR1}, and let $U\subset Y$ be an open, toric subvariety such that $U$ itself satisfies the same hypotheses. Note that there is a canonical inclusion
\[ \mf M(U) \xrightarrow{\cong} \mf M_U(Y) \subset  \mf M(Y) \quad\quad \text{induced by} \quad\quad  \Perv_\cs(U) \xrightarrow{\cong } \Perv_{\cs}(Y)_U \subset \Perv_\cs(Y)  \ , \]
the equivalence of $\Perv_\cs(U)$ with the full subcategory $\Perv_\cs(Y)_U$ of compactly supported perverse coherent sheaves on $Y$ with cohomology sheaves supported on $U\subset Y$.

Further, suppose $S_0$ is an irreducible, reduced, toric Cartier divisor on $Y$, let
\begin{equation}\label{modspcmapeqn}
	 S_U = S_0 \times_Y U \quad\quad \text{and note there is} \quad\quad \mf M^{\f_0}(U,S_U) \xrightarrow{\cong} \mf M_U^{\f_0}(Y,S_0) \subset  \mf M^{\f_0}(Y,S_0)  
\end{equation}
a natural inclusion with image $\mf M^{\f_0}_U(Y,S_0)$ the substack of $\mf M^{\f_0}(Y,S_0)$ parameterizing the iterated extensions of $\mc O_{S_0}[1]$ with compactly supported perverse coherent sheaves supported on $U\subset Y$.

Under these hypotheses, we assume that our choice of stability conditions $\zeta_\VW$ for each of framed quivers with potential $(Q_{S_0}^{0_{S_0}},W_{S_0}^{\f_0})$ satisfy the following natural property:

\begin{defn}\label{stablocdefn} A choice of stability conditions $\zeta$ and $\zeta_U$ for the quivers with potential $(Q_{S_0}^{\f_0},W_{S_0}^{\f_0})$ and $(Q_{S_U}^{\f_0},W_{S_U}^{\f_0})$ is called locally compatible if the map of Equation \ref{modspcmapeqn} induces an isomorphism
\begin{equation}\label{stablemodspcmapeqn}
	 \mf M^{\f_0,\zeta_U}(U,S_U)  \xrightarrow{\cong} \mf M^{\f_0,\zeta}_U(Y,S_0) \subset \mf M^{\f_0,\zeta}(Y,S_0) \ , 
\end{equation}
and additionally if $S_0\subset U$ so that $S_U\cong S_0$ we require $\mf M^{\f_0,\zeta}_U(Y,S_0) = \mf M^{\f_0,\zeta}(Y,S_0)$.
\end{defn}

If we choose stability conditions for which the corresponding moduli spaces admit a common geometric description intrinsic to $S_0$ that is manifestly local, one can hope to deduce such identifications geometrically. Indeed, by definition our putative family of stability conditions $\zeta_\VW$ are chosen such that the spaces $\mc M^0(Y,S_0)=\mf M^{\f_0,\zeta_\VW}(Y,S_0) $ are equivalent to moduli spaces of rank 1 torsion free sheaves $E$ on $S_0$ of arbitrary first and second Chern class, with an isomorphism $\varphi:{\tilde E}^{\vee\vee} \xrightarrow{\cong} \mc O_{S_0}$.

The hypotheses of Definition \ref{stablocdefn} imply the following key locality result, which we use in the proof of Theorem \ref{abGRthm} below, establishing Conjectures \ref{GRconj} and \ref{vacuumconj} for reduced, irreducible divisors $S_0$:

\begin{prop}\label{localsubrepprop} The cohomological Hall algebra $\mc H(U)$ is a naturally a subalgebra of $\mc H(Y)$, and the map of Equation \ref{stablemodspcmapeqn} induces an isomorphism
\begin{equation}\label{localsubrepeqn}
 \V_{S_U} = H_\bullet^T(\mf M^{\f_0,\zeta_{\VW}}(U,S_U), \varphi_{W_{S_U}}) \xrightarrow{\cong} \bb V_{S_0, U}: =H_\bullet^T(\mf M^{\f_0,\zeta_\VW}_U(Y,S_0),\varphi_{W_{S_0}}) \subset \bb V_{S_0}
\end{equation}
of $\mc H(U)$-representations, where in particular $\V_{S_0,U}$ defines a subrepresentation of the restriction of the $\mc H(Y)$-representation $\V_{S_0}$ to $\mc H(U)$.
\end{prop}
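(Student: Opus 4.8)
The plan is to derive all three assertions from the single geometric input of Definition~\ref{stablocdefn}, exploiting the fact that the $U$-supported objects form a Serre subcategory, so that every correspondence entering the constructions of Theorems~\ref{Athm} and~\ref{Bthm} restricts compatibly. The conceptual engine is that $\Perv_\cs(Y)_U \subset \Perv_\cs(Y)$ is closed under subobjects, quotients, and extensions: for $0 \to A \to B \to C \to 0$ one has $\supp A, \supp C \subseteq \supp B$ and $\supp B \subseteq \supp A \cup \supp C$, so $B$ is $U$-supported if and only if $A$ and $C$ both are.

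First I would establish the subalgebra claim. Passing to the quiver models of Theorem~\ref{Athm}, the equivalence $\Perv_\cs(U) \cong \Perv_\cs(Y)_U$ is realized by a full subquiver $Q_U \subset Q_Y$ on the vertices supported over $U$, with $W_U = W_Y|_{Q_U}$. For a dimension vector $\nnn$ supported on $V_{Q_U}$, vanishing of the remaining vertex spaces is automatic, whence $\mf M_\nnn(Q_U, W_U) = \mf M_\nnn(Q_Y, W_Y)$, exhibiting $\mc H(U)$ as the graded direct summand of $\mc H(Y)$ indexed by such $\nnn$. By the Serre property, if $\kk$ and $\bl$ are supported on $V_{Q_U}$ then every short exact sequence counted by $\mf M_{\kk,\bl}(Y)$ has all three terms $U$-supported, so $\mf M_{\kk,\bl}(Y) = \mf M_{\kk,\bl}(U)$; hence the correspondences defining $m$ restrict and $\mc H(U) \hookrightarrow \mc H(Y)$ is a subalgebra inclusion.

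Next, the isomorphism $\V_{S_U} \xrightarrow{\cong} \V_{S_0,U}$ is obtained by applying $T$-equivariant Borel--Moore homology with vanishing-cycle coefficients to the isomorphism of stacks in Equation~\ref{stablemodspcmapeqn}, after observing that the framed potential $W_{S_U}$ is the restriction of $W_{S_0}$ under the corresponding full subquiver $Q^{\f_0}_{S_U} \subset Q^{\f_0}_{S_0}$; this step is thus pure functoriality of $\varphi$-homology. The crux is then the compatibility of this isomorphism with the $\mc H(U)$-actions, together with the subrepresentation claim. For the latter I would show that the Hall correspondences $\mf M^{\f_0}_{\kk,\bl}(Y,S_0)$ underlying Corollary~\ref{Nakopgencoro}, with $\kk$ supported on $V_{Q_U}$ and with middle and quotient terms in $\mf M^{\f_0}_U(Y,S_0)$, have total term $\mc E$ again in $\mf M^{\f_0}_U(Y,S_0)$: indeed $\mc E$ is an extension of $\mc F$ --- itself $\mc O_{S_0}[1]$ extended by $U$-supported compactly supported perverse coherent sheaves --- by the $U$-supported object $F$, and remains $U$-supported by the Serre property. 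Thus the action preserves $\V_{S_0,U}$, and via Equation~\ref{stablemodspcmapeqn} the restricted correspondences are identified with those over $U$ defining the $\mc H(U)$-action on $\V_{S_U}$ from Theorem~\ref{Bthm}, so that $p_* q^*$ agrees on both sides. This is the first precise instance of the locality principle of Remark~\ref{localityrmk}.

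The hard part will be verifying that the locally compatible hypothesis of Definition~\ref{stablocdefn}, stated for the moduli spaces themselves, propagates to the correspondence stacks $\mf M^{\f_0}_{\kk,\bl}$: one must check that a $\zeta_\VW$-stable $U$-supported extension over $Y$ matches a $\zeta_{\VW,U}$-stable one over $U$ at the level of short exact sequences, and that this identification is compatible with the proper pushforward and the (smooth, or regular) pullback defining the Hall action, yielding the required base-change identity for $p_* q^*$. Establishing this compatibility of the stability condition with the support stratification --- equivalently, that $\zeta_\VW$-stability of $\mc E$ and $\mc F$ together with $U$-supportedness is detected entirely inside $U$ --- is where the genuine work lies; the support and extension bookkeeping, though conceptually central, is then routine.
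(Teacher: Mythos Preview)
Your proposal is more detailed than what the paper actually provides: the paper explicitly defers a detailed proof to future work, stating only that ``the main idea is a straightforward base change argument relating the correspondences defining the representations of $\mc H(U)$ and $\mc H(Y)$.'' Your sketch is entirely consistent with this hint --- the base-change identity for $p_* q^*$ that you isolate as the ``hard part'' is precisely what the paper gestures at --- and you have correctly identified the Serre-subcategory bookkeeping and the propagation of the stability hypothesis to the correspondence stacks as the places where the actual work lies. In that sense you have gone further than the paper does here, and your outline is a reasonable scaffolding for the deferred argument.
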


A detailed proof is orthogonal to the goals of this paper and we defer it to future work, but the main idea is a straightforward base change argument relating the correspondences defining the representations of $\mc H(U)$ and $\mc H(Y)$. Together with the latter assumption of Definition \ref{stablocdefn}, this implies:

\begin{corollary} Suppose $S_0$ is supported on $U\subset Y$ so that $S_U=S_0 \times_Y U = S_0$. Then the map of Equation \ref{localsubrepeqn} defines an isomorphism of $\mc H(U)$-representations
\[ \V_{S_U} = H_\bullet^T(\mf M^{\f_0,\zeta_{\VW}}(U,S_U), \varphi_{W_{S_U}})\xrightarrow{\cong} \V_{S_0}  =H_\bullet^T(\mf M^{\f_0,\zeta_\VW}(Y,S_0),\varphi_{W_{S_0}}) \ ,\]
where the latter is the restriction of the $\mc H(Y)$-representation to $\mc H(U)$.
\end{corollary}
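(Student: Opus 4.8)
The plan is to obtain this as an essentially immediate consequence of Proposition \ref{localsubrepprop}, with the one extra input coming entirely from the final clause of Definition \ref{stablocdefn}. First I would invoke Proposition \ref{localsubrepprop} in the present situation, where it provides an isomorphism of $\mc H(U)$-representations
\[ \V_{S_U} = H_\bullet^T(\mf M^{\f_0,\zeta_\VW}(U,S_U),\varphi_{W_{S_U}}) \xrightarrow{\cong} \bb V_{S_0,U} = H_\bullet^T(\mf M^{\f_0,\zeta_\VW}_U(Y,S_0),\varphi_{W_{S_0}}) \subset \bb V_{S_0}, \]
identifying $\V_{S_U}$ with the subrepresentation $\bb V_{S_0,U}$ of the restriction of $\bb V_{S_0}$ along the subalgebra $\mc H(U)\subset \mc H(Y)$.

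The remaining step is to show that under the additional hypothesis $S_0\subset U$ this subrepresentation is in fact all of $\bb V_{S_0}$. Since $S_0$ is supported on $U$ we have $S_U = S_0\times_Y U = S_0$, so the final clause of Definition \ref{stablocdefn} applies and gives the equality of substacks
\[ \mf M^{\f_0,\zeta_\VW}_U(Y,S_0) = \mf M^{\f_0,\zeta_\VW}(Y,S_0). \]
Taking $T$-equivariant Borel--Moore homology with coefficients in the sheaf of vanishing cycles $\varphi_{W_{S_0}}$ on both sides then yields $\bb V_{S_0,U} = \bb V_{S_0}$, so the inclusion $\bb V_{S_0,U}\hookrightarrow \bb V_{S_0}$ is an equality and the displayed isomorphism above becomes the claimed isomorphism $\V_{S_U}\xrightarrow{\cong}\bb V_{S_0}$ of $\mc H(U)$-representations.

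Since the substantive content---that the correspondences defining the $\mc H(U)$- and $\mc H(Y)$-actions are compatible under the inclusion of Equation \ref{modspcmapeqn}, and that the latter restricts to the isomorphism of Equation \ref{localsubrepeqn} onto the subrepresentation $\bb V_{S_0,U}$---is already carried by Proposition \ref{localsubrepprop}, no genuine obstacle remains at this stage; the corollary is formal given that proposition. The only point that truly needs checking is that the Vafa--Witten stability conditions $\zeta_\VW$ indeed satisfy the local compatibility of Definition \ref{stablocdefn}, which is precisely the standing assumption imposed on their choice, and which is where the geometric work (the base-change identification of $\mc M^0(Y,S_0)$ with a moduli space of rank $1$ torsion free sheaves intrinsic to $S_0$) is actually located.
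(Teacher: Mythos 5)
Your argument is exactly the paper's: the corollary is stated immediately after Proposition \ref{localsubrepprop} with the one-line justification that it follows from that proposition ``together with the latter assumption of Definition \ref{stablocdefn},'' which is precisely the two-step deduction you give. Correct, and no further comment is needed.
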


This reduces the proof of the Gaitto-Rapcak Conjecture \ref{GRconj} for divisors $S=S_0$ with a single reduced, irreducible component to computations in the following examples of $(Y,S_0)$:
\begin{equation}\label{localmodeleqn}
	(\C^3,\C^2),\  (Y_{1,1}, |\mc O_{\bb P^1}(-1)|),\  (Y_{2,0}, |\mc O_{\bb P^1}|),\  (Y_{2,0}, |\mc O_{\bb P^1}(-2)|) , \ (Y_{m,n}, \widetilde{A}_{m-1,n})  \ , (Y^*,S^*) \ ,
\end{equation}
where $\widetilde{A}_{m-1}=\widetilde{A}_{m-1,0}$ denotes the resolved type $A_{m-1}$ singularity, $\widetilde{A}_{m-1,n}$ denotes its iterated blowup at at $n$ $T$-fixed points, $Y_{m,n}\to X_{m,n}$ denotes a toric, Calabi-Yau resolution of the threefold singularity $X_{m,n}=\{ xy-z^m w^n\}$, and $S^*\subset Y^*=\widetilde{\C^3/ (\bb Z_2\times \bb Z_2 )}$ is one additional isolated example, which we do not consider in the present work; these examples exhaust the local models for reduced, irreducible divisors $S_0\subset Y$ occurring in the class of toric Calabi-Yau threefolds we consider.

In fact, we will use Proposition \ref{localsubrepprop} above to outline a general proof of Theorem \ref{abGRthm}, which does not depend on explicit calculations exhausting the above list of examples, but we begin by outlining the explicit computation in the first two examples in the list, to introduce the main structure of the underlying examples.

\begin{eg}\label{geometricC2eg}
	
In the simplest example $(Y,S_0)=(\C^3,\C^2)$, the framed quiver with potential $(Q_{\C^2}^{0},W_{\C^2}^{0})$ is given by that of Equation \ref{C3quiveqn} in the case $r=1$, for which the dimensional reduction isomorphism recalled in Equation  \ref{dimredeqn}, as the $r=1$ special case of that of Equation \ref{dimredrC2eqn}, induces an isomorphism
\begin{equation} \label{dimredC2eqn}
 \V_{\C^2} =  \bigoplus_{n\in \bb N}  H_\bullet^T( \mc M^0_n(\C^3,\C^2), \varphi_{W_{[\C^2]}^0}) \otimes_{H_T^\bullet(\pt)} F  \xrightarrow{\cong } \bigoplus_{n\in \bb N} H_\bullet^T(\Hilb_n(\C^2)) \otimes_{H_T^\bullet(\pt)} F   \ ,
\end{equation} 
where we choose the stability condition $\zeta_\VW$ defining $\mc M^0_n(\C^3,\C^2)$ to be that corresponding to the standard stability condition for the ADHM quiver identifying the resulting moduli space of stable presentations of dimension $n$ with $\Hilb_n(\C^2)$.

The set of $T$ fixed points in $\Hilb_n(\C^2)$ can be identified with the set $\Fp_n$ of partitions of length $n$, which correspond to subschemes supported set-theoretically at the origin $0\in (\C^2)^T$, whose structure sheaves are given by irreducible, graded iterated extensions of $\mc O_{0}$ of length $n$ corresponding to the given partition. Thus, we obtain an isomorphism
\begin{equation}
	\V_{\C^2} \xrightarrow{\cong } \bigoplus_{n\in \bb N} H_\bullet^T(\Hilb_n(\C^2)) \otimes_{H_T^\bullet(\pt)} F  \cong \bigoplus_{n\in \bb N} \bigoplus_{\lambda_\in \Fp_n} F_\lambda \cong F[b_i]_{i\leq -1} = \pi_F^k
\end{equation}
where $F_\lambda =  H_T^\bullet(\pt_\lambda)\otimes_{ H_T^\bullet(\pt)}F$, and $\pi_F^k$ denotes the vacuum module of the Heisenberg algebra over the base field $F$ at level $k\in F$, a polynomial algebra in variables $b_i$ for $i\in \bb Z_{\leq -1}$, in keeping with Equation \ref{fockmodeqn}. For example, for $n=7$ the following are each of the corresponding data for a particular partition $\lambda \in \Fp_7$, where we omit the map from the framing vertex in the representation:
\begin{equation}\label{ADHMrepeqn}
	 \scriptsize{\vcenter{\xymatrixcolsep{1pc}
	\xymatrixrowsep{1pc}\xymatrix{ \C_{2\e_2} \\ \C_{\e_2} \ar[r]^{B_1} \ar[u]^{B_2} & \C_{\e_1+\e_2} \\ \C_0 \ar[r]^{B_1} \ar[u]^{B_2} & \C_{\e_1}\ar[r]^{B_1} \ar[u]^{B_2} & \C_{2\e_1} \ar[r]^{B_1}& \C_{3\e_1} }}} \quad \mapsto \quad (y^3, xy^2, x^2y ,x^4)  \quad \mapsto \quad \begin{ytableau}
	y^2 & 	\none[ ] & 	\none[ ]  &\none[]  \\
	y & xy 	 & 	\none[ ] &	\none[ ] \\
	1 & x & x^2 & x^3
\end{ytableau} \quad \mapsto \quad b_{-1}^3b_{-2}^2b_{-3} b_{-4}  \ .  
\end{equation}

 \begin{figure}[b]
 	\caption{Reduced quiver and vacuum character for $Y=\C^3$ and $S_0=\C^2$}
 	\hspace*{-6cm}
 	\begin{overpic}[width=.3\textwidth]{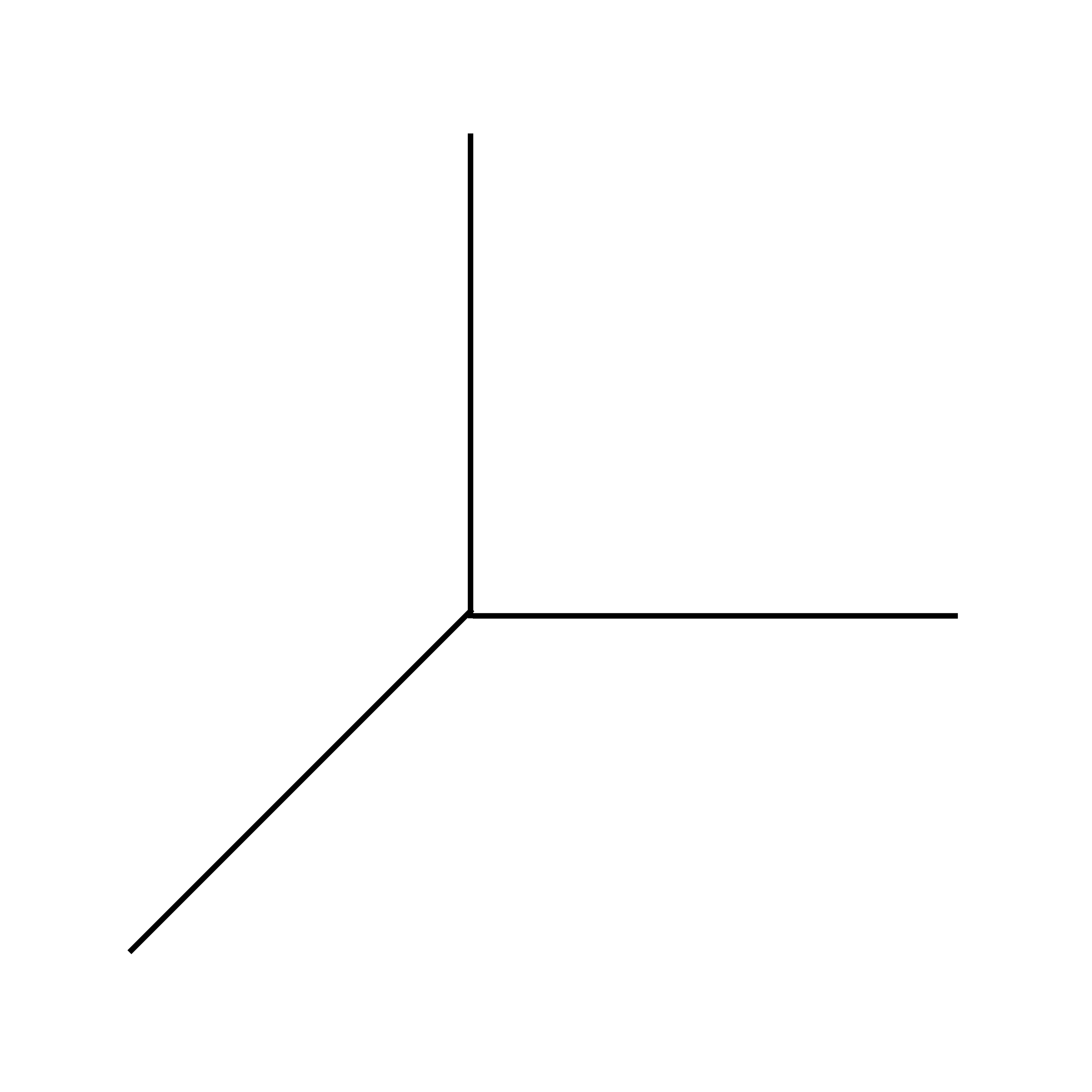}
 		\put(65,65) {$1$}
 		\put(46,46) {$J^0$}
 	\end{overpic}
 	\hspace*{.5cm}
 	\begin{overpic}[width=.25\textwidth]{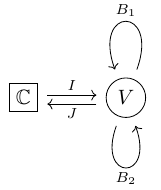}
 \put(5,-10) { $[B_1,B_2]+IJ=0 $ }
 \put(110,47) {$ \mc Z_{\C^2}^\VW(q) = \prod_{k = 1}^\infty \frac{1}{1-q^k} $}
 	\end{overpic}
 	\label{fig:C2redquivfig}
 	\vspace*{-1cm}
 \end{figure}

 The correspondences of Equation \ref{Nakcorrmainkeqn} and the induced endomorphisms of Equation \ref{Nakendmainkeqn} for $\V_{\C^2}$ are precisely those of the original arguments of Grojnowski \cite{Groj} and Nakajima \cite{Nak} recalled in the introduction in Equations \ref{Nakcorreqn} and \ref{Nakopeqn}, respectively, and thus by Theorem \ref{Nakthm} the level $k\in F$ is negative the equivariant norm of $[\C^2]\in H_\bullet^T(\C^2)$ with respect to the intersection pairing,
\begin{equation}\label{Heisleveleqn}
	  k = - \langle [\C^2] , [\C^2] \rangle = -\frac{1}{\e_i \e_j} \ ,
\end{equation}
 where $i,j \in \{1,2,3\}$ are determined by the choice of toric divisor $\C^2 \subset \C^3$. In fact, in this case one can explicitly calculate the commutators of the generators $b_k=\alpha_k([\C^2])$ of Equation \ref{Nakopeqn} and prove the result, using the the presentation in terms of the localization theorem
\begin{equation}\label{explicitC2geneqn}
	  b_{-k}^n= \sum_{(\lambda < \mu ) \in \Fp_{n,n+k}}  \frac{\e_T(T_\lambda \Hilb_n) }{ \e_T(T_{(\lambda,\mu )} \Hilb_{n,n+k})}\ : {H}_\bullet^T(\Hilb_n)\otimes_{H_T^\bullet(\pt)} F  \to  {H}_\bullet^T(\Hilb_{n+k})\otimes_{H_T^\bullet(\pt)} F  \ .
\end{equation}
\end{eg}

Our goal in the remainder of this section is to generalize the preceding calculations to directly check that the geometric construction in the preceding Section \ref{GRintrosec} defines the desired vertex algebra $\Pi(Y,S_0)$ from Section \ref{ablatticesec} in the remaining examples listed in Equation \ref{localmodeleqn}. We begin with an explicit check in one further example, then outline the analogous argument in the general case:

\begin{eg}\label{Om1GReg}
We now outline the analogous proof of Conjecture \ref{GRconj} in the case that the threefold $Y=Y_{1,1} = |\mc O_{\bb P^1}(-1)\oplus \mc O_{\bb P^1}(-1)| \to X_{1,1} = \{ xy-zw\}$ and $S_0=|\mc O_{\bb P^1}(-1)|$, for which the associated framed quiver with potential is that on the right hand side of Equation \ref{quiveregeqn} with $r=1$.

In this example, the dimensional reduction isomorphism recalled in Equation \ref{dimredeqn} above induces
\begin{equation}\label{Om1dimredeqn}
	 \hspace*{-1.5cm} \V_{|\mc O_{\bb P^1}(-1)|} =\bigoplus_{n_0,n_1\in \bb N}  H_\bullet^A( \mc M^0_{n_0,n_1}(Y_{1,1},|\mc O_{\bb P^1}(-1)|), \varphi_{W_{|\mc O_{\bb P^1}(-1)|}^0}) \otimes_{H_A^\bullet(\pt)} F  \xrightarrow{\cong } \bigoplus_{n_0,n_1\in \bb N} H_\bullet^A(\mc M_{(1,n_0,n_1)}(Q_\textup{NY})) \otimes_{H_A^\bullet(\pt)} F   \ , 
\end{equation}
where $\mc M_{(1,n_0,n_1)}(Q_\textup{NY})=\mf M_{(1,n_0,n_1)}^{^\infty \zeta}(Q_\textup{NY})$ denotes the moduli space of $^\infty\zeta$-stable representations of the framed quiver with relations $Q_\textup{NY}$ in Figure \ref{fig:conrotfig} below, such that $\dim V_0=n_0$, $\dim V_1=n_1$ and the framing vector space is of dimension $r=1$; precisely this moduli space was considered by Nakajima-Yoshioka \cite{NY1}, where they proved it is isomorphic to the moduli space $\widehat{\mc M}(1,\ell,n)$ of framed, rank 1 torsion free sheaves $E$ on $|\mc O_{\bb P^1}(-1)|$ with Chern classes $c_1(E)=\ell$ and $c_2(E)=n$:

\begin{figure}[b]
	\caption{Reduced quiver and vacuum character for $Y=Y_{1,1}$ and $S_0=|\mc O_{\bb P^1}(-1)|$}
	 	\hspace*{-7cm}
	\begin{overpic}[width=.5\textwidth]{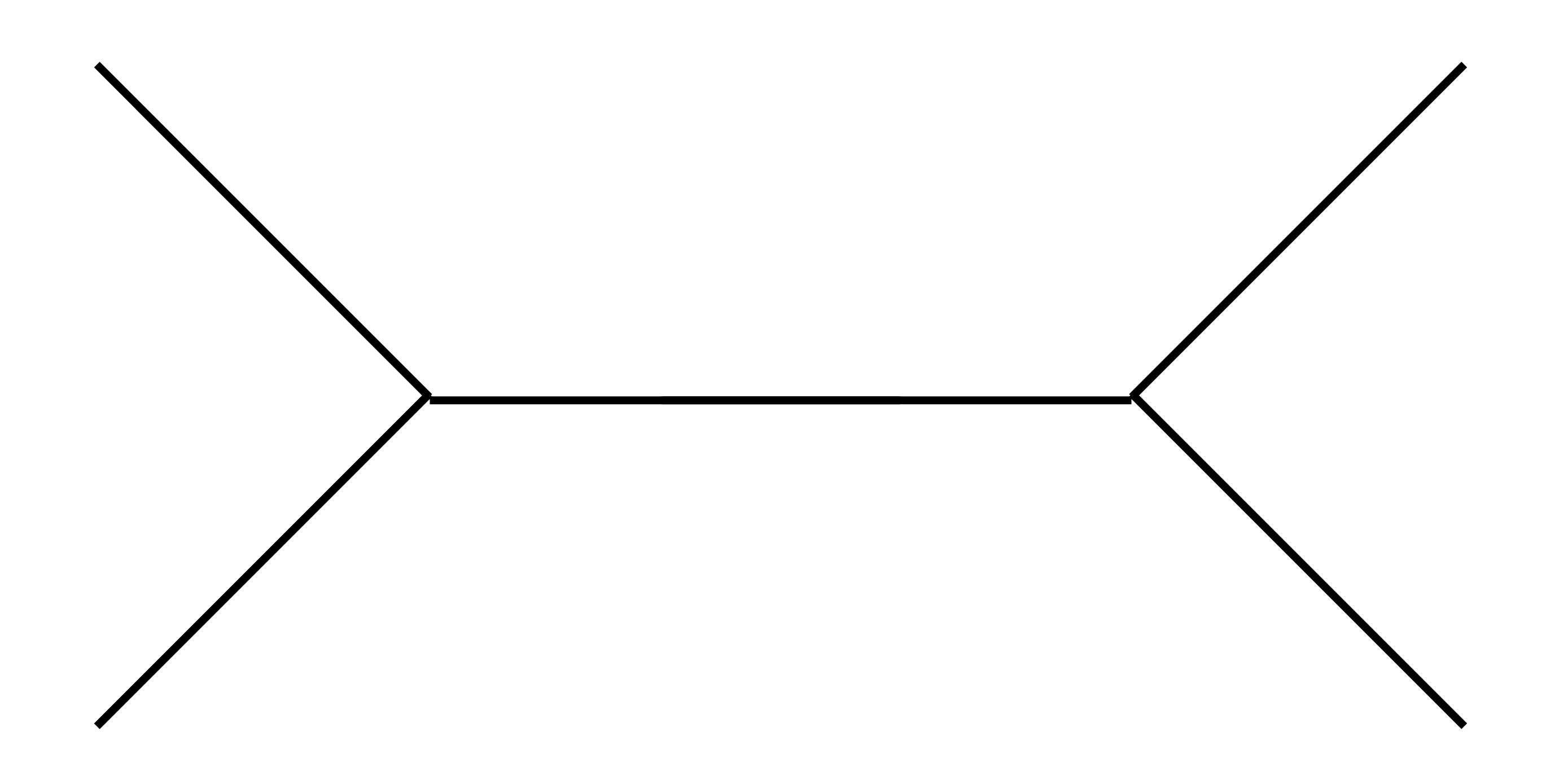}
		\put(27,26) {$ J^0$}
		\put(49,26) {$V$}
		\put(49,40) {$1$}
		\put(68,26) {$J^\infty$}
	\end{overpic}
	\hspace*{.5cm}
	\begin{overpic}[width=.25\textwidth]{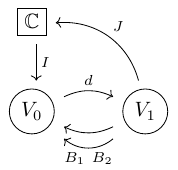}
		 \put(3,-15) { $ B_1 d B_2 -B_2 d B_1 + IJ=0 $ }
		\put(115,47) {$  \mc Z^\VW_{|\mc O_{\bb P^1}(-1)|} (q)= \sum_{\ell\in \bb Z} q^{\frac{\ell^2}{2}} \frac{1}{\prod_{k=1}^\infty (1-q^k)^2} \ .  $}
	\end{overpic}
	\label{fig:conrotfig}
\end{figure}

\begin{equation}\label{NYisoeqn}
	\mc M_{(1,n_0,n_1)}(Q_\textup{NY})\xrightarrow{\cong} \widehat{\mc M}(1,\ell,n) \quad\quad \text{for}\quad\quad \ell = n_1-n_0 \quad\quad\text{and}\quad\quad n=\frac{1}{2}( n_0+n_1 -\ell^2) \ ,
\end{equation}
where we have chosen the stability condition $\zeta_\VW$ defining $\mc M^0_{n_0,n_1}(Y_{1,1},|\mc O_{\bb P^1}(-1)|)$ to be that corresponding to the stability condition denoted by $^\infty \zeta$ in \emph{loc. cit.}.

By Proposition 3.2 of \cite{NY0}, the $T$ fixed points of $ \widehat{\mc M}(1,\ell,n)$ are of the form $I(\ell C)=I \otimes_{\mc O} \mc O(\ell C)$ for $I$ a $T$ fixed ideal sheaf of codimension $n$, defining a zero dimensional, $T$ fixed subscheme denoted $Z_I\subset S_0=|\mc O_{\bb P^1}(-1)|$. Any such subscheme is supported set theoretically on the union of the two distinct fixed points $0$ and $\infty$, and thus is canonically decomposed as a disjoint union $Z_I=Z_0\sqcup Z_\infty$ with each $T$-fixed $Z_i$ supported at $\{i\}$ corresponding to a partition $\lambda_i$ as in the preceding example.

In summary, we have that for each $\ell\in \bb Z$ and $n\in \bb N$, the set of $T$ fixed points $\Fp_{\ell,n} = \widehat{\mc M}(1,\ell,n)^T$ is given by the set of pairs of partitions $(\lambda_0,\lambda_\infty)$ such that $|\lambda_0|+|\lambda_\infty|:=n_0+n_\infty =n$. In particular
\begin{equation}\label{VOm1loceqn}
	\V_{|\mc O_{\bb P^1}(-1)|}  \xrightarrow{\cong }  \bigoplus_{n\in \bb N, \ell\in \bb Z}  \bigoplus_{(\lambda_0,\lambda_\infty)\in \Fp_{\ell,n}}  F_{(\ell,\lambda_0,\lambda_\infty)}  \cong  \bigoplus_{ \ell\in \bb Z} \bigg( \bigoplus_{n_0 \in \bb N} \bigoplus_{ \lambda_0 \in \Fp_{n_0}} F_{\lambda_0} \bigg)_{\ell} \otimes \bigg(  \bigoplus_{n_\infty \in \bb N} \bigoplus_{ \lambda_\infty \in \Fp_{n_\infty}} F_{\lambda_\infty}\bigg)_{\ell}   \ ,  
\end{equation}
which is evidently consistent with our algebraic definition of the algebra $\V(Y,S_0)$  for $S_0=|\mc O_{\bb P^1}(-1)|$. We now explain the concrete proof of Conjecture \ref{GRconj} in this case using localization, analogous to the proof in the case $S_0=\C^2$ recalled in Example \ref{geometricC2eg} above.

As in Equation \ref{ADHMrepeqn}, each representation of the quiver with relations in Figure \ref{fig:conrotfig} describes the presentation of the corresponding torsion free sheaf as an iterated extension of the compactly supported perverse coherent sheaves corresponding to the vertices of the quiver, together with a single copy of the structure sheaf $\mc O_{S_0}[1]$, as explained in the companion paper \cite{BR1}. For simplicity, we will describe these presentations in terms of sheaves on $S_0=|\mc O_{\bb P^1}(-1)|$ rather than $Y=Y_{1,1}$, since all such $\zeta_\VW$-stable extensions in this case are supported scheme-theoretically on $S_0$. We still follow the structural results described in \emph{loc. cit.} in the three dimensional setting:

Let $C=\bb P^1$, $\iota:C\to S_0=|\mc O_{\bb P^1}(-1)|$ denote the zero section, and to begin consider the representations of the unframed variant of the reduced quiver of Figure \ref{fig:conrotfig}, which corresponds to the dimensional reduction of the quiver on the left side of Equation \ref{quiveregeqn}. The one dimensional representations determined by the vertices of this quiver correspond to the simple, compactly supported, perverse coherent sheaves on $S_0\to S_0^\textup{aff}$ in the sense of Bridgeland \cite{Bdg1}, which are given by
\[ F_0 = \iota_* \mc O_C \quad\quad \text{and}\quad\quad F_1 = \iota_* \mc O_C(-1)[1] \ ,\]
for the one dimensional, unframed representations of dimension $(1,0)$ and $(0,1)$, respectively.

The linear maps $B_1$ and $B_2$ are the Koszul dual generators to the standard arrows in the Beilinson quiver describing coherent sheaves on $\bb P^1$ in terms of representations of the Kronecker quiver \cite{Bei2}. For example, the structure sheaves of zero dimensional subschemes of $C$ correspond to quiver representations of equal dimension $n_0=n_1$ such that $d=0$, and in particular the structure sheaf a single closed point $c\in C$ can be expressed as a quiver representation, and correspondingly an extension of the objects $F_0$ and $F_1$, given by 
\begin{equation}\label{Om1ptshveqn}
	  \xymatrix{ \bb C_{0}  & \ar@<5pt>[l]^{B_1} \ar@<-5pt>[l]_{B_2}  \C_{1} } \quad\quad \text{and} \quad\quad  F_0=\iota_*\mc O_C \to \iota_{c*} \mc O_{c} \to \iota_*\mc O_C(-1)[1]=F_1  \ ,  
\end{equation}
 respectively, where the values of the linear maps $B_1,B_2\in \C$ determine $c=B_1/B_2\in \bb P^1$; this was the basic example motivating the definition of the perverse coherent heart in \cite{Bdg1}.
 
 Similarly, the line bundles $\iota_*\mc O_C(m)$ and $ \iota_* \mc O_C(-m-1)[1]$ for $m\geq 0$ are determined by
 \begin{equation}\label{Beilinebuneqn}
 	  \xymatrix{ \bb C_{0}^{m+1}  & \ar@<5pt>[l]^{[\id \ 0]} \ar@<-5pt>[l]_{[0 \ \id ]}  \C_{1}^m }  \quad\quad \text{and}\quad\quad  \xymatrix{ \bb C_{0}^{m}  & \ar@<5pt>[l]^{[\frac{\id}{0}]} \ar@<-5pt>[l]_{[\frac{0}{\id}]}  \C_{1}^{m+1} } \ , 
 \end{equation}
 respectively, which correspond to expressions for the desired sheaves as iterated extensions of the simple objects $F_0$ and $F_1$: for example, for $m=0$ they are the one dimensional representations corresponding to $F_0$ and $F_1$ themselves as above, and for $m=1$ they correspond to the extensions
 \[ \mc O_C^{\oplus 2} \to \mc O_C(1) \to \mc O_C(-1)[1] \quad\quad \text{and}\quad\quad  \mc O_C \to \mc O_C(-2)[1] \to \mc O_C(-1)^{\oplus 2}[1] \ , \]
 determined by the Euler-type short exact sequences of vector bundles on $\bb P^1$ given by
 \[ \mc O_C(-1) \to \mc O_C^{\oplus 2} \to \mc O_C(1) \quad\quad \text{and}\quad\quad  \mc O_C(-2)[1] \to \mc O_C(-1)^{\oplus 2}[1] \to \mc O_C[1] \ . \]

We now describe the geometric interpretation of the stable, framed representations of the reduced quiver as torsion free sheaves. We focus on the graded representations, which correspond to the $T$-fixed points in the corresponding moduli space of sheaves, for the choice of compatible bigrading 
\[ \deg(B_1) = (1,0) \quad \quad \deg(B_2) = (-1,0) \quad \quad \deg(d)=(0,1) \quad\quad \deg(I)=(0,0) \quad\quad \deg(J)=(0,1)  \ .\]

To begin, recall that on the $\ell=0$ component a fixed point corresponds to the ideal sheaf $\mc I_Z$ of a zero dimensional subscheme $Z=Z_0\sqcup Z_\infty$, with each $Z_i$ supported set theoretically at the point $\{i\}\subset C=\bb P^1$ for $i=0,\infty$, which are determined by partitions $\lambda_0$ and $\lambda_\infty$. As usual, the ideal sheaves $\mc I_Z$, and their corresponding representations, can be expressed as an extensions
\[ \mc O_Z \to \mc I_Z[1] \to \mc O_{S_0}[1] \quad\quad \text{and correspondingly decompose as} \quad\quad  \C_\f \xrightarrow{I} R \]
where $\C_f$ denotes the framing vector space and $R$ denotes a representation of the unframed quiver with potential corresponding to the compactly supported (perverse) coherent sheaf $\mc O_Z$. Thus, we omit the map from the framing vertex when describing such quiver representations in order to simplify notation, as in Equation \ref{ADHMrepeqn} in the preceding example. The two fixed points for which $R$ is of the form in Equation \ref{Om1ptshveqn} correspond to $[\mc O_{S_0} \to \mc O_{\{0\}}]$ or $[ \mc O_{S_0} \to \mc O_{\{\infty\}}]$, with graded quiver representations and in turn coloured partitions
\begin{equation}\label{fixedptrepseqn}
	 \xymatrix{ \bb C_{0,0}  & \ar[l]_{B_2} \C_{1,\e_1} }\quad\quad\text{or}\quad\quad \xymatrix{ \C_{1,-\e_1} \ar[r]^{B_1} & \bb C_{0,0}  } \quad\quad\text{and}\quad\quad  \begin{ytableau}
	 	0  & *(lightgray) \ \end{ytableau} \quad\quad\text{or} \quad\quad \begin{ytableau}*(lightgray) \  & 0 \end{ytableau} \ .
\end{equation}

The analogous descriptions for fixed points labelled by $\lambda_0$ or $\lambda_\infty$ in the $\ell=0$ component are
\begin{equation}\label{conquiverrepegeqn}
\hspace*{-2.3cm} \scriptsize{\vcenter{\xymatrixcolsep{1pc}
		\xymatrixrowsep{1pc}\xymatrix{ 
	\C_{0,-2\e_1+2\e_2} &  \ar[l]_{B_2} \C_{1,-\e_1+2\e_2} & \\
 & \C_{0,-\e_1+\e_2} \ar[u]_{d} & \ar[l]_{B_2} \C_{1,\e_2}\ar[r]^{B_1}  & \C_{0,\e_1+\e_2} & \ar[l]_{B_2} \C_{1,2\e_1+\e_2}  \\
 && \C_{0,0} \ar[u]_d & \ar[l]_{B_2} \C_{1,\e_1} \ar[r]^{B_1} & \C_{0,2\e_1} \ar[u]_{d} & \ar[l]_{B_2} \C_{1,3\e_1}
 }}  \ \text{or} \  
\vcenter{\xymatrixcolsep{1pc}
		\xymatrixrowsep{1pc}\xymatrix{ & & & &  \C_{1,\e_1+2\e_2} \ar[r]^{B_1} & \C_{0,2\e_1+2\e_2} \\
		& & & \C_{1,\e_2} \ar[r]^{B_1} & \C_{0,\e_1+\e_2} \ar[u]_{d} \\ 
		\C_{1,-3\e_1} \ar[r]^{B_1} & \C_{0,-2\e_1}  & \ar[l]_{B_2} \C_{1,-\e_1} \ar[r]^{B_1} &  \C_{0,0} \ar[u]_{d}	}}}
\end{equation}
for example, which correspond to coloured partitions determining $\lambda_0$ or $\lambda_\infty$, as for example
\begin{equation}
\hspace*{-1cm}\begin{ytableau}
 \  & *(lightgray) \ \\
 \none[] &  \  & *(lightgray) \  &  \  & *(lightgray) \ \\
 \none[] & \none[] &  0  & *(lightgray) \ & \  & *(lightgray) \ 
\end{ytableau} 
\quad\quad \text{or}\quad\quad 
\begin{ytableau}
 \none[] &  \none[] &  \none[] &  \none[] &*(lightgray) \ & \  		\\
 \none[] & \none[] 	&   \none[] & *(lightgray) \ &  \  \\
	*(lightgray)	\ 	&  \  & 	*(lightgray) \ & 0 & \none[]
\end{ytableau} 
\quad\quad \implies \quad\quad 
\lambda_0= \begin{ytableau}
	x^2 & 	\none[ ]   \\
	x	& 	xz \\
1		&  z
\end{ytableau} 
\quad\quad \text{or} \quad\quad \lambda_\infty =
\begin{ytableau}
	\none &  \tilde{x}^2 \\
\none	&  \tilde{x} \\
\scriptsize{z^{-1}}	&  1 
\end{ytableau}
\end{equation}
for the representations of the preceding Equation \ref{conquiverrepegeqn}.

Similarly, for each $\ell \geq 0$ the line bundle $\mc O(l)$ can be described in terms of such quiver representations as follows: let $\ell C$ denote the degree $\ell$ Cartier divisor on $S_0=|\mc O_{\bb P^1}(-1)|$ with $[\ell C]=\ell[C]$, so that we have a short exact sequence
\begin{equation}\label{Olcompeqn}
	 \mc O_{S_0}(-\ell [C]) \to \mc O_{S_0} \to \mc O_{\ell C}  \quad\quad \text{and thus an exact triangle} \quad\quad  \mc O_{\ell C} \to \mc O(\ell)[1] \to \mc O_{S_0}[1] \ , 
\end{equation}
noting $\mc O_{S_0}([C])=\mc O_{S_0}(-1)$. Moreover, for each $\ell$ there is a short exact sequence
\[ \mc O_C(\ell-1) \to \mc O_{\ell C} \to \mc O_{(\ell-1) C} \quad\quad \text{inducing} \quad\quad \mc O_{\ell C} = \left[ \mc O_C(\ell-1) < \mc O_C(\ell-2) < \hdots < \mc O_C  \right] \]
a composition series in which each of the factors are given by the iterated extensions of $F_0$ and $F_1$ corresponding to the quiver representations on the left side of Equation \ref{Beilinebuneqn}. These composition series correspond to the sequence of quiver representations 
\begin{equation}\label{linebunrepseqn}
	\scriptsize{
\vcenter{\xymatrix{ \C_{0,0} }}  \quad \quad \vcenter{
\xymatrixcolsep{1pc}\xymatrixrowsep{1pc}\xymatrix{ 
		& \C_{0,-\e_1+\e_2} & \ar[l]_{B_2} \C_{1,\e_2}\ar[r]^{B_1}  & \C_{0,\e_1+\e_2}\\
		&& \C_{0,0} \ar[u]_d 
	}} \quad\quad
\vcenter{\xymatrixcolsep{1pc}\xymatrixrowsep{1pc}\xymatrix{ 
	\C_{0,-2\e_1+2\e_2} &  \ar[l]_{B_2} \C_{1,-\e_1+2\e_2} \ar[r]^{B_1} & \C_{0,2\e_2}  & \ar[l]_{B_2} \C_{1,\e_1+2\e_2} \ar[r]^{B_1} & \C_{0,2\e_1+2\e_2}  \\
	& \C_{0,-\e_1+\e_2} \ar[u]_{d} & \ar[l]_{B_2} \C_{1,\e_2}\ar[r]^{B_1}  & \C_{0,\e_1+\e_2} \ar[u]_{d} \\
	&& \C_{0,0} \ar[u]_d 
}
} \quad \hdots } 
\end{equation}
and in turn the sequence of coloured partitions
\[ \scriptsize{ \begin{ytableau}  \none[] \\ \none[] \\ 0
\end{ytableau}  \quad\quad \begin{ytableau}  \none[] \\
\ & *(lightgray) \  & \ \\ \none[] & 0
\end{ytableau}  \quad\quad
\begin{ytableau}  \ & *(lightgray) \  & \ & *(lightgray) \  & \ \\
  \none[]  & \ & *(lightgray)  &\   \\ \none[] &  \none[] & 0
\end{ytableau}  \quad\quad \hdots  \quad\quad }\text{and similarly} \scriptsize{ \quad\quad  \begin{ytableau} \none[0] \\ *(lightgray) \  \end{ytableau}
\quad\quad \begin{ytableau} \none[]  & \none[0] \\
 \none[] &  *(lightgray)  \\
 *(lightgray)\  & \ & *(lightgray) \ 
 \end{ytableau}
 \quad\quad \begin{ytableau}\none[]  & \none[] & \none[0] \\
 \none[] &  	\none[] &  *(lightgray)  \\
 \none[] &	*(lightgray)\  & \ & *(lightgray) \ \\
 	*(lightgray)\  & \ &	*(lightgray)\  & \  &	*(lightgray)\ 
 \end{ytableau} }  \quad\quad \hdots   \]
  are the analogous shorthand expressions for the quiver representations corresponding to the presentations of $\mc O(-l)$ for $\ell\geq 0$ as iterated extensions of the objects $F_0$ and $F_1$ with a single $\mc O_{S_0}$; the latter are determined similarly from the composition series of $\mc O(-\ell)$ induced by the exact triangles
  \[ \mc O_{S_0}[1] \to \mc O(-\ell)[1] \to \mc O_{\ell C}(-\ell) [1] \quad\quad \text{and}\quad\quad   \mc O_C (-1)[1]  \to \mc O_{\ell C}(-\ell)[1] \to \mc O_{(\ell-1)C} (-\ell +1)[1]  \ ,\]
  together with the expressions for $\mc O_C(\ell-1)[1]$ given on the right side of Equation \ref{Beilinebuneqn}.

 Recall that the conjectural generators for the geometric action of the algebra of modes of the vertex algebra are defined in Section \ref{GRintrosec}, by Theorem \ref{Bthm}, Corollary \ref{Nakopcoro}, and Propositions \ref{Nakopprop} and \ref{opactprop}. Consider the spherical subalgebras $\mc{SH}^y(Y)\subset \mc H(Y)$ generated by the equivariant fundamental classes $[\pt_y] \in \mc H(Y)_{(1,1)}$ for $y=0,\infty$ corresponding to the unframed variants of the representations pictured in Equation \ref{fixedptrepseqn}. The natural basis vectors in each component of the subalgebra
\[ [\pt_y]^k \in \mc{SH}_k^y(Y):= \mc{SH}^y(Y)\cap \mc{SH}_{2k}(Y)  \quad\quad \textup{define} \quad\quad \tilde{b_k^y} = \rho_{S_0}([\pt_y]^k) \in \End_F(\V_{S_0})  \]
 as their image under the representation $\rho_{S_0}:\mc H(Y) \to \End_F(\V_{S_0}) $ of Corollary \ref{Nakopcoro} for $k\leq 0$, and similarly their adjoints for $k>0$. Further, by Propositions \ref{Nakopprop} and \ref{opactprop}, these endomorphisms can be computed in terms of the correspondences of Equation \ref{Nakcorrmainkeqn}, and by localization we obtain an analogous formula to that of Equation \ref{explicitC2geneqn}, from which one can readily compute
\begin{equation} 
	 [\tilde{b}_n^y,\tilde{b}_m^{y'}] = -n \delta_{m-n} \delta_{y=y'} \e_T(T_y S_0)  \quad\quad \text{where}\quad\quad \begin{cases} \e_T(T_0 S_0) & = 2\e_1 (-\e_1+\e_2) \\ \e_T(T_\infty S_0) & = -2\e_1 (\e_1+\e_2) \end{cases}
\end{equation}
 are determined in the calculation by our grading conventions, and match the grading implicit the orientation of the toric diagram in Figure \ref{fig:conrotfig}. In particular, applying the rescaling automorphism of the Heisenberg algebra over $F$ we can define the equivalent generators
 \[b_n^y = \e_T(T_y S_0)^{-1} \tilde{b}_n^y \quad\quad\text{and let} \quad\quad b_0^y=\langle [\{y\}],c_1 \rangle = \ell \  \e_T(N_{C,y} S_0) =: \ell  \lambda_y \]
 so that the fields defined by
 \[ J^y(z) := \sum_{n\in \bb Z} b_n^y z^{-n-1} \ \in \End(\V_{S_0})\lP z^{\pm 1} \rP \quad\quad \text{satisfy} \quad\quad  J^y(z) J^{y'}(w) \sim  - \frac{\delta_{y=y'}}{\e_T(T_yS_d)} \frac{1}{(z-w)^2}  \] 
  as in Equation \ref{geoheisopeeqn}, for $y=0,\infty$. In summary, we obtain the desired representation of the Heisenberg subalgebra $\pi_{S_0}\cong \pi^{k_0} \otimes \pi^{k_\infty}$ defined in Equation \ref{HeisSeqn}, giving
\begin{equation}\label{Om1HeisSrepeqn}
	    \mc U(\pi^{k_0} \otimes \pi^{k_\infty} ) \to \End_F(\V_{S_0}) \quad\quad \text{for}\quad\quad k_0= -\frac{1}{2\e_1(-\e_1+\e_2)} \quad\quad k_\infty = \frac{1}{2\e_1(\e_1+\e_2)}   \ . 
\end{equation}
  Since the endomorphisms $b_n^y$ preserve the components corresponding to distinct first Chern class, as well as the factorization into pairs of partitions, the decomposition of Equation \ref{VOm1loceqn} induces
  \begin{equation}
  	\V_{S_0} \cong \bigoplus_{ \ell\in \bb Z} \pi^{k_0}_{\ell \lambda_0}\otimes \pi^{k_\infty}_{\ell \lambda_\infty}   \end{equation}
  an isomorphism of representations of $\pi_{S_0}\cong \pi^{k_0} \otimes \pi^{k_\infty}$, where $\lambda_y$ are as in Equation \ref{} above. This is evidently consistent with the vacuum module for the lattice extension of the Heisenberg algebra $\pi_{S_0}$ generated by the exponential vertex operators
\begin{equation}\label{Om1vertopeqn}
	   V_\ell(z) = \nol \exp \left( \ell ( \e_T(N_{C,0} S_0) \phi^{0}(z) + \e_T(N_{C,\infty}S_0) \phi^{\infty}(z) \right) \nor  \quad  \in \End_F(\V_{S_0}) \LFz \  
\end{equation}
  for $\ell \in \bb Z$, just as in Equation \ref{curveextopeqn} in the definition of $\V(Y,S_0)=\Pi(Y,S_0)$.
  
  It remains to show that this algebra is generated by the endomorphisms in the image of the geometric representation defined above. Each of the unframed variants $R_\ell$ of the representations pictured in Equation \ref{linebunrepseqn}, corresponding to the $\mc O_{\ell C}$ factors in the presentations of the line bundles $\mc O(\ell)$ as extensions in Equation \ref{Olcompeqn}, define classes
  \[ [\pt^0_\ell] \in \mc{SH}(Y)_{(\ell_0,\ell_1)} \quad\quad \text{and thus} \quad\quad e^{\ell,0} = \rho_{S_0}([\pt_\ell^0]) \in \End_F(\V_{S_0})  \]
   where $\ell_0= \ell(\ell+1)/2$ and $\ell_1= \ell(\ell-1)/2$ for $\ell\geq 0$ and opposite for $\ell < 0$.
  The only non-trivial components of the correspondence of Equation \ref{Nakcorrmainkeqn} inducing the endomorphism $e^\ell$ are given by
  \[  \widehat{\mc M}_{(1,0,n),(1,\ell,n)}^{[\pt^0_\ell]} \cong \{ (E,E')\in  \widehat{\mc M}_{(1,0,n)}\times  \widehat{\mc M}_{(1,\ell,n)} \ | \ E'=E(\ell):= E \otimes_{\mc O_{S_0}} \mc O(\ell) \  \}   \] 
  noting that for $E \in  \widehat{\mc M}_{(1,0,n)}$ there is a unique stable extension by $\mc O_{\ell C}$ given by
  \[ \mc O_{\ell C} \to  E(\ell)[1] \to  E[1] \]
  where the maps are induced by those of the exact triangle on the right side of Equation \ref{Olcompeqn}. Evidently the correspondence is isomorphic to $\widehat{\mc M}_{(1,0,n)}$ itself by the canonical projection, so that the endomorphism $e^\ell$ is simply induced by pushforward along the map of moduli spaces,
  \[ e^{\ell,0}  = E^{\ell,0}_* : \bb V_{S_0,(0,n)} \to \bb V_{S_0,(\ell,n)} \quad\quad \text{for}\quad\quad E^{\ell,0} :=(\cdot) \otimes_{\mc O_{S_0}} \mc O(\ell) :\widehat{\mc M}_{(1,0,n)} \to \widehat{\mc M}_{(1,\ell,n)}  \]
  which are all isomorphisms. The induced maps evidently commute with the endomorphisms $b_n^y$ for $n\neq 0$, and extend uniquely to a representation of the group algebra $F[\bb Z]\to \End_F(\V_{S_0})$ defined by
\begin{equation}\label{shiftopgeoeqn}
	   \ell \mapsto  e^\ell = \sum_{\ell'\in \bb Z} e^{\ell,\ell '} : \V_{S_0} \to \V_{S_0} \quad\quad \text{where} \quad\quad   e^{\ell,\ell '} : \V_{S_0,(\ell',n)} \to \V_{S_0,(\ell+\ell',n)} \ , 
\end{equation}
  are defined uniquely by compatibility with the endomorphisms $e^{\ell,0}$. In summary, the endomorphisms $e^\ell$ identify with the shift operators $S_\ell$ of Equation \ref{shiftopeqn} in this case, and together with the action of the Heisenberg subalgebra $\pi_{S_0}$ defined in Equation \ref{Om1HeisSrepeqn} above, generate the action of the vertex operators $V_\ell(z)$ of Equation \ref{Om1vertopeqn} above, and thus the desired free field vertex algebra $\V(Y,S_0)=\Pi(Y,S_0)$ in this example, as claimed.
\end{eg}

We now outline a proof of the main result of this section, establishing Conjectures \ref{GRconj} and \ref{vacuumconj} for irreducible, reduced divisors $S_0\subset Y$:

\begin{theo}\label{abGRthm} There exists a natural representation 
	\[\rho: \mc U(\V(Y,S_0)) \to \End_F(\V_{S_0} )\]
	of the algebra of modes $\mc U(\V(Y,S_0))$ of the vertex algebra $\V(Y,S_0)=\Pi(Y,S_0)$ on $\V_{S_0}$, such that $\V_{S_0}$ is identified with the vacuum module $\Pi(Y,S_0)_0$ for $\Pi(Y,S_0)$.
\end{theo}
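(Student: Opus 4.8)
The plan is to construct the action of the lattice vertex algebra $\Pi(Y,S_0)$ on $\V_{S_0}$ directly from the geometric representation $\rho_{S_0}:\mc H(Y)\to \End_F(\V_{S_0})$ of Corollary \ref{Nakopcoro}, together with its adjoint action from Proposition \ref{opactprop}, and to identify the resulting module with the vacuum module. Following the two explicit computations of Examples \ref{geometricC2eg} and \ref{Om1GReg}, I would proceed in three stages: first recover the Heisenberg subalgebra $\pi_{S_0}=\otimes_{y\in \mf F_{S_0}}\pi^{k_y}$, then the shift operators indexed by $H_2(S_0;\Z)$, and finally assemble these into the exponential vertex operators $V_i(z)$ that generate $\Pi(Y,S_0)$.

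For the Heisenberg stage, for each torus fixed point $y\in \mf F_{S_0}$ I would consider the spherical generators $[\pt_y]^k\in \mc{SH}(Y)$ together with their adjoints, and show via the localization formula for the correspondences of Equation \ref{Nakcorrmainkeqn} --- generalizing Equation \ref{explicitC2geneqn} --- that the induced endomorphisms $\tilde b_n^y$ satisfy $[\tilde b_n^y,\tilde b_m^{y'}]=-n\,\delta_{n+m,0}\,\delta_{y,y'}\,\e_T(T_y S_0)$. After the rescaling $b_n^y=\e_T(T_y S_0)^{-1}\tilde b_n^y$ this is precisely the Heisenberg algebra at level $k_y=-\e_T(T_y S_0)^{-1}$ of Equation \ref{Heisleveleqn}. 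The essential point, where Proposition \ref{localsubrepprop} enters, is that this commutator is determined entirely by the tangent weights at $y$ and the local structure of the correspondence near $y$; by the locality principle the computation may therefore be carried out in whichever of the local models of Equation \ref{localmodeleqn} describes a neighbourhood $U\ni y$, so that no global computation is required.

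Next I would realise the lattice grading geometrically. By the localization theorem and the description of $\mc M^0(Y,S_0)$ as rank $1$ torsion-free sheaves with an identification of the double dual with $\mc O_{S_0}$, the $A$-fixed points are indexed by a first Chern class $\ell\in H_2(S_0;\Z)\cong \Z^{\mf C_{S_0}}$ together with a tuple of partitions $(\lambda_y)_{y\in \mf F_{S_0}}$, yielding a vector-space identification $\V_{S_0}\cong \bigoplus_{\ell}\bigotimes_{y}\pi^{k_y}_{\ell\lambda_y}$ compatible with the Heisenberg action. Tensoring by the line bundles $\mc O_{S_0}(C_i)$ along the correspondences associated to the representations of Equation \ref{linebunrepseqn} then defines isomorphisms between fixed components of differing Chern class, which I would identify with the shift operators $S_{\lambda_i}$ of Equation \ref{shiftopeqn}; composing these with the Heisenberg modes reproduces the exponential vertex operators $V_i(z)$ of Equation \ref{curveextopeqn}. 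Their operator product expansions, in particular the exponent $\chi(i,j)=-\langle [C_i],[C_j]\rangle$ of Proposition \ref{pairingprop}, are again local in the curves $C_i,C_j$ and their fixed points, so locality reduces this verification to the same models.

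I expect the main obstacle to be the final generation-and-surjectivity claim: that the image $\rho_{S_0}(\mc Y_{S_0}(Y))$ is \emph{exactly} the algebra of modes $\mc U(\Pi(Y,S_0))$, neither larger nor smaller. The inclusion of the generated vertex operators into the image is straightforward once the pieces above are in place, but to show that the spherical cohomological Hall algebra generates nothing beyond $\mc U(\Pi(Y,S_0))$, and that the fixed-point basis assembles into the vacuum module $\Pi(Y,S_0)_0$ with no degeneracy, requires controlling the full spherical subalgebra --- not merely the point classes $[\pt_y]$ --- and checking that the locality reduction of Proposition \ref{localsubrepprop} is compatible with the multiplication in $\mc H(Y)$. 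This is precisely the step for which the explicit exhaustion of the local models in Equation \ref{localmodeleqn} would give an unconditional argument, whereas the general locality argument must establish that every structure constant of $\Pi(Y,S_0)$ is already detected in a local chart.
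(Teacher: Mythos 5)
Your proposal follows essentially the same route as the paper's proof: both use the localization/fixed-point description of $\V_{S_0}$, reduce via Proposition \ref{localsubrepprop} to the local models of Equation \ref{localmodeleqn} (affine charts $U_y\cong\C^3$ around fixed points for the Heisenberg generators, and two-chart opens $U_i$ around compact curves for the shift/vertex operators built from line-bundle twists), and conclude by the claim that $\mc Y_{S_0}(Y)$ is generated by the local subalgebras $\mc Y_{S_i}(U_i)$. The generation-and-surjectivity step you flag as the main obstacle is indeed exactly where the paper's argument rests its weight, handled there by asserting $\mc Y_{S_0}(Y)=F\langle \mc Y_{S_i}(U_i)\rangle_{i\in\mf C_{S_0}}$.
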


\begin{proof} To begin, note that for the class of surfaces $S_0$ appearing in Equation \ref{localmodeleqn}, we have
\[ \Pic(S_0) \xrightarrow{\cong}  H^2(S_0; \bb Z) \cong H_2(S_0;\bb Z) \cong \bb Z^{\mf C_{S_0}} \ , \]
where we recall that $\mf C_{S_0}$ denotes the index set for the irreducible, compact toric curve classes $C_i\subset S_0$. Further, recall the notation for the set of fixed points $y\in \mf F_{S_0}=S_0^T$, and note that each fixed point is contained in an affine, toric Zariski chart $U_y \cong \C^3 \subset Y$ such that 
\[S_y=S_0\times_{Y}U_y \cong \C^2 \subset  \C^3 \ .\]
	
Analogously to Equation \ref{VOm1loceqn}, by the localization theorem we have the vector space isomorphism
\[ \V_{S_0} \xrightarrow{\cong } \bigoplus_{\ell \in \bb Z^{\mf C_{S_0}}}  \bigotimes_{y\in \mf F_{S_0}} \bigg( \bigoplus_{n_y \in \bb N} \bigoplus_{\lambda_{n_y}\in \Fp_{n_y}} F_{\lambda_{n_y}} \bigg)_{\ell}  \]
corresponding to the decomposition of $\mc M^0(Y,S_0)$ into components with first Chern class $\ell=c_1(E) \in H_2(S_0,\Z)^\vee \cong \bb Z^{\mf C_{S_0}}$, and the enumeration of the $T$-fixed points in each component in terms of tuples of partitions $(\lambda_{n_y} \in \Fp_{n_y})_{y\in \mf F_{S_0}}$ of lengths $n_y\in \bb N$, corresponding to the zero dimensional subschemes of $S_0$ supported set theoretically on the fixed points $S_0^T$. Moreover, applying Proposition \ref{localsubrepprop} to each of the open sets $U_y$, together with the proof of the desired result for $(Y,S)=(\C^3,\C_2)$ from \cite{SV} as recalled in the introduction, we obtain inclusions of subalgebras
\[ \mc Y(\glh_1)\cong \mc Y_{S_{y}}(U_y) \to \mc Y_{S_0}(Y) \]
under which the restriction of the representation $\V_{S_0}$ to $\mc Y_{S_{y}}(U_y)$ has subrepresentation $\V_{S_0,U_y}$ induced by Proposition \ref{localsubrepprop} is given by
\[ \V_{S_0,U_y} = \bigoplus_{n_y \in \bb N} \bigoplus_{\lambda_{n_y}\in \Fp_{n_y}} F_{\lambda_{n_y}} \quad\quad \text{admitting a factorization}\quad \quad  \mc Y(\glh_1) \to \mc U(\pi^y) \to \End_F(\pi_{S_0,y}) \]
such that the representation $\V_{S_0,U_y}$ of $\mc U(\pi^{y})$ identifies with the vacuum module for $\pi^{y}$, the Heisenberg vertex algebra at level $k_y$ as in Equation \ref{Heisleveleqn}.

The representations $\mc U(\pi^y) \to \End_F(\V_{S_0})$ naturally commute for distinct fixed points $y\in \mf F_{S_0}$, as the supports of the compactly supported perverse coherent sheaves generating the cohomological Hall algebras $\mc H(U_y)$ are disjoint for distinct $y$, so that we obtain the desired representation of the Heisenberg subalgebra
\begin{equation}\label{piS0acteqn}
	 \mc U(\pi_{S_0})= \widehat{\bigotimes_{y\in \mf F_{S_0}}} \mc U(\pi^y) \to \End(\V_{S_0}) \quad\quad \text{where}\quad\quad \pi_{S_0}=\bigotimes_{y\in \mf F_{S_0}} \pi^y 
\end{equation}
denotes the Heisenberg subalgebra $\pi_{S_0} \subset \Pi(Y,S_0)$, as in Equation \ref{Heiscompeqn}.

Similarly, for each compact toric curve class $C_i\in \mf C_{S_0}$, let $0_i,\infty_i\in \mf F_{S_0}$ denote the $T$-fixed points contained in $C_i$ and define
\[ U_i = U_{0_i} \cup U_{\infty_i} \subset Y \quad\quad \text{and}\quad\quad S_i = S\times_Y U_i  \ .\]
Given our assumptions on $(Y,S_0)$, we must have either
\[ U_i \quad\cong \quad Y_{1,1} \quad\text{or} \quad Y_{2,0} \quad\quad \text{and similarly} \quad\quad S_i\quad \cong  \quad  |\mc O_{\bb P^1}(-1)|  \quad , \quad |\mc O_{\bb P^1}|\quad\text{or} \quad   |\mc O_{\bb P^1}(-2)| \ ,\]
where we recall that $Y_{1,1}\cong |\mc O_{\bb P^1}(-1)\oplus\mc O_{\bb P^1}(-1) | $ and $Y_{2,0}\cong |\mc O_{\bb P^1}\oplus\mc O_{\bb P^1}(-2) | $. Again, applying Proposition \ref{localsubrepprop} we obtain an inclusion of algebras $\mc Y_{S_i}(U_i)\to \mc Y_{S_0}(U)$ such that the subrepresentation of the restriction of $\V_{S_0}$ to $\mc Y_{S_i}(U_i)$ is given by
\[ \V_{S_0,U_i}= \bigoplus_{\ell_i \in \bb Z} \bigg( \bigoplus_{n_{0_i} \in \bb N} \bigoplus_{\lambda_{n_{0_i}}\in \Fp_{n_{0_i}}} F_{\lambda_{n_{0_i}}} \bigg)_{\ell_i}  \otimes \bigg( \bigoplus_{n_{\infty_i} \in \bb N} \bigoplus_{\lambda_{n_{\infty_i}}\in \Fp_{n_{\infty_i}}} F_{\lambda_{n_{\infty_i}}} \bigg)_{\ell_i}  \ , \]
as in Equation \ref{VOm1loceqn}. In the case $(U_i,S_i)=(Y_{1,1},|\mc O_{\bb P^1}(-1)|)$, by the computation in Example \ref{Om1GReg} above, we have the desired factorization
\begin{equation}\label{localcurverepeqn}
	 \mc Y_{S_i}(U_i) \to \mc U(\Pi(U_i,S_i)) \to \End_F(\V_{S_0,U_i}) 
\end{equation}
under which $\V_{S_0,U_i}$ is identified with the vacuum module $\Pi(U_i,S_i)_0$. A similar computation in the two remaining cases $(U_i,S_i) = (Y_{2,0},|\mc O_{\bb P^1}|$ or $(Y_{2,0}, |\mc O_{\bb P^1}(-2)| )$ implies the analogous factorizations in those examples, where in both cases the resulting quiver with potential
\[ \hspace*{-1.5cm} \begin{cases}  Q_{|\mc O_{\bb P^1}|}^{\f_0} & = 
	\begin{tikzcd}
	\boxed{\C}\arrow[d,shift left=0.5ex, "I"] \\ 
	\arrow[out=160,in=200,loop,swap,"E"] \mathcircled{V_0} \arrow[r, bend left=25 ] \arrow[r, bend left=40 ,  "A\ C"]  & \arrow[l, bend left=25 ] \arrow[l, bend left=40 ,  "B\ D"] \mathcircled{V_1}\arrow[out=340,in=20,loop,swap,"F"] \ar[ul,shift left=1ex, bend right=40]\ar[ul, bend right=40, swap,"J_1 \ J_2"]    
\end{tikzcd} \\  W_{|\mc O_{\bb P^1}|}^{\f_0} & =E(BC-DA)+ F(AD-CB)  \\ & \quad\quad + IJ_1A + IJ_2C 
\end{cases}  \quad\quad\text{and}\quad\quad \begin{cases} Q_{|\mc O_{\bb P^1}(-2)|}^{\f_0} &  = \quad \begin{tikzcd}
	\boxed{\C}\arrow[d,shift left=0.5ex, "I"] \\ 
	\arrow[out=160,in=200,loop,swap,"E"] \ar[u,shift left=0.5ex, "J"] \mathcircled{V_0} \arrow[r, bend left=25 ] \arrow[r, bend left=40 ,  "A\ C"]  & \arrow[l, bend left=25 ] \arrow[l, bend left=40 ,  "B\ D"] \mathcircled{V_1}\arrow[out=340,in=20,loop,swap,"F"]
\end{tikzcd} \\ W_{|\mc O_{\bb P^1}(-2)|}^{\f_0} & = E(BC-DA)+ F(AD-CB) \\ & \quad\quad + EIJ - IG_\f J \end{cases}  \]
admits an analogous dimensional reduction isomorphism to that of Equation \ref{Om1dimredeqn}, where the reduced quiver with relations is given by
 \begin{equation}\label{dimredNakquiveqn}
 	\hspace*{-1cm} \begin{cases}  Q_{|\mc O_{\bb P^1}|} = &  \begin{tikzcd}
	\boxed{\C}\arrow[d,shift left=0.5ex, "I"] \\ 
	\arrow[out=160,in=200,loop,swap,"E"] \mathcircled{V_0}  & \arrow[l,swap, "B\ D"] \arrow[l,shift left=1ex] \mathcircled{V_1}\arrow[out=340,in=20,loop,swap,"F"] \ar[ul,shift left=1ex, bend right=40]\ar[ul, bend right=40, swap,"J_1 \ J_2"]  
\end{tikzcd} \\  R_{|\mc O_{\bb P^1}|}  = &  EB-BF + I J_2 =0 \\  & DF-ED + I J_1  = 0 \end{cases}\quad\quad\text{and}\quad\quad \begin{cases} Q_{|\mc O_{\bb P^1}(-2)| } & = \begin{tikzcd}
	\boxed{\C}\arrow[d,shift left=0.5ex, "I"] \\ 
	\ar[u,shift left=0.5ex, "J"] \mathcircled{V_0} \arrow[r, bend left=25 ] \arrow[r, bend left=40 ,  "A\ C"]  & \arrow[l, bend left=25 ] \arrow[l, bend left=40 ,  "B\ D"] \mathcircled{V_1}
\end{tikzcd}  \\   R_{|\mc O_{\bb P^1}|}  = & BC-DA + IJ  =0 \\    & AD-CB   = 0 \end {cases} \ , 
 \end{equation}
respectively. In particular, this implies that the endomorphisms induced as in Equation \ref{shiftopgeoeqn} by the representation of Equation \ref{localcurverepeqn} satisfy the desired relations of Equation \ref{geoheisopeeqn} with the Heisenberg generators induced by the representations of $\pi^{0_i}$ and $\pi^{\infty_i}$ defined in Equation \ref{piS0acteqn}, and that together these generate the action of $\mc Y_{S_i}(U_i)$ and in turn $\mc U(\Pi(U_i,S_i))$.

Finally, note that the algebra $\mc Y_{S_0}(Y)$ is generated by the collection of subalgebras $\mc Y_{S_i}(U_i)$,
\[ \mc Y_{S_0}(Y) = F\langle \mc Y_{S_i} (U_i) \rangle_{i\in \mf C_{S_0}}   \quad \quad \text{and thus the representation} \quad\quad \mc Y_{S_0}(Y) \to \End_F(\V_{S_0})\]
is determined by the action of the generators of each of the subalgebras $\mc Y_{S_i}(U_i)$, which by the preceding paragraph satisfy the desired relations to determine a factorization
\[ \mc Y_{S_0}(Y) \to  \mc U(\Pi(Y,S_0)) \to \End_F( \V_{S_0}) \]
inducing the desired representation of $\mc U(\Pi(Y,S_0))$ and identification of $\V_{S_0}$ with the vacuum module $\Pi(Y,S_0)$, as desired.

\end{proof}

\subsection{Towards the proof in higher rank: factorization and locality}\label{generalGRsec} In this section, we explain some partial results towards a proof of the main conjectures of Section \ref{GRintrosec}. The strategy is to use relations between the various representations $\V_S^\f$ to reduce to calculations which can be checked in local models and low rank. The first main result required for our approach is the following natural refinement of Conjecture \ref{FFRconj}: Suppose we can express $\mc O_S$ as an extension
\[ 0\to \mc O_{R} \to \mc O_S \to  \mc O_{T} \to 0 \]
for divisors $R,T\subset Y$  as in the hypotheses preceding Proposition \ref{factprop}, and let $\f_{R,T}$ be the framing structure on $\mc O_{S^\red}^\sss$ of rank $\rr_S$ determined by $\mc O_R\oplus \mc O_T$.

\begin{conj}\label{geofactconj} There exists a natural isomorphism
\begin{equation}\label{geomfacteqn} 	 \V_{S}^{\f_{R,T}} \xrightarrow{\cong} \V_R^{\f_R} \otimes \V_S^{\f_S}   \end{equation}
of representations of $\V(Y,S)$, where $\V_{S}^{\f_{R,T}}$ is as in Conjecture \ref{voamodconj} for $\f=\f_{R,T}$, and $\V_R^{\f_R} \otimes \V_S^{\f_S} $ is the restriction along the factorization map of Proposition \ref{factprop} of the representations of Conjecture \ref{vacuumconj}.
\end{conj}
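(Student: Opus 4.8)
The plan is to deduce the geometric factorization from a torus localization argument that mirrors, on the moduli-theoretic side, the algebraic factorization of Proposition \ref{factprop}. The framing structure $\f_{R,T}$ is by definition the one determined by the partial semisimplification $\mc O_R[1] \oplus \mc O_T[1]$, so the framing vector space of the quiver with potential $(Q_S^{\f_{R,T}}, W_S^{\f_{R,T}})$ splits canonically as a sum of the framing spaces associated to $R$ and to $T$. This splitting is preserved by a rank one subtorus $\C^\times_\sigma \subset T_{\f_{R,T}}$ of the framing torus acting with distinct weights on the two summands, and I would take $\sigma$ generic relative to the remaining equivariant parameters.

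First I would analyze the $\C^\times_\sigma$-fixed locus of $\mc M^{\f_{R,T}}(Y,S)$. A $\sigma$-fixed stable perverse coherent extension must carry a compatible grading in which the mixing data between its $R$-framed and $T$-framed components lies in nonzero weight, and stability should force this data to vanish, so that the object splits as a direct sum of an $\f_R$-framed extension of $\mc O_R[1]$ and an $\f_T$-framed extension of $\mc O_T[1]$; this should yield an identification
\[ \mc M^{\f_{R,T}}_\nnn(Y,S)^{\C^\times_\sigma} \;\cong\; \coprod_{\nnn_R + \nnn_T = \nnn} \mc M^{\f_R}_{\nnn_R}(Y,R) \times \mc M^{\f_T}_{\nnn_T}(Y,T) \ , \]
compatibly with the decomposition $W_S^{\f_{R,T}} = W_R^{\f_R} \boxplus W_T^{\f_T}$ of the potential on the fixed locus. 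Applying the localization theorem in vanishing cycle homology together with a Thom--Sebastiani isomorphism for $\varphi_{W_R^{\f_R} \boxplus W_T^{\f_T}}$ then produces the vector space isomorphism of Equation \ref{geomfacteqn}, exactly as in the factorization $\V^0_{r[\C^2]} \cong \V_{\C^2}^{\otimes r}$ of Equation \ref{C2facteqn}.

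To upgrade this to an isomorphism of $\V(Y,S)$-modules, I would show that the Hecke correspondences of Equation \ref{Nakcorrmainkeqn} defining the $\mc Y_S(Y)$-action are themselves compatible with the $\sigma$-fixed point decomposition. Concretely, the correspondence $\mc M^{[\mf N_1]\cdot \hdots \cdot [\mf N_k]}_{\nnn,\nnn+\kk}(Y,S)$ restricted to the fixed loci should decompose into a sum over ways of distributing the modification between the $R$- and $T$-factors, so that the induced operator on localized homology factors through the conjectural coproduct $\Delta_{R,T}: \mc Y_S(Y) \to \mc Y_R(Y) \otimes \mc Y_T(Y)$ of the introduction. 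This is the geometric incarnation of the commuting square relating generalized coproducts to the vertex algebra embeddings of Equation \ref{VOAembintroeqn}, and combined with Conjectures \ref{vacuumconj} and \ref{GRconj} applied to the strictly smaller divisors $R$ and $T$ it identifies the $\mc Y_S(Y)$-action on the right hand side of Equation \ref{geomfacteqn} with the restriction of the tensor product action along $\Delta_{R,T}$, which is precisely the module structure described in Conjecture \ref{FFRconj}.

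The main obstacle I expect is the first step: proving that the $\sigma$-fixed locus is genuinely the product above, and in particular controlling the attracting directions so that no stable $\sigma$-fixed object carries nontrivial extension data between the two framings. This requires a careful comparison of the stability condition $\zeta_\VW$ with the decomposition induced by $\sigma$, together with a Thom--Sebastiani statement for the potentials that is uniform over the strata; the delicacy here is the same one that makes the stable envelope and coproduct constructions of \cite{SV} and \cite{MO} nontrivial. I would model the argument on the locality result of Proposition \ref{localsubrepprop} and the base change techniques underlying it, replacing the open inclusion $U \subset Y$ used there by the $\sigma$-attracting decomposition of the framed moduli space.
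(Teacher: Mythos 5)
You should first note that the paper offers \emph{no proof} of this statement: it is stated as Conjecture \ref{geofactconj} and left open. What the paper does provide, in the discussion immediately following the conjecture, is (i) evidence at the level of underlying vector spaces --- via dimensional reduction to a Nakajima quiver with relations and the tensor product structure on the $A$-fixed locus in the sense of Section 2.4 of \cite{MO}, giving $\mc M_{\nnn,r}(Q_S,R_S)^A = \bigsqcup \prod_i \mc M_{\nnn_i,1}(Q_S,R_S)$ and hence $\V_S^0\cong\V_{S_0}^{\otimes r}$ by localization --- and (ii) an explicit identification of the remaining content of the conjecture as ``the existence of a canonical normalization of such isomorphisms which intertwines the geometric representation,'' which the paper expects to be governed by generalized coproducts $\Delta_{S_1,\dots,S_h}$ on $\mc Y_S(Y)$ that are themselves only conjectured. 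Your proposal tracks this expected strategy quite faithfully: the framing-subtorus localization plus Thom--Sebastiani step is the vanishing-cycle analogue of the fixed-point tensor product structure the paper cites, and your second step (decomposing the Hecke correspondences over the fixed loci so that the action factors through $\Delta_{R,T}$) is exactly the commuting square the paper conjectures.

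That said, as a proof the proposal has genuine gaps, and they are concentrated precisely where the paper admits the difficulty lies. First, the claim that $\zeta_\VW$-stability forces every $\C^\times_\sigma$-fixed object to split is not automatic in the perverse-coherent/vanishing-cycle setting: one needs a hyperbolic localization statement for $\varphi_{W}$ compatible with the stratification by attracting sets, and a Thom--Sebastiani isomorphism uniform over the fixed components; moreover the homology groups $\V_S^\f$ of Equation \ref{VSfdefeqn} are localized only over $H_T^\bullet(\pt)$, not over the framing torus, so a localization argument in $\sigma$ a priori produces an isomorphism only after inverting the framing equivariant parameter, and one must check it is defined (and canonically normalized) before that inversion. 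Second, the module-structure step is circular as written: it invokes the coproduct $\Delta_{R,T}$, whose existence and compatibility with the vertex algebra embeddings of Proposition \ref{factprop} is itself an open conjecture of the paper, and it further assumes Conjectures \ref{GRconj} and \ref{vacuumconj} for $R$ and $T$. So what you have is a plausible and well-aligned reduction of Conjecture \ref{geofactconj} to other open statements in the paper, not an independent proof; to be honest about its status you should present it as such.
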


In particular, if $R$ and $T$ are reduced, irreducible divisors, then $\f_{R,T}=0_S$ is given by the trivial framing structure, so that by inductive application of the preceding conjecture for a choice of composition series
\[ \mc O_S = \left[ \mc O_{S_{d_1}} < ... <\mc O_{S_{d_k}} < ... < \mc O_{S_{d_N}} \right]  \]
we obtain the following corollary, which analogously refines Conjecture \ref{GRconj}:
\begin{corollary} There exists a natural isomorphism
\[\V_S^0 \xrightarrow{\cong }\bigotimes_{d\in \mf D_S}  \V_{S_d}^{\otimes r_d} = \Pi(Y,S)_0 \]
of representations of $\V(Y,S)$, where $\V_S^0$ is as in Conjecture \ref{GRconj} and $\bigotimes_{d\in \mf D_S}  \V_{S_d}^{\otimes r_d}$ is the restriction along the defining free field realization of $\V(Y,S)$ of the tensor product of the representations defined in Theorem \ref{abGRthm}, which in particular implies the latter equality.
\end{corollary}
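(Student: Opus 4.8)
The plan is to prove both assertions by induction on the number $N=\sum_{d\in\mf D_S} r_d$ of irreducible components of $S$ counted with multiplicity, taking Conjecture \ref{geofactconj} as the inductive engine and Theorem \ref{abGRthm} as the base case. Fix the composition series $\mc O_S = [\mc O_{S_{d_1}} < \cdots < \mc O_{S_{d_N}}]$ underlying the free field realization $\V(Y,S) \to \bigotimes_{d} \V(Y,S_d)^{\otimes r_d}$ of Proposition \ref{factprop}. For $N=1$ the divisor $S=S_{d_1}$ is reduced and irreducible, $0_S=\f_{S_{d_1}}$, and Theorem \ref{abGRthm} identifies $\V_S^0=\V_{S_{d_1}}$ with the vacuum module $\Pi(Y,S_{d_1})_0=\V(Y,S_{d_1})_0$; this is the $N=1$ instance. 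For $N=2$ with both components reduced and irreducible one has $\f_{R,T}=0_S$, so Conjecture \ref{geofactconj} together with Theorem \ref{abGRthm} gives $\V_S^0 \cong \V_{S_{d_1}}\otimes\V_{S_{d_2}} = \V(Y,S_{d_1})_0\otimes\V(Y,S_{d_2})_0$ directly.

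For the inductive step I would write $S$ as an extension $\mc O_S=[\mc O_R < \mc O_T]$ with $\mc O_R=\mc F^{1}=\mc O_{S_{d_1}}$ reduced and irreducible and $\mc O_T=\mc O_S/\mc F^{1}$ the structure sheaf of the $(N-1)$-component divisor $T$ with induced composition series. The essential point is that the finest framing $0_S\leftrightarrow\oplus_{k=1}^N\mc O_{S_{d_k}}$ refines the two-block framing $\f_{R,T}\leftrightarrow\mc O_R\oplus\mc O_T$ precisely by further splitting the $T$-summand into its trivial framing $0_T$. The form of Conjecture \ref{geofactconj} required for the induction is therefore the \emph{relative} one in which the two factors carry their own trivial framings, namely a natural isomorphism of $\V(Y,S)$-modules
\[ \V_S^{0_S} \xrightarrow{\ \cong\ } \V_R^{0_R}\otimes\V_T^{0_T}\ , \]
compatible with the factorization embedding of Proposition \ref{factprop}. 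Granting this, $\V_R^{0_R}=\V_{S_{d_1}}=\V(Y,S_{d_1})_0$ by Theorem \ref{abGRthm}, while the inductive hypothesis applied to $T$ gives a factorization of $\V_T^{0_T}$ into a tensor product of the vacuum modules of the reduced irreducible components of $T$; combining these yields $\V_S^{0_S}\cong\bigotimes_{d\in\mf D_S}\V_{S_d}^{\otimes r_d}$ as a $\V(Y,S)$-module.

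I expect the main obstacle to be exactly this relative refinement of Conjecture \ref{geofactconj}, together with the verification that the resulting isomorphism is one of $\V(Y,S)$-modules rather than merely of $F$-vector spaces. Conjecture \ref{geofactconj} as stated produces the \emph{full} framings $\f_R,\f_T$ on the two factors, and since distinct framing structures give genuinely distinct modules (vacuum versus free field, say), one cannot simply re-apply it to the factor $\V_T^{\f_T}$ to continue the induction. To bridge this I would argue that the framed moduli spaces $\mc M^\f(Y,S)$ and their vanishing-cycle homology depend functorially on the framing structure $\f$, and that refining $\f$ along the sub-divisor $T$ corresponds geometrically to applying the factorization to the $T$-part alone; this is the natural relative analogue of the tautological factorization of Proposition \ref{factprop}, and should follow from the same base-change and locality principle of Remark \ref{localityrmk} used in the proof of Theorem \ref{abGRthm}, localizing the correspondences of Equation \ref{Nakcorrmainkeqn} to the open chart supporting the $T$-components. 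One must also check that the iterated factorizations commute, so that the resulting isomorphism is independent of the order in which components are peeled off and is genuinely natural in $S$.

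Finally, the asserted equality $\bigotimes_{d\in\mf D_S}\V_{S_d}^{\otimes r_d}=\Pi(Y,S)_0$ is a formal consequence of the first part. By Theorem \ref{abGRthm} each reduced irreducible factor satisfies $\V_{S_d}=\V(Y,S_d)_0=\Pi(Y,S_d)_0$, since $\V(Y,S_d)=\Pi(Y,S_d)$ in that case; and since $\Pi(Y,S)=\bigotimes_{d\in\mf D_S}\Pi(Y,S_d)^{\otimes r_d}$ by definition and the vacuum module of a tensor product of lattice vertex algebras is the tensor product of the vacuum modules, we obtain $\bigotimes_d\Pi(Y,S_d)_0^{\otimes r_d}=\Pi(Y,S)_0$. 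This identifies the right-hand side with the free field module of $\V(Y,S)$ and, combined with the module isomorphism above, completes the proof.
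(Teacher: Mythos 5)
Your proposal follows essentially the same route as the paper: the corollary is presented there as an immediate consequence of the ``inductive application'' of Conjecture \ref{geofactconj} along the chosen composition series, with Theorem \ref{abGRthm} supplying both the base case and the identification of each reduced, irreducible factor $\V_{S_d}$ with the vacuum module $\Pi(Y,S_d)_0$, and with the final equality $\bigotimes_{d}\V_{S_d}^{\otimes r_d}=\Pi(Y,S)_0$ following formally from $\Pi(Y,S)=\bigotimes_{d}\Pi(Y,S_d)^{\otimes r_d}$, exactly as in your last paragraph. The substantive point you raise --- that the two-block statement of Conjecture \ref{geofactconj} places the vacuum framing $\f_T$ on the second factor, so it cannot be re-applied verbatim to the factor $\V_T^{\f_T}$ to continue the induction --- is accurate, and is precisely what the paper compresses into the phrase ``inductive application''; the intended resolution is the family of modules $\V_S^{\f}$ indexed by arbitrary partial semisimplifications of $\mc O_S$ (Conjecture \ref{VOAmodintroconj} and the discussion following Conjecture \ref{FFRconj}), of which your relative form $\V_S^{0_S}\cong\V_R^{0_R}\otimes\V_T^{0_T}$ is the relevant instance. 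Since the whole statement is conditional on a conjecture, your flagged refinement is a correct account of what must actually be assumed rather than a gap in your argument, and your observation that one must also check independence of the order in which components are peeled off is a genuine compatibility the paper does not address.
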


Note that often the existence of an isomorphism of underlying vector spaces in Equation \ref{geomfacteqn} can be directly checked. For example, in many cases where $[S]=r[S_0]$ for $S_0$ a reduced, irreducible divisor, and $\f=0_S$ is given by the trivial framing structure, there exists a dimensional reduction isomorphism analogous to that of Equation \ref{Om1dimredeqn},
\[ H_\bullet^A(\mc M_\nnn^{0_S}(Y,S),\varphi_{W^{0_S}_{S}}) \xrightarrow{\cong} H_\bullet^A(\mc M_{\nnn,r}(Q_S,R_S)) \quad\quad \text{and thus}\quad\quad \V_S^0\cong \bigoplus_{\nnn} H_\bullet^A(\mc M_{\nnn,r}(Q_S,R_S))\otimes_{ H_T^\bullet(\pt)}F  \ , \]
for $(Q_S,R_S)$ a Nakajima quiver with relations and $\mc M_{\nnn,r}(Q_S,R_S)$ the moduli space of stable, framed representations of dimension $(\nnn,r)$ with respect to an appropriate choice of stability condition; the higher rank analogues of the framed quivers with relations in Equation \ref{dimredNakquiveqn} provide examples of this phenomenon. In this case, the \emph{tensor product} structure on the $A$-fixed points of Nakajima quiver varieties, in the sense of Section 2.4 of \cite{MO} for example, implies the existence of an isomorphism between the underlying vector spaces induced by localization, that is,
\[ \mc M_{\nnn,r}(Q_S,R_S)^A = \bigsqcup_{\nnn_1+...+\nnn_r=\nnn} \mc M_{\nnn_1,1}(Q_S,R_S) \times ... \times \mc M_{\nnn_r,1}(Q_S,R_S) \quad\quad \text{implies}\quad\quad \V_S^0 \cong \V_{S_0}^{\otimes r} \ .  \]

Similarly, in the setting of Example \ref{Om1GReg}, by Proposition 3.2 of \cite{NY0} we have an analogous description of the fixed points in the moduli space of representations of the reduced quiver with relations in Figure \ref{fig:conrotfig} and thus by the isomorphism of Equation \ref{Om1dimredeqn} together with the analogous application of the localization again implies the desired factorization at the level of vector spaces. From this perspective, the content of Conjecture \ref{geofactconj} above is the existence of a canonical normalization of such isomorphisms which intertwines the geometric representation on $\V_S^0$ defined in Section \ref{GRintrosec} above, and the tensor product of those geometric representations for reduced, irreducible components $\V_{S_d}$, restricted along the factorization maps of Proposition \ref{factprop}.

Following \cite{SV} and in turn a suggestion of Nakajima cited therein, our expectation is that these factorization maps are related to generalized coproducts on the shifted affine Yangian-type quantum groups $\mc Y_S(Y)$, which essentially by construction admit geometric representations on the vector spaces $\V_{S}$, following the results of the companion paper \cite{BR1} and references therein:

\begin{conj}Suppose there exist divisors $S_l$ on $Y$ for $l=1,...,h$, such that $\mc O_S$ admits 
\[	\mc O_S = \left[ \mc O_{S_{1}} < ... <\mc O_{S_{l}} < ... < \mc O_{S_{h}} \right] \ ,\]
a composition series such that each of the induced extensions $\mc O_{S_{l}} \to E \to \mc O_{S_{l+1}}$ satisfies the hypotheses preceding Proposition \ref{factprop}.
Then there exist generalized coproduct maps
\[\hspace*{-1cm} \Delta_{S_1,...,S_h} :\mc Y_S(Y) \to \bigotimes_{l=1}^h \mc Y_{S_l}(Y) \quad\quad\text{such that}\quad\quad \vcenter{\xymatrix{\mc Y_S(Y)  \ar[r]\ar[d] &  \mc U(\V(Y,S))  \ar[d] \ar[r] & \End_F(\V_{S}^\f) \ar[d]  \\ \bigotimes_{l=1}^h \mc Y_{S_l}(Y)  \ar[r] &  \bigotimes_{l=1}^h  \mc U(\V(Y,S_l)) \ar[r] &  \bigotimes_{l=1}^h  \End_F(\V_{S_l}) }} \ ,\]
commutes, where the top and bottom horizontal maps are as induced by Conjecture \ref{GRconj} for $S$ and $S_l$, respectively, the middle vertical arrow is given by the vertex algebra embeddings of Proposition \ref{factprop}, and the right vertical arrow is induced analogously by the isomorphisms of Equation \ref{geomfacteqn}.
\end{conj}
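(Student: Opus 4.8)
The plan is to construct $\Delta_{S_1,\dots,S_h}$ as an iterate of a two-step coproduct and to realize it geometrically through the correspondences defining the representations $\rho_S$. First I would reduce to the case $h=2$: given a composition series $\mc O_S = [\mc O_{S_1} < \dots < \mc O_{S_h}]$, I take $\mc O_R$ to be the subobject $\mc O_{S_1}$ and $\mc O_T$ the quotient, so that $\Delta_{S_1,\dots,S_h}$ is obtained by applying a two-step coproduct $\Delta_{R,T} : \mc Y_S(Y) \to \mc Y_R(Y) \otimes \mc Y_T(Y)$ and then recursing on $T$. Coassociativity (independence of the bracketing) is itself part of what must be proved, but it is forced once $\Delta_{R,T}$ is identified with a Drinfeld-type coproduct, as below. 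Thus the heart of the argument is the construction of $\Delta_{R,T}$ and the commutativity of the two squares for a single extension $0 \to \mc O_R \to \mc O_S \to \mc O_T \to 0$ satisfying the hypotheses preceding Proposition \ref{factprop}.

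For the geometric construction of $\Delta_{R,T}$, I would exploit the factorization isomorphism $\V_S^{\f_{R,T}} \xrightarrow{\cong} \V_R \otimes \V_T$ of Conjecture \ref{geofactconj} (Equation \ref{geomfacteqn}), which on underlying vector spaces is the tensor product structure on $A$-fixed points induced by localization, as in the discussion following that conjecture. The operators $\rho_S(\mf a)$ for $\mf a \in \mc Y_S(Y)$ are built from the correspondences of Equations \ref{Nakcorrmaineqn} and \ref{Nakcorrmainkeqn}, i.e.\ from moduli of short exact sequences of $\zeta_\VW$-stable objects. Restricting these correspondences to the substacks compatible with the refinement of $\mc F$ by $R$ and $T$ — equivalently, decomposing a Hecke modification according to whether the added perverse coherent sheaves are absorbed into the $R$-part or the $T$-part of the framed object — expresses $\rho_S(\mf a)$ as a finite sum of operators of the form $\rho_R(\mf a') \otimes \rho_T(\mf a'')$. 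On the spherical generators $[\mf N] \in \mc H_1(Y) \cong \mc Y_S(Y)_+$ this yields a Drinfeld-type formula $\Delta_{R,T}([\mf N]) = [\mf N]\otimes 1 + K_{[\mf N]} \otimes [\mf N] + (\text{crossing terms})$, where the Cartan factor $K_{[\mf N]} \in \mc Y_R(Y)_0$ records the equivariant Euler-class weights of the fixed-point contributions. One then extends $\Delta_{R,T}$ to $\mc Y_S(Y)_- \cong \mc{SH}(Y)^\op$ by adjunction with respect to the pairing of Equation \ref{pairingeqn} (Proposition \ref{opactprop}) and to $\mc Y_S(Y)_0$ directly.

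With $\Delta_{R,T}$ so defined, the right-hand square commutes tautologically, since $\Delta_{R,T}$ is constructed precisely so that the $A$-localized action factors through $\bigotimes_l \End_F(\V_{S_l})$ under the isomorphism of Equation \ref{geomfacteqn}. The left-hand square then follows from Conjecture \ref{GRconj} together with Proposition \ref{factprop}: the surjections $\mc Y_S(Y) \twoheadrightarrow \mc U(\V(Y,S))$ and $\mc Y_{S_l}(Y) \twoheadrightarrow \mc U(\V(Y,S_l))$ intertwine $\Delta_{R,T}$ with the vertex algebra embedding $\V(Y,S) \to \V(Y,R) \otimes_K \V(Y,T)$, as both are determined by their action on $\V_S$, which factors identically. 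To verify that the operator-level factorization lifts to an algebra homomorphism into the \emph{abstract} tensor product $\mc Y_R(Y) \otimes \mc Y_T(Y)$ — rather than merely into the image algebras — I would argue locally: by the principle of Remark \ref{localityrmk} and Proposition \ref{localsubrepprop}, the coproduct formula on generators is determined by restriction to the affine toric charts $U_y$ and curve charts $U_i$, reducing the verification of the defining relations to the local models of Equation \ref{localmodeleqn} and to the rank-one and rank-two cases, where $\mc Y_{S_i}(U_i)$ is an explicit (shifted) affine Yangian of $\gl_1$ or $\gl_{m|n}$ and the geometric coproduct can be matched with the known Drinfeld coproduct.

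The main obstacle I anticipate is exactly this lifting step: establishing that $\Delta_{R,T}$ is a well-defined, coassociative algebra homomorphism intrinsically, independent of any particular representation. Because $\mc Y_S(Y)$ is defined through its geometric correspondence action and carries no explicit generators-and-relations presentation in general, one cannot check relations directly; instead one must either (i) prove that the geometric representation $\rho_S$ is faithful, or faithful up to the appropriate completion, so that the operator identity $\rho_S = (\rho_R \otimes \rho_T) \circ \Delta_{R,T}$ forces $\Delta_{R,T}$ to respect all relations, or (ii) identify $\mc Y_S(Y)$ abstractly with a shifted affine Yangian carrying a standard Drinfeld coproduct and show the geometric $\Delta_{R,T}$ agrees with it. The control of the crossing terms and the precise normalization of the Cartan factor $K_{[\mf N]}$ — which is what makes the diagram commute on the nose rather than up to a scalar — is the delicate point, and is where the explicit computations in the local models of Equation \ref{localmodeleqn} and the comparison with the Semikhatov-type inverse reduction of Theorem \ref{affinesl2theo} would be indispensable.
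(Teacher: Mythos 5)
The statement you are addressing is left as a conjecture in the paper: Section \ref{generalGRsec} states it without proof, offering only the remark that, following \cite{SV} and a suggestion of Nakajima cited therein, the factorization maps are \emph{expected} to be related to generalized coproducts on $\mc Y_S(Y)$. So there is no argument in the paper to compare yours against, and your proposal must be judged as a standalone strategy. As such it is a reasonable and well-informed outline --- the reduction to $h=2$, the restriction of the correspondences of Equations \ref{Nakcorrmaineqn} and \ref{Nakcorrmainkeqn} to substacks compatible with the sub/quotient decomposition, and the extension to the negative half by adjunction via Proposition \ref{opactprop} are all consistent with how the paper sets up the surrounding machinery --- but it is not a proof, and the gaps are not merely technical.

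Concretely: (i) your construction of $\Delta_{R,T}$ is routed through the isomorphism of Equation \ref{geomfacteqn}, which is itself Conjecture \ref{geofactconj} and is open; likewise the commutativity of the left square invokes Conjecture \ref{GRconj}. An argument that presupposes both is circular relative to the paper's own logical structure, where the coproduct is intended as part of the toolkit for establishing those conjectures in higher rank. (ii) The step you flag yourself --- lifting the operator identity $\rho_S = (\rho_R\otimes\rho_T)\circ\Delta_{R,T}$ to a well-defined, coassociative algebra homomorphism into the abstract tensor product $\mc Y_R(Y)\otimes \mc Y_T(Y)$ --- is exactly where the content lies, and neither of your two escape routes is available within the paper: faithfulness of $\rho_S$ (even after completion) is not established, and the identification of $\mc Y_S(Y)$ with a shifted affine Yangian carrying a standard Drinfeld coproduct is itself only conjectured, and only for the $Y_{m,n}$ family. (iii) The decomposition of a Hecke correspondence according to whether the added perverse coherent sheaves are absorbed into the $R$-part or the $T$-part is plausible on $A$-fixed points, but for a non-split extension $\mc O_R \to \mc O_S \to \mc O_T$ the stability condition $\zeta_\VW$ does not obviously respect this splitting, and the crossing terms you mention are precisely where wall-crossing contributions could enter; controlling them is a genuine geometric problem, not a normalization issue. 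In short, your text is an accurate map of what would need to be done, with the hardest points correctly identified as open; it should be presented as a strategy, not a proof.
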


Finally, we outline a geometric approach to the higher rank analogue of the locality principle, established in the algebraic setting in Proposition \ref{localityprop}. The main result required is the following higher rank analogue of Proposition \ref{localsubrepprop}. To begin, note that in the setting of \emph{loc. cit.} a framing structure $\f$ for a divisor $S$ induces a natural framing structure $\f_U$ on $S_U=S\times_Y U$ by restriction.

\begin{prop}\label{rlocalsubrepprop} The natural generalization of the map of Equation \ref{stablemodspcmapeqn} induces an isomorphism
	\begin{equation}\label{rlocalsubrepeqn}
		\V_{S_U}^{\f_U} = H_\bullet^T(\mf M^{\f_U,\zeta_{\VW}}(U,S_U), \varphi_{W_{S_U}^{\f_U}}) \xrightarrow{\cong} \bb V_{S, U}^\f: =H_\bullet^T(\mf M^{\f,\zeta_\VW}_U(Y,S),\varphi_{W_{S}^\f}) \subset \bb V_{S}
	\end{equation}
	of $\mc H(U)$-representations, where in particular $\V_{S,U}^\f$ defines a subrepresentation of the restriction of the $\mc H(Y)$-representation $\V_{S}$ to $\mc H(U)$.
\end{prop}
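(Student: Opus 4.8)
The plan is to follow the base-change argument sketched for Proposition~\ref{localsubrepprop}, now carrying the framing structure $\f$ along; the genuinely new point is only that $\f$ restricts compatibly to $\f_U$ on $S_U=S\times_Y U$ and that the potentials and correspondences behave well under this restriction, so that the rank-one argument applies \emph{mutatis mutandis}. I would first recall that $\mc H(U)$ is a subalgebra of $\mc H(Y)$, exactly as in Proposition~\ref{localsubrepprop}, since this depends only on the equivalence $\Perv_\cs(U)\xrightarrow{\cong}\Perv_\cs(Y)_U$ and not on $S$.

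Next I would fix the combinatorial bookkeeping and deduce the vector-space isomorphism. The equivalence above identifies the simple compactly supported perverse coherent sheaves on $U$ with those on $Y$ supported on $U$, so the vertex set $V_{Q_U}$ of the quiver for $U$ is a full subset of $V_{Q_Y}$, and a $\zeta_\VW$-stable $\f$-framed extension of $\mc O_{S^\red}^\sss[1]$ lies in $\mf M^{\f,\zeta_\VW}_U(Y,S)$ precisely when its dimension vector lies in the submonoid $\bb N^{V_{Q_U}}\subset\bb N^{V_{Q_Y}}$. Since the dimension vector is locally constant, $\mf M^{\f,\zeta_\VW}_U(Y,S)$ is a union of connected components of $\mf M^{\f,\zeta_\VW}(Y,S)$, and hence
\[
\bb V_{S,U}^\f = \bigoplus_{\nnn\in\bb N^{V_{Q_U}}} H_\bullet^T(\mc M^\f_\nnn(Y,S),\varphi_{W^\f_\nnn})
\]
is the direct summand of $\bb V_S$ indexed by $\bb N^{V_{Q_U}}$. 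The generalization of the locally compatible hypothesis (Definition~\ref{stablocdefn}, with $0_S$ replaced by $\f$) supplies the isomorphism of stacks $\mf M^{\f_U,\zeta_\VW}(U,S_U)\xrightarrow{\cong}\mf M^{\f,\zeta_\VW}_U(Y,S)$, under which $W^{\f_U}_{S_U}$ pulls back to $W^\f_S$; matching the vanishing cycle sheaves $\varphi_{W^{\f_U}_{S_U}}$ and $\varphi_{W^\f_S}$ then yields the desired isomorphism $\bb V_{S_U}^{\f_U}\xrightarrow{\cong}\bb V_{S,U}^\f$ of Equation~\ref{rlocalsubrepeqn}.

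The substance is $\mc H(U)$-equivariance. For $\kk,\bl\in\bb N^{V_{Q_U}}$ I would show that the correspondence stack $\mf M^{\f_U,\zeta_\VW}_{\kk,\bl}(U,S_U)$ of short exact sequences $0\to F\to\mc E\to\mc F\to 0$ is obtained by base change from $\mf M^{\f,\zeta_\VW}_{\kk,\bl}(Y,S)$ along the clopen inclusions of the moduli of subobjects $\mf M_\kk(U)\subset\mf M_\kk(Y)$, quotients $\mc M^{\f_U}_\bl(U,S_U)\subset\mc M^\f_\bl(Y,S)$, and total objects. The key geometric input is that support is additive in extensions, $\supp(\mc E)=\supp(F)\cup\supp(\mc F)$, so if $F$ is supported on $U$ and $\mc F$ has perverse-coherent part on $U$ then so does $\mc E$, and conversely all terms of a sequence with middle term on $U$ are supported on $U$; this makes the defining square cartesian and identifies $p,q$ for $U$ with the restrictions of $p,q$ for $Y$. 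Applying the functoriality of the vanishing-cycle twisted proper pushforward and smooth pullback established in the construction of Theorem~\ref{Bthm} in \cite{BR1}, compatibly for $U$ and $Y$, the operators $p_*\circ q^*$ agree under the summand inclusion $\bb V_{S,U}^\f\subset\bb V_S$; in particular $\bb V_{S,U}^\f$ is preserved by $\mc H(U)$ and the isomorphism above is one of $\mc H(U)$-representations.

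The main obstacle will be this last compatibility: verifying that the twisted pushforward–pullback operations defining $\rho_U$ and the restriction $\rho_S|_{\mc H(U)}$ genuinely commute with restriction to the clopen substack $\mf M^\f_U(Y,S)$. This is precisely a formalization of the locality principle of Remark~\ref{localityrmk}, and it reduces to a Thom–Sebastiani and proper-base-change statement for vanishing cycles, together with a check that the orientation and Euler-class normalization data match under the identification of tangent complexes coming from the isomorphism of stacks. This is the same step deferred in Proposition~\ref{localsubrepprop}; as there, passing from the rank-one framing $0_{S_0}$ to a general $\f$ introduces no new difficulty beyond tracking the framing maps through the correspondences, so a complete proof would be given in the same future work.
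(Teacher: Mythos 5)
First, a caveat about the comparison: the paper does not actually prove Proposition \ref{rlocalsubrepprop} — as with Proposition \ref{localsubrepprop}, the proof is explicitly deferred to future work, with only the indication that it is "a straightforward base change argument relating the correspondences." Your overall strategy (restrict the framing to $\f_U$, base-change the Hecke correspondences using additivity of support in extensions, and invoke functoriality of the vanishing-cycle pushforward and pullback) is consistent with that indication, and your flagging of the deferred Thom--Sebastiani and orientation compatibilities is exactly where the paper itself stops.

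However, the combinatorial bookkeeping in your second paragraph contains a genuine error. It is not true that $V_{Q_U}$ is a subset of $V_{Q_Y}$, nor that $\mf M^{\f,\zeta_\VW}_U(Y,S)$ is a union of connected components cut out by a submonoid of dimension vectors. The equivalence $\Perv_\cs(U)\cong \Perv_\cs(Y)_U$ does not preserve simple objects: for $Y=Y_{1,1}$ and $U\cong \C^3$ a toric chart, the skyscraper $\mc O_{pt}$ is simple in $\Perv_\cs(U)=\Coh_\cs(\C^3)$ but in $\Perv_\cs(Y_{1,1})$ it is a nontrivial extension of $\iota_*\mc O_C(-1)[1]$ by $\iota_*\mc O_C$ (Equation \ref{Om1ptshveqn}), so the generator of $\mc H_1(U)$ lands in $\mc H(Y)_{(1,1)}$; the relation between dimension lattices is an injective monoid homomorphism $\bb N^{V_{Q_U}}\to \bb N^{V_{Q_Y}}$ recording $K$-theory classes, not a coordinate inclusion. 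Correspondingly, the component of $\mc M^{\f_0}(Y_{1,1},S_0)$ with dimension vector $(n,n)$ is $\Hilb_n(S_0)$, which contains subschemes supported anywhere on $S_0$, and the $U$-supported locus $\Hilb_n(\C^2)$ is an open dense, not clopen, substack. The inclusion $\bb V^\f_{S,U}\subset \bb V_S$ and its realization as a direct summand therefore do not follow from a decomposition into components; they require equivariant localization onto the isolated fixed points lying in the open substack, as in the proof of Theorem \ref{abGRthm}. Your support-additivity argument for why the $\mc H(U)$-correspondences preserve this locus is sound, but the cartesian squares must be taken over open substacks, and verifying that $p_*\circ q^*$ is compatible with the localized summand (rather than with a clopen restriction) is precisely the nontrivial content that the paper defers.
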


Again, the proof is orthogonal to the main goals of this paper and is deferred to future work. The main corollary of this result is a geometric analogue of Proposition \ref{localityprop}, which holds under the same hypotheses as \emph{loc. cit.}, and gives a geometric explanation for the property of Remark \ref{localityrmk}:

\begin{corollary} There is an isomorphism of $\mc H(Y_1)\otimes \mc H(Y_2)$-representations
\[ \bb V_S^\f \xrightarrow{\cong} \bigoplus_{l \in \bb Z^r} \bb V_{S_1,l}^{\f_1} \otimes \V_{S_2,l}^{\f_2} \ , \]
where the former is the restriction of the $\mc H(Y)$-representation to $\mc H(Y_1)\otimes \mc H(Y_2)$, for some geometric $\mc H(Y_i)$ representations $\V_{S_i,l}^{\f_i}$ such that $\V_{S_i,0}^{\f_i}=\V_{S_i}^{\f_i}$, defined for $i=1,2$ and each $l\in \bb Z^r$.
\end{corollary}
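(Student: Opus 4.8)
The plan is to establish this as the geometric counterpart of Proposition \ref{localityprop}, replacing each algebraic step in the proof of that proposition by its geometric avatar furnished by Proposition \ref{rlocalsubrepprop}. I retain the notation from the proof of Proposition \ref{localityprop}: we have $Y=Y_1\cup Y_2$ with $Y_1\cap Y_2$ free of torus fixed points, so that $\mf F_S=\mf F_{S_1}\sqcup \mf F_{S_2}$; the unique compact curve $\overline C=C_i$ crossing the intersection contributes $r=r_{d_+}+r_{d_-}$ elements $\mf C_{S,\cap}\subset \mf C_S$; and $\mf C_S=\mf C_{S,\cap}\sqcup \mf C_{S_1}\sqcup \mf C_{S_2}$. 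First I would apply Proposition \ref{rlocalsubrepprop} to the two open charts $U=Y_1$ and $U=Y_2$. By the first assertion of that result (and of Proposition \ref{localsubrepprop}) each $\mc H(Y_i)$ is a subalgebra of $\mc H(Y)$; since the compactly supported perverse coherent sheaves generating $\mc H(Y_1)$ and $\mc H(Y_2)$ have disjoint supports away from $\overline C$, exactly as in the reduced case treated in the proof of Theorem \ref{abGRthm}, the two subalgebras commute and assemble into an algebra map $\mc H(Y_1)\otimes \mc H(Y_2)\to \mc H(Y)$, along which we restrict the $\mc H(Y)$-representation $\V_S^\f$.

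The second step is to produce the grading by $l\in \bb Z^r$. Because our standing assumption is that $\mc M_\nnn^\f(Y,S)^A$ is a finite set of isolated fixed points, and because $Y_1\cap Y_2$ contains no torus fixed points, every $A$-fixed perverse coherent extension decomposes canonically according to the support of its compactly supported part: into a piece supported on $Y_1$, a piece supported on $Y_2$, and the data recording the degree $l\in \bb Z^{\mf C_{S,\cap}}\cong \bb Z^r$ of its restriction to a neighbourhood of the compact curve $\overline C$. This is the geometric shadow of the decomposition $\Pi(Y,S)=\bigoplus_\lambda \Pi(Y,S_1,S_2)_\lambda$ in the proof of Proposition \ref{localityprop}, and concretely it follows from the tensor-product structure on $A$-fixed loci of the associated Nakajima-type quiver varieties, in the sense of Section 2.4 of \cite{MO}, under the dimensional reduction isomorphisms employed in Section \ref{latticegeosec}. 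Passing to localized vanishing-cycle homology then yields a vector-space decomposition
\[ \V_S^\f \xrightarrow{\cong} \bigoplus_{l\in \bb Z^r} \V_{S_1,l}^{\f_1}\otimes \V_{S_2,l}^{\f_2} \ , \]
where $\V_{S_i,l}^{\f_i}$ is defined as the span of the fixed-point classes supported on $Y_i$ carrying boundary degree $l$. When $l=0$ this is precisely the subspace $\bb V_{S_i,U}^{\f_i}$ of Proposition \ref{rlocalsubrepprop}, whence the required normalization $\V_{S_i,0}^{\f_i}=\V_{S_i}^{\f_i}$.

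Finally, I would upgrade this to an isomorphism of $\mc H(Y_1)\otimes \mc H(Y_2)$-representations. The correspondences defining the $\mc H(Y_i)$-action add a subobject supported on $Y_i$; since such a subobject has trivial class along any compact curve disjoint from $Y_i$, and in particular does not alter the boundary degree $l$ recorded at $\overline C$, the $\mc H(Y_i)$-action preserves the grading by $l$ and acts only on the $i$-th tensor factor. Combined with the compatibility of the correspondences under the support decomposition — which is exactly the base-change statement underlying Proposition \ref{rlocalsubrepprop} — this shows that the displayed isomorphism intertwines the two $\mc H(Y_1)\otimes \mc H(Y_2)$-module structures, as desired.

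The main obstacle I anticipate is making the boundary-degree grading $l$ interact correctly with the Hecke modifications \emph{along the compact curve} $\overline C$: these are precisely the operators that can shift $l$, so one must verify both that they are not in the image of $\mc H(Y_1)\otimes \mc H(Y_2)$ and that the fixed-point factorization is strictly diagonal in $l$, i.e.\ that no $\mc H(Y_i)$-correspondence mixes distinct $l$-graded pieces. Establishing a canonical normalization of the fixed-point factorization isomorphism that is genuinely equivariant — rather than merely a vector-space identification up to scalars on each graded piece — via the base-change argument deferred in Proposition \ref{rlocalsubrepprop}, is the technical heart of the argument and the step I expect to require the most care.
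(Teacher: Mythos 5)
Your proposal follows essentially the same route the paper intends: the corollary is stated there as a direct consequence of Proposition \ref{rlocalsubrepprop} applied to the charts $Y_1$ and $Y_2$, mirroring step-for-step the decomposition in the proof of Proposition \ref{localityprop}, and the paper itself defers the underlying base-change argument to future work. Your expansion — commuting subalgebras $\mc H(Y_i)\subset \mc H(Y)$ with disjoint supports, the $l$-grading from the fixed-point/tensor-product structure at the compact curve $\overline C$, and the check that the $\mc H(Y_i)$-correspondences are diagonal in $l$ — is consistent with that outline, and you correctly identify the canonical normalization of the factorization isomorphism as the genuinely deferred technical point.
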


This establishes a key step in proving the desired geometric analogue of Proposition \ref{localityprop}. Our hope is that this will facilitate a local-to-global proof of the main conjectures of Section \ref{GRintrosec}.

\section{Examples and applications}\label{egsec} In this section, we explain several conjectures about the vertex algebras $\V(Y,S)$ and their geometric representations, and give proofs of some of the results in low rank examples.

\subsection{Examples of lattice vertex algebras associated to irreducible divisors}\label{egabsec} To begin, we simply recall the construction of Section \ref{screensec} in a few explicit examples in rank 1.

\begin{eg} Let $Y=|\mc O_{\bb P^1}\oplus \mc O_{\bb P^1}(-2)|$ and $S=|\mc O_{\bb P^1}|$. Then the vertex algebra $\Pi(Y,S)$ is generated two Heisenberg fields $J^1$ and $J^2$ satisfying
	\[ J^1(z) J^1(w) \sim -\frac{1}{\e_1\e_3} \frac{1}{(z-w)^2} \quad\quad\text{and}\quad\quad  J^2(z) J^2(w) \sim \frac{1}{\e_1\e_3} \frac{1}{(z-w)^2} \]
	as well as the vertex operator
	\[ V(z) = \nol \exp( \e_1(\phi_1+\phi_2) ) \nor  \quad\quad\text{and more generally} \quad\quad V_l(z) = \nol V(z)^l \nor \]
	for each $l\in \bb Z$, which satisfy the relations
	\[ J^i(z) V_l(w) \sim (-1)^i\frac{l}{\e_3} \frac{V_l(w)}{z-w} \quad\quad \text{and}\quad\quad V_l(z)V_m(w)= \nol V_l(z) V_m(w)\nor \ .  \]
\end{eg}

\begin{eg} Let $Y=|\mc O_{\bb P^1}(-1)\oplus \mc O_{\bb P^1}(-1)|$ and $S=|\mc O_{\bb P^1}(-1)|$. Then the vertex algebra $\Pi(Y,S)$ is generated two Heisenberg fields $J^1$ and $J^2$ satisfying
	\[ J^1(z) J^1(w) \sim -\frac{1}{\e_2\e_3} \frac{1}{(z-w)^2} \quad\quad\text{and}\quad\quad  J^2(z) J^2(w) \sim -\frac{1}{\e_3\e_1} \frac{1}{(z-w)^2} \]
	as well as the vertex operator
	\[ V(z) = \nol \exp( \e_2\phi_1(z)-\e_1\phi_2(z)) ) \nor  \quad\quad\text{and more generally} \quad\quad V_l(z) = \nol V(z)^l \nor \]
	for each $l\in \bb Z$, which satisfy the relations
	\[ J^i(z) V_l(w)  \sim (-1)^i\frac{l}{\e_3} \frac{V_l(w)}{z-w} \quad\quad \text{and}\quad\quad V_l(z)V_m(w)= (z-w)^{lm}\nol V_l(z) V_m(w)\nor \ .  \]
\end{eg}

\begin{figure}[b]
	\caption{Vertex algebra data for $\Pi(Y_{2,0},|\mc O_{\bb P^1}|)$ and $\Pi(Y_{1,1},|\mc O_{\bb P^1}(-1)|)$  }
	\begin{overpic}[width=1\textwidth]{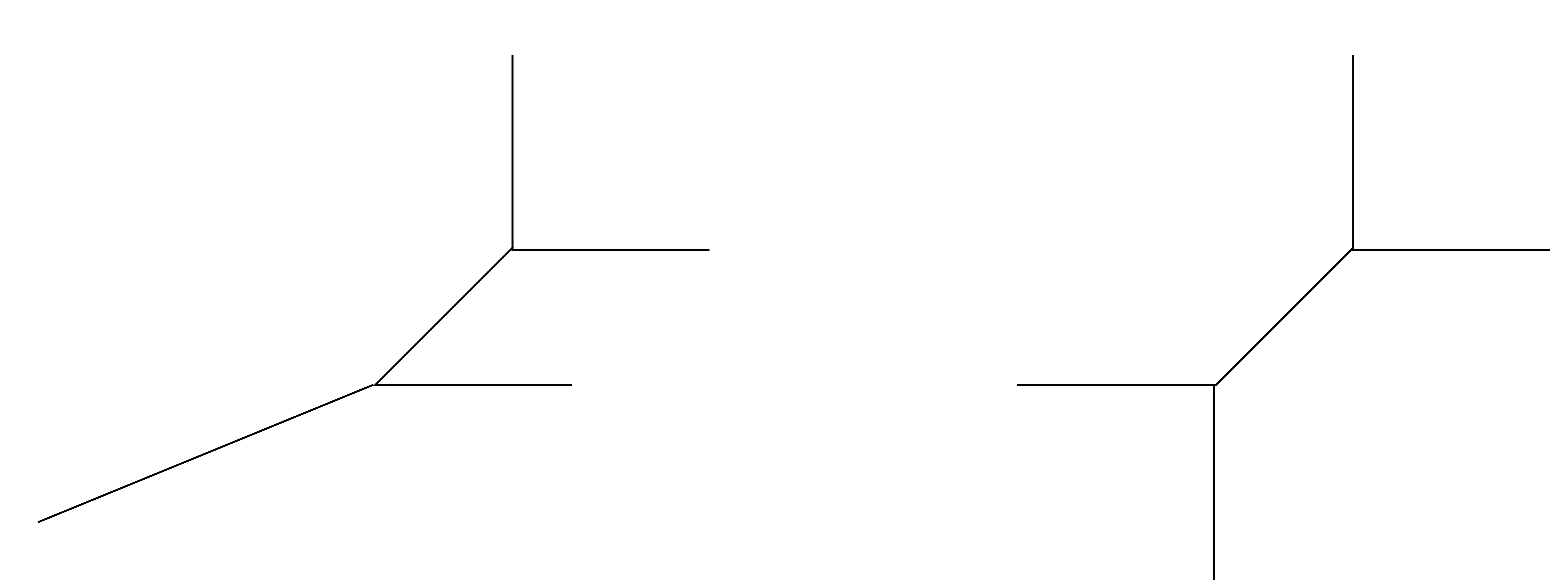}
	\put(32, 19.5) {\scriptsize{$J^1$}}
	\put(26, 13.5) {\scriptsize{$J^2$}}
	\put(29.5, 17) {\scriptsize{$V$}}
		\put(39.5, 16) {$1$}
	
	\put(84, 21) {\scriptsize{$J^1$}}
		\put(76, 13.5) {\scriptsize{$J^2$}}
	\put(81, 17.5) {\scriptsize{$V$}}
	\put(75,21) {$1$}
	\end{overpic}
	\label{fig:sl2config}
\end{figure}

\begin{eg} Let $Y=|\mc O_{\bb P^1}\oplus \mc O_{\bb P^1}(-2)|$ and $S=|\mc O_{\bb P^1}(-2)|$. Then the vertex algebra $\Pi(Y,S)$ is generated two Heisenberg fields $J^1$ and $J^2$ satisfying
	\[ J^1(z) J^1(w) \sim -\frac{1}{\e_2\e_3} \frac{1}{(z-w)^2} \quad\quad\text{and}\quad\quad  J^2(z) J^2(w) \sim - \frac{1}{\e_3(\e_1-\e_3)} \frac{1}{(z-w)^2} \]
	as well as the vertex operator
	\[ V(z) = \nol \exp( \e_2\phi_1(z)+(\e_3-\e_1)\phi_2(z)) ) \nor  \quad\quad\text{and more generally} \quad\quad V_l(z) = \nol V(z)^l \nor \]
for each $l\in \bb Z$, 	which satisfy the relations
	\[ J^i(z) V_l(w) \sim \sim (-1)^i\frac{l}{\e_3} \frac{V_l(w)}{z-w} \quad\quad \text{and}\quad\quad V_l(z)V_m(w)=(z-w)^{2lm} \nol V_l(z) V_m(w)\nor \ .  \]
\end{eg}

\begin{eg} Let $Y=\tilde {A_2} \times \bb A^1$ be the product of the resolved $A_2$ singularity with the affine line, and $S=\tilde {A_2}$. Then the vertex algebra $\Pi(Y,S)$ is generated by three Heisenberg fields $J^1$, $J^2$ and $J^3$ satisfying
	\[\hspace*{-2cm} J^1(z) J^1(w) \sim -\frac{1}{\e_2\e_3} \frac{1}{(z-w)^2} \quad\quad J^2(z) J^2(w) \sim -\frac{1}{\e_3(\e_1-\e_3)} \frac{1}{(z-w)^2} \quad\quad    J^3(z) J^3(w) \sim - \frac{1}{(\e_3-\e_1)(2\e_1-\e_3)} \frac{1}{(z-w)^2} \]
	as well as the vertex operators
	\[V_{1,0}(z) = \nol \exp( \e_2\phi_1(z)+(\e_3-\e_1)\phi_2(z)) ) \nor  \quad\quad\text{and} \quad\quad   V_{0,1}(z) = \nol \exp( -\e_3\phi_2(z)+(\e_3-2\e_1)\phi_3(z)) ) \nor \]
	as well as more generally
		\[V_{l}(z) = \nol V_{1,0}(z)^{l_1} V_{0,1}(z)^{l_2} \nor \quad\quad \text{for each $l=(l_1,l_2) \in \bb Z^2$ } \ .\]

		\begin{figure}[b]
			\caption{Vertex algebra data for the algebras $\Pi(Y_{2,0},\widetilde{A}_{1})$ and $\Pi(Y_{3,0},\widetilde{A}_{2})$}
			\begin{overpic}[width=1\textwidth]{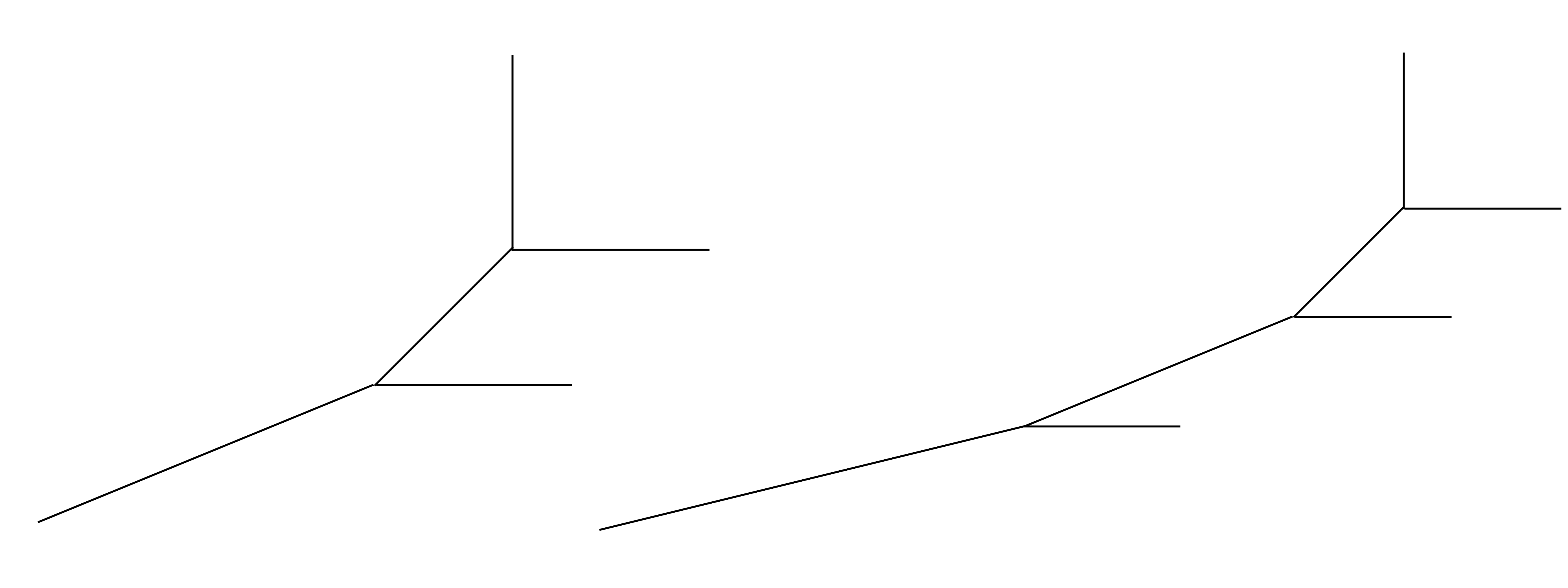}
\put(30, 21) {\scriptsize{$J^1$}}
\put(21.5, 13.5) {\scriptsize{$J^2$}}
\put(26.5, 17) {\scriptsize{$V$}}
\put(20, 22) {$1$}

\put(87, 24.5) {\scriptsize{$J^1$}}
\put(80.5, 17.5) {\scriptsize{$J^2$}}
\put(65, 11) {\scriptsize{$J^3$}}
\put(83, 21.5) {\scriptsize{$V_{1,0}$}}
\put(73, 15) {\scriptsize{$V_{0,1}$}}
\put(72,22) {$1$}
			\end{overpic}
			\label{fig:sl2sl3fig}
		\end{figure}
		which satisfy the relations
	\begin{align*}
		J^1(z) V_{1,0}(w)  & \sim -\frac{1}{\e_3} \frac{V_{1,0}(w)}{z-w} & J^2(z) V_{1,0}(w) & \sim \frac{1}{\e_3} \frac{V_{1,0}(w)}{z-w} \\
		 J^2(z) V_{0,1}(w) & \sim \frac{1}{(\e_1-\e_3)} \frac{V_{0,1}(w)}{z-w} & J^3(z) V_{0,1}(w) &\sim \frac{1}{(\e_3-\e_1)}\frac{V_{0,1}(w)}{z-w}
	\end{align*}
as well as the relations for each $l=(l_1,l_2),\ m=(m_1,m_2) \in \bb Z^2$ given by
\[ V_l(z)V_m(w)=(z-w)^{2(l_1 m_1 + l_ 2m_2) - l_1m_2 - l_2m_1} :V_l(z)V_m(w): \ .  \]
\end{eg}

\subsection{$W$-superalgebras $ W_{f_\mu,f_\nu}^\kappa(\gl_{M|N})$ from divisors $S_{\mu,\nu}$ in $Y_{m,n}$}\label{Walgsec}

We now describe the application of our results to examples of divisors $S$ in a resolution $Y_{m,n}$ of $X_{m,n}=\{xy-z^mw^n\}$. Let $\mu$ and $\nu$ be partitions of length $m$ and $n$, respectively
\[\mu=\{ \mu_1 \geq \hdots \geq \mu_m \geq 0 \} \quad \quad\text{and}\quad\quad \nu =\{\nu_1 \geq \hdots \geq \nu_n \geq 0 \} \]
and define the corresponding lists of integers
\[ \textbf{M} =(M_i)_{i=0}^{m-1} \quad \textbf{N} =(N_i)_{i=0}^{n-1}  \quad \quad \text{where} \quad \quad M_i=\sum_{k=i+1}^{m} \mu_k \quad N_i=\sum_{k=i+1}^{n} \nu_k \]
$i=0,...,m-1$, $j=0,...,n-1$; we also write just $M = M_0 = \sum_{k=1}^m \mu_k$ and $N=N_0=\sum_{k=1}^n \nu_k$. We define $S_{\mu,\nu}$ as the toric divisor corresponding to the labeling of the faces of the moment polytope of $Y_{m,n}$ by the integers $M_i$ and $N_i$ depicted in Figure \ref{fig:Smunufig} below in the case $m=3,n=2$. We also write simply $S_\mu$ or $S_\nu$ if $n=0$ or $m=0$, respectively.

\begin{figure}[b]
	\caption{The resolution $Y_{3,2}$, the toric divisor $S_{\mu,\nu}$, and the compact toric curve classes $C_i$ with their corresponding simple roots $\alpha_i$ of $\gl_{3|2}$}
	\label{fig:Smunufig}
	\begin{overpic}[width=1.0\textwidth]{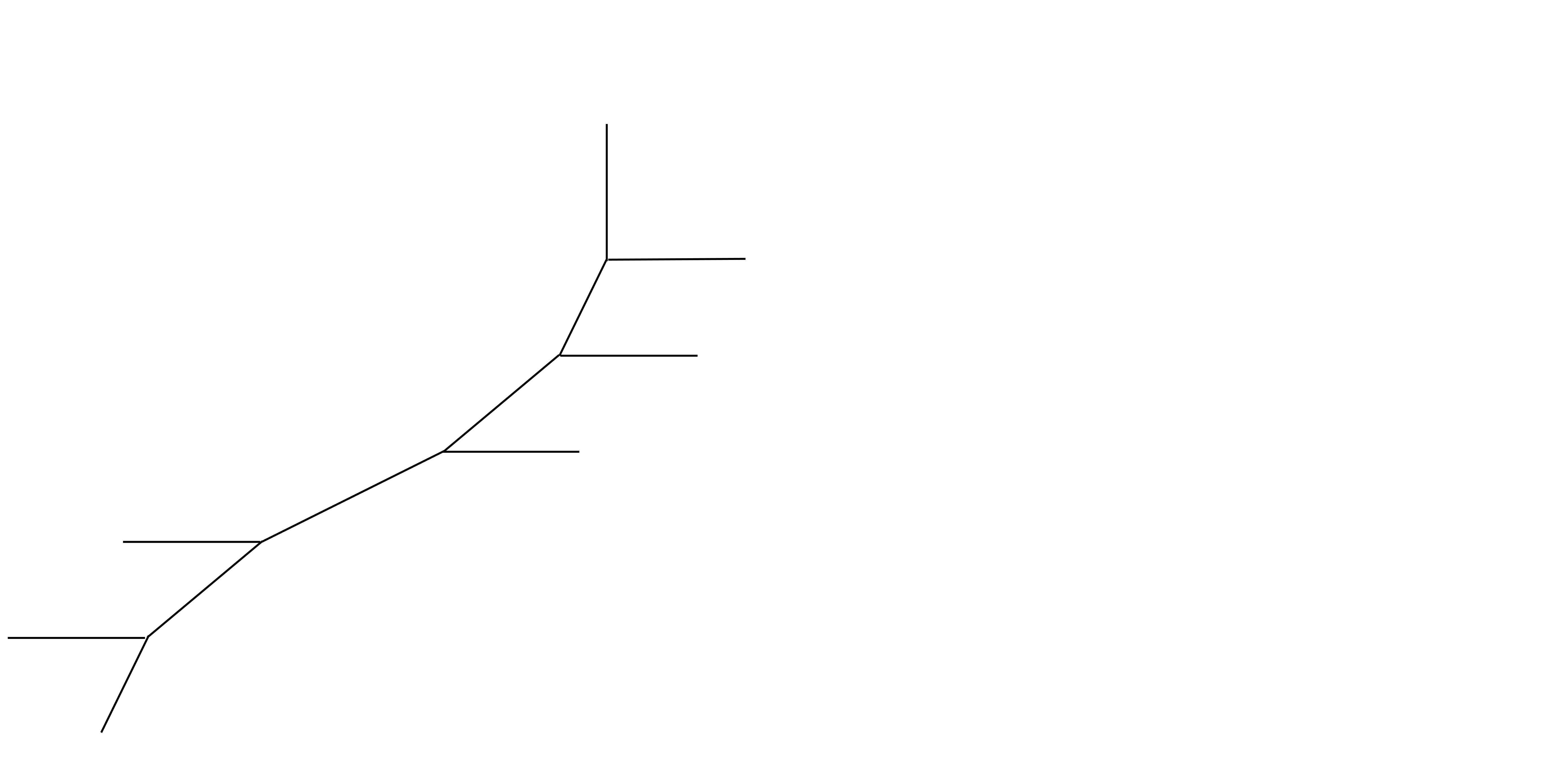}
		\put(0,43) {$ Y_{3,2}\to X_{3,2} = \{ xy-z^3w^2 \}$}
		\put (39.5,35) {$M_0= \mu_1+\mu_2+\mu_3$} 
		\put (37.5,28.5) {$M_1= \mu_2+\mu_3$} 
		\put(32, 22) {$M_2=\mu_3$} 
		\put (2,10.5) {$N_1=\nu_2$}
		\put (4, 16.5) {$N_0=\nu_1+\nu_2$}
		
		\put(34,30) {$C_1$}
		\put(29,25) {$C_2$}
		\put(23,17) {$C_3$}
		\put(14,11.5) {$C_4$}

		\put(70,22) {$ \mleft[ \begin{array}{c c c | c c}  &  &   &   &  \\  \alpha_1 & & & &  \\  &\alpha_2   &  &  &   \\
				\hline &  & \alpha_3 &  &  \\  & &  &   \alpha_4&  \end{array} \mright] $}
	\end{overpic}
	
\end{figure}

In this setting, following physical predictions from \cite{PrR} and \cite{Rap} we conjecture that the corresponding vertex algebra is given by the affine $W$-algebra $W_{f_\mu,f_\nu}^\kappa(\gl_{M|N})$ of $\gl_{M|N}$ corresponding to the nilpotents $f_\mu$ and $f_\nu$ in $\gl_M$ and $\gl_N$, respectively, determined by $\mu$ and $\nu$:

\begin{conj}\label{WLSconj}There is an isomorphism of vertex algebras
	\[ W_{f_\mu,f_\nu}^\kappa(\gl_{M|N}) \xrightarrow{\cong} \V(Y_{m,n},S_{\mu,\nu})  \ ,\]
	such that $\Pi(Y,S)$ identifies with a certain canonical free field realization of $W_{f_\mu,f_\nu}^\kappa(\gl_{M|N})$.
\end{conj}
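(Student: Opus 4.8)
The plan is to follow the template that proves Theorem \ref{WLStheo} in the purely even case $n=0$: identify $\Pi(Y_{m,n},S_{\mu,\nu})$ with an explicit free field realization attached to $\gl_{M|N}$, and match the screening operators $Q_{s_k}$ of Section \ref{geoscreensec} with the Feigin-Frenkel-Genra screenings that cut out $W^\kappa_{f_\mu,f_\nu}(\gl_{M|N})$. First I would record the combinatorial input: by Proposition \ref{pairingprop} the pairing $\chi(i,j)$ on the lattice $\bigoplus_d H_2(S_d;\Z)$ underlying $\Pi(Y_{m,n},S_{\mu,\nu})$ is minus the intersection form of the compact toric curves $C_i\subset S_{\mu,\nu}$, and I would verify that, under the identification of the $C_i$ with the simple roots $\alpha_i$ of $\gl_{m|n}$ depicted in Figure \ref{fig:Smunufig}, this matrix reproduces the (super) Cartan data encoded by the shift matrix $\sigma_{\mu,\nu}$. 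This is the structural statement that $\Pi(Y_{m,n},S_{\mu,\nu})$ is the bosonization of the Wakimoto-type free field space attached to $(\gl_{M|N},f_\mu,f_\nu)$, generalizing the even realization in Theorem \ref{WLStheo}.

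Second, I would reduce to the two even legs using factorization and locality. Choosing a composition series of $\mc O_{S_{\mu,\nu}}$ that first exhausts the $\mu$-components and then the $\nu$-components, Proposition \ref{factprop} gives an embedding $\V(Y_{m,n},S_{\mu,\nu})\hookrightarrow \V(Y_{m,n},R)\otimes_K\V(Y_{m,n},T)$ whose image is the kernel of the single junction screening $Q_{s_{k_0}}$ associated to the non-compact curve separating the two legs. Applying the locality principle of Section \ref{VOAYSsec} to an open cover of $Y_{m,n}$ adapted to this splitting, I would identify $\V(Y_{m,n},R)$ and $\V(Y_{m,n},T)$ with $W^\kappa_{f_\mu}(\gl_M)$ and $W^\kappa_{f_\nu}(\gl_N)$, up to the auxiliary Heisenberg factors tracking the noncompact directions, thereby reducing the even content of the problem to the $n=0$ and $m=0$ cases already controlled by Theorem \ref{WLStheo} and its mirror.

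The crux is then the junction screening $Q_{s_{k_0}}$. In the even case all screenings are bosonic exponential operators and the kernel computation is governed by the generalized Wakimoto resolution of Genra (as in Theorem \ref{FFtheo} for the principal case); here the curve class at the interface of the $\mu$- and $\nu$-legs carries the \emph{odd} simple root of $\gl_{M|N}$, so the corresponding screening current must be identified with a fermionic screening. The hard part will be establishing the super-analogue of the Feigin-Frenkel characterization: that the joint kernel of the even screenings, producing the $\gl_M$ and $\gl_N$ reduced data, together with the odd screening $Q_{s_{k_0}}$, is exactly $W^\kappa_{f_\mu,f_\nu}(\gl_{M|N})$ and neither larger nor smaller. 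This requires (i) verifying that the OPEs of $Q_{s_{k_0}}(z)$ with itself and with the even screening currents match the defining relations of the odd Feigin-Frenkel screening for $\widehat{\gl}_{M|N}$ at the level $\kappa=-h^\vee-\e_2/\e_1$ dictated by the equivariant parameters, and (ii) controlling the cohomology of the resulting super-screening complex, where the absence of a uniformly established generalized Wakimoto resolution for $W$-superalgebras beyond the lowest-rank cases is the principal gap. I expect a clean argument only once one fixes a distinguished simple root system for $\gl_{M|N}$ compatible with the chosen composition series and then checks the odd-reflection compatibility reconciling different orderings of the junction; this is precisely the data that Proposition \ref{factprop} and the inverse-reduction maps are designed to encode, and verifying that these odd reflections act as isomorphisms on the screened subalgebras is where I anticipate the main technical work.
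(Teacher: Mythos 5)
The statement you are addressing is a \emph{conjecture} in the paper, not a theorem: the paper proves it only in the purely even case $n=0$ (Theorem \ref{WLStheo}), and even there conditionally on results in progress from \cite{BBN} about coordinate systems on generalized slices identifying the geometric screenings with the generalized Wakimoto screenings. So there is no complete proof in the paper to compare against; what can be compared is your strategy versus the strategy the paper actually executes for the cases it handles, and whether you have correctly located why the general statement remains open.

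On that score your proposal is faithful to the paper's approach. Your reduction of the even content to the two legs via a composition series that exhausts the $\mu$-components before the $\nu$-components, with Proposition \ref{factprop} exhibiting $\V(Y_{m,n},S_{\mu,\nu})$ inside $\V(Y_{m,n},S_\mu)\otimes\V(Y_{m,n},S_\nu)$ as the kernel of the junction screening, is exactly the mechanism the paper uses in the proof of Theorem \ref{WLStheo} (there realized as the compatibility of factorization with parabolic induction, Equations \ref{parindeqn} and \ref{VSmuparindeqn}). One small correction: identifying $\V(Y_{m,n},S_\mu)$ with the $n=0$ algebra $\V(Y_{m,0},S_\mu)$ is not an application of the locality principle of Proposition \ref{localityprop}, which concerns a divisor spanning two charts glued along a \emph{compact} curve and produces a lattice-type extension of the tensor product rather than an isomorphism; what you want instead is the (essentially tautological) observation that the algebraic definition of $\V(Y,S)$ depends only on the fixed points, compact curves, and non-compact curves meeting $S$, so a divisor supported away from the $\nu$-leg yields the same data in $Y_{m,n}$ as in $Y_{m,0}$. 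Finally, you have correctly named the genuine gap: the junction curve carries the odd simple root of $\gl_{M|N}$, the corresponding screening must be matched with a fermionic Feigin--Frenkel-type screening, and the cohomological control of the resulting super-screening complex — i.e., a generalized Wakimoto resolution for $W$-superalgebras with arbitrary pairs of nilpotents — is not available. This is precisely why the paper states the result as Conjecture \ref{WLSconj} rather than proving it, and why even the even case is conditional on \cite{BBN}. Your proposal is therefore a correct plan of attack with an honestly identified missing ingredient, not a proof.
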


\noindent In fact, we prove this conjecture in the case $n=0$, assuming some results in progress \cite{BBN} about the properties of free field realizations of $W$-algebras in type A, as stated in Theorem \ref{WLStheo} below.
	
The theorem follows from identifying the defining free field realization of $\V(Y_{m,n},S_{\mu,\nu})$ with a certain canonical bosonized presentation, in the sense of \cite{FMS}, of the generalization \cite{Gen1} of the Wakimoto resolution \cite{Wak} to $W$-algebras for general nilpotents, such that the embeddings of Equation \ref{voaembeqn} induce parabolic induction \cite{Gen2} and inverse reduction \cite{Sem} maps. We will give an explicit proof of the result for $\gl_2$ with any nilpotent, as well as a general argument conditional on the work in progress mentioned above on the properties of generalized Wakimoto realizations.

We begin by explaining the simplest example with a non-trivial $W$-algebra:
\begin{prop}\label{Virscreg} Let $Y=\bb C^3$ and $S=2[\bb C^2]$. There is an isomorphism of vertex algebras
\[ W^\kappa_{f_\prin}(\gl_2)  =  \pi^+\otimes \ W^\kappa_{f_\prin}(\spl_2)  \xrightarrow{\cong} \V(\C^3,r[\C^2])  \ , \]
such that the representation $\Pi(\bb C^3,2[\bb C^2])$ identifies with $ \pi_{\mf h_{\gl_2}}$, the Feigin-Frenkel realization \cite{FF1}.
\end{prop}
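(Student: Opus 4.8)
The plan is to make both sides of the claimed isomorphism completely explicit and then invoke the Feigin--Frenkel presentation recalled in Example \ref{FFsl2eg} and Theorem \ref{FFtheo}; the only real content is a level/momentum bookkeeping.

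First I would compute $\Pi(\C^3,2[\C^2])$ directly from Section \ref{ablatticesec}. The plane $\C^2\subset\C^3$ is an irreducible toric divisor with a single torus-fixed point $y_0$ (the origin) and no compact toric curves, so $\mf F_{\C^2}=\{y_0\}$ and $\mf C_{\C^2}=\emptyset$; hence $\Pi_{\C^2}=\pi^{y_0}=\pi^k$ is a single rank-$1$ Heisenberg vertex algebra at level $k=-1/\e_T(T_{y_0}\C^2)=-1/(\e_1\e_2)$, as in Equation \ref{Heisleveleqn}. With $r_d=2$ this gives $\Pi(\C^3,2[\C^2])=(\pi^k)^{\otimes 2}$, which is exactly the rank-$2$ Heisenberg Fock module $\pi_{\mf h_{\gl_2}}$ underlying the Feigin--Frenkel realization, $\mf h_{\gl_2}$ carrying the pairing determined by $k$. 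This already establishes the last clause of the statement.

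Next I would identify the screening current. The Jordan--Holder filtration is $\mc O_S=[\mc O_{\C^2}<\mc O_{\C^2}]$, so there is a single index $k_0=1$ and the relevant screening currents run over the non-compact toric curves $C_s$ of the common plane. By the definition in Section \ref{VOAYSsec} the corresponding momentum is $\lambda_{s_1}=\e_T(N_{C_s,y_0}\C^2)\,(\delta^1-\delta^2)$, where $\delta^j$ is dual to the Heisenberg generator of the $j$-th copy of $\C^2$; in particular it is supported on the trace-zero direction $b^1-b^2$ and vanishes on the diagonal $b^1+b^2$. Writing $\mf h_{\gl_2}=\mf h_{\gl_1}\oplus\mf h_{\spl_2}$ orthogonally, the center $\pi^+$ generated by $b^1+b^2$ lies in $\ker(Q_{s_1})$ by the relation $J^a(z)V_\lambda(w)\sim k\lambda(a)V_\lambda(w)/(z-w)$ and factors out, so that $\V(\C^3,2[\C^2])=\pi^+\otimes\ker\big(Q_{s_1}|_{\pi_{\mf h_{\spl_2}}}\big)$, and on the $\spl_2$ factor $Q_{s_1}=\int V_\beta(z)\,dz$ is the residue of an exponential operator with momentum $\beta$ proportional to a single $\e_i$.

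The crux, and the step I expect to be the main obstacle, is matching this geometric datum with the Feigin--Frenkel momentum. Using the self-OPE of Equation \ref{geoheisopeeqn} to fix the induced pairing on $\mf h_{\spl_2}$, I would compute that the screened Virasoro has central charge $c=1+6(\e_1+\e_2)^2/(\e_1\e_2)$, which is $c(\kappa)$ for $\kappa=-2-\e_2/\e_1$ as in Equation \ref{agtleveleqn}, equivalently that $\beta$ agrees with the value $-\sqrt{2/(k(\kappa+2))}$ of Example \ref{FFsl2eg}; here careful normalization of the trace-zero boson (level $2k$ for $b^1-b^2$) is exactly the delicate point. I would also note that if both non-compact curves of the plane contribute currents, their momenta $\sqrt{2}\,\e_1$ and $\sqrt{2}\,\e_2$ are interchanged by the duality $\beta\mapsto-2/(k\beta)$ observed in Example \ref{FFsl2eg}, so both screen the same Virasoro and the intersection of kernels is unaffected. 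With the level matched, Example \ref{FFsl2eg} (or Theorem \ref{FFtheo} for $\spl_2$) identifies $\ker(Q_{s_1}|_{\pi_{\mf h_{\spl_2}}})$ with $W^\kappa_{f_\prin}(\spl_2)\cong\Vir_{c(\kappa)}$, whence $\V(\C^3,2[\C^2])=\pi^+\otimes W^\kappa_{f_\prin}(\spl_2)=W^\kappa_{f_\prin}(\gl_2)$, with $\Pi(\C^3,2[\C^2])=\pi_{\mf h_{\gl_2}}$ the ambient Fock space of the Feigin--Frenkel realization, as claimed.
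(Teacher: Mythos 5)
Your proposal is correct and follows essentially the same route as the paper: decompose $\Pi(\C^3,2[\C^2])=(\pi^k)^{\otimes 2}$ into the diagonal boson $\pi^+$ (on which the screenings act trivially) and the trace-free boson, identify the screened subalgebra of the latter with $\Vir_{c(\kappa)}\cong W^\kappa_{f_\prin}(\spl_2)$ via the computation of Example \ref{FFsl2eg}, and observe that the two screening currents (momenta proportional to $\e_1$ and $\e_2$ on $\phi_1-\phi_2$) are exchanged by the $\beta\mapsto -2/(k\beta)$ duality and hence cut out the same subalgebra. The only difference is cosmetic bookkeeping of normalizations, which you correctly flag as the delicate point.
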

\begin{proof}
The free field vertex algebra $\Pi(Y,S)=\Pi(Y,\bb C^2)^{\otimes 2}$ is generated by two Heisenberg fields $J^1$, $J^2$ satisfying
\[ J^1(z) J^1(w) \sim -\frac{1}{\e_1\e_2} \frac{1}{(z-w)^2} \quad\quad \text{and}\quad\quad J^2(z) J^2(w) \sim -\frac{1}{\e_1\e_2}  \frac{1}{(z-w)^2}  \ ,\]
and the screening currents are given by
\[ Q_1(z)=\nol \exp( \e_1(\phi_1-\phi_2)) \nor \quad\quad \text{and}\quad\quad   Q_2(z)=\nol \exp( \e_2(\phi_1-\phi_2)) \nor \ .\]
After making the change of basis
\[ J^+(z)=J^1(z) + J^2(z) \quad\quad J^-(z)=J^1(z) - J^2(z) \]
we can identify $\Pi(Y,S) \cong \pi^+ \otimes \pi^-$ where $\pi^+$ denotes the Heisenberg subalgebra generated by $J^+$ at level $k_+=-\frac{2}{\e_1 \e_2}$ and similarly $\pi^-$ that generated by $J^-$ at level $k_-=-\frac{2}{\e_1 \e_2}$.

\begin{figure}[b]
	\caption{Vertex algebra data for $\V(\C^3,2[\C^2])$}
	\begin{overpic}[width=.3\textwidth]{picC3}
		\put(45,45) {\scriptsize{$J^{1/2}$}}
		\put(40,90) {\scriptsize{$Q_1$}}
		\put(90,41) {\scriptsize{$Q_2$}}
		\put(70,70) {$2$}
	\end{overpic}
	\label{fig:C32C2fig}
\end{figure}

The screening current current is given by
\[ Q_1(z) = \nol \exp(\e_1(\phi_1(z)-\phi_2(z))) \nor \ ,\]
which has nonsingular OPE with $J^+$, so that $J^+$ generates a Heisenberg subalgebra $\pi^+\subset \V(Y,S)$ independent of the other fields, while on $\pi^-$ it induces
n \[ Q_1=\int Q_1(z)dz \ : \pi^- \to \pi^-_{-\e_1}   \ . \]

The vertex subalgebra $\ker(Q_1)\subset \pi^-$ is generated by the Virasoro current
\begin{equation}\label{Virgeneqn}
	T(z) = -\frac{\e_1\e_2}{4} \nol J^-(z) J^-(z) \nor -\frac{\e_1+\e_2}{2} \del_z J^-(z)  \quad\quad\text{of central charge}\quad\quad c=1+6\frac{(\e_1+\e_2)^2}{\e_1\e_2}  
\end{equation}
by the calculation recalled in Proposition \ref{FFsl2eg}, and identifies with the Feigin-Frenkel realization of the principal affine $W$-algebra $W_\kappa(\spl_2)$ at level
\[ \kappa +2 = - \frac{\e_2}{\e_1} \ .  \]
Moreover, the symmetry of $T(z)$ under interchanging $\e_1$ and $\e_2$, which identifies with the symmetry of $T^\beta$ interchanging $\beta$ and $-\frac{2}{k\beta}$ in \emph{loc. cit.}, is equivalent to interchanging $Q_2$ and $Q_1$, which implies that we have
\[ \V(Y,S) = \ker(Q_1) = \ker(Q_2) \cong  \pi^+\otimes \ W_\kappa(\spl_2) \ . \]
\end{proof}

In order to prove Theorem \ref{WLStheo} for $M=2$, it remains to consider the case of the trivial nilpotent $f_\mu=0\in\mc N_{\gl_2}$ corresponding to the $\spl_2$ coweight $\mu=(1,1)$. We begin with an intermediate case:

\begin{prop} \label{chantidomegprop}Let $Y=|\mc O_{\bb P^1}\oplus \mc O_{\bb P^1}(-2)|$ and $S=S_{1,1,0,0}= [\bb A^2_{xy}] +[|\mc O_{\bb P^1}|]$.  There is an isomorphism of vertex algebras
\[ \pi^+\otimes \mc D^\ch(\bb A^1)  \xrightarrow{\cong} \V(Y,S)  \ ,\]
such that the free field module is identified with $\Pi(Y,S)= \pi^+\otimes\Pi_0$ the bosonization of \cite{FMS}.
\end{prop}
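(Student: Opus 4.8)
The plan is to adapt the argument of Proposition~\ref{Virscreg}, with the Virasoro kernel there replaced by the $\beta\gamma$-system realized through Friedan--Martinec--Shenker bosonization. First I would compute $\Pi(Y,S)$ from Section~\ref{ablatticesec}. As $S = [\bb A^2_{xy}] + [|\mc O_{\bb P^1}|]$ has two reduced irreducible components, each of multiplicity one, $\Pi(Y,S) = \Pi(Y,[\bb A^2_{xy}]) \otimes \Pi(Y,|\mc O_{\bb P^1}|)$. The first factor is a single Heisenberg algebra $\pi^0 = \langle J^0\rangle$, since $\bb A^2_{xy}\cong \C^2$ has a unique torus fixed point and no compact curve classes. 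The second factor $\Pi_0 := \Pi(Y,|\mc O_{\bb P^1}|)$ is the rank two lattice-type vertex algebra of the first Example of Section~\ref{egabsec}, generated by Heisenberg fields $J^1, J^2$ of opposite level and vertex operators $V_l$ whose momentum $\e_1(\phi^1+\phi^2)$ is isotropic, so that $V_l(z)V_m(w) = \nol V_l(z)V_m(w)\nor$. I would record the operator product coefficients read off from the equivariant weights of $Y_{2,0}$.

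Second, I would identify $\Pi_0$ with the bosonization of $\mc D^\ch(\bb A^1)$ in the sense of \cite{FMS}: the isotropic direction $\phi^1 + \phi^2$ provides a weight-zero, integer-braided field identified with $\gamma \leftrightarrow V_1$, while $\beta$ is built from $V_{-1}$ and a derivative of the complementary boson. The relevant classical input is that the $\beta\gamma$-system arises as the kernel of a screening charge on a rank two bosonic lattice vertex algebra, with $\beta, \gamma$ generating the kernel.

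Third, I would write the screening operator $Q$ of Section~\ref{geoscreensec} attached to the non-compact toric curve $C_s \subset \bb A^2_{xy} \cap |\mc O_{\bb P^1}|$ through the shared fixed point $y$, with current $Q(z) = \nol \exp(\e_T(N_{C_s,y}S_{d_+})\phi^0(z) - \e_T(N_{C_s,y}S_{d_-})\phi^1(z))\nor$ coupling $J^0$ to $J^1$. In parallel with Proposition~\ref{Virscreg}, I would change basis among $J^0, J^1, J^2$, letting $J^+$ be the Heisenberg field spanning the orthogonal complement of the two-plane spanned by the screening momentum $\lambda_s$ and the $\gamma$-charge momentum $\e_1(\phi^1+\phi^2)$. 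Then $J^+$ has regular operator product with $Q$ and with every $V_l$, hence generates a decoupled Heisenberg subalgebra $\pi^+ \subset \ker(Q)$; on the complementary rank two system the current $Q$ is precisely the FMS screening of the previous step, so $\ker(Q)$ there is $\mc D^\ch(\bb A^1)$. This yields $\V(Y,S) = \ker(Q) \cong \pi^+ \otimes \mc D^\ch(\bb A^1)$ and the free field module $\Pi(Y,S) = \pi^+ \otimes \Pi_0$.

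The hard part will be this last identification: matching the geometrically normalized screening $Q$, whose coefficients are fixed by the toric geometry of $Y_{2,0}$, with the FMS screening, and showing that $\ker(Q)$ is generated by, and not merely contains, $\beta$ and $\gamma$. The inclusion $\beta,\gamma \in \ker(Q)$ is a direct operator product computation, but surjectivity onto the kernel requires either a spanning argument graded by the charge lattice of $\Pi_0$ --- where one must handle the zero mode $\gamma_0$, which renders the weight spaces infinite dimensional --- or a direct appeal to the established structure of the FMS bosonization. I expect this bookkeeping, rather than any individual operator product, to be the principal obstacle.
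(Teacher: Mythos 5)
Your proposal follows essentially the same route as the paper's proof: tensor-decompose $\Pi(Y,S)$ over the two irreducible components, write the single geometric screening current attached to the non-compact curve in their intersection, split off the Heisenberg field $J^+$ orthogonal to both the screening momentum and the isotropic lattice momentum, and identify the residual rank-two half-lattice system with the Friedan--Martinec--Shenker bosonization so that $\ker(Q)\cong \mc D^\ch(\bb A^1)$ there. The paper resolves the surjectivity issue you flag exactly as you anticipate, by matching the geometrically normalized screening with the standard FMS screening of the $\beta\gamma$-system and citing the established bosonization result.
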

\begin{proof}
	The Heisenberg subalgebra $\pi(Y,S)$ of the free field vertex algebra \[ \Pi(Y,S)=\Pi(Y,\bb C^2)\otimes \Pi(Y,|\mc O_{\bb P^1}|) \]
	is generated by three Heisenberg fields $J^2$, $J^3$ and $J^4$ (note we have so far omitted $J^1$; the notation is chosen to be consistent with the application in Theorem \ref{affinesl2theo} below). The field $J^2$ generating $\Pi(Y,\bb C^2)$ satisfies
	\[ J^2(z) J^2(w) \sim -\frac{1}{\e_1\e_2} \frac{1}{(z-w)^2}  \]
	and $J^3$ and $J^4$ from $\Pi(Y,|\mc O_{\bb P^1}|)$ satisfy
	\[ J^3(z) J^3(w) \sim -\frac{1}{\e_1\e_3} \frac{1}{(z-w)^2} \quad\quad\text{and}\quad\quad  J^4(z) J^4(w) \sim \frac{1}{\e_1\e_3} \frac{1}{(z-w)^2} \]
	which generate $\Pi(Y,|\mc O_{\bb P^1}|)$ together with the vertex operator
	\[ V(z) = \nol \exp( \e_1(\phi_3(z)+\phi_4(z)) ) \nor  \quad\quad\text{and more generally} \quad\quad V_l(z) = \nol V(z)^l \nor \]
	for each $l\in \bb Z$, and satisfy the relations
	\[ J^3(z) V_l(w) \sim - \frac{l}{\e_3} \frac{V_l(w)}{z-w} \quad\quad J^4(z) V_l(w) \sim \frac{l}{\e_3} \frac{V_l(w)}{z-w} \quad\quad \text{and}\quad\quad V_l(z)V_m(w)= \nol V_l(z) V_m(w)\nor \ .  \]
	The only screening current is given by
	\[  Q(z)=\nol \exp( \e_2\phi_2(z) - \e_3\phi_3(z))) \nor   \ . \ \]
	
	\begin{figure}[t]
		\begin{overpic}[width=.8\textwidth]{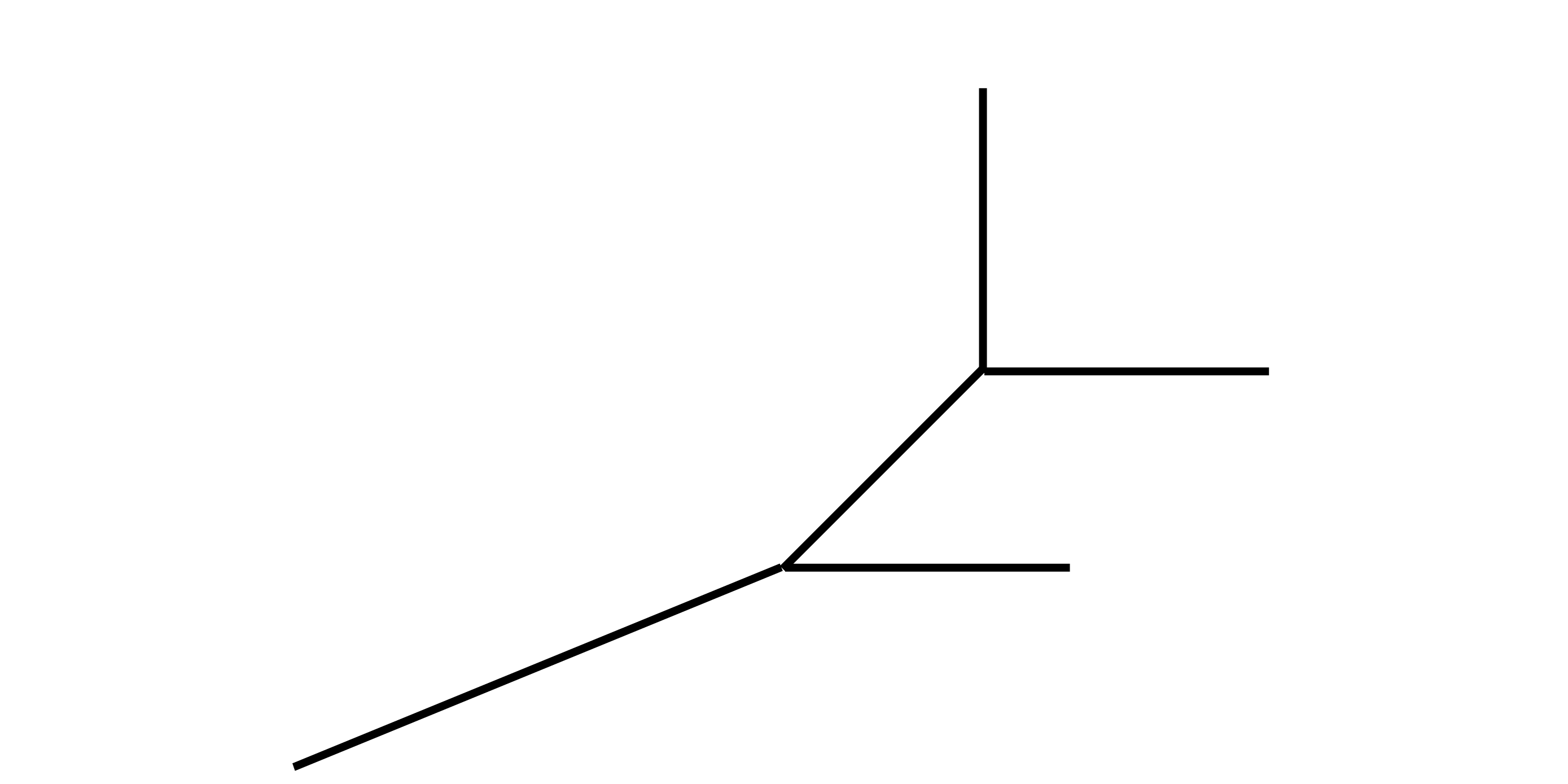}
			\put(64,27) {\scriptsize{$J^{2}$}}
			\put(62,23.5) {\scriptsize{$J^{3}$}}
			\put(53,15) {\scriptsize{$J^{4}$}}	
			\put(58,19.5) {\scriptsize{$V$}}
			\put(82,25.5) {\scriptsize{$Q$}}
			\put(74,35) {$1$}
			\put(69,19) {$1$}
			
		\end{overpic}
		\label{fig:S1100fig}
			\caption{Vertex algebra data for $\V(Y_{2,0},S_{1,1,0,0})$}
	\end{figure}
	
Consider the change of basis in the total Heisenberg subalgebra of $\Pi(Y,S)$ given by
\[ \hspace*{-1cm} J^+= J^2 + J^3 + J^4 \quad\quad \alpha = \e_2 J^2 - \e_3 J^3 \quad\quad  \beta = - \e_2(J^2+J^3) +  \e_1 J^4 \ . \]

Note that $J^+$ has nonsingular OPE with $\alpha$, $\beta$, and the screening current $S$, and it generates a Heisenberg subalgebra of $\pi^+\subset \V(Y,S)$ independent of the other fields, as in Proposition \ref{Virscreg}.

Similarly, note that $\alpha$ and $\beta$ define commuting Heisenberg subalgebras of levels $1$ and $-1$, respectively, that is, they satisfy
\[  \alpha(z) \alpha(w) \sim \frac{1}{(z-w)^2} \quad\quad \beta(z) \beta(w) \sim -\frac{1}{(z-w)^2} \quad\quad \alpha(z) \beta(w) \sim 0  \ , \]
and in terms of $\alpha$ and $\beta$, we have the identifications
\[ V(z) = \nol \exp( \phi_\alpha(z) + \phi_\beta(z) ) \nor  \quad\quad \text{and} \quad\quad  S(z) = \nol \exp( \phi_\alpha(z) ) \nor  \ , \]
where $\phi_\alpha=\e_2 \phi_2- \e_3 \phi_3$ and $\phi_\beta =- \e_2(\phi^2+\phi^3) +  \e_1 \phi^4 $ are the corresponding vertex operators. Thus, together the fields $\alpha$, $\beta$ and $V$ generate a canonically normalized half lattice vertex algebra $\Pi_0$, so that in summary we have
\[ \Pi(Y,S) = \pi^+ \otimes \Pi_0 \ . \]
Moreover, note that under this identification the screening current $S$ identifies with the standard screening current for the realization of a canonically normalized chiral Weyl vertex algebra $\mc D^\ch(\bb A^1)$ as a subalgebra of the half lattice vertex algebra, that is
\begin{equation}\label{betagammageneqn}
	 \ker(Q)\cong \mc D^\ch(\bb A^1) \subset \Pi_0 \quad\quad
\textup{generated by} \quad\quad 
 a(z) = \nol \exp( \phi_\alpha(z) + \phi_\beta(z) ) \nor  \quad\quad a^*(z) =  \nol  \alpha(z) \exp(- \phi_\alpha(z) - \phi_\beta(z) ) \nor  
\end{equation} 
which satisfy the defining relations thereof
 \[ a(z)a(w) \sim a^*(z) a^*(w) \sim 0 \quad\quad a(z) a^*(w) \sim \frac{1}{z-w} \ ; \]
 see for Example \cite{AlW}, though we recall the original physics reference is \cite{FMS}. In summary, we have shown that that the vertex algebra $V(Y,S)$ associated to the divisor is given by
\[ \V(Y,S) = \pi^+\otimes \mc D^\ch(\bb A^1)  \ .\]
\end{proof}

We now give a proof of Theorem \ref{WLStheo} for $M=2$ and trivial nilpotent $f_\mu=0\in\mc N_{\gl_2}$:

\begin{theo}\label{affinesl2theo}
	Let $Y=|\mc O_{\bb P^1}\oplus \mc O_{\bb P^1}(-2)|$ and $S=S_{2,1,0,0}=2 [\bb A^2_{xy}] +[|\mc O_{\bb P^1}|]$. There is an isomorphism of vertex algebras
	\[  V^\kappa(\gl_2) =\pi^+ \otimes  V^\kappa(\spl_2)\xrightarrow{\cong} \V(Y,S) \]
	such that the representation $\Pi(Y,S)$ is identified with the bosonization, in the sense of \cite{FMS}, of the Wakimoto realization \cite{Wak}, used in the proof of inverse reduction for $\spl_2$ in \cite{Sem}.
\end{theo}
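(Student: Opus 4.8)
The plan is to deduce the theorem from the factorization property of Proposition \ref{factprop}, combined with the two computations already carried out in Propositions \ref{Virscreg} and \ref{chantidomegprop}, by reorganizing the four free bosons so that the trace direction decouples and the remaining data assembles into the bosonized Wakimoto realization of $\spl_2$. First I would record the free field data. Since $S=2[\bb A^2_{xy}]+[|\mc O_{\bb P^1}|]$, the algebra $\Pi(Y,S)=\Pi(Y,\bb A^2)^{\otimes 2}\otimes \Pi(Y,|\mc O_{\bb P^1}|)$ is generated by four Heisenberg fields $J^1,J^2$ (the two copies of $\bb A^2$) and $J^3,J^4$ together with the vertex operator $V$ (the $|\mc O_{\bb P^1}|$ factor), with OPEs exactly as in Proposition \ref{chantidomegprop}, and there are precisely two screening currents: the current $Q_2(z)=\nol\exp(\e_2\phi_2(z)-\e_3\phi_3(z))\nor$ between the second $\bb A^2$ and $|\mc O_{\bb P^1}|$, and a current $Q_1(z)$ between the two copies of $\bb A^2$, namely the canonical screening of Section \ref{geoscreensec} attached to the non-compact toric curve in their intersection.

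Second, I would apply Proposition \ref{factprop} to the extension $0\to \mc O_{\bb A^2_{xy}}\to \mc O_S\to \mc O_{S_{1,1,0,0}}\to 0$, which produces a canonical embedding $\V(Y,S)\hookrightarrow \Pi(Y,\bb A^2)\otimes \V(Y,S_{1,1,0,0})$ with image the kernel of the single remaining screening $Q_1$. By Proposition \ref{chantidomegprop} the second factor is $\pi^+\otimes \mc D^\ch(\bb A^1)$, so that
\[ \V(Y,S)=\ker\big(Q_1: \pi_{J^1}\otimes \pi^+\otimes \mc D^\ch(\bb A^1)\to M\big). \]
Next I would change basis to isolate the global trace current $J^+=J^1+J^2+J^3+J^4$; as in Propositions \ref{Virscreg} and \ref{chantidomegprop}, it has regular OPE with every screening current, so it generates a decoupled Heisenberg lying inside $\V(Y,S)$. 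The two-dimensional Heisenberg $\pi_{J^1}\otimes\pi^+$ then splits as $\pi^+\otimes \pi_{\mf h}$ with $\pi^+$ the full trace and $\pi_{\mf h}$ the complementary Cartan direction, reducing the claim to the statement that the kernel of $Q_1$ on $\pi_{\mf h}\otimes \mc D^\ch(\bb A^1)$ is $V^\kappa(\spl_2)$.

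Third, I would identify this kernel with the Wakimoto realization. Using the bosonization \ref{betagammageneqn} of the generators $a(z),a^*(z)$ of $\mc D^\ch(\bb A^1)$, I would compute the OPEs of $Q_1(z)$ against $a,a^*$ and against the Cartan current $J_{\mf h}$, and verify that the standard Wakimoto currents — $e$ linear in $a^*$, $h$ a combination of $\nol a a^*\nor$ and $J_{\mf h}$, and $f$ cubic in $a,a^*$ corrected by $\del a$ and $\nol J_{\mf h}a\nor$, with the constants fixed by these OPEs — satisfy the $\spl_2$ current relations at the level $\kappa$ with $\kappa+2=-\tfrac{\e_2}{\e_1}$, matching Proposition \ref{Virscreg}, and that each of $e,h,f$ lies in $\ker Q_1$. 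This exhibits the Wakimoto embedding $V^\kappa(\spl_2)\hookrightarrow \pi_{\mf h}\otimes \mc D^\ch(\bb A^1)$ of \cite{Wak}, bosonized via \cite{FMS} exactly in the form used by Semikhatov \cite{Sem}, with image contained in $\ker Q_1$.

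The hard part will be the reverse inclusion: that $\ker Q_1$ is exactly the Wakimoto image and nothing larger. I would establish this through the non-degeneracy of the screening at generic level — equivalently the vanishing of the higher cohomology of the associated Felder-type complex — by comparing the graded character of $\ker Q_1$ with that of $V^\kappa(\spl_2)$; since $V^\kappa(\spl_2)\subseteq \ker Q_1$ and the two characters agree for generic $\kappa$, equality follows. This is precisely the free-field input recorded in the forthcoming \cite{BBN}, which specializes here to Semikhatov's explicit inverse-reduction identity \cite{Sem}. Reinstating the decoupled $\pi^+$ then yields $\V(Y,S)\cong \pi^+\otimes V^\kappa(\spl_2)=V^\kappa(\gl_2)$, with $\Pi(Y,S)\cong \pi^+\otimes \pi_{\mf h}\otimes \Pi_0$ recognized as the bosonized Wakimoto free field realization, as claimed.
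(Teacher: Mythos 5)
Your proposal follows essentially the same route as the paper's proof of the main isomorphism: factor via Proposition \ref{factprop} along the curve separating the two copies of $\bb A^2$, invoke Proposition \ref{chantidomegprop} to replace $\V(Y,S_{1,1,0,0})$ by $\pi^+\otimes\mc D^\ch(\bb A^1)$, decouple the trace boson $J^+$, and recognize the residual screening current $Q_1$ as the bosonized Wakimoto screening for $\spl_2$ at $\kappa+2=-\e_2/\e_1$ (the paper makes the complementary Cartan direction explicit as $\delta=\e_2(J^1-J^2-J^3-J^4)$ at level $2(\kappa+2)$, and writes $Q_1=\nol a\,\exp(\tfrac{\e_1}{\e_2}\phi_\delta)\nor$, then cites the standard fact that its kernel is $V^\kappa(\spl_2)$ where you instead sketch the character/Felder argument — both are fine at generic level).

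The one substantive omission is that the paper's proof also performs the \emph{second} factorization, cutting along the curve corresponding to $Q_2$, which embeds $\V(Y,S)$ into $\ker(Q_1)\cong\pi^+\otimes W^\kappa_{f_\prin}(\spl_2)\otimes\Pi_0$ and identifies the residual screening $Q_2$ with the screening defining Semikhatov's embedding $V^\kappa(\spl_2)\hookrightarrow W^\kappa_{f_\prin}(\spl_2)\otimes\Pi$. Since the theorem's statement (and the surrounding discussion, e.g.\ Figure \ref{fig:sl2factfig}) explicitly claims the identification with the realization ``used in the proof of inverse reduction for $\spl_2$ in \cite{Sem}'', your argument establishes the isomorphism $V^\kappa(\gl_2)\cong\V(Y,S)$ and the Wakimoto half of the claim but leaves the inverse-reduction half unaddressed; you would need to add the analogous change of basis ($\mu,\nu$ in the paper's notation) in $\ker(Q_1)$ to complete it.
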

\begin{proof}

	The free field vertex algebra \[ \Pi(Y,S)=\Pi(Y,\bb C^2)^{\otimes 2}\otimes \Pi(Y,|\mc O_{\bb P^1}|) \]
	is generated by four Heisenberg fields, $J^1$ and $J^2$ generating $\Pi(Y,\bb C^2)^{\otimes 2}$ satisfying
	\[ J^1(z) J^1(w) \sim -\frac{1}{\e_1\e_2} \frac{1}{(z-w)^2} \quad\quad \text{and}\quad\quad J^2(z) J^2(w) \sim -\frac{1}{\e_1\e_2}  \frac{1}{(z-w)^2}  \]
	and $J^3$ and $J^4$ from $\Pi(Y,|\mc O_{\bb P^1}|)$ satisfying
	\[ J^3(z) J^3(w) \sim -\frac{1}{\e_1\e_3} \frac{1}{(z-w)^2} \quad\quad\text{and}\quad\quad  J^4(z) J^4(w) \sim \frac{1}{\e_1\e_3} \frac{1}{(z-w)^2} \]
	which generate $\Pi(Y,|\mc O_{\bb P^1}|)$ together with the vertex operator
	\[ V(z) = \nol \exp( \e_1(\phi_3(z)+\phi_4(z)) ) \nor  \quad\quad\text{and more generally} \quad\quad V_l(z) = \nol V(z)^l \nor \]
	for each $l\in \bb Z$, which satisfy the relations
	\[ J^3(z) V_l(w) \sim - \frac{l}{\e_3} \frac{V_l(w)}{z-w} \quad\quad J^4(z) V_l(w) \sim \frac{l}{\e_3} \frac{V_l(w)}{z-w} \quad\quad \text{and}\quad\quad V_l(z)V_m(w)= \nol V_l(z) V_m(w)\nor \ .  \]
	
	The screening currents are given by
	\[ Q_1=\nol \exp( \e_1(\phi_1-\phi_2)) \nor \quad\quad \text{and}\quad\quad   Q_2=\nol \exp( \e_2\phi_2 - \e_3\phi_3)) \nor  \ . \ \]

	\begin{figure}[b]
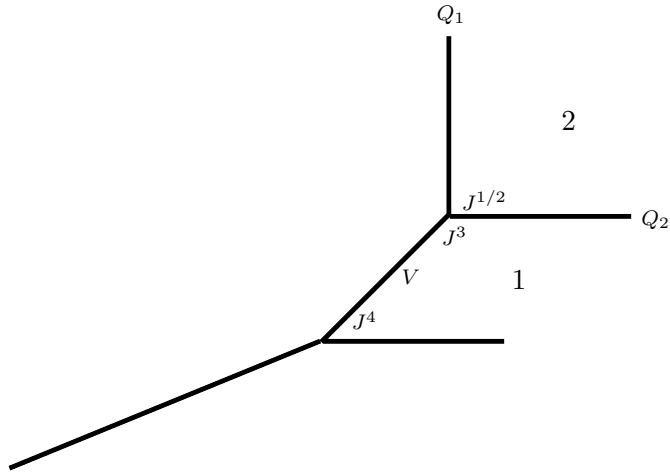

	\caption{Vertex algebra data for $\V(Y_{2,0},S_{2,1,0,0})$}
\label{fig:S1100fig}
		\begin{overpic}[width=.8\textwidth]{picOm2alt}
			\put(64,27) {\scriptsize{$J^{1/2}$}}
			\put(62,23.5) {\scriptsize{$J^{3}$}}
			\put(53,15) {\scriptsize{$J^{4}$}}	
			\put(58,19.5) {\scriptsize{$V$}}
			\put(82,25.5) {\scriptsize{$Q_2$}}
			\put(61.5,46) {\scriptsize{$Q_1$}}
			\put(74,35) {$2$}
			\put(69,19) {$1$}
		\end{overpic}
	
	\end{figure}

We now explain the two descriptions of the vertex algebra $V(Y,S)$ induced by Proposition \ref{factprop} applied to the two screening currents above. First, partitioning the divisor along the curve class corresponding to the screening operator $Q_1$, we have a vertex algebra embedding
	\[ \V(Y,S) \subset \ker(Q_2)= \V(Y,S_{1,0,0,0}) \otimes \V(Y,S_{1,1,0,0} )\cong \pi\otimes \pi^+_{1,1,0,0} \otimes \mc D^\ch(\bb A^1)  \ ,\]
	where the tensor factors of the right hand side are generated by $J^1$, $J^2+J^3+J^4$, and $a$ and $a^*$ defined as in Equation \ref{betagammageneqn}, respectively, following Example \ref{chantidomegprop} above. The image of $V(Y,S)$ under this embedding is identified with the kernel of the screening operator $Q_1$ corresponding to the curve class along which we have partitioned the divisor.

	Now, consider the change of basis in the Heisenberg subalgebra $\pi_S$ of $\Pi(Y,S)$ given by
	\[  J^+=J^1 + J^2 + J^3 + J^4  \quad\quad \alpha  = \e_2 J^2 - \e_3 J^3 \quad\quad  \beta = - \e_2(J^2+J^3) +  \e_1 J^4  \quad\quad \delta =\e_2 (J^1 - J^2 - J^3-  J^4 )  \ . \]
	Note that $J^+$ still has nonsingular OPE with each of the other fields $J^-$, $\alpha$, $\beta$, and $V_l$, and both screening currents $Q_1$ and $Q_2$, so that it generates a Heisenberg subalgebra of $\pi^+\subset \V(Y,S)$ independent of the other fields, as in the previous examples. The field $\delta$ also has nonsingular OPE with each of the other fields and the screening current $Q_2$, so that it defines an independent Heisenberg subalgebra $\pi^\delta\subset \ker(Q_2)$ at level $k_\delta = -2\frac{\e2}{\e1}=2(\kappa + 2)  $. The fields $\alpha$ and $\beta$ and their relations with the vertex operators $V_l$ and the screening current $Q_2$ are evidently the same as in the previous example.
	
	In summary, the fields $J^+$ and $\delta$ together with $a$ and $a^*$ defined in terms of $\alpha$ and $\beta$ as in Equation \ref{betagammageneqn} give a new set of generators for $\ker(Q_2) $, so that we have
	\[ \ker(Q_2)  \cong \pi^+ \otimes \pi^\delta \otimes \mc D^\ch(\bb A^1)   \ ,\]
	where the residual screening operator $Q_1$ vanishes identically on $\pi^+$.
	Moreover, in terms of these generators the residual screening current $Q_1$ is given by
	\[ Q_1(z)=\nol \exp( \e_1(\phi_1(z)-\phi_2(z))) \nor = \nol \exp( \phi_\alpha(z)+\phi_\beta(z) + \frac{\e_1}{\e_2} \phi_\delta(z) ) \nor = \nol a(z) \exp(\frac{\e_1}{\e_2} \phi_\delta(z) )  \nor \]
	which identifies with the screening current for the Wakimoto realization \cite{Wak} of the affine algebra of $\spl_2$ at level $\kappa$, that is, we have
	\[ \ker(Q_1|_{\pi^\delta\otimes \mc D^\ch(\bb A^1) }) \cong V^\kappa(\spl_2) \subset \pi^\delta\otimes A  \quad\quad \text{generated by} \quad\quad \begin{cases} J^e(z) = & a(z)  \\ J^h(z)  =   & - 2 : a^*(z) a(z) \nor + \delta(z) \\
	J^f(z)= & - \nol a^*(z)^2 a(z) \nor + \kappa \del_z a^*(z) +  a^*(z) \delta(z) \end{cases}  \ ,  \]
	noting that $-\frac{\e_1}{\e_2}=\frac{1}{\kappa+2}$; see for Example \cite{FrG}. In summary, we obtain that the vertex algebra $V(Y,S)$ associated to the divisor is given by
	\[ \V(Y,S) = \ker(Q_1)\cap \ker(Q_2)  \cong \pi^+ \otimes  V^\kappa(\spl_2) \ , \]
	the affine Kac-Moody vertex algebra associated to $\gl_2$ at level $\kappa= -2 -\frac{\e_2}{\e_1}$.

	Next, we consider instead the embedding induced by Proposition \ref{factprop} by partitioning the divisor along the curve class corresponding to the screening current $Q_2$, so that we have
	\[ \V(Y,S) \subset \ker(Q_1) = \V(Y,S_{2,0,0,0} )\otimes \V(Y,S_{0,1,0,0} )\cong \pi^+_{2,0,0,0}\otimes \mc W_\kappa(\spl_2) \otimes \Pi_0\]
	where $\pi^+_{2,0,0,0}$ denotes the Heisenberg algebra generated by $J^1+J^2$, $\mc W_\kappa(\spl_2)$ the affine $W$ algebra generated by $T(z)$ as defined in Equation \ref{Virgeneqn}, and $\Pi_0=\Pi[Y,S_{0,1,0,0}]$ denotes the half lattice vertex algebra generated by $J^3$, $J^4$ and $V^l$ for $l\in \bb Z$.
	
	Now, consider the alternative change of basis in the Heisenberg subalgebra of $\Pi(Y,S)$ given by
	\[  J^+=J^1 + J^2 + J^3 + J^4  \quad\quad J^-  = J^1-J^2 \quad\quad \mu = \frac{\e_2}{2}(J^1+J^2 + J^3 - J^4) -  \e_1 J^4  \quad\quad \nu =\frac{\e_2}{2} (J^1 + J^2) - \e_3 J^3   \ . \]
	Note that $J^+$ and $J^-$ have non-singular OPE with each other as well as $\mu$, $\nu$ and $V^l$, $J^+$ has non-singular OPE with both screening currents so that it generates a Heisenberg subalgebra as before, and $J^-$ has the same relations with $Q_1$ as in Example \ref{Virscreg}. The fields $\mu$ and $\nu$ define commuting Heisenberg subalgebras of levels $k_\mu= \frac{1}{2}(-2-\frac{\e_2}{\e_1} )=\frac{\kappa}{2}$ and $k_\nu= \frac{1}{2}(2+\frac{\e_2}{\e_1} )=-\frac{\kappa}{2}$, that is, they satisfy
	\[  \mu(z) \mu(w) \sim \frac{\kappa}{2}\frac{1}{(z-w)^2}  \quad\quad  \nu(z) \nu(w) \sim -\frac{\kappa}{2}\frac{1}{(z-w)^2} \quad\quad \mu(z) \nu(w) \sim 0  \ . \]

	\vspace*{.5cm}
	\begin{figure}[b]
					\vspace*{-1cm}
		\caption{Factorization structure on the vertex algebra $\V(Y_{2,0},S_{2,1,0,0})$}
		\label{fig:sl2factfig}
			\vspace*{1cm}
\hspace*{-1cm}	\begin{overpic}[width=1\textwidth]{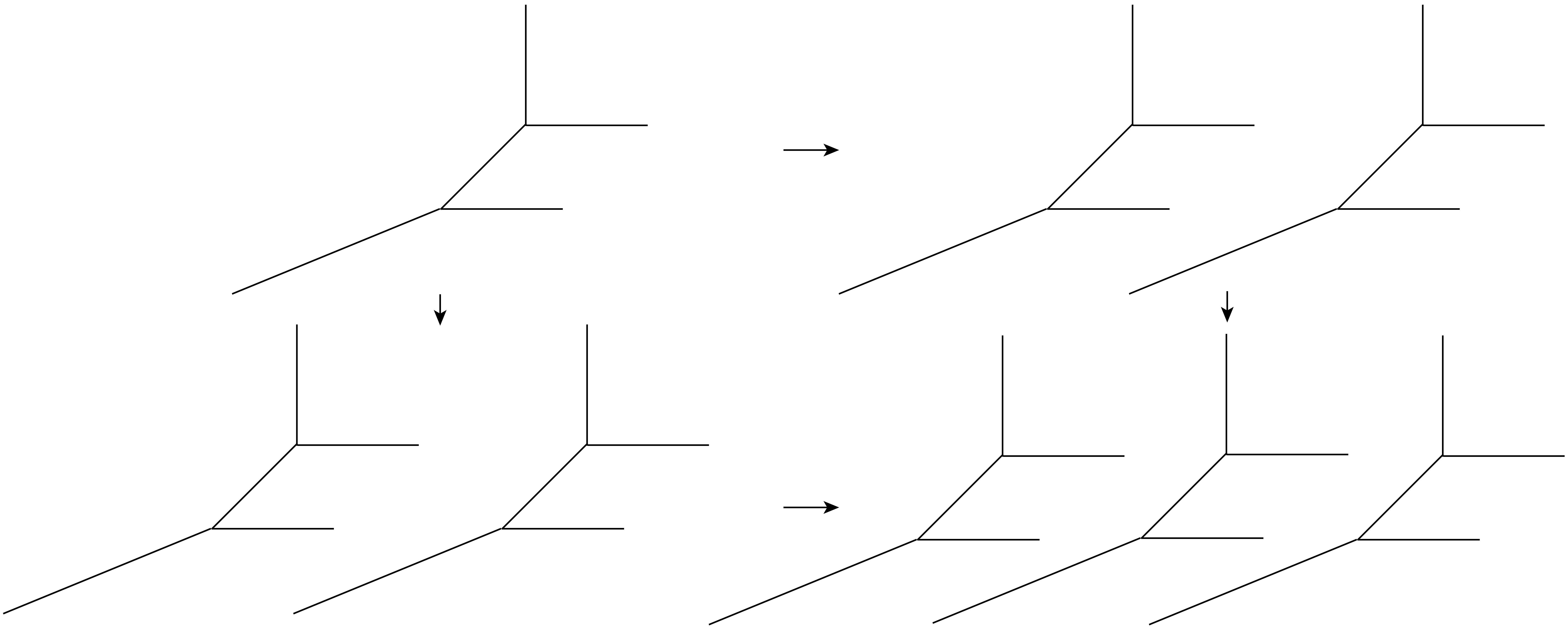}
		\put(34,32.25) {\scriptsize{$J^{1/2}$}}
		\put(32.5,30.25) {\scriptsize{$J^{3}$}}
		\put(29.25,27) {\scriptsize{$J^{4}$}}	
		\put(31.5,28.5) {\scriptsize{$V$}}
		\put(41.5,31.5) {\scriptsize{$Q_2$}}
		\put(33,40) {\scriptsize{$Q_1$}}
		\put(38,35) {$2$}
		\put(36,28.5) {$1$}

		\put(76,35) {$1$}
		\put(72.5,32.25) {\scriptsize{$J^{1}$}}
		
		\put(95,35) {$1$}
		\put(91,32.25) {\scriptsize{$J^{2}$}}
		\put(89.75,30.25) {\scriptsize{$J^{3}$}}
		\put(86.5,27) {\scriptsize{$J^{4}$}}	
		\put(88.5,28.5) {\scriptsize{$V$}}
		\put(99,31.5) {\scriptsize{$Q_2$}}
		\put(93,28.5) {$1$}

		\put(19,11.75) {\scriptsize{$J^{1/2}$}}
		\put(36.5,9.85) {\scriptsize{$J^{3}$}}
		\put(33.25,6.5) {\scriptsize{$J^{4}$}}	
		\put(35.5,8.1) {\scriptsize{$V$}}
		\put(18,20) {\scriptsize{$Q_1$}}
		\put(23,14.5) {$2$}
		\put(40.5,8.25) {$1$}

		\put(64.25,11.15) {\scriptsize{$J^{1}$}}
		\put(78.5,11.25) {\scriptsize{$J^{2}$}}
		\put(68,13.5) {$1$}
		\put(82.25,13.5) {$1$}
		
			\put(91.25,9.25) {\scriptsize{$J^{3}$}}
		\put(87.75,5.75) {\scriptsize{$J^{4}$}}	
		\put(90,7.5) {\scriptsize{$V$}}
		
				\put(95,7.5) {$1$}

		\put(80,29) {$\bigotimes$}
		\put(28,7.5) {$\bigotimes$}
		\put(70,7.5) {$\bigotimes$}
		\put(84,7.5) {$\bigotimes$}

		\put(29,20) {\cite{Sem}}
		\put(47.5,32) {\cite{Wak}}
		\put(46, 9) {\cite{FF1}$\otimes\id$}
		\put(79.5,20) {$\id\otimes$\cite{FMS} }

		\put(78,40) {$\pi\otimes (\pi \otimes \mc D^\ch(\bb A^1))=\ker(Q_2)$}
		\put(-2,40) {$\V(Y_{2,0},S_{2,1,0,0})=\pi \otimes V^\kappa(\spl_2)$}
		\put(73,-2) {$\pi\otimes\pi\otimes\Pi_0=\Pi(Y_{2,0},S_{2,1,0,0})$}
		\put(1,-2) {$\ker(Q_1)=(\pi\otimes W^\kappa_{f_\prin}(\spl_2))\otimes \Pi_0$}

	\end{overpic}
			\vspace*{-2cm}
	\end{figure}

	Moreover, in terms of $\mu$ and $\nu$, we have the identifications
	\[ \hspace*{-1cm} V =\nol \exp( \e_1(\phi_3(z)+\phi_4(z)) ) \nor = \nol \exp(  \frac{2}{\kappa}(\phi_\mu - \phi_\nu ) ) \nor  \quad\quad \text{and}\quad\quad Q_2(z)=  \nol \exp( \e_2\phi_2 - \e_3\phi_3)) \nor =\nol \exp( \phi_\nu- \frac{\e_2}{2} \phi^- )) \nor  \ .\]
	Thus, the fields $\mu$, $\nu$ and $V$ again generate a half lattice vertex algebra $\Pi$ in the kernel of $Q_1$, and together with the field $T$ defined in terms of $J^-$ as in Equation \ref{Virgeneqn} give a new set of generators for $\ker(Q_1)$, so that we have
	\[ \ker(Q_1) \cong \pi^+ \otimes \mc W_\kappa(\spl_2) \otimes \Pi  \ ,\]
	where the residual screening operator $Q_2$ vanishes identically on $\pi^+$.
	Moreover, in terms of these generators the residual screening current $Q_2$ screens the corresponding subalgebra
	\begin{align}
		J^e(z)  & = \nol \exp( \e_1(\phi_3(z)+\phi_4(z)) ) \nor  & = \nol \exp(  \frac{2}{\kappa}(\phi_\mu - \phi_\nu ) ) \nor   \\
		J^h(z)  & =    - 2 \beta(z) +\delta(z)   & = 2 \mu(z) \\
		J^f(z) & = ...  & = (T(z) + T_\nu(z) )\nol \exp(  -\frac{2}{\kappa}(\phi_\mu - \phi_\nu ) ) \nor   \ .
	\end{align}
which precisely corresponds to the embedding of $V^\kappa(\spl_2)$ in $ \mc W_{f_\prin}^\kappa(\spl_2) \otimes \Pi$ constructed in \cite{Sem}, where $T_\nu(z)=\nol \nu(z) \nu(z) \nor + c_\kappa \del_z \nu(z)$ denotes the Segal-Sugawara field in $\pi_\nu$. In summary, we have
	\[ \ker(Q_2 |_{\mc W_\kappa(\spl_2) \otimes \Pi })\cong V^\kappa(\spl_2) \subset \mc W_{f_\prin}^\kappa(\spl_2) \otimes \Pi  \quad\quad \text{and again} \quad\quad  \V(Y,S) = \ker(Q_1)\cap \ker(Q_2) \cong \pi^+\otimes V^\kappa(\spl_2)  \ . \]
\end{proof}

We recall that the framing structures and induced quivers with potential that conjecturally correspond to each of the representations of $V^\kappa(\gl_2)$ in Figure \ref{fig:sl2factfig} above are given in Figure \ref{fig:Walgframingsfig}.

Next, we describe the generalization of these arguments to outline a proof of Theorem \ref{WLStheo} below. Recall that we let $\mu$ denote a partition of length $m$ given by
\[ \mu=\{ \mu_1 \geq \hdots \geq \mu_m \geq 0 \} \quad\quad\text{and let} \quad\quad  \textbf{M} =(M_i)_{i=0}^{m-1}\quad\quad\text{where}  \quad\quad M_i=\sum_{j=i+1}^{m}\mu_j  \]
and write simply $M = M_0 = |\mu|=\sum_{j=1}^m \mu_j$. We define the corresponding divisor $S_\mu$ in $Y_{m,0}$ by the labelling of the faces of the moment polytope of $Y_{m,0}$ pictured in Figure \ref{fig:Walgfig} below. We also introduce the shorthand expression for $S_\mu$ indicated in the top left of the figure. We have:

\begin{theo}\label{WLStheo} There is an isomorphism of vertex algebras
	\[ W^\kappa_{f_\mu}(\gl_M) \xrightarrow{\cong}\V(Y_{m,0}, S_\mu)  \ . \]
\end{theo}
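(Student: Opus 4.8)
The plan is to identify the defining free field realization of $\V(Y_{m,0},S_\mu)$ with the generalized Wakimoto realization of $W^\kappa_{f_\mu}(\gl_M)$ constructed by Genra \cite{Gen1}, by matching both the underlying free field vertex algebra and the configuration of screening currents cutting out the W-algebra, and then invoking the results in progress \cite{BBN} to conclude that the simultaneous kernel of the screenings is exactly $W^\kappa_{f_\mu}(\gl_M)$. First I would compute $\Pi(Y_{m,0},S_\mu)$ explicitly from the toric data of Section \ref{ablatticesec}: the Heisenberg generators $J^y$ are indexed by the torus fixed points in the disjoint union of the $r_d$ copies of the reduced components $S_d$, with levels fixed by the equivariant tangent weights, and the lattice generators $V_i$ by the compact curve classes $C_i$. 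After the change of basis separating the overall trace direction $J^+$, which always generates an independent factor $\pi^+$ exactly as in Propositions \ref{Virscreg}, \ref{chantidomegprop} and Theorem \ref{affinesl2theo}, and after bosonizing the half-lattice pieces attached to the curves $C_i$ into $\beta\gamma$-systems $\mc D^\ch(\bb A^1)$ in the sense of \cite{FMS} (precisely as in Proposition \ref{chantidomegprop}), this should exhibit $\Pi(Y_{m,0},S_\mu)$ as $\pi^+$ tensored with the bosonic free field algebra underlying Genra's realization, namely a product of Heisenberg algebras indexed by the Cartan directions of the Levi blocks and of $\beta\gamma$-systems indexed by the positive roots of $\gl_M$ off the Levi.

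Next I would match the two families of screening operators, distinguished by whether the associated non-compact curve $C_{s_k}$ lies internal to a single reduced component or along a compact curve $C_i$ joining two consecutive components. The former are the Feigin-Frenkel screenings \cite{FF1}: a vertical stack of $k$ copies of a single reduced component, screened among themselves, produces a principal W-algebra $W^\kappa_{f_\prin}(\gl_k)$, which is the $m=1$ case $\V(\C^3,k[\C^2])\cong W^\kappa_{f_\prin}(\gl_k)$ and generalizes Proposition \ref{Virscreg}. The latter are the Wakimoto-type currents implementing inverse quantum Hamiltonian reduction and parabolic induction, which glue these principal blocks; the precise combinatorics of which copies are paired is controlled by the staircase labelling $M_i = \sum_{k>i}\mu_k$ and by the chosen composition series of $\mc O_{S_\mu}$.

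The core of the argument is then an induction on the length $m$ of $\mu$ via the factorization property of Proposition \ref{factprop}: splitting $\mc O_{S_\mu}$ along a curve class realizes $\V(Y_{m,0},S_\mu)$ as the kernel of a single residual screening on $\V(Y,R)\otimes\V(Y,T)$, where by the inductive hypothesis $\V(Y,R)$ and $\V(Y,T)$ are smaller W-algebras of the same type. Partitioning along an innermost compact curve realizes this step as an inverse reduction generalizing \cite{Sem}, descending from a coarser nilpotent toward $f_\mu$, while partitioning along an outermost one realizes it as a parabolic induction map of \cite{Gen2}, so that the embeddings of Equation \ref{voaembeqn} are identified with these two standard families of maps. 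The base case $m=1$ is the principal Feigin-Frenkel realization of Theorem \ref{FFtheo}, and the rank-two instances are exactly Proposition \ref{Virscreg} and Theorem \ref{affinesl2theo}, which already display both partitionings.

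The hard part will be the final identification: showing that after these matchings the simultaneous kernel of all the screenings is neither larger nor smaller than $W^\kappa_{f_\mu}(\gl_M)$, and that the factorization maps coincide on the nose with Genra's parabolic induction and the generalized inverse reduction maps, with matching levels and normalizations — including the symmetry exchanging $\e_1$ and $\e_2$ used in Proposition \ref{Virscreg} to identify the two Feigin-Frenkel screenings. This amounts to a uniform statement that the generalized Wakimoto realization of \cite{Gen1} is cut out by exactly this configuration of screening currents, which is precisely the content of the results in progress \cite{BBN} that the theorem is stated conditional upon. Granting \cite{BBN}, the induction closes and yields the claimed isomorphism $W^\kappa_{f_\mu}(\gl_M)\xrightarrow{\cong}\V(Y_{m,0},S_\mu)$.
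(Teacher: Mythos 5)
Your overall architecture --- identify $\Pi(Y_{m,0},S_\mu)$ with a bosonization of Genra's generalized Wakimoto module, match the geometric exponential screenings against the Wakimoto screenings, reduce via the factorization of Proposition \ref{factprop}, and defer the final identification of the simultaneous kernel to \cite{BBN} --- is the same as the paper's, and you correctly flag that the theorem is conditional on that input. However, the block decomposition driving your induction is not the one the paper uses, and as stated it is wrong. The claim that a stack of $k$ copies of a single reduced component, screened among themselves, yields $W^\kappa_{f_\prin}(\gl_k)$ holds only for the corner face $\C^2$. The interior faces of $Y_{m,0}$ carrying the multiplicities $M_1,\dots,M_{m-1}$ are of type $|\mc O_{\bb P^1}|$, with two fixed points and hence two independent screening currents per consecutive pair; Example \ref{MRRVireg} shows that two such copies screened among themselves give $\Pi^+\otimes I_{\spl_2}^\kappa$, the chiral universal centralizer, an extension of $W^\kappa_{f_\prin}(\spl_2)\otimes W^{\kappa^*}_{f_\prin}(\spl_2)$ rather than $W^\kappa_{f_\prin}(\gl_2)$. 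Moreover, since $\V(Y,S)$ depends on the chosen composition series and screenings only occur between consecutive subquotients, in the column-ordered filtration the paper actually uses (Figures \ref{fig:affinealgfig} and \ref{fig:affinefactfig}) copies of the same non-corner face are never adjacent, so your face-grouped stacks and their internal Feigin--Frenkel screenings are largely not present; also, the $\beta\gamma$-systems in Genra's realization are indexed by the positive roots \emph{inside} the Levi blocks ($N^0_\mu=N/N_\mu\cong\prod_l N_l$), not off the Levi.

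The second issue is that your induction on the length $m$ toward the principal base case $m=1$ takes inverse quantum Hamiltonian reduction as its inductive step, but in the paper inverse reduction (Theorem \ref{IRthm}) is \emph{derived from} Theorem \ref{WLStheo} together with factorization, so this is circular unless \cite{BBN} establishes those embeddings independently. The paper runs the induction in the opposite direction: it splits the pyramid $\pi_\mu$ into columns, which by the established parabolic induction of \cite{Gen2} reduces everything to the affine case $\mu=(1,\dots,1)$; the affine case is then handled by factoring $S_{\mu_{m}}$ into the divisors $S_{\textbf{M}_j}$, each identified with $\mc D^\ch(\bb A^j)\otimes\pi$ by Proposition \ref{CDOprop}, so the codomain of the factorization is literally the Wakimoto module $\mc D^\ch(N)\otimes\pi_{\mf h}$, and \cite{BBN} enters only at the last step, to produce the coordinate system on $T^*N$ (from multiplication on generalized slices in the affine Grassmannian) in which the $M-h$ Levi--Wakimoto screenings and $h-1$ parabolic-induction screenings take the prescribed exponential form. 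To repair your argument you would need to replace the face-wise stacks of principal $W$-algebras by the column-wise affine blocks and route the induction through the affine case rather than through inverse reduction.
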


The proof will consist of identifying the defining free field realization of $\V(Y_{m,0},S_{\mu})$ with a certain bosonized presentation, in the sense of \cite{FMS}, of the generalization \cite{Gen1} of the Wakimoto resolution \cite{Wak} to $W$-algebras for general nilpotents, such that the embeddings of Equation \ref{voaembeqn} induce parabolic induction \cite{Gen2} and inverse reduction \cite{Sem} maps, generalizing Theorem \ref{affinesl2theo}.

\begin{proof}
To begin, we note that the statement of the theorem is naturally compatible with the existence of parabolic induction maps between affine $W$-algebras for $\gl_M$: Given a pyramid $\pi_\sigma$ which is the sum of two pyramids $\pi_{\mu}$ and $\pi_{\nu}$ for a decomposition $\sigma=\mu+\nu$ as a sum of dominant coweights $\mu,\nu\geq 0$, it is proved in \cite{Gen2} that there is an embedding of vertex algebras
\begin{equation}\label{parindeqn}
	 W^\kappa_{f_\sigma}(\gl_O) \to  W^{\kappa_1}_{f_{\mu}}(\gl_{M}) \otimes  W^{\kappa_2}_{f_{\nu}}(\gl_{N})  
\end{equation}
which preserves the gradings from the respective pyramids, where $M=|\mu|$, $N=|\nu|$ and $O=|\sigma|$.

\begin{figure}[b]
	\caption{The divisor $S_\mu$ in $Y_{m,0}$ and its associated pyramid $\pi_\mu$ and shorthand}
	\label{fig:Walgfig}
	\hspace*{-3cm}
	\begin{overpic}[width=1\textwidth]{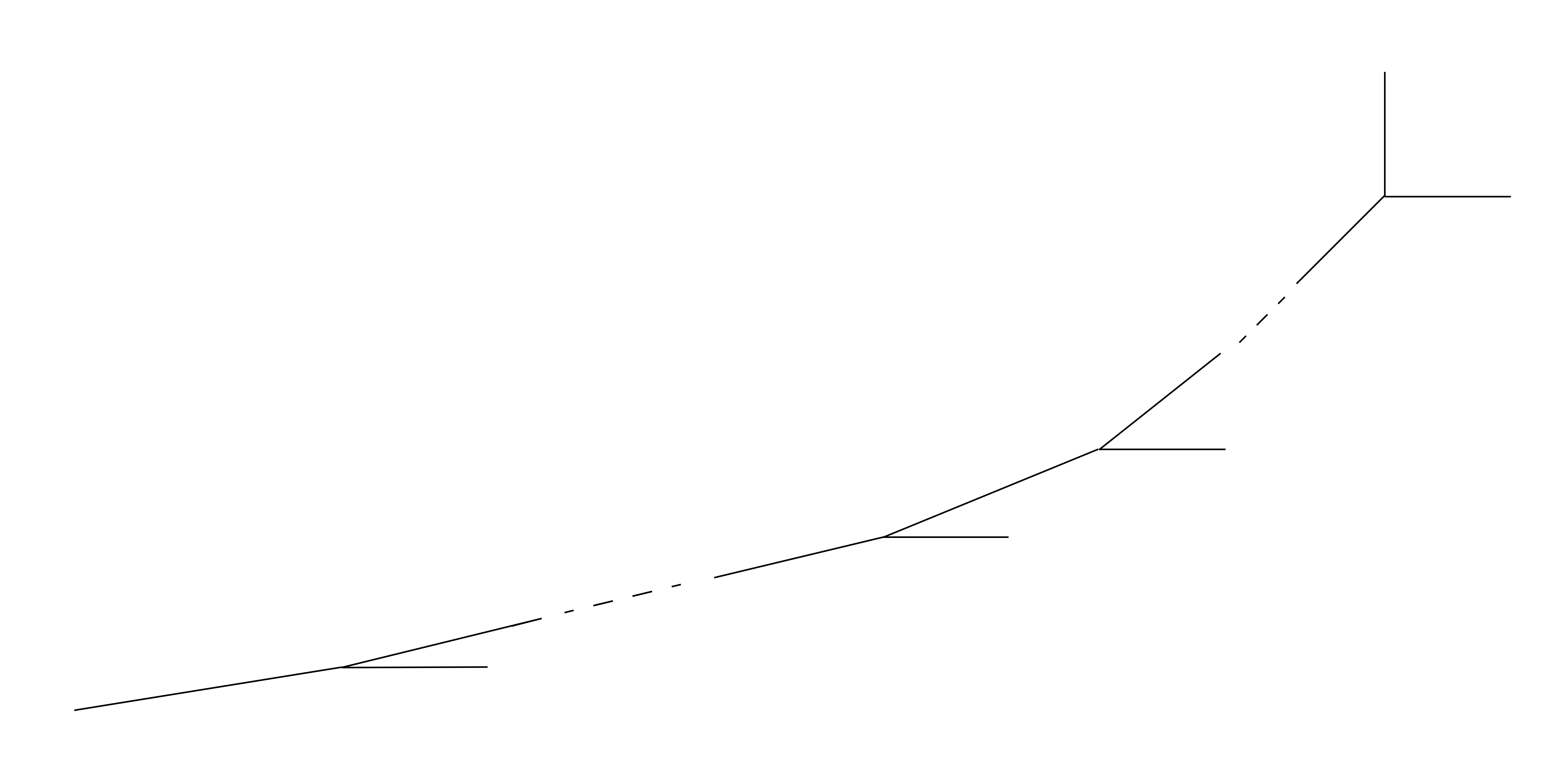}
			\put(5,43) {$ Y_{m,0}\to X_{m,0} = \{ xy-z^m \}\times \bb A^1$}
			\put(5,40)  {\rotatebox[origin=c]{90}{$\subset$}}
			\put(5,37) {$S_\mu  \ = $}
			\put(49,37) {$ M_0 $}
			\put(44,37) {$ \hdots $}
			\put(36.5,37) {$ M_{i-1} $}
			\put(31,37) {$ M_i $}
			\put(24,37) {$ \hdots $}
			\put(16.5,37) {$ M_{m-1}$}
			\put(11,32) {$\includegraphics{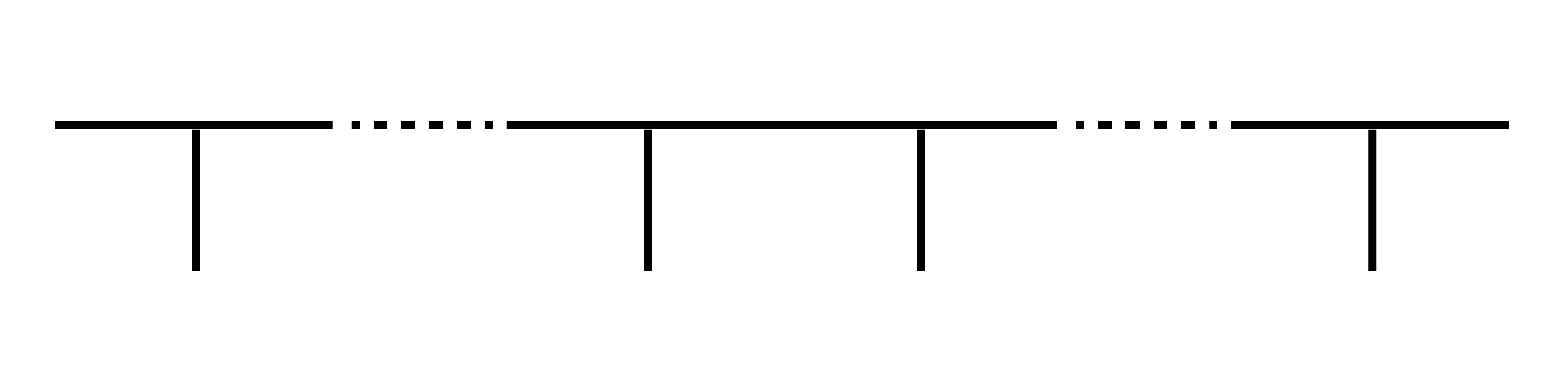}$}
			
			\put(47.5,34)  {$\mu_1$}
			\put(35,34)  {$\mu_i$}
			\put(28,34)  {$\mu_{i+1}$}
			\put(15,34)  {$\mu_m$}

		\put (90,39) {$M_0= \mu_1+\hdots+\mu_m$} 
		\put (88,34) {$M_1= \mu_2+\hdots+\mu_m$} 
		\put(75, 23) {$M_{i-1}=\mu_{i}+\hdots+\mu_m$}
		\put(67, 18) {$M_{i}=\mu_{i+1}+\hdots+\mu_m$} 
		\put(54, 13) {$M_{i+1}=\mu_{i+2}+\hdots+\mu_m$} 
		\put(31,8) {$M_{m-1}=\mu_m$}
		
		\put(69,2) {$\pi_{\mu}= $}
		\put(75, 17) {\begin{ytableau}
				\none[] &\none[ ]  & 	\none[ ] & 	\none[ ] & \none[ ] & 	\none[ ]  & \none[ ] & 	\none[ ]  & \mu_m & \none[\hdots] &  \\
				\none[] &\none[ ]  & 	\none[ ] & 	\none[ ] & \none[ ] & 	\none[ ]  &  \mu_{m-1} & \none[\hdots] & & & \\
				\none[] &\none[ ]  & 	\none[ ] & 	\none[ ] & \none[ ] & 	\none[ ]  & \none[\vdots] & \none[\vdots] & \none[\vdots] & \none[\vdots] &\none[\vdots]  \\
				\none[] &\none[ ]  & 	\none[ ] & 	\none[ ] & \mu_i & \none[\hdots] & & & & &  \\
				\none[] &\none[ ]  & 	\none[ ] & 	\none[ ] & \none[\vdots] & \none[\vdots] & \none[\vdots] & \none[\vdots] & \none[\vdots] & \none[\vdots] & \none[\vdots] \\
				\none & \none & \mu_2 & \none[\hdots]& & & & & & & \\
				\mu_1 & \none[\hdots] &  & & & & & & & & 
		\end{ytableau}}
	\end{overpic}
	\vspace*{-.5cm}
\end{figure}

\begin{figure}[t]
	
\begin{overpic}[width=1\textwidth]{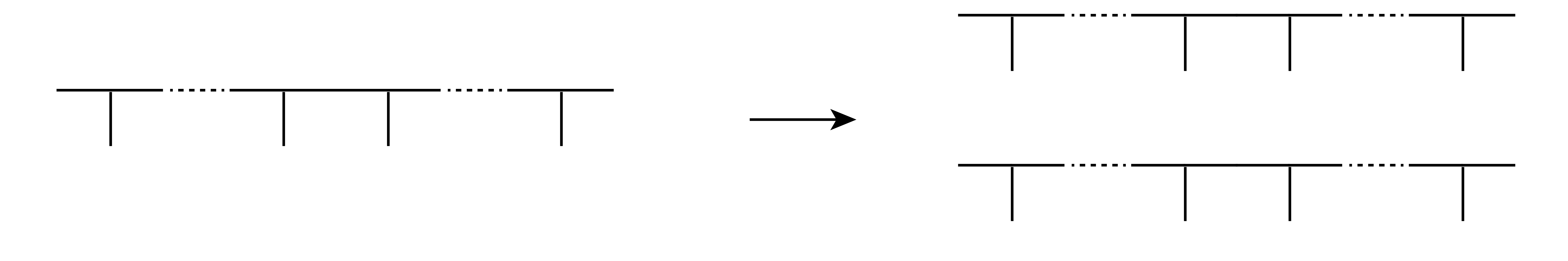}

		\put(36,8.5) {$ O_0 $}
		\put(32,8.5) {$ \hdots $}
		\put(25.5,8.5) {$ O_{i-1} $}
		\put(20,8.5) {$ O_i $}
		\put(14,8.5) {$ \hdots $}
		\put(8,8.5) {$ O_{m-1}$}
		
		\put(35,6)  {$\sigma_1$}
		\put(24,6)  {$\sigma_i$}
		\put(17,6)  {$\sigma_{i+1}$}
		\put(6,6)  {$\sigma_m$}

		\put(94,3.5) {$ M_0 $}
		\put(90,3.5) {$ \hdots $}
		\put(83.5,3.5) {$ M_{i-1} $}
		\put(78,3.5) {$ M_i $}
		\put(72,3.5) {$ \hdots $}
		\put(65,3.5) {$ M_{m-1}$}
		
		\put(93,1)  {$\mu_1$}
		\put(82,1)  {$\mu_i$}
		\put(75,1)  {$\mu_{i+1}$}
		\put(64,1)  {$\mu_m$}

				\put(78,7.5) {$\bigotimes$}

		\put(94,13.5) {$ N_0 $}
		\put(90,13.5) {$ \hdots $}
		\put(83.5,13.5) {$ N_{i-1} $}
 		\put(78,13.5) {$ N_i $}
		\put(72,13.5) {$ \hdots $}
		\put(65,13.5) {$ N_{m-1}$}
		
		\put(93,11)  {$\nu_1$}
		\put(82,11)  {$\nu_i$}
		\put(75,11)  {$\nu_{i+1}$}
		\put(65,11)  {$\nu_m$}

\end{overpic}

	\caption{The divisor $S_\mu$ in $Y_{m,0}$ and its associated pyramid $\pi_\mu$ and shorthand}
	\label{fig:shorthandfactpic}
	\hspace*{-3cm}

\end{figure}

Further, note that the corresponding divisors $S_\mu$, $S_\nu$ and $S_\sigma$ satisfy the natural relation
\[ 0\to \mc O_{S_\mu} \to \mc O_{S_\sigma} \to  \mc O_{S_\nu} \to 0 \]
so that by the factorization principle of Proposition \ref{factprop}, there is a natural embedding
\begin{equation}\label{VSmuparindeqn}
	 \V(Y_{m,0}, S_\sigma) \to \V(Y_{m,0}, S_\mu)\otimes \V(Y_{m,0}, S_\nu) 
\end{equation}
with image characterized by the kernel of a single screening operator, induced by filtrations
\[ \hspace*{-1cm}\begin{cases} \mc O_{S_\mu}  =  &  \left[ \mc O_{S_{d_1}} < ... <\mc O_{S_{d_{k_0}}} \right] \\   \mc O_{S_\nu} =  &  \left[ \mc O_{S_{d_{k_0+1}}} < ... < \mc O_{S_{d_N}} \right] \end{cases}  \quad\quad \textup{such that} \quad\quad  \mc O_{S_\sigma} = \left[ \mc O_{S_{d_1}} < ... <\mc O_{S_{d_{k_0}}} <\mc O_{S_{d_{k_0+1}}} <  ... < \mc O_{S_{d_N}} \right]  \ .  \]
Indeed, recall that such a decomposition induces an identification $\Pi(Y,S_\sigma)\cong \Pi(Y,S_\mu)\otimes_K \Pi(Y,S_\nu)$, such that the screening operator $Q_{s_k}:\Pi(Y,S_\sigma)\to \Pi(Y,S_\sigma)_{\lambda_k}$ is the only screening operator which does not vanish on either of the subalgebras $ \Pi(Y,S_\mu)\otimes_K \one$ or $ \one\otimes \Pi(Y,S_\nu)$ of $ \Pi(Y,S_\mu)\otimes_K \Pi(Y,S_\nu)$, and the remaining screening operators can be identified with those defining the tensor product $\V(Y_{m,0}, S_\mu)\otimes \V(Y_{m,0}, S_\nu) $; this is summarized in Figure \ref{fig:shorthandfactpic} above.

The proof of the existence of parabolic induction maps given in \cite{Gen2} uses essentially the same argument, comparing the generalized Wakimoto resolutions of the relevant $W$-algebras $ W^\kappa_{f_\mu}(\gl_M)$ introduced in \cite{Gen1}. Indeed, we will argue that the isomorphisms of Theorem \ref{WLStheo} identify the free field algebras $\Pi(Y_{m,0},S_\mu)$ with certain canonical bosonizations of the generalized Wakimoto resolutions, and identify the vertex algebra embeddings of Equations \ref{parindeqn} and \ref{VSmuparindeqn}. Inductive application of parabolic induction realizations gives an embedding of any $W$-algebra $ W^\kappa_{f_\mu}(\gl_M)$ into a tensor product of affine algebras determined by the columns of the corresponding pyramid $\pi_\mu$, so that the preceding claim reduces Theorem \ref{WLStheo} to the case of affine algebras. We begin by outlining the proof of the result for affine algebras, after which we explain the identification with generalized Wakimoto resolutions and compatibility with parabolic induction maps.

The $\gl_m$ coweight $\mu_{m}=(1,...,1)$ corresponds to the trivial nilpotent $f_{\mu_{m}}=0\in \gl_M=\gl_m$, for which the corresponding $W$-algebra is given by the affine Kac-Moody algebra $W_{0}^\kappa(\gl_m)=V^\kappa(\gl_m)$. The corresponding divisor $S_{\mu_{m}}$ is given by the labelling of the moment polytope as in Figure \ref{fig:affinealgfig} for $M_i=m-i$ for each $i=0,...,m-1$. We now proceed to analyze the defining free field realization of the vertex algebra $\V(Y_{m,0},S_{\mu_{m}})$ in this case, which will be identified with the Wakimoto realization \cite{Wak}, proving Theorem \ref{WLStheo} for coweights $\mu$ of the form ${\mu_{m}}$.

To begin, note that for any $\textbf{M}=(M_i)_{i=0}^{m-1}\in \bb N^{m}$ we have a corresponding divisor $S_\MM$ determined by the labelling of the moment polytope as above, but in general the corresponding integers $\mu_i=M_{i-1}-M_i$ need not be positive or ordered, and thus the corresponding coweight $\mu$ need not be dominant. Nonetheless, as we will explain, there are several such examples for which the corresponding vertex algebras admit natural interpretations in representation theory.

\begin{figure}[t]
	
	\begin{overpic}[width=1\textwidth]{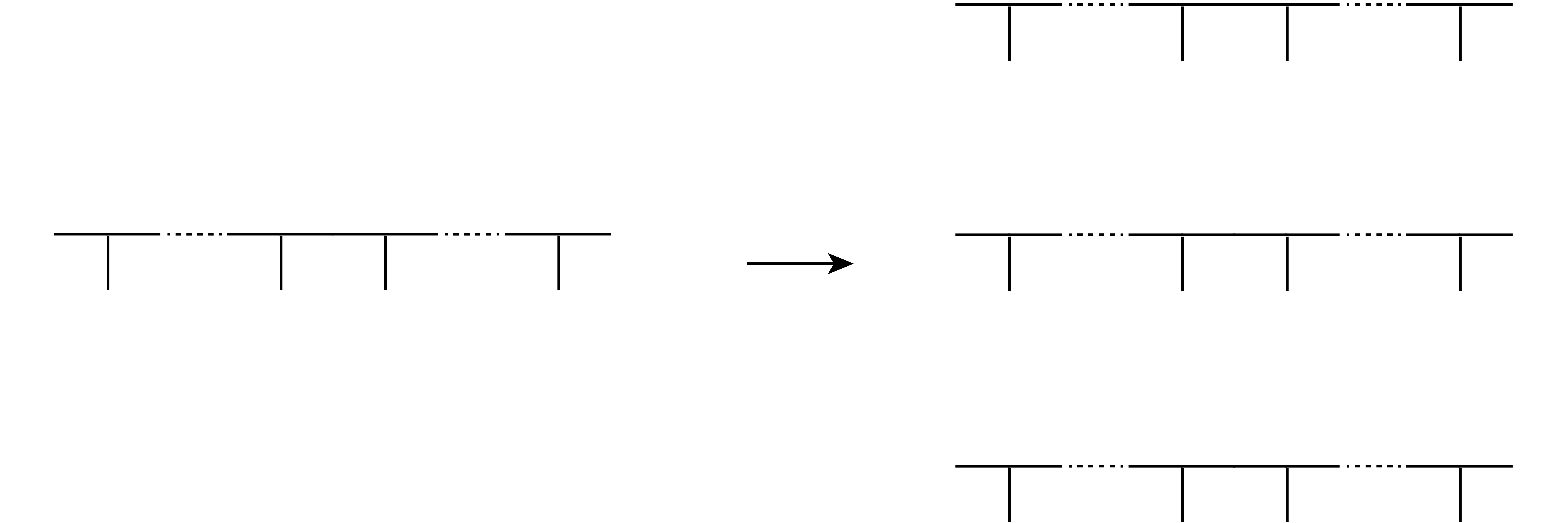}
		
		\put(94,1) {$1$}
		\put(88,1) {$ \hdots $}
		\put(83.5,1) {$1 $}
		\put(78,1) {$ 1 $}
		\put(73,1) {$1$}
		\put(68,1) {$ \hdots $}
		\put(65,1) {$1$}

		\put(92.5,-2)  {$0$}
		\put(82,-2)  {$0$}
		\put(75,-2)  {$0$}
		\put(64,-2)  {$1$}

		\put(78,5) {$\bigotimes$}
		\put(78,11) {$\bigotimes$}
		\put(79,7.5) {$\vdots$}

		\put(78,20) {$\bigotimes$}
		\put(78,26) {$\bigotimes$}
		\put(79,22.5) {$\vdots$}

		\put(94,16) {$1$}
		\put(88,16) {$ \hdots $}
		\put(83.5,16) {$1 $}
		\put(78,16) {$ 1 $}
		\put(73,16) {$0$}
		\put(68,16) {$ \hdots $}
		\put(65,16) {$0$}

		\put(92.5,13)  {$0$}
		\put(82,13)  {$0$}
		\put(75,13)  {$1$}
		\put(64,13)  {$0$}

		\put(94,31) {$1$}
		\put(88,31) {$ \hdots $}
		\put(83.5,31) {$0 $}
		\put(78,31) {$ 0 $}
		\put(73,31) {$0$}
		\put(68,31) {$ \hdots $}
		\put(65,31) {$0$}

		\put(92.5,27.5)  {$1$}
		\put(82,27.5)  {$0$}
		\put(75,27.5)  {$0$}
		\put(64,27.5)  {$0$}

		\put(36,16) {$m$}
		\put(31,16) {$ \hdots $}
		\put(25,16) {$i+1 $}
		\put(21,16) {$ i $}
		\put(13,16) {$i-1$}
		\put(9,16) {$ \hdots $}
		\put(7,16) {$1$}

		\put(35,13)  {$1$}
		\put(24,13)  {$1$}
		\put(17.5,13)  {$1$}
		\put(6,13)  {$1$}

	\end{overpic}

	\caption{Factorization for $S_{\mu_{m}}$ in $Y_{m,0}$ associated to the trivial nilpotent $f_{\mu_{m}}=0$}
	\label{fig:affinealgfig}
	\hspace*{-3cm}
	
\end{figure}

In particular, applying Proposition \ref{factprop} inductively to the above divisor $S_{\mu_{m}}$, as in Figure \ref{fig:affinealgfig} above, we see that the vertex algebra $\V(Y_{m,0},S_{\mu_{m}})$ admits an embedding
\begin{equation}\label{prewakieqn}
	\V(Y_{m,0},S_{\mu_{m}}) \to \bigotimes_{i=0}^{m-1} \V(Y_{m,0}, S_{\textbf{M}_j})
\end{equation}
characterized as the intersection of the kernels of screening operators $Q_j$ for $j=1,...,m-1$, where $S_{\textbf{M}_j}$ is the divisor corresponding to the labelling $\textbf{M}_i=(1,...,1,0,...,0)$, where the first zero entry is $M_{i+1}$. In fact, we have the following description of these vertex algebras:
\end{proof}

\begin{prop}\label{CDOprop} There is an isomorphism of vertex algebras
\[ \mc D^\ch(\bb A^j)\otimes \pi  \xrightarrow{\cong} \V(Y_{m,0},S_{\textbf{M}_j}) \ .\]
\end{prop}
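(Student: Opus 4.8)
The plan is to reduce to the rank-one computation of Proposition \ref{chantidomegprop} (which is precisely the case $j=1$) by induction on $j$, using the factorization property of Proposition \ref{factprop}, and to match the resulting structure with the bosonization of the rank-$j$ $\beta\gamma$-system in the sense of \cite{FMS}. First I would record the free field data. The divisor $S_{\textbf{M}_j}$ is reduced, a chain of irreducible toric surfaces, so by Section \ref{screensec} the algebra $\Pi(Y_{m,0}, S_{\textbf{M}_j})$ is generated by the Heisenberg fields $J^y$ at the fixed points $y \in \mf F_{S_{\textbf{M}_j}}$, the vertex operators $V_i$ for the compact curve classes $C_i$ in the chain, and carries $j$ screening currents $Q_1, \dots, Q_j$, one for each junction curve in an intersection $S_{d_k} \cap S_{d_{k+1}}$ of consecutive components. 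As in the proofs of Propositions \ref{Virscreg} and \ref{chantidomegprop}, the diagonal field $J^+ = \sum_y J^y$ --- the Calabi-Yau direction --- has nonsingular OPE with a complementary set of generators and with every $Q_k$, so it generates a Heisenberg subalgebra $\pi$ that splits off as an independent tensor factor lying in $\bigcap_k \ker(Q_k)$; this produces the factor $\pi$ in the statement.

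It then remains to identify $\bigcap_k \ker(Q_k)$ inside the complementary half-lattice vertex algebra with $\mc D^\ch(\bb A^j)$, which I would do by induction on $j$. Writing $\mc O_{S_{\textbf{M}_j}}$ as an extension of the length-$(j-1)$ subchain by one additional component, Proposition \ref{factprop} realizes $\V(Y_{m,0}, S_{\textbf{M}_j})$ as the kernel of the single residual screening operator $Q_j$ inside $\V(Y_{m,0}, S_{\textbf{M}_{j-1}}) \otimes_K \V(Y_{m,0}, S_d)$. By the inductive hypothesis the first factor is $\mc D^\ch(\bb A^{j-1}) \otimes \pi$, and the second is the lattice factor $\Pi(Y_{m,0}, S_d)$. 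Passing to half-lattice coordinates $(\alpha_j, \beta_j)$ built from the transverse Heisenberg fields at the new junction, I would exhibit the FMS generators
\[ a_j(z) = \nol \exp(\phi_{\alpha_j}(z) + \phi_{\beta_j}(z)) \nor, \qquad a_j^*(z) = \nol \alpha_j(z)\, \exp(-\phi_{\alpha_j}(z) - \phi_{\beta_j}(z)) \nor, \]
verify $a_j(z) a_j(w) \sim a_j^*(z) a_j^*(w) \sim 0$ and $a_j(z) a_j^*(w) \sim (z-w)^{-1}$, and check that $Q_j$ screens precisely this pair. Since $\mc D^\ch(\bb A^{j-1}) \otimes \mc D^\ch(\bb A^1) = \mc D^\ch(\bb A^j)$, this closes the induction, with the base case $j=1$ supplied by Proposition \ref{chantidomegprop} and $j=0$ trivial.

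The main obstacle will be the decoupling step: in the geometric basis the screening currents $Q_k$ are linked through the Heisenberg fields shared by consecutive surfaces along the $A_{m-1}$ chain, so it is not a priori clear that the newly produced $a_j, a_j^*$ have trivial OPE with the generators $a_i, a_i^*$ of the $\mc D^\ch(\bb A^{j-1})$ inherited from the subchain, i.e.\ that no anomalous cross terms between distinct $\beta\gamma$ pairs survive. The key to resolving this is Proposition \ref{pairingprop}, which identifies the vertex operator exponents with minus the intersection pairing of the compact curves in the chain; I expect that a suitable triangular change of basis in the $2j$ transverse bosons diagonalizes this pairing so that $Q_j$, after the change of basis, depends only on $(\alpha_j, \beta_j)$ up to a shift by fields commuting with it, forcing the off-diagonal OPEs to vanish. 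Carrying out this change of basis uniformly in $j$ and $m$, and confirming that it is compatible with the parabolic induction and inverse reduction maps of Equation \ref{voaembeqn} used in the surrounding proof of Theorem \ref{WLStheo}, is the crux of the argument.
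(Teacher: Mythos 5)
Your inductive route is genuinely different in organization from what the paper does: the paper's entire proof of Proposition \ref{CDOprop} is the single sentence that the argument of Proposition \ref{chantidomegprop} (the $j=1$ case) applies \emph{mutatis mutandis}, i.e.\ one is meant to redo the direct computation for the length-$(j+1)$ chain --- list all the Heisenberg fields and lattice generators, perform one global change of basis splitting off the diagonal $\pi^+$ and producing $j$ mutually orthogonal half-lattice pairs $(\alpha_k,\beta_k)$, and recognize each $Q_k$ as the FMS screening current for the $k$-th $\beta\gamma$ pair. Your version instead peels off one component at a time via Proposition \ref{factprop} and quotes the $j=1$ case as the base. Both are legitimate, and your framing has the advantage of making the single new ingredient at each stage explicit (one residual screening operator, one new $\beta\gamma$ pair); what it costs you is that the inductive hypothesis is stated in the ``wrong'' basis: the $\pi$ factor of $\V(Y_{m,0},S_{\textbf{M}_{j-1}})\cong\mc D^\ch(\bb A^{j-1})\otimes\pi$ is the diagonal Heisenberg over the fixed points of the first $j$ components only, and it does \emph{not} commute with the new screening current $Q_j$ (only the full diagonal over all $j+1$ components does). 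So at each step you must re-split $\pi_{j-1}\otimes\pi(S_{d_{j+1}})$ into the new diagonal plus the new $(\alpha_j,\beta_j)$, and the latter necessarily recombines old and new bosons. You correctly identify the resulting orthogonality question --- whether the new pair has regular OPE with the $a_i,a_i^*$ inherited from the subchain --- as the crux, and Proposition \ref{pairingprop} is indeed the right tool, but you leave the triangular change of basis as an expectation rather than exhibiting it. To be fair, the paper leaves exactly the same computation implicit behind ``mutatis mutandis,'' so your proposal is no less complete than the published argument; it would become a full proof once the uniform-in-$j$ change of basis generalizing $(J^+,\alpha,\beta)$ from Proposition \ref{chantidomegprop} is written down and its orthogonality relations checked.
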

\begin{proof}
	The argument is the same as that in the proof of Proposition \ref{chantidomegprop}, which is precisely the claimed result in the case $j=1$, \emph{mutatis mutandis}.
\end{proof}

\begin{proof}(of Theorem \ref{WLStheo}, continued) We now complete the proof of Theorem \ref{WLStheo} in the case that $f_\mu=0$, before proceeding with the generalization to arbitrary $\mu \geq 0$. By Proposition \ref{CDOprop}, the codomain of the embedding of vertex algebras in Equation \ref{prewakieqn} is isomorphic to the Wakimoto module,
\begin{equation}\label{affineWakeqn}
	 \bb W_{0}^\kappa(\gl_m)_0 := \mc D^\ch(N)\otimes \pi_{\mf h} \xrightarrow{\cong} \bigotimes_{i=0}^{m-1} \V(Y_{m,0}, S_{\textbf{M}_j})  \ , 
\end{equation}
 the tensor product of the chiral differential operators $\mc D^\ch(N)$ on the lower triangular unipotent matrices $N\subset \Gl_m$, with the Heisenberg algebra $\pi_\h^k$ on the diagonal subalgebra $\mf h\subset \gl_m$, as follows from the identifications $N\cong \bb A^{\frac{m(m-1)}{2}}$ and $\pi_\h^k \cong (\pi^k)^{\otimes m}$. Similarly, we have
 \[ \bigoplus_{l(w)=1}  \bb W_{0}^\kappa(\gl_m)_w  \xrightarrow{\cong}  \bigoplus_{j=1}^{m-1} \left( \bigotimes_{i=0}^{m-1} \V(Y_{m,0}, S_{\textbf{M}_i}) \right)_{\lambda_j}  \]
 so that it remains to show that there exists a coordinate system on $T^*N$ for which the direct sum of the Wakimoto screening operators
 $Q_w^\text{Wak}: \bb W_{0}^\kappa(\gl_m)_0 \to  \bb W_{0}^\kappa(\gl_m)_w$
is identified with that of the screening operators $Q_j$ for $j=1,..,m-1$ introduced above, which are all of the canonical linear form given in the definition of $\V(Y,S_{\mu})$. In the forthcoming work \cite{BBN} of the author together with Christopher Beem and Sujay Nair, we construct such coordinate systems in terms of multiplication maps on generalized slices between orbits in the affine Grassmannian of $\Gl_m$. This implies the desired result in the case $f_\mu=0$, and more generally as we now explain:

For a general nilpotent $f_\mu$ in $\gl_M$ corresponding to a dominant $\spl_m$ coweight $\mu$ with $|\mu|=M$ and a choice of pyramid $\pi_\mu$ for $f_\mu$ of width $h$ and necessarily of height $m$, there are two natural labellings of the pyramid, given by bibliographically with respect to columns and rows. These correspond to the Jordan normal form of $f_\mu$ (which by definition has $m$ Jordan blocks), and what we call the \emph{generalized hook-type form} of $f_\mu$: that for which $f_\mu$ is contained in the Lie algebra $\mf n_{\mu}$ of the lower triangular unipotent subgroup $N_{\mu}$, defined as the complement to the Levi factors $L_l=\Gl_{m_l}$ determined by the heights $m_l$ of the columns of $\pi_\mu$ for $l=1,...,h$.

In the example that $\mu=(3,2,1,1)$, the two labellings and corresponding permutations of coordinates are given by
\[ \hspace*{-1cm}   \pi_\mu=  \scriptsize{\begin{ytableau}
 \none & \none & 7 \\  \none[] & \none[] & 6 \\ \none & 4 & 5 \\ 1 & 2 & 3
\end{ytableau}}
\quad,\quad
\pi_\mu= \scriptsize{\begin{ytableau}
 \none & \none &7 \\  \none[] & \none[] & 6 \\ \none & 3 & 5 \\ 1 & 2 & 4
\end{ytableau}} \quad  \implies  \quad  f_\mu = \mleft[ \scriptsize{ \begin{array}{c  c  c| c c| c |c}  0& 0 &  0 & 0  & 0  &0 & 0  \\ 
1 & 0 &  0 & 0  & 0  &0 & 0  \\ 
0 & 1 & 0 &  0 & 0  & 0  &0   \\ \hline
0& 0 &  0 & 0  & 0  &0 & 0  \\ 
0  & 0  & 0 & 1 & 0 &  0 & 0   \\  \hline
  0& 0 &  0 & 0  & 0  &0 & 0  \\ \hline
   0& 0 &  0 & 0  & 0  &0 & 0  \end{array}} \mright]  \quad ,\quad
  f_\mu = \mleft[  \scriptsize{\begin{array}{c  |  c  c|  cc c  c}   
  	0& 0 &  0 & 0  & 0  &0 & 0  \\ \hline 
  	\boxed{ {\color{blue}1}} &  0 & 0  & 0  &0 & 0  \\
  	{\color{blue}0} & {\color{red} 0 } & 0 & 0  & 0  &0 & 0 \\ \hline 
  	{\color{blue}0} &  {\color{blue}1} & \boxed{ {\color{blue}0} }  &0  & 0  &0 & 0 \\
  	{\color{blue}0}& {\color{blue}0} &  {\color{blue}1} &{\color{red}0}  & 0  &0 & 0 \\
  	{\color{blue}0}& {\color{blue}0} &  {\color{blue}0} &{\color{red}0} &{\color{red}0} & 0  & 0 \\
  	{\color{blue}0}& {\color{blue}0} &  {\color{blue}0} &{\color{red}0} &{\color{red}0} & {\color{red}0}   & 0 \\
  \end{array}} \mright] \ , 
\]
where the blue entries indicate the Lie algebra $\mf n_\mu$, and the red denote the Lie algebras $\mf n_l$ of the lower triangular unipotent subgroups $N_l$ of the aforementioned Levi factors $L_l$.

The latter presentation of the nilpotent $f_\mu$ and the good grading induced by $\pi_\mu$ implies that
\begin{equation}\label{Nmuzeroeqn}
	 N_\mu^0 := N/N_\mu  \xrightarrow{\cong} \prod_{l=1}^h N_l \ , 
\end{equation}
where $N$ denotes the lower triangular unipotent subgroup, with respect to the basis inducing the generalized hook-type form of $f_\mu$. Using this identification and applying the factorization maps of Proposition \ref{factprop} corresponding to the decomposition of $\pi_\mu$ into columns, as pictured in Figure \ref{fig:affinefactfig} below in the example $\mu=(3,2,1,1)$, we obtain a vertex algebra embedding
\begin{equation}\label{affineparindeqn}
	 \V(Y,S_\mu) \to  \bigotimes_{l=1}^h\V(Y_{m,0},S_{\mu_{m_l}}) \cong \bigotimes_{l=1}^h V^\kappa(\gl_{m_l}) \ ,
\end{equation}

\begin{figure}[b]	
	\caption{Factorization for $\mu=(3,2,1,1)=(1,1,1,1)+(1,1,0,0)+(1,0,0,0)$}
	\label{fig:affinefactfig}
	
	\begin{overpic}[width=1\textwidth]{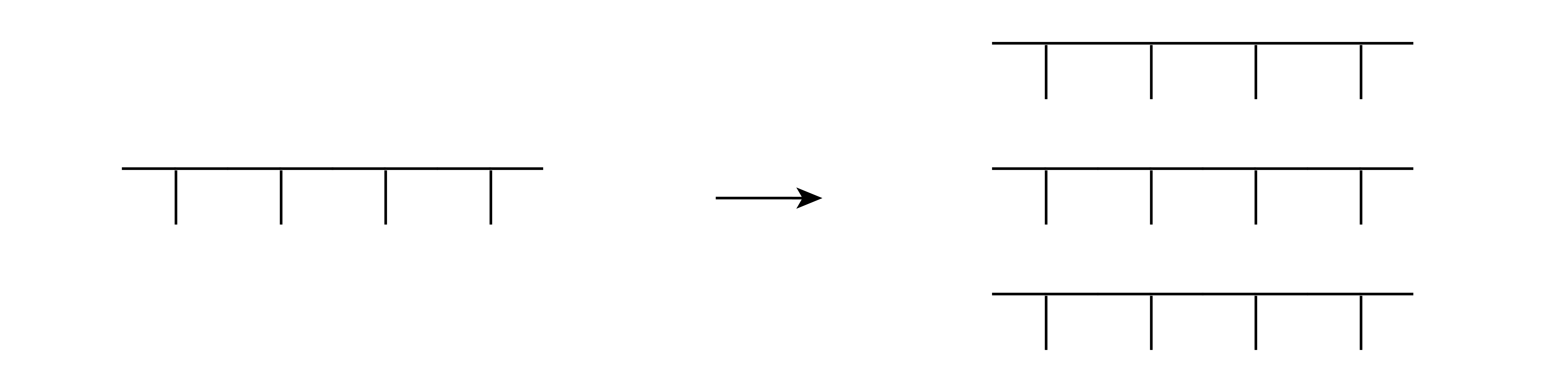}

		\put(-6,15) {\begin{ytableau}
				\none & \none & \  \\  \none[] & \none[] & \ \\ \none & \ &\ \\ \ & \ & \
		\end{ytableau}}
	
\put(95,15) {\begin{ytableau}
		\none  \\  \none[]  \\ \none  \\  \
\end{ytableau}}

\put(100,15) {\begin{ytableau}
		\none  \\  \none[]  \\  \  \\  \
\end{ytableau}}

\put(105,15) {\begin{ytableau}
		\  \\  \  \\ \ \\  \
\end{ytableau}}

\put(75.8, 15.3) {$\bigotimes$}
\put(75.8, 7.3) {$\bigotimes$}	
	
		\put(69.5,3) {$0$}
		\put(76.5,3) {$0 $}
		\put(83,3) {$ 0 $}
		\put(88.5,3) {$1$}
		
		\put(66.5,0.5)  {$0$}
		\put(73,0.5)  {$0$}
		\put(79.5,0.5)  {$0$}
		\put(86.5,0.5)  {$1$}
		
		\put(69.5,11) {$0$}
		\put(76.5,11) {$0 $}
		\put(83,11) {$ 1 $}
		\put(88.5,11) {$2$}
		
		\put(66.5,8.5)  {$0$}
		\put(73,8.5)  {$0$}
		\put(79.5,8.5)  {$1$}
		\put(86.5,8.5)  {$1$}
		
		\put(69.5,19) {$1$}
		\put(76.5,19) {$2 $}
		\put(83,19) {$ 3 $}
		\put(88.5,19) {$4$}
		
		\put(66.5,16.5)  {$1$}
		\put(73,16.5)  {$1$}
		\put(79.5,16.5)  {$1$}
		\put(86.5,16.5)  {$1$}

		\put(14,11) {$1$}
		\put(20.5,11) {$2 $}
		\put(27.5,11) {$ 4 $}
		\put(33,11) {$7$}
		
		\put(10.5,8.5)  {$1$}
		\put(17.5,8.5)  {$1$}
		\put(24,8.5)  {$2$}
		\put(31,8.5)  {$3$}
		
\end{overpic}

\vspace*{-1.5cm}
\end{figure}

\noindent such that composing with the tensor product over $l$ of the factorization maps of Equation \ref{prewakieqn} for $m=m_l$ identifies the resulting free field realization with the generalized Wakimoto realization of \cite{Gen1}, as we now explain; this implies the compatibility of the factorization structure maps with the parabolic inductions maps as claimed in the beginning of the proof.

First, note that composing the factorization map of Equation \ref{affineparindeqn} with the tensor product of those of Equation \ref{prewakieqn}, as described, gives a vertex algebra embedding
\[ \V(Y,S_\mu) \to \bigotimes_{l=1}^h \bigotimes_{i=0}^{m_l-1} \V(Y_{m,0}, S_{\textbf{M}_j}) \cong  \bigotimes_{l=1}^h  \mc D^\ch(N_l) \otimes \pi_{\mf h_l}  \ ,\]
where $\mf h_l$ denotes the Cartan subalgebra of the Levi factor $L_l$, so that the identification of Equation \ref{Nmuzeroeqn} gives an identification of the codomain with the generalized Wakimoto module
\[   \bb W_{f_\mu}^\kappa(\gl_M)_0 := \mc D^\ch(N_\mu^0) \otimes \pi_{\mf h} \xrightarrow{\cong} \bigotimes_{l=1}^h \bigotimes_{i=0}^{m_l-1} \V(Y_{m,0}, S_{\textbf{M}_j})  \ ,\]
for nilpotent $f_\mu\in \gl_M$ at level $\kappa$ of \cite{Gen1}, generalizing the identification of Equation \ref{affineWakeqn}, where we let $\mf h\cong \bigoplus_{l=1}^h \mf h_{m_l}$ be the Cartan subalgebra of $\gl_M$.

Moreover, the screening operators characterizing the images of each of the factorization embeddings of Equation \ref{prewakieqn} are identified with those of the Wakimoto realizations for each of the Levi factors factors $L_l$, by the previous results in the affine case. Thus, we have identifications 
 \[ \bigoplus_{l(w)=1}  \bb W_{f_\mu}^\kappa(\gl_{m_l})_w  \xrightarrow{\cong}  \bigoplus_{j=1}^{m_l-1} \left( \bigotimes_{i=0}^{m_l-1} \V(Y_{m_l,0}, S_{\textbf{M}_i}) \right)_{\lambda_j}  \]
for each $l=1,...,h$, under which the geometric screening operators $Q_{s_j}$ for $j=1,...,m_l$ and $l=1,...,h$ correspond to the $m_l$ Wakimoto screening operators for each of the Levi factors $L_l$. In turn, acting on the larger algebra these can be identified with the generalized Wakimoto screening operators $Q^\textup{Wak}_w:\bb W_{f_\mu}^\kappa(\gl_M)_0 \to W_{f_\mu}^\kappa(\gl_M)_w $ for the length one Weyl group elements $w$ corresponding to only those simple roots $\alpha_i$ for $\gl_M$ which are contained in one of the corresponding lower triangular nilpotent Lie algebras $\mf n_l$. This accounts for $\sum_{l=1}^h m_l-1 = M-h$ of the $M-1$ screening operators in the generalized Wakimoto presentation for $W_{f_\mu}^\kappa(\gl_M)$.

Similarly, the image of the vertex algebra embedding in Equation \ref{affineparindeqn} is given by the kernel of $h-1$ screening operators, which can be identified with precisely the remaining $h-1$ screening operators in the generalized Wakimoto realization, those corresponding to the simple roots with root space contained in the complementary subspace $\mf n_\mu$. These are precisely the screening operators such that if one of their kernels is omitted in the intersection defining $W_{f_\mu}^\kappa(\gl_M)$, the induced embedding into the remaining intersection defines the parabolic induction map given by splitting the pyramid $\pi_\mu$ above the $l^{th}$ row, for $l=1,...,h-1$. These are the simple roots corresponding to the boxed entries of the matrix in the above example of the expression for the generalized hook-type form in the case $\mu=(3,2,1,1)$.

\end{proof}

We now explain the application of the results of this section to the construction of inverse quantum Hamiltonian reduction maps, generalizing those in \cite{Sem}, \cite{ACG}, \cite{Feh1}, and \cite{Feh2} to arbitrary W-algebras in type A, following work in progress proving this result joint with Christopher Beem and Sujay Nair \cite{BBN}. The statement of the main Theorem (in progress) is the following:

\begin{theo}\label{IRthm}\cite{BBN} Let $f_1\leq f_2$ be nilpotents in $\gl_M$. There is an embedding of vertex algebras
	\[ W^\kappa_{f_1}(\gl_M) \to  W^\kappa_{f_2}(\gl_M) \otimes \mc D^\ch(\bb G_m^{\times a} \times \bb A^b) \ .  \]
\end{theo}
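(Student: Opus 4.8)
The plan is to deduce Theorem \ref{IRthm} as a corollary of the geometric factorization structure developed in this section, reducing inverse quantum Hamiltonian reduction to the factorization property of Proposition \ref{factprop}. First I would realize both W-algebras $W^\kappa_{f_1}(\gl_M)$ and $W^\kappa_{f_2}(\gl_M)$ as vertex algebras $\V(Y_{m_1,0},S_{\mu_1})$ and $\V(Y_{m_2,0},S_{\mu_2})$ associated to divisors in resolutions $Y_{m,0}\to X_{m,0}$, using Theorem \ref{WLStheo}. Here the nilpotents $f_1\leq f_2$ correspond to dominant coweights $\mu_1,\mu_2$ whose associated pyramids $\pi_{\mu_1},\pi_{\mu_2}$ are related by the dominance order: since $f_1\leq f_2$ in the closure order on nilpotent orbits, the pyramid $\pi_{\mu_2}$ is obtained from $\pi_{\mu_1}$ by moving boxes upward, which translates geometrically into a modification of the divisor labelling data on a common threefold $Y=Y_{m,0}$.

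The key step is to express $\mc O_{S_{\mu_1}}$ as an iterated extension whose partial semisimplifications interpolate between $S_{\mu_1}$ and $S_{\mu_2}$, so that Proposition \ref{factprop} produces an embedding of the form
\[ \V(Y,S_{\mu_1}) \to \V(Y,S_{\mu_2}) \otimes_K \V(Y,S_{\textup{comp}})  \ , \]
where $S_{\textup{comp}}$ is the complementary divisor encoding the difference between the two pyramids. The remaining work is to identify $\V(Y,S_{\textup{comp}})$ with the chiral differential operator algebra $\mc D^\ch(\bb G_m^{\times a} \times \bb A^b)$ appearing in the statement. This identification should proceed exactly as in Proposition \ref{CDOprop} and Proposition \ref{chantidomegprop}: each reduced, irreducible component of $S_{\textup{comp}}$ contributes a half-lattice vertex algebra $\Pi_0$ generated by the fields of Equation \ref{betagammageneqn}, and upon taking the kernel of the associated screening operators one recovers copies of $\mc D^\ch(\bb A^1)$, while the curve classes along which boxes move contribute the $\bb G_m$ factors via the lattice vertex operators $V_l(z)$. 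The integers $a$ and $b$ are read off from the combinatorics of the pyramid modification: $a$ counts the relevant non-compact curve classes giving rise to invertible coordinates, and $b$ the affine ones.

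I would then verify that the embedding obtained from Proposition \ref{factprop} coincides with the inverse quantum Hamiltonian reduction map, by checking that the residual screening operator characterizing the image is precisely the one appearing in the generalized Wakimoto presentation of \cite{Gen1}, following the identification of screening operators carried out in the proof of Theorem \ref{WLStheo}. The base case $M=2$, $f_1=0$, $f_2=f_{\prin}$ is exactly Theorem \ref{affinesl2theo} together with Semikhatov's result \cite{Sem}, which gives the template; the general case is assembled by the inductive application of factorization along columns of the pyramid as in Figure \ref{fig:affinefactfig}.

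The hard part will be the precise identification of the complementary factor $\V(Y,S_{\textup{comp}})$ with $\mc D^\ch(\bb G_m^{\times a} \times \bb A^b)$ uniformly across all pairs $f_1\leq f_2$, rather than in the low-rank examples treated explicitly above. This requires the coordinate systems on the generalized slices in the affine Grassmannian of $\Gl_m$ promised in the forthcoming work \cite{BBN}, which are needed to match the canonical linear form of the geometric screening operators with the Wakimoto screening operators; controlling the normalization of the bosonization of \cite{FMS} in this generality, and checking compatibility of the $\bb G_m$-versus-$\bb A^1$ factors with the good grading induced by $\pi_{\mu_2}$, is where the genuine difficulty lies, and is precisely the content deferred to \emph{loc. cit.}
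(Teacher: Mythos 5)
Your proposal follows essentially the same route as the paper: realize both $W$-algebras as $\V(Y_{m_1,0},S_{\mu_i})$ via Theorem \ref{WLStheo} on a common threefold, use the short exact sequence $\mc O_{S_\sigma}\to\mc O_{S_{\mu_1}}\to\mc O_{S_{\mu_2}}$ together with Proposition \ref{factprop} to obtain $\V(Y,S_{\mu_1})\to\V(Y,S_{\mu_2})\otimes\V(Y,S_\sigma)$, and then embed the complementary factor $\V(Y,S_\sigma)$ into $\mc D^\ch(\bb G_m^{\times a}\times\bb A^b)$ by a further filtration as in Propositions \ref{chantidomegprop} and \ref{CDOprop}. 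You also correctly locate the genuinely hard content (matching the geometric screening operators with the Wakimoto ones in general) in the deferred work \cite{BBN}, which is exactly where the paper places it.
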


Let $\mu_1$ and $\mu_2$ be dominant coweights corresponding to the partitions determined by the Jordan normal form of $f_1$ and $f_2$, respectively, and let $m_2\leq m_1\in \bb N$ be the lengths of the corresponding partitions. Then by Theorem \ref{WLStheo} above (which we recall relies on the results of \emph{loc. cit.}) we have divisors $S_{\mu^1}$ and $S_{\mu^2}$ in $Y_{m_1,0}$ such that
\begin{equation}\label{IR1eqn}
	W^\kappa_{f_1}(\gl_M) \xrightarrow{\cong }\V(Y_{m_1,0}, S_{\mu_1})  \quad\quad\text{and}\quad\quad  W^\kappa_{f_2}(\gl_M) \xrightarrow{\cong }\V(Y_{m_1,0}, S_{\mu_2})  \ .
\end{equation}

The condition that $f_1\leq f_2$ implies that there exists a divisor $S_\sigma$ on $Y_{m_1,0}$ such that
\begin{equation}\label{IRfacteqn}
	 \mc O_{S_\sigma} \to \mc O_{S_{\mu_1}} \to \mc O_{S_{\mu_2}}  
\end{equation}
defines a short exact sequence as in the hypotheses of Proposition \ref{factprop}, so that we have
\begin{equation}\label{IR2eqn}
	\V(Y_{m_1,0}, S_{\mu_1})  \to \V(Y_{m_1,0}, S_{\mu_2})  \otimes \V(Y_{m_1,0},S_\sigma)  
\end{equation}
an embedding of vertex algebras. Similarly, there exists a further filtration on $\mc O_{S_\sigma}$ such that \emph{loc. cit.} induces an embedding of vertex algebras
\begin{equation}\label{IR3eqn}
	 \V(Y_{m_1,0},S_\sigma) \to  \mc D^\ch(\bb G_m^{\times a} \times \bb A^b) \ ,
\end{equation}
where $a$ is the minimum number of boxes that must be moved to change the partition corresponding to $\mu_1$ into that corresponding to $\mu_2$, and $b$ is determined by $2(a+b)=\dim \mc S_{f_1}-\dim  \mc S_{f_2}$ where $\mc S_{f_i}$ denotes a slice to the nilpotent orbit $\bb O_{f_i}$ in $\gl_M$ for $i=1,2$.

In summary, composing the embedding of Equation \ref{IR2eqn} with that of Equation \ref{IR3eqn} (tensored with the identity on $\V(Y_{m_1,0}, S_{\mu_2}) $), implies Theorem \ref{IRthm}, by the identifications of Equation \ref{IR1eqn} induced by Theorem \ref{WLStheo}. Thus, we see that the presentation of affine W-algebras as vertex algebras $\V(Y_{m,0},S_\mu)$ in \emph{loc. cit.} is naturally adapted to the proof of inverse Hamiltonian reduction.

 Finally, we provide a simple example: Let $\mu_1=(3,2,1,1)$, as in the example discussed in the proof of \emph{loc. cit.} outlined above, and let $\mu_2=(4,1,1,1)$ so that we can take $\sigma=(-1,1,0,0)$ and we have the desired short exact sequence of Equation \ref{IRfacteqn}. Then we have
\[ S_\sigma = |\mc O_{\bb P^1}| \quad \quad \text{and thus} \quad\quad \V(Y_{m_1,0},S_\sigma) = \mc D^\ch(\bb G_m)  \ ,\]
so that we obtain the desired inverse quantum Hamiltonian reduction map
\[ W^\kappa_{f_1}(\gl_7) \to  W^\kappa_{f_2}(\gl_7) \otimes \mc D^\ch(\bb G_m)  \ ,\]
as summarized in Figure \ref{fig:IRfig} below. 

\begin{figure}[b]	
	\caption{Factorization for inverse quantum Hamiltonian reduction}
	\label{fig:IRfig}
	\vspace*{1.5cm}
	\begin{overpic}[width=1\textwidth]{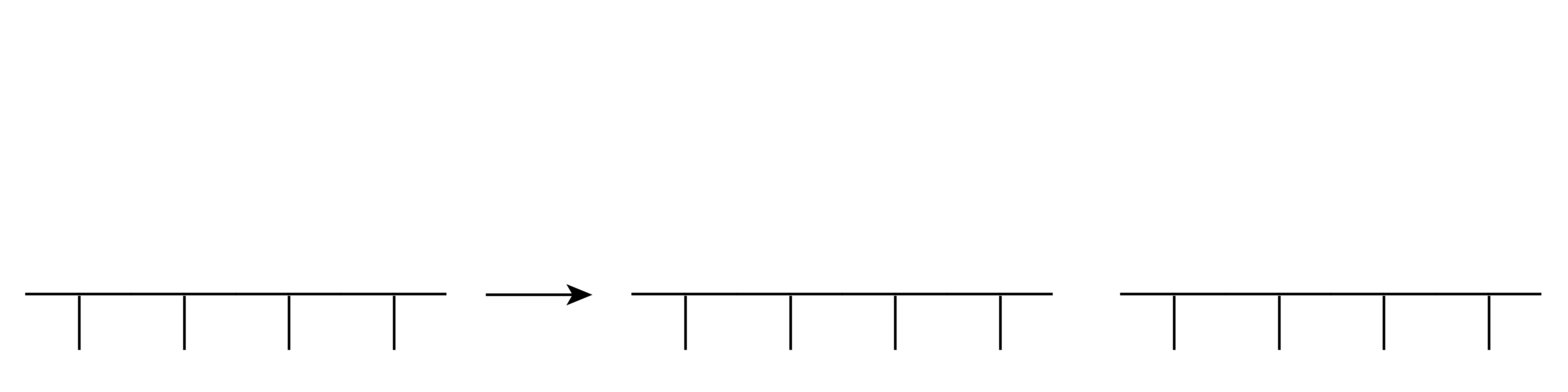}

		\put(13,27) {$W^\kappa_{f_1}(\gl_7)$}
		
		\put(50,27) {$ W^\kappa_{f_2}(\gl_7) $}
		
		\put(80 ,27) {$\mc D^\ch(\bb G_m)$}

		\put(10,20) {\begin{ytableau}
				\none & \none & \ \\  \none[] & \none[] & \ \\ \none & \ & \ \\ \ & \ & \
		\end{ytableau}}

	\put(46,20) {\begin{ytableau}
		\none &	\none & \none & \ \\  \none & \none[] & \none[] & \ \\ \none & \none & \none & \  \\  \ & \ & \ & \
	\end{ytableau}}
		
		\put(88, 20) {\begin{ytableau}
				\none \\ \none \\  \ \\ \none & *(lightgray) \ 
		\end{ytableau}}
		
		\put(8,3) {$1$}
		\put(14.5,3) {$2 $}
		\put(21,3) {$ 4 $}
		\put(26.5,3) {$7$}
		
		\put(4.5,0)  {$1$}
		\put(11.5,0)  {$1$}
		\put(18,0)  {$2$}
		\put(24.5,0)  {$3$}

		\put(46.5,3) {$1$}
		\put(53.5,3) {$2 $}
		\put(59.5,3) {$ 3 $}
		\put(65,3) {$7$}

		\put(43.5,0)  {$1$}
		\put(49.8,0)  {$1$}
		\put(56.8,0)  {$1$}
		\put(63,0)  {$4$}
		
		\put(68.2,2) {$\bigotimes$}
		
		\put(91,3) {$1$}
		\put(88,0)  {$1$}
		\put(93,0)  {$-1$}
		
	\end{overpic}
\end{figure}

\subsection{Genus zero class $\mc S$ chiral algebras}\label{classSsec}
In this section, we explain the conjectural application of our results to the genus zero, class $\mc S$ chiral algebras, which were proposed in \cite{BeemS} and defined mathematically in the case of genus zero with regular singularities in \cite{Ar}. The main result of \emph{loc. cit.} is that there exists a family of vertex algebras $\bb V^{\mc S}_{G;f_1,...,f_k}$, for any simply connected, semisimple algebraic group $G$ and collection of nilpotent elements $f_i\in \g$ for $i=1,...,k$, which satisfy several natural compatibilities as the number of points $k$ and nilpotents $f_i$ vary.

In our geometric setting, we must take $G=\Gl_M$, and to begin we restrict to the case that $k\leq 2$. Then we must fix nilpotents $f$ and $\tilde f$ in $\mc N_{M}\subset \gl_M$, which correspond to a pair of partitions of $M$
\[ \mu=\{ \mu_1 \geq \hdots \geq \mu_m \geq 0 \} \quad \quad\text{and}\quad\quad \tilde \mu=\{\tilde \mu_1 \geq \hdots \geq \tilde\mu_{\tilde m } \geq 0  \}  \ ,\]
of some lengths $m$ and $\tilde m$. As in the preceding Section \ref{Walgsec}, we also define the lists of integers
\[ \textbf{M} =(M_i)_{i=0}^{m-1} \quad \tilde{\textbf{M}} =(\tilde M_i)_{i=0}^{\tilde m-1}   \quad \quad \text{where} \quad \quad M_i=\sum_{k=i+1}^{m} \mu_k \quad \tilde M_i=\sum_{k=i+1}^{\tilde m} \tilde \mu_k \]
$i=0,...,m-1$, $j=0,...,\tilde m-1$, noting that by hypothesis we have
\[ |\mu| = M_0 = M = \tilde M_0 = |\tilde \mu| \ , \]
and define a divisor $S_{\mu,\tilde \mu}\subset Y_{m+\tilde m,0}$ by labelling the faces of the moment polytope as pictured in Figure \ref{fig:classSfig} below, and we have the following conjecture:

\begin{conj}\label{classSconj} There is an isomorphism of vertex algebras
\[ \V_{\Gl_M;f,\tilde f}^{\mc S,\kappa} \xrightarrow{\cong } \V(Y_{m+\tilde m,0},S_{\mu,\tilde{\mu}}) \ .  \]
\end{conj}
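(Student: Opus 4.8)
The plan is to prove Conjecture \ref{classSconj} by realizing both sides as the same doubled generalized Wakimoto free field realization, leveraging the factorization property of Proposition \ref{factprop} together with the identification of $\V(Y_{m,0},S_\mu)$ with affine $W$-algebras in Theorem \ref{WLStheo}. First I would recall from \cite{BeemS} and \cite{Ar} that the genus zero, two-punctured class $\mc S$ chiral algebra $\V^{\mc S,\kappa}_{\Gl_M;f,\tilde f}$ is obtained by applying the quantum Drinfeld--Sokolov reduction functors associated to $f=f_\mu$ and $\tilde f=f_{\tilde\mu}$ to the two commuting $\widehat{\gl}_M$-actions carried by the two-punctured sphere (``cylinder'') algebra of the Moore--Tachikawa theory, namely the chiral differential operators $\mc D^\ch(\Gl_M)$; concretely this is a double reduction $H_{\textup{DS},f}\,H_{\textup{DS},\tilde f}\,\mc D^\ch(\Gl_M)$. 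On the big Bruhat cell $\Gl_M \supset N_-\cdot H \cdot N_+$ this carries a free field realization of the shape $\mc D^\ch(N_-)\otimes \pi_{\mf h}\otimes \mc D^\ch(N_+)$, on which the two reductions act through the two unipotent factors and are coupled only through the shared Cartan Heisenberg $\pi_{\mf h}$.

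Next I would analyze the geometric side. The divisor $S_{\mu,\tilde\mu}\subset Y_{m+\tilde m,0}$ is labelled by a symmetric ``double pyramid'' whose label rises from one extreme end according to $\tilde\mu$ to the peak value $M$ and descends to the other end according to $\mu$, as in Figure \ref{fig:classSfig}. Applying Proposition \ref{factprop} along the compact curve class at this peak yields an embedding
\[ \V(Y_{m+\tilde m,0}, S_{\mu,\tilde\mu}) \to \V(Y_{m+\tilde m,0}, R)\otimes_K \V(Y_{m+\tilde m,0}, T), \]
with image the kernel of the single residual peak screening operator $Q_{s_{k_0}}$. Using the locality principle of Proposition \ref{localityprop} to restrict each factor to the appropriate open sub-threefold, together with Theorem \ref{WLStheo} and the bosonized generalized Wakimoto presentation constructed in its proof, I would identify $\V(Y_{m+\tilde m,0},R)$ and $\V(Y_{m+\tilde m,0},T)$ with the generalized Wakimoto realizations of $W^\kappa_{f_\mu}(\gl_M)$ and $W^{\kappa}_{f_{\tilde\mu}}(\gl_M)$ for the levels dictated by the equivariant parameters $\e_i$, sharing a common Cartan Heisenberg $\pi_{\mf h}$.

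It then remains to match the two presentations. Since the screening operators internal to $R$ and to $T$ are, by the proof of Theorem \ref{WLStheo}, exactly the generalized Wakimoto screenings of $W_{f_\mu}$ and $W_{f_{\tilde\mu}}$, the comparison reduces to identifying the residual peak screening $Q_{s_{k_0}}$ with the coupling in $\mc D^\ch(\Gl_M)$ between the two affine actions, and to checking that the level produced by the shared $\pi_{\mf h}$ agrees with the class $\mc S$ prescription on both sides. I would carry this out by expressing both $\Pi(Y_{m+\tilde m,0},S_{\mu,\tilde\mu})$ and the doubled Wakimoto realization in the canonical linear coordinates furnished by the in-progress results of \cite{BBN}, namely multiplication maps on generalized slices in the affine Grassmannian of $\Gl_M$, now applied in the two-sided setting, and comparing the resulting linear screening data directly.

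The hardest part will be precisely this gluing at the peak: establishing that the single factorization screening reproduces the bifundamental coupling of the Moore--Tachikawa cylinder $\mc D^\ch(\Gl_M)$ rather than a naive tensor product of the two $W$-algebras, and that the middle region of the double pyramid is geometrically responsible for this chiral differential operator structure. This requires both the doubled analogue of the \cite{BBN} coordinate systems and a direct identification of $\mc D^\ch(\Gl_M)$ with the free field algebra attached to the peak; once this local model and level matching are established, the isomorphism of Conjecture \ref{classSconj} for $k\leq 2$ follows, and its compatibility with the abstract definition of \cite{Ar} reduces to comparing the respective quantum Drinfeld--Sokolov reductions.
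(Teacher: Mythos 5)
First, a point of calibration: the paper does not prove this statement --- it is left as Conjecture \ref{classSconj}, supported only by the rank-one computation of Example \ref{MRRVireg} (recovering the chiral universal centralizer $I^\kappa_{\spl_2}$ from $S=2[|\mc O_{\bb P^1}|]\subset Y_{2,0}$) and by the Corollary immediately following it, which asserts that $\V(Y_{m+\tilde m,0},S_{\mu,\tilde\mu})$ is a vertex algebra \emph{extension} of $W^\kappa_{f_\mu}(\gl_M)\otimes W^{\kappa^*}_{f_{\tilde\mu}}(\gl_M)$. Both of these are obtained from the \emph{locality} property, Proposition \ref{localityprop}, combined with Theorem \ref{WLStheo} --- not from the factorization property.

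This is exactly where your proposal has a structural gap. You propose to split the divisor ``at the peak'' using Proposition \ref{factprop}, obtaining an embedding $\V(Y,S_{\mu,\tilde\mu})\hookrightarrow \V(Y,R)\otimes_K\V(Y,T)$ cut out by one residual screening operator. Three things go wrong. (i) Factorization distributes the multiplicities $r_d$ of the irreducible components between $R$ and $T$, so there is no admissible choice with $R$ and $T$ carrying the data of $S_\mu$ and $S_{\tilde\mu}$: the middle face of $S_{\mu,\tilde\mu}$ has multiplicity $M$, while each of $S_\mu$ and $S_{\tilde\mu}$ requires all $M$ copies of it, i.e.\ $[S_\mu]+[S_{\tilde\mu}]\neq[S_{\mu,\tilde\mu}]$. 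The decomposition that does see both $\gl_M$'s at full rank is the locality one: cover $Y_{m+\tilde m,0}$ by two open toric subvarieties meeting along the compact curves adjacent to the multiplicity-$M$ face, so each open inherits the full restriction of the divisor. (ii) Factorization realizes $\V(Y,S)$ as a \emph{subalgebra} of the tensor product, whereas the two-punctured class $\mc S$ chiral algebra is an \emph{extension} of $W^\kappa_{f}\otimes W^{\kappa^*}_{\tilde f}$ --- the inclusion goes the opposite way, and is precisely what locality delivers via $\V(Y,S)=\bigoplus_{\lambda\in P_+}\V(Y_1,S_1)_\lambda\otimes\V(Y_2,S_2)_\lambda$. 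A kernel of a residual screening inside $W_{f_\mu}\otimes W_{f_{\tilde\mu}}$ cannot reproduce this direct sum of nontrivial modules. (iii) You place both $W$-algebra factors at the same level $\kappa$; the locality decomposition forces the dual level $\kappa^*$ on the second factor, with $\kappa+2=-\e_2/\e_1$ and $\kappa^*+2=\e_2/\e_1$ as in Equation \ref{classSleveleqn}, i.e.\ $(\kappa+h^\vee)+(\kappa^*+h^\vee)=0$, which is exactly the level relation of the two commuting actions on $\mc D^\ch_\kappa(\Gl_M)$ and is indispensable for matching $H^0_{\textup{DS};f,\tilde f}(\mc D^\ch_\kappa(G))$. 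Your instinct that the middle of the diagram should produce the bifundamental coupling of $\mc D^\ch_\kappa(\Gl_M)$ is consistent with the paper's discussion (compare the extension of $V^\kappa(\gl_2)\otimes V^{\kappa^*}(\gl_2)$ conjecturally equal to $\Pi_+\otimes\mc D^\ch_\kappa(\textup{SL}_2)$), but that coupling lives in the lattice extension datum $P_+$ of Proposition \ref{localityprop}, not in a residual screening operator arising from Proposition \ref{factprop}.
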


\begin{figure}[b]
	\caption{The divisor $S_{\mu,\tilde{\mu}}$ in $Y_{m+\tilde m,0}$ associated to partitions $(\mu,\tilde \mu)$ of $M$}
	\label{fig:classSfig}
	\begin{overpic}[width=1\textwidth]{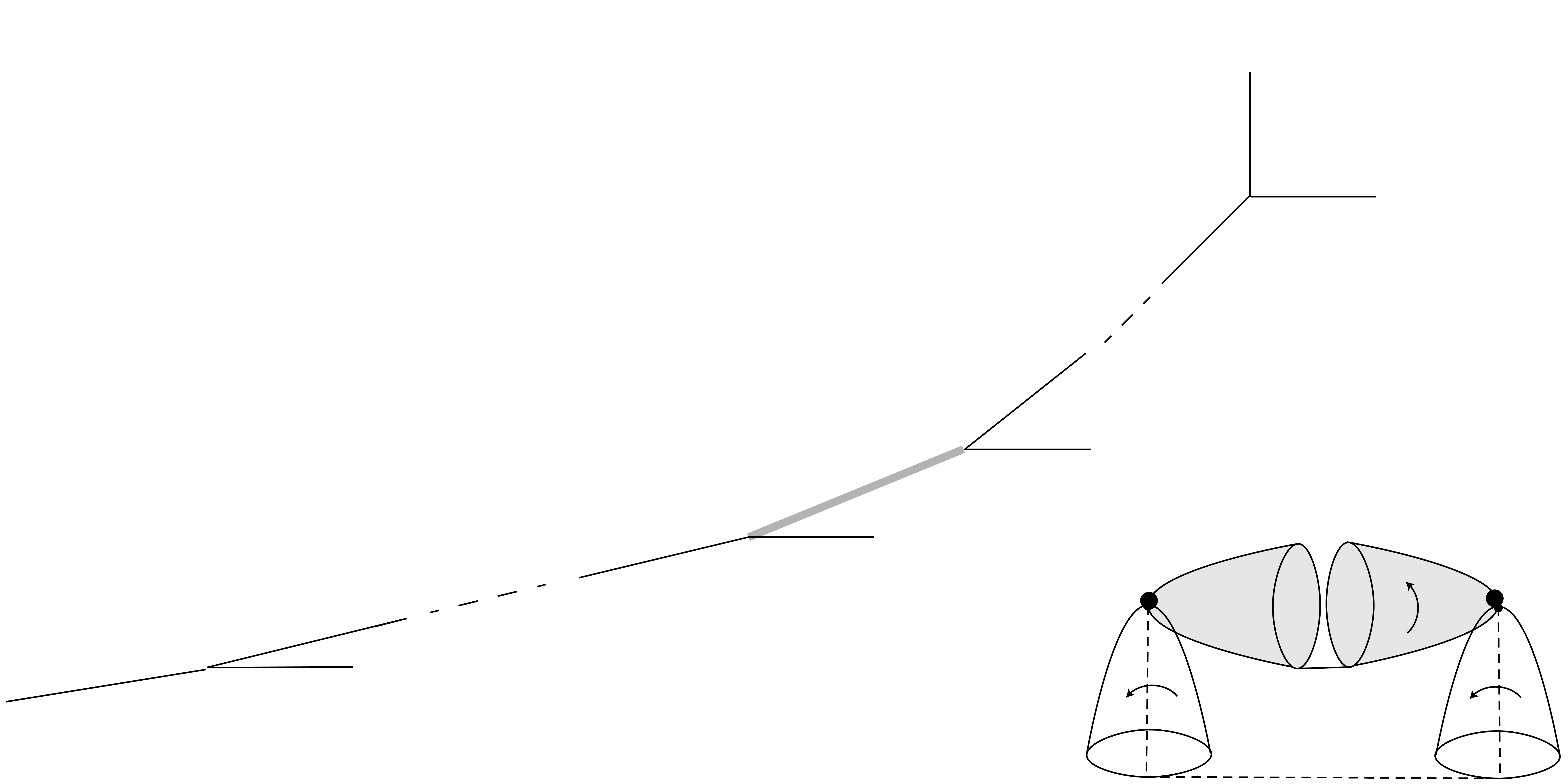}
		\put(5,43) {$ Y_{m+\tilde m ,0}\to X_{m+\tilde m,0} = \{ xy-z^{m+\tilde m} \}\times \bb A^1$}
		\put(5,40)  {\rotatebox[origin=c]{90}{$\subset$}}
		\put(5,36.5) {$S_{\mu,\tilde{\mu}}  \ = $}
		\put(61,36.5) {$\tilde M_{\tilde m} $}
		\put(55,36.5) {$ \hdots $}
		\put(48,36.5) {$\tilde M_1 $}
		\put(42,36.5) {$ M $}
		\put(35,36.5) {$  M_1 $}
		\put(28,36.5) {$ \hdots $}
		\put(21,36.5) {$ M_m$}
		\put(11,32) {$\includegraphics{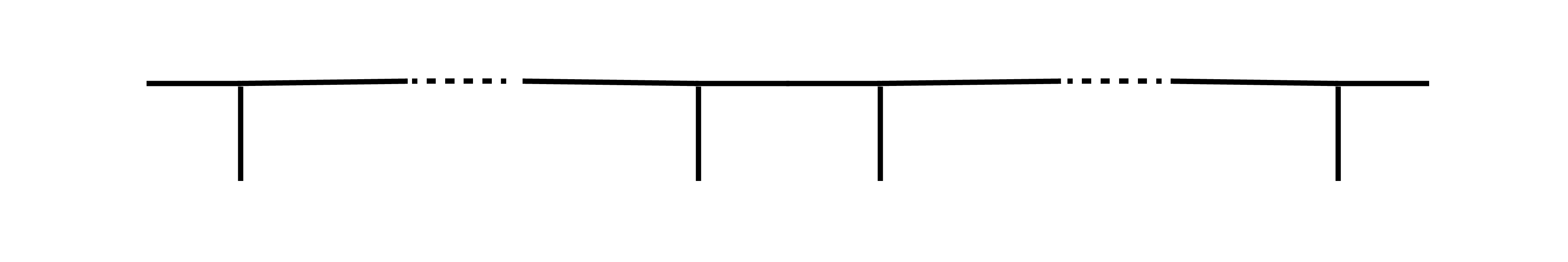}$}
		
		\put(65,33)  {$\tilde\mu_{\tilde m}$}
		\put(46,33)  {$\tilde\mu_1$}
		\put(38.5,33)  {$ \mu_1$}
		\put(19.5,33)  {$ \mu_m$}

		\put (78,34) {$\tilde M_{\tilde m}= \tilde \mu_{\tilde m}$} 
		\put(65, 22) {$\tilde M_1=\tilde \mu_2+\hdots+\tilde \mu_{\tilde m}$}
		\put(58, 18) {$M=M_0=\tilde M_0$} 
		\put(45, 13) {$M_1= \mu_2+\hdots+\mu_m$} 
		\put(25,8) {$M_m= \mu_m$}

		\put(72.5,13) {$ f$}
		
		\put(83,3) {$M$}
		\put(95,13) {$\tilde f$}
\end{overpic}
	\vspace*{-2cm}
\end{figure}

Here we let $\V_{\Gl_M;f,\tilde f}^{\mc S,\kappa}=H^0_{\textup{DS};f,\tilde f}(\mc D^\ch_{\kappa}(G))$ denote the Drinfeld-Sokolov reduction with respect to $\mf n_f\times \mf n_{\tilde{f}} $ at $(f,\tilde f)$ of the $\kappa$-twisted chiral differential operators $\mc D^\ch_\kappa(G)$ on $G$, the natural deformation of $\V_{\Gl_M;f,\tilde f}^{\mc S}=H^0_{\textup{DS};f,\tilde f}(\mc D^\ch_{-h^\vee}(G))$ from $\kappa=-h^\vee$; this algebra was introduced under the notation $I^\kappa_{G,f,\tilde f}$ in Section 7 of \cite{Ar}, generalizing the example of $\textup{Sl}_2$ at generic $\kappa$ considered in \cite{FrSt}.

In particular, this conjecture implies the existence of free field realizations of the class $\mc S$ chiral algebras $\V_{\Gl_M;f,\tilde f}^{\mc S}$ with image characterized as the kernel of the explicit screening operators defined in Section \ref{geoscreensec}. Moreover, these various free field realizations for different rank $\Gl_M$ and choices of nilpotents $f$ and $\tilde f$ are manifestly compatible, so that the conjecture would also imply the existence of parabolic induction and inverse reduction relations between the algebras $\V_{\Gl_M;f,\tilde f}^{\mc S}$, generalizing those for the usual affine type A $W$-algebras $W_f^\kappa(\gl_M)$ described in the preceding Section \ref{Walgsec}.

In particular, consider the case $f=\tilde f = f_\prin \in  \mc N_M \subset \gl_M$, for which we have
\[ \V_{\Gl_M;f_\prin,f_\prin}^{\mc S} \cong   \V_{\Gl_M}^{\mc S} = I_G^\ch  \ , \]
that is, $\V_{\Gl_M;f_\prin,f_\prin}^{\mc S}$ is equivalent to the genus zero class $\mc S$ chiral algebra for $k=0$ points $\V_{\Gl_M}^{\mc S}$, which is by definition the chiral universal centralizer. In this case, we have $m=\tilde m=1$, so that the corresponding divisor is given by $S=M[|\mc O_{\bb P^1}(-1)]$ on $Y=Y_{2,0}$ and the free field module is given by
\[ \Pi(Y,S)= \Pi(Y_{2,0},M[|\mc O_{\bb P^1}(-1)|])= \Pi(Y_{2,0},|\mc O_{\bb P^1}(-1)|)^{\otimes M} \cong \mc D^\ch(\bb G_m)^{\otimes M} \cong \mc D^\ch(T) \ , \]
in keeping with the Beem-Nair conjecture \cite{BN2}, part of which was recently proved in \cite{Fur}. Thus, our approach gives a refinement of the Beem-Nair conjecture, providing an explicit description of the screening operators which conjecturally characterize the image of the desired free field realization, as well as a generalization thereof to generic $\kappa$.

\begin{eg} \label{MRRVireg}	Let $Y=|\mc O_{\bb P^1}\oplus \mc O_{\bb P^1}(-2)|$ and $S=S_{0,2,0,0}=2[|\mc O_{\bb P^1}|]$. Then the Heisenberg subalgebra $\pi_S$ of the free field vertex algebra \[ \Pi(Y,S)= \Pi(Y,|\mc O_{\bb P^1}|)^{\otimes 2} \]
	is generated by four Heisenberg fields, $J^1$ and $J^3$ satisfying
\[ J^1(z) J^1(w) \sim -\frac{1}{\e_1\e_2} \frac{1}{(z-w)^2} \quad\quad\text{and}\quad\quad  J^3(z) J^3(w) \sim \frac{1}{\e_1\e_2} \frac{1}{(z-w)^2} \]
	which generate the first copy of $\Pi(Y,|\mc O_{\bb P^1}|)$ together with the vertex operator
	\[ V_{1,0}(z) = \nol \exp( \e_1(\phi_1(z)+\phi_3(z)) ) \nor  \quad\quad\text{and more generally} \quad\quad V_{l,0}(z) = \nol V_{1,0}(z)^l \nor \]
	for each $l\in \bb Z$, that is, they satisfy the relations
	\[ J^1(z) V_{l,0}(w) \sim - \frac{l}{\e_2} \frac{V_{l,0}(w)}{z-w} \quad\quad J^3(z) V_{l,0}(w) \sim \frac{l}{\e_2} \frac{V_{l,0}(w)}{z-w} \quad\quad \text{and}\quad\quad V_{l,0}(z)V_{m,0}(w)= \nol V_{l,0}(z) V_{m,0}(w)\nor \ , \]
	and the independent fields $J^2$ and $J^4$ satisfying
	\[ J^2(z) J^2(w) \sim -\frac{1}{\e_1\e_2} \frac{1}{(z-w)^2} \quad\quad\text{and}\quad\quad  J^4(z) J^4(w) \sim \frac{1}{\e_1\e_2} \frac{1}{(z-w)^2} \]
	which generate the second copy of $\Pi(Y,|\mc O_{\bb P^1}|)$ together with the vertex operator
	\[ V_{0,1}(z) = \nol \exp( \e_1(\phi_2(z)+\phi_4(z)) ) \nor  \quad\quad\text{and more generally} \quad\quad V_{0,l}(z) = \nol V(z)^l \nor \]
	for each $l\in \bb Z$, which satisfy the relations
	\[ J^2(z) V_{0,l}(w) \sim - \frac{l}{\e_2} \frac{V_{0,l}(w)}{z-w} \quad\quad J^4(z) V_{0,l}(w) \sim \frac{l}{\e_2} \frac{V_{0,l}(w)}{z-w} \quad\quad \text{and}\quad\quad V_{0,l}(z)V_{0,m}(w)= \nol V_{0,l}(z) V_{0,m}(w)\nor \ .  \]
The screening currents are given by
\begin{equation}\label{MRRscreeneqn}
	 Q_1=\nol \exp( \e_2(\phi_1-\phi_2)) \nor \quad\quad \text{and}\quad\quad   Q_2=\nol \exp(-\e_2(\phi_3-\phi_4))  \nor  \ .
\end{equation}

\begin{figure}[t]
	\begin{overpic}[width=.8\textwidth]{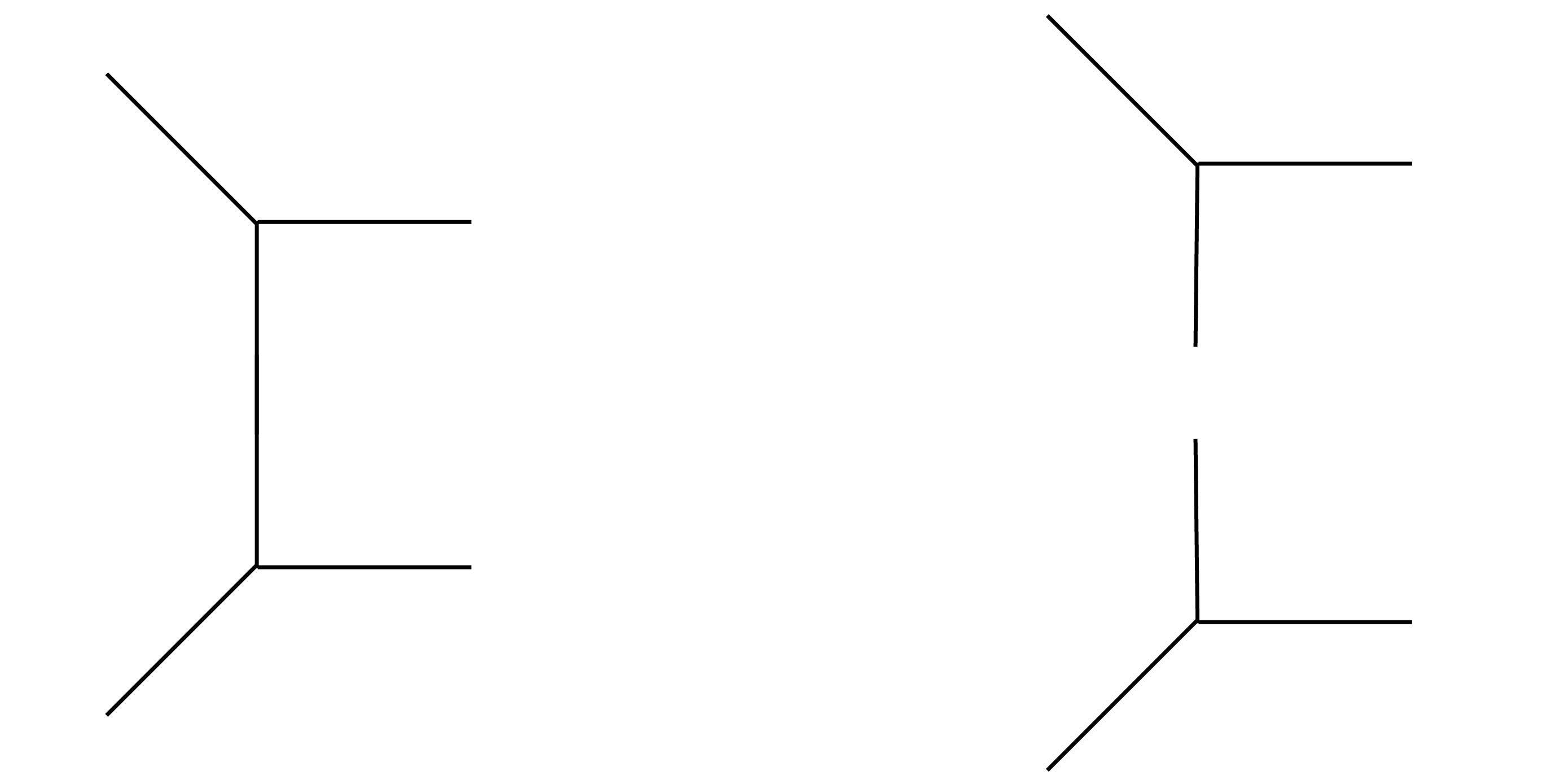}
		\put(17,14.5) {\scriptsize{$J^{1/2}$}}
		\put(17,33) {\scriptsize{$J^{3/4}$}}	
		\put(17,24) {\scriptsize{$V_{1,0/0,1}$}}
		\put(31,13) {\scriptsize{$Q_1$}}
		\put(31,35.5) {\scriptsize{$Q_2$}}
		\put(28,24) {$2$}
		
		\put(77.5,11) {\scriptsize{$J^{1/2}$}}
		\put(77.5,37) {\scriptsize{$J^{3/4}$}}	
		\put(91,9.5) {\scriptsize{$Q_1$}}
		\put(91,39) {\scriptsize{$Q_2$}}
		\put(85,18) {$2$}
		\put(85,30) {$2$}
		
	\end{overpic}
		\caption{Vertex algebra data and locality principle for $\V(Y_{2,0},S_{0,2,0,0})$}
	\label{fig:S1100fig}
\end{figure}

Consider the change of basis in the Heisenberg subalgebra $\pi_S$ of $\Pi(Y,S)$ given by
\[ \alpha = J^1+J^2 \quad\quad \beta = J^3+J^4 \quad\quad \alpha^- = J_1-J_2 \quad\quad \beta^- = J^3-J^4 \ , \]
as well as in the lattice direction by defining
\[ V^+ =  V_{1,1} = \nol \exp( \e_1 ( \phi_1+\phi_2+\phi_3+\phi_4)) \nor \quad\quad \text{and}\quad\quad V^- = V_{1,-1} =\nol \exp( \e_1 ( \phi_1-\phi_2+\phi_3-\phi_4))  \ .  \]
 Note that the fields $\alpha$, $\beta$, and $V_+$ all commute with the remaining fields $\alpha^-$, $\beta^-$ and $V_-$, and have non-singular OPE with both of the screening currents, so that they generate an independent half-lattice vertex algebra $\Pi_+\subset \V(Y,S)$. The fields $\alpha^-$, $\beta^-$ and $V^-$ generate another independent half-lattice vertex algebra $\Pi_- \subset \Pi(Y,S)$ so that we have
\[ \Pi(Y,S) \cong \Pi^+\otimes \Pi^- \ , \]
where both screening operators $Q_1$ and $Q_2$ vanish on $\Pi^+$.

Now, we can apply the locality property of Proposition \ref{localityprop} analyze the structure of $\V(Y,S)$. In particular, each field $\alpha^-$ and $\beta^-$ has non-singular OPE with the complimentary screening current $Q_2$ and $Q_1$, respectively. Thus, we can decompose
\[ \Pi^- = \bigoplus_{\lambda \in \bb Z} \pi^{\alpha^-}_{\lambda \e_1} \otimes \pi^{\beta^-}_{\lambda \e_1 } \]
and we have that
\[ Q_1|_{\pi^\beta} = Q_2|_{\pi^\alpha} = 0 \quad\quad \text{and moreover}\quad\quad \ker( Q_1|_{\pi^\alpha})\cong \mc W^\kappa_{f_\prin}(\spl_2) \quad\quad  \ker( Q_2|_{\pi^\beta})\cong \mc W^{\kappa^*}_{f_\prin}(\spl_2) \]
by the calculation of Example \ref{Virscreg}, where
\begin{equation}\label{classSleveleqn}
	 \kappa + 2 = - \frac{\e_2}{\e_1} \quad\quad \textup{and}\quad\quad \kappa^*+2 =  \frac{\e_2}{\e_1} \ , 
\end{equation}
so that we have
\[ \hspace*{-1cm} \ker(Q|_{\pi^{\alpha^-} \otimes \pi^{\beta^-}}) \cong  \mc W^\kappa_{f_\prin}(\spl_2) \otimes  \mc W^{\kappa^*}_{f_\prin}(\spl_2) \quad\quad\textup{and more generally}\quad\quad \ker(Q|_{\Pi^-}) \cong \bigoplus_{\lambda \in P_+} \mc W^\kappa_{f_\prin,\lambda}(\spl_2) \otimes  \mc W^{\kappa^*}_{f_\prin,\lambda^*}(\spl_2)  \ , \]
which identifies with the modified regular representation, or chiral universal centralizer $I_{\spl_2}^\kappa$ for $\g=\spl_2$, at level $\kappa=-h^\vee-\frac{\e_2}{\e_1}$, as defined in \cite{FrSt,Ar}. In summary, we obtain
\[ \V(Y,S) \cong \Pi^+ \otimes I_{\spl_2}^\kappa \ . \]
\end{eg}

Our expectation is that a similar strategy can be used to prove Conjecture \ref{classSconj} in general. Indeed, applying Proposition \ref{localityprop} together with Theorem \ref{WLStheo}, we obtain in general:

\begin{corollary} The vertex algebra $\V(Y_{m+\tilde m,0},S_{\mu,\tilde{\mu}}) $ is canonically a vertex algebra extension of $W_{f_\mu}^\kappa(\gl_M)\otimes W_{f_{\tilde \mu}}^{\kappa^*}(\gl_{M})$, where $\kappa$ and $\kappa^*$ are as in Equation \ref{classSleveleqn}.
\end{corollary}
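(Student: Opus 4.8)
The plan is to deduce this corollary directly from the locality principle of Proposition \ref{localityprop} together with the identification of the vertex algebras $\V(Y_{m,0},S_\mu)$ with $W$-algebras from Theorem \ref{WLStheo}. First I would exhibit the geometric decomposition of $Y_{m+\tilde m,0}$ as a union of two open toric subvarieties $Y_1$ and $Y_2$ along an intersection of the form $(\bb A^1\setminus\{0\})\times \bb A^2$, chosen so that the compact curve class $\overline{C}$ separating them corresponds to the central face of the moment polytope labelled by $M$ in Figure \ref{fig:classSfig}. With this choice, the two pieces restrict the divisor $S_{\mu,\tilde\mu}$ to $S_1\cong S_\mu\subset Y_{m,0}$ and $S_2\cong S_{\tilde\mu}\subset Y_{\tilde m,0}$, where the two halves of the pyramid are read off from the two sides of the polytope. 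The main point here is purely combinatorial: verifying that the labelling of faces on either side of $\overline{C}$ recovers exactly the pyramids $\pi_\mu$ and $\pi_{\tilde\mu}$, with the orientation on the $Y_2$ side reversed, which is responsible for the conjugate level $\kappa^*$ below.

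Next I would apply Proposition \ref{localityprop} to this cover, yielding a canonical embedding of vertex algebras
\[ \V(Y_1,S_1)\otimes \V(Y_2,S_2) \to \V(Y_{m+\tilde m,0},S_{\mu,\tilde\mu}) \]
realizing the codomain as a vertex algebra extension $\bigoplus_{\lambda\in P_+}\V(Y_1,S_1)_\lambda\otimes \V(Y_2,S_2)_\lambda$, with $P_+\subset \bb Z^{\mf C_{S,\cap}}$ as in Equation \ref{Ppeqn}. It then remains to identify the two tensor factors. Applying Theorem \ref{WLStheo} to $Y_1\cong Y_{m,0}$ with divisor $S_\mu$ gives $\V(Y_1,S_1)\cong W_{f_\mu}^\kappa(\gl_M)$, and applying it to $Y_2\cong Y_{\tilde m,0}$ with divisor $S_{\tilde\mu}$ gives $\V(Y_2,S_2)\cong W_{f_{\tilde\mu}}^{\kappa^*}(\gl_M)$. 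The level $\kappa^*$ with $\kappa^*+2=\frac{\e_2}{\e_1}$, conjugate to $\kappa+2=-\frac{\e_2}{\e_1}$, arises precisely because the Heisenberg levels $k_y=-1/\e_T(T_yS_d)$ on the $Y_2$ side are computed with the opposite orientation of the two-dimensional torus weights, exactly as in the appearance of both $\kappa$ and $\kappa^*$ in Example \ref{MRRVireg} and Equation \ref{classSleveleqn}.

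The step I expect to be the main obstacle is confirming that the level computed geometrically on the $Y_2$ factor is genuinely $\kappa^*$ rather than $\kappa$, and more generally that Theorem \ref{WLStheo} applies verbatim to $Y_2$ despite the reversed orientation inherited from the global geometry of $Y_{m+\tilde m,0}$. Concretely, one must check that the restriction of the Calabi-Yau torus $T$ and its character lattice to the subvariety $Y_2$ induces the sign flip $\e_2\mapsto -\e_2$ (equivalently $\e_1\mapsto\e_1$, $\e_3\mapsto -\e_3-2\e_1$ in the conventions of Equation \ref{baseringeqn}) relative to the standard orientation used in the proof of Theorem \ref{WLStheo}. This is a local equivariant computation of the weights of the tangent spaces $T_yS_{\tilde\mu}$ at the fixed points on the $Y_2$ side, matching them against the Heisenberg level conventions of Equation \ref{Heisleveleqn}; the computation is entirely analogous to the explicit $\spl_2$ calculation carried out in Example \ref{MRRVireg}, and once it is verified the corollary follows immediately from the factorization structure of Proposition \ref{localityprop}.
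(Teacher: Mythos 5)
Your proposal is correct and follows exactly the route the paper intends: the corollary is stated as an immediate consequence of "applying Proposition \ref{localityprop} together with Theorem \ref{WLStheo}," and your argument is a faithful fleshing-out of that one-line derivation, including the two checks the paper leaves implicit (that cutting along the compact curve in the $M$-labelled component restricts $S_{\mu,\tilde\mu}$ to copies of $S_\mu$ and $S_{\tilde\mu}$, and that the reversed orientation on the second chart produces the conjugate level $\kappa^*$, consistent with Example \ref{MRRVireg} and Equation \ref{classSleveleqn}). No gaps; this matches the paper's approach.
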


For example, generalizing Example \ref{MRRVireg} we can consider again the case $M=2$ and choose non-principal (and thus necessarily trivial) nilpotents $f=0$ or $\tilde f=0$, for which the corresponding partitions are given by $\mu=(1,1)$ or $\tilde \mu=(1,1)$; the divisor $S_{\mu,\tilde \mu}$ in $Y_{4,0}=\widetilde{A}_{3}\times \bb A^1$ and corresponding vertex algebra data for the case $f=\tilde f=0\in\gl_2$ are pictured in Figure \ref{fig:S01210fig} below.

In addition to the generators $J^1,J^2,J^3,J^4$ from Example \ref{MRRVireg} above, we have independent Heisenberg generators $J^5,J^6,J^7$ and $J^8$ at levels
\[k_5 =\frac{1}{\e_1(\e_1+\e_2)}\quad\quad  k_6= - \frac{1}{\e_1(\e_1+\e_2)} \quad\quad  k_7= \frac{1}{\e_1(\e_1-\e_2)} \quad\quad\text{and}\quad\quad k_8= -\frac{1}{\e_1(\e_1-\e_2)}  \ , \]
as well as the additional lattice generators
\[ V^+(z) = \nol \exp( \e_1(\phi_5(z)+\phi_6(z)) ) \nor\quad\quad\text{and}\quad\quad  V^-(z) = \nol \exp( \e_1(\phi_7(z)+\phi_8(z)) ) \nor \]
and the additional screening currents between the Heisenberg generators in distinct components
\[  Q_+=\nol \exp( \e_2\phi_1 + (\e_1+\e_2)\phi_5 ) \nor \quad\quad \text{and}\quad\quad   Q_-=\nol \exp(\e_2\phi_4 + (\e_1-\e_2)\phi_7 )  \nor  \ . \]

\begin{figure}[b]
	\caption{Vertex algebra data and locality principle for $\V(Y_{4,0},S_{0,1,2,1,0})$}
	\label{fig:S01210fig}
	\begin{overpic}[width=1\textwidth]{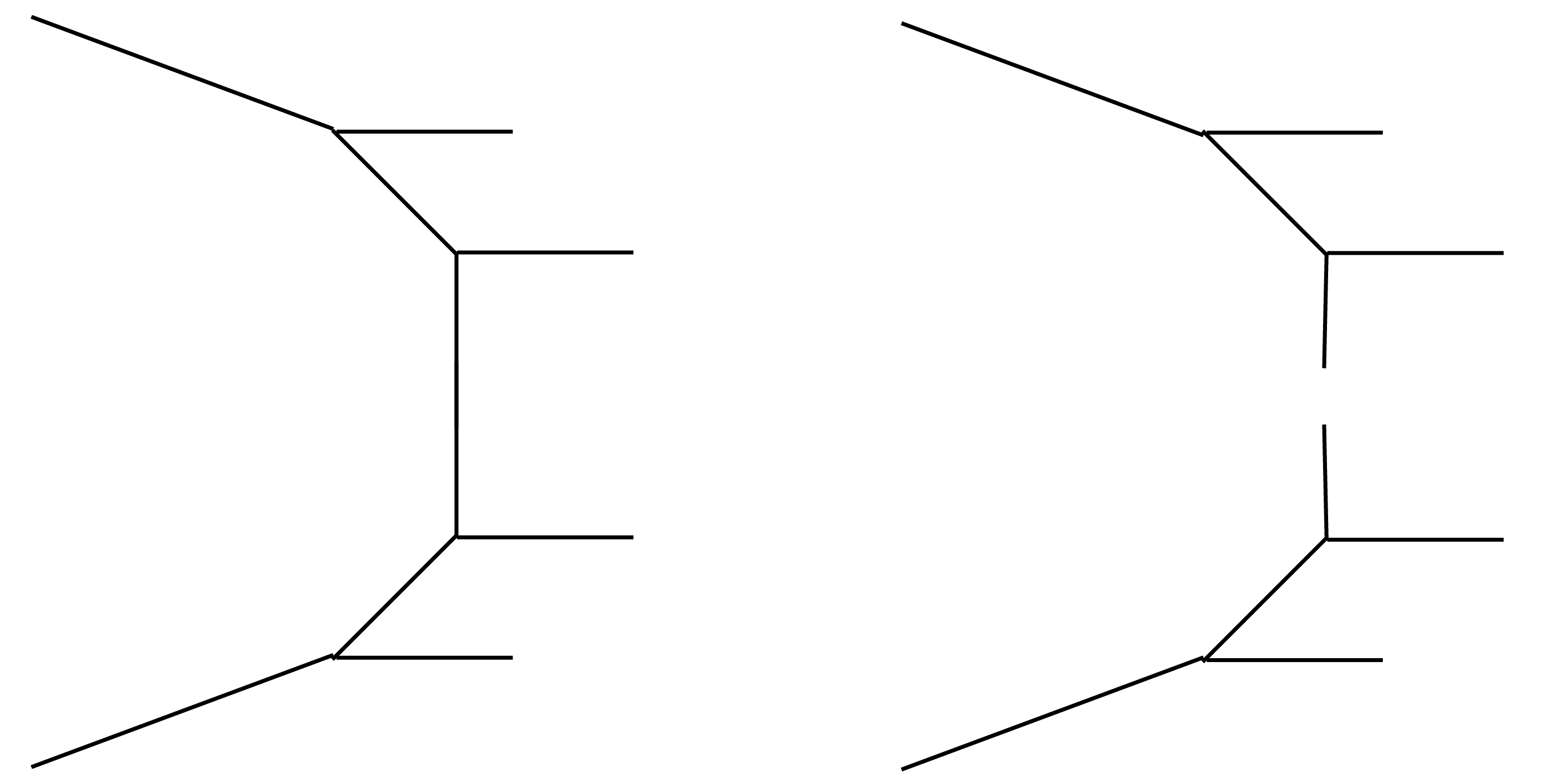}
		\put(29.5,16.5) {\scriptsize{$J^{1/2}$}}
		\put(29.5,31.5) {\scriptsize{$J^{3/4}$}}
		\put(29.5,24) {\scriptsize{$V_{1,0/0,1}$}}
		\put(41,16) {\scriptsize{$Q_1$}}
		\put(41,32) {\scriptsize{$Q_2$}}
		\put(41,14) {\scriptsize{$Q_+$}}
		\put(41,34) {\scriptsize{$Q_-$}}
		\put(38,24) {$2$}
		\put(32,11) {$1$}
		\put(32,37) {$1$}
		
		\put(28,13.5) {\scriptsize{$J^{5}$}}
		\put(28.5,34.5) {\scriptsize{$J^{7}$}}
		
		\put(23,8.5) {\scriptsize{$J^{6}$}}
		\put(23.5,39.8) {\scriptsize{$J^{8}$}}
		
		\put(26,11) {\scriptsize{$V^+$}}
		\put(26,37) {\scriptsize{$V^-$}}
		
		\put(85.5,16.5) {\scriptsize{$J^{1/2}$}}
		\put(85.5,31.6) {\scriptsize{$J^{3/4}$}}	
		\put(84,13.5) {\scriptsize{$J^{5}$}}
		\put(84,34.5) {\scriptsize{$J^{7}$}}
		\put(79,8.5) {\scriptsize{$J^{6}$}}
		\put(79.5,39.8) {\scriptsize{$J^{8}$}}

\put(81.5,11) {\scriptsize{$V^+$}}
\put(81.5,37) {\scriptsize{$V^-$}}

		\put(97,16) {\scriptsize{$Q_1$}}
		\put(97,14) {\scriptsize{$Q_+$}}
		\put(97,32.5) {\scriptsize{$Q_2$}}
				\put(97,34.5) {\scriptsize{$Q_-$}}
		\put(90,20) {$2$}
		\put(90,28) {$2$}
				\put(88,11) {$1$}
		\put(88,37) {$1$}
	\end{overpic}
\end{figure}
The subalgebra generated by $J^1,J^2,J^5,J^6$ and $V^+$ and screened by $Q_1$ and $Q_+$ is evidently identified with $V^\kappa(\gl_2)$ by Theorem \ref{affinesl2theo}, and similarly that for the complimentary generators $J^3,J^4, J^7,J^8$ and $V^-$ and screening operators $Q_2$ and $Q_-$ is identified with $V^{\kappa^*}(\gl_2)$, where $\kappa$ and $\kappa^*$ are again as in Equation \ref{classSleveleqn}, so that we obtain a vertex algebra extension of $V^\kappa(\gl_2)\otimes V^{\kappa^*}(\gl_2)$, conjecturally isomorphic to $\Pi_+\otimes \mc D^\ch_\kappa(\textup{SL}_2)$; this partition of a subset of the generators into subalgebras is pictured on the right of Figure \ref{fig:S01210fig}.

Finally, we describe a more speculative extension of the preceding proposal, which gives a conjectural alternative construction for genus zero class $\mc S$ chiral algebras $\bb V_{\Gl_M;f_1,...,f_k}^{\mc S}$ for $G=\Gl_M$ and $k>2$ marked points, and perhaps even the twisted variants constructed in \cite{BN1}.
The basic idea is that by judiciously forgetting the screening operators corresponding to certain compact curve classes in the definition of $\V(Y,S)$, one can formally separate the sheets of the divisor in order to include the data of multiple nilpotents within a single divisor in the same class of threefolds $Y_{m,0}$ used in the conjectural construction for $k\leq 2$ marked points described above.

To begin, we consider the following critical level limit $\kappa\to - h^\vee$ of the preceding constructions for $k\leq 2$. We introduce the rescaled Heisenberg generators $\tilde{J^y}=\e_2 J^y$ for $y=1,2,3,4$, which satisfy
\[  \tilde{J}^{1/2}(z)\tilde{J}^{1/2}(w) \sim -\frac{\e_2}{\e_1} \frac{1}{(z-w)^2} = \frac{\kappa+h^\vee}{(z-w)^2} \quad\quad\text{and} \quad\quad   \tilde{J}^{3/4}(z)\tilde{J}^{3/4}(w) \sim \frac{\e_2}{\e_1} \frac{1}{(z-w)^2} =- \frac{\kappa+h^\vee}{(z-w)^2}\]
and corresponding bosons $\tilde\phi_y=\e_2 \phi_y$ in terms of which we have
\[	 Q_1=\nol \exp( \tilde \phi_1-\tilde \phi_2) \nor \quad\quad \text{and}\quad\quad   Q_2=\nol \exp(\tilde \phi_3- \tilde \phi_4)  \nor  \ .\]
In particular, in the limit $\e_2\to 0$ or equivalently $\kappa \to -h^\vee$ we have that the $J^y$ generate commutative Heisenberg algebras for $y=1,2,3,4$, and the subalgebras screened by $Q_1$ and $Q_2$ in those generated by $J^1,J^2$ and $J^3,J^4$, respectively, are each isomorphic to the critical level $\kappa=\kappa^*=-h^\vee$ principal affine $W$ algebra, or equivalently the centre of the affine Kac-Moody algebra $\mf z(\glh_2) \subset V^{-h^\vee}(\gl_2)$ at critical level $\kappa=-h^\vee$, by Proposition 7.3.6 of \cite{Fr1}.

Further, note that in this limit the levels of the additional Heisenberg generators are given by
\[k_5 =k_7=\frac{1}{\e_1^2}\quad\quad\text{and}\quad\quad  k_6= k_8=  - \frac{1}{\e_1^2}  \ , \]
and the auxiliary screening currents are given by
\[  Q_+=\nol \exp(\tilde \phi_1 + \e_1\phi_5 ) \nor \quad\quad \text{and}\quad\quad   Q_-=\nol \exp(\tilde \phi_4 + \e_1\phi_7 )  \nor \ . \]
In particular, note that the subalgebra isomorphic to $\mc D^\ch(\bb G_m)$ generated by $J^5,J^6$ and $V^+$ is canonically identified with that generated by $J^7,J^8$ and $V^-$, under which the screening currents $Q_+$ and $Q_-$ differ only by the choice of boson $\tilde \phi_1$ or $\tilde \phi_4$. This additional symmetry naturally suggests a generalization to $k>2$ by simply tensoring with additional subalgebras $\mc D^\ch(\bb G_m)$ and introducing auxiliary screening operators of this form, for $\tilde \phi$ some linear combination of the bosons $\tilde \phi^1,\tilde \phi^2,\tilde \phi^3$ and $\tilde \phi^4$. Such algebras arise as modifications of vertex algebras $\V(Y_{m,0},S)$ for more general divisors $S$ in the same class of threefolds $Y_{m,0}$, as we now explain:

For concreteness, we begin by considering the case of $k=3$ non-trivial marked points and corresponding non-principal, and thus necessarily trivial, nilpotents $f_1=f_2=f_3=0$. Consider the divisor $S=S_{0,2,2,1,0}$ and corresponding vertex algebra $\V(Y_{4,0},S_{0,2,2,1,0})$, with generators and screening operators as indicated on the left in Figure \ref{fig:classSkfig} below. Our candidate for the class $\mc S$ chiral algebra $\V_{\Gl_2,b=3}^{\mc S}$ is defined by omitting, from the intersection defining $\V(Y_{4,0},S_{0,2,2,1,0})$, the kernel of the screening operator $Q_{s_1}$ corresponding to the non-compact curve class $C_{s_1}$.
We let $\tilde{\V}_{\Gl_2,b=3}^{\mc S}$ denote the resulting vertex algebra, defined by the intersection
\[ \tilde{\V}_{\Gl_2,b=3}^{\mc S} := \ker(Q_1) \cap \ker(Q_2) \cap \ker(Q_+) \cap \ker(Q_-) \cap \ker(Q_{s_2}) \subset  \Pi(Y_{4,0},S_{0,2,2,1,0}) \ . \]

\begin{figure}[t]
	\begin{overpic}[width=1\textwidth]{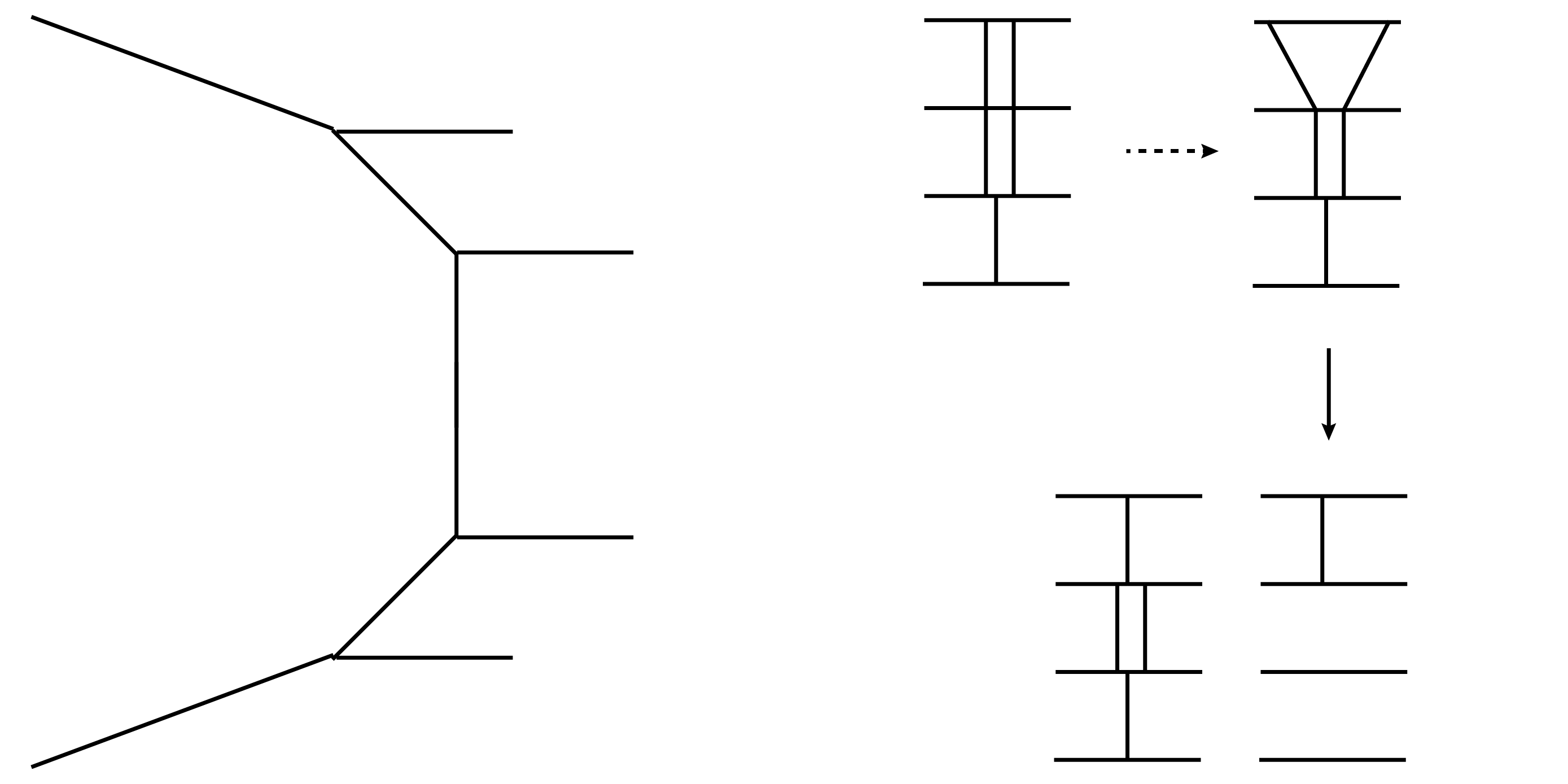}
		\put(29.5,16.5) {\scriptsize{$J^{1/2}$}}
		\put(29.5,31.5) {\scriptsize{$J^{3/4}$}}
		\put(29.5,24) {\scriptsize{$V_{1,0/0,1}$}}
		\put(41,16) {\scriptsize{$Q_1$}}
		\put(41,32) {\scriptsize{$Q_2$}}
		\put(41,14) {\scriptsize{$Q_+$}}
		\put(41,34) {\scriptsize{$Q_-,Q_{s_2}$}}
		\put(33,41) {\scriptsize{$Q_{s_1}$}}
		\put(38,24) {$2$}
		\put(34,11) {$1$}
		\put(34,37) {$2$}
		
		\put(28,13.5) {\scriptsize{$J^{5}$}}
		\put(28.5,34.5) {\scriptsize{$J^{7/9}$}}
		
		\put(23,8.5) {\scriptsize{$J^{6}$}}
		\put(23.5,39.8) {\scriptsize{$J^{8/10}$}}
		
		\put(26,11) {\scriptsize{$V^+$}}
		\put(25.5,37.5) {\scriptsize{$V^-_{1,0/0,1}$}}

\put(51,39.5) {$\V(Y,S)$} 
\put(91,39.5) {$\tilde{\V}_{\Gl_2,b=3}^{\mc S}$}
\put(91,9) {$\tilde{\V}_{\Gl_2,b=2}^{\mc S} \otimes \mc D^\ch(\bb G_m)$}

\put(77.5,9.5) {$\bigotimes$}

\put(27,42.5) {\scriptsize{$C_{s_1} $}}

	\end{overpic}
	
	\caption{The conjectural free field presentation $\tilde{\V}_{\Gl_2,b=3}^{\mc S}$ from $\V(Y_{4,0},S_{0,2,2,1,0})$}
	\label{fig:classSkfig}
	
\end{figure}

 We propose that one can understand this modification geometrically as deforming the two sheets of the divisor along that irreducible component in order to separate them at $C_{s_1}$, as pictured on the right of Figure \ref{fig:classSkfig}, and we conjecture that this vertex algebra (or a slight variant thereof) will be isomorphic to the desired class $\mc S$ chiral algebra $\V_{\Gl_2,b=3}^{\mc S}$. Indeed, as in Example \ref{MRRVireg} there is a subalgebra $\Pi_+\cong \mc D^\ch(\bb G_m) \subset \Pi(Y_{4,0},S_{0,2,2,1,0})$ generated by two rank 1 Heisenberg algebras and a single $\Z$-family of lattice vertex operators which commute with all the screening operators, and the complimentary free field algebra is generated by eight rank 1 Heisenberg algebras and four orthogonal families of lattice generators, which can together be identified with $\mc D^\ch(\bb G_m)^{\otimes 4}$. We conjecture that the subalgebra of the latter defined by the remaining screening operators is equivalent to four copies of the standard free field realization of \cite{FMS} given by
\[ \mc D^\ch(\bb A^4) \to \mc D^\ch(\bb G_m)^{\otimes 4}  \quad\quad \text{so that we have} \quad\quad  \tilde{\V}_{\Gl_2,b=3}^{\mc S} \cong \Pi_+ \otimes   \mc D^\ch(\bb A^4) =\Pi_+ \otimes   \bb V_{\textup{Sl}_2,b=3}^{\mc S}\ ,  \]
in agreement with the calculation of Theorem A.1 in \cite{Ar}. Further, note there is an embedding
\[ \tilde{\V}_{\Gl_2,b=3}^{\mc S} \to  \V(Y_{4,0},S_{0,1,2,1,0}) \otimes \V(Y_{4,0}, |\mc O_{\bb P^1}|) \cong \tilde{\V}_{\Gl_2,b=2}^{\mc S} \otimes \mc D^\ch(\bb G_m)   \]
with image characterized by the kernel of the single screening operator $Q_{s_2}$. More generally, applying the same procedure to define $\tilde{\V}_{\Gl_2,b}^{\mc S}$ for $b\geq 3$, we have analogous relative free field realizations
\[  \tilde{\V}_{\Gl_2,b}^{\mc S} \to \tilde{\V}_{\Gl_2,b-1}^{\mc S} \otimes \mc D^\ch(\bb G_m) \ , \]
which appear to be consistent with the system of partial free field realizations of the genus zero class $\mc S$ chiral algebras for $G=\textup{Sl}_2$ constructed in \cite{BN2}.

\bibliographystyle{alpha}

\bibliography{reCYsnewest}

\begin{thebibliography}{AHDM78}

\bibitem[ACG21]{ACG}
Drazen Adamovic, Thomas Creutzig, and Naoki Genra.
\newblock {Relaxed and logarithmic modules of $\widehat{\mathfrak{sl}_3}$}.
\newblock 10 2021.

\bibitem[AGT10]{AGT}
Luis~F. Alday, Davide Gaiotto, and Yuji Tachikawa.
\newblock Liouville correlation functions from four-dimensional gauge theories.
\newblock {\em Letters in Mathematical Physics}, 91(2):167–197, 2010.

\bibitem[AHDM78]{ADHM}
M.F. Atiyah, N.J. Hitchin, V.G. Drinfeld, and Yu.I. Manin.
\newblock Construction of instantons.
\newblock {\em Physics Letters A}, 65(3):185–187, 1978.

\bibitem[Ara18]{Ar}
Tomoyuki Arakawa.
\newblock Chiral algebras of class $\mathcal{S}$ and {M}oore-{T}achikawa
  symplectic varieties.
\newblock arXiv:1811.01577, 2018.

\bibitem[AW22]{AlW}
Robert Allen and Simon Wood.
\newblock Bosonic ghostbusting: The bosonic ghost vertex algebra admits a
  logarithmic module category with rigid fusion.
\newblock {\em Communications in Mathematical Physics}, 390(2):959–1015,
  2022.

\bibitem[BBN24]{BBN}
Christopher Beem, Dylan Butson, and Sujay Nair.
\newblock Inverse {H}amiltonian reduction in type {A} and generalized slices in
  the affine {G}rassmannian.
\newblock \emph{in preparation}, 2024.

\bibitem[Bei78]{Bei2}
A.~A. Beilinson.
\newblock Coherent sheaves on $\mathbb{P}^n$ and problems of linear algebra.
\newblock {\em Functional Analysis and Its Applications}, 12(3):214–216,
  1978.

\bibitem[BFN14]{BFN5}
Alexander Braverman, Michael Finkelberg, and Hiraku Nakajima.
\newblock Instanton moduli spaces and $\mathcal{W}$-algebras.
\newblock arXiv:1406.2381, 2014.

\bibitem[BK04]{BrKl}
Jonathan Brundan and Alexander~S. Kleshchev.
\newblock Shifted {Y}angians and finite {W}-algebras.
\newblock {\em Advances in Mathematics}, 200:136--195, 2004.

\bibitem[BN23a]{BN2}
Christopher Beem and Sujay Nair.
\newblock {Free Field Realisation of the Chiral Universal Centraliser}.
\newblock {\em Annales Henri Poincare}, 24(12):4343--4404, 2023.

\bibitem[BN23b]{BN1}
Christopher Beem and Sujay Nair.
\newblock {Twisted Chiral Algebras of Class ${\mathcal {S}}$ and Mixed
  Feigin\textendash{}Frenkel Gluing}.
\newblock {\em Commun. Math. Phys.}, 399(1):295--366, 2023.

\bibitem[BPRR15]{BeemS}
Christopher Beem, Wolfger Peelaers, Leonardo Rastelli, and Balt C.~Van Rees.
\newblock Chiral algebras of class $ \mathcal{S} $.
\newblock {\em Journal of High Energy Physics}, 2015(5), 2015.

\bibitem[BR23]{BR1}
D.~Butson and M.~Rap\v{c}\'{a}k.
\newblock Perverse coherent extensions on {C}alabi-{Y}au threefolds and
  representations of cohomological {H}all algebras.
\newblock arXiv:, 2023.

\bibitem[Bri02]{Bdg1}
Tom Bridgeland.
\newblock Flops and derived categories.
\newblock {\em Inventiones Mathematicae}, 147(3):613–632, 2002.

\bibitem[CCDS21]{CCDS}
Wu-Yen Chuang, Thomas Creutzig, Duiliu-Emanuel Diaconescu, and Yan Soibelman.
\newblock Hilbert schemes of nonreduced divisors in {C}alabi–{Y}au threefolds
  and {W}-algebras.
\newblock {\em European Journal of Mathematics}, 2021.

\bibitem[CG20]{CrG}
Thomas Creutzig and Davide Gaiotto.
\newblock Vertex algebras for {S}-duality.
\newblock {\em Communications in Mathematical Physics}, 2020.

\bibitem[Cos18]{CosMSRI}
Kevin Costello.
\newblock The cohomological {H}all algebra and {M}-theory, talk at
  `{S}tructures in {E}numerative {G}eometry' conference at {MSRI}.
\newblock Recording available at:
  \href{https://www.msri.org/workshops/816/schedules/23868}{https://www.msri.org/workshops/816/schedules/23868},
  2018.

\bibitem[Dav17]{Dav1}
Ben Davison.
\newblock The critical coha of a quiver with potential.
\newblock {\em The Quarterly Journal of Mathematics}, 68(2):635–703, 2017.

\bibitem[Feh23a]{Feh2}
Zachary Fehily.
\newblock {Inverse reduction for hook-type W-algebras}.
\newblock 6 2023.

\bibitem[Feh23b]{Feh1}
Zachary Fehily.
\newblock {Subregular W-algebras of type A}.
\newblock {\em Commun. Contemp. Math.}, 25(09):2250049, 2023.

\bibitem[FF96]{FF1}
Boris Feigin and Edward Frenkel.
\newblock Integrals of motion and quantum groups.
\newblock {\em Lecture Notes in Mathematics}, page 349–418, 1996.

\bibitem[FG06]{FrG}
Edward Frenkel and Dennis Gaitsgory.
\newblock Local geometric {L}anglands correspondence and affine {K}ac-{M}oody
  algebras.
\newblock {\em Algebraic Geometry and Number Theory Progress in Mathematics},
  page 69–260, 2006.

\bibitem[FG20]{FeiG}
Boris Feigin and Sergei Gukov.
\newblock {VOA}[{$M_4$}].
\newblock {\em Journal of Mathematical Physics}, 61(1):012302, 2020.

\bibitem[FMS86]{FMS}
Daniel Friedan, Emil~J. Martinec, and Stephen~H. Shenker.
\newblock {Conformal Invariance, Supersymmetry and String Theory}.
\newblock {\em Nucl. Phys. B}, 271:93--165, 1986.

\bibitem[Fre10]{Fr1}
Edward Frenkel.
\newblock {\em Langlands correspondence for loop groups}.
\newblock Cambridge University Press, 2010.

\bibitem[FS06]{FrSt}
Igor~B. Frenkel and Konstantin Styrkas.
\newblock Modified regular representations of affine and virasoro algebras, voa
  structure and semi-infinite cohomology.
\newblock {\em Advances in Mathematics}, 206(1):57–111, 2006.

\bibitem[Fur23]{Fur}
Shun Furihata.
\newblock {On the Beem-Nair Conjecture}.
\newblock 6 2023.

\bibitem[Gen17]{Gen1}
Naoki Genra.
\newblock Screening operators for $\mathcal{W}$-algebras.
\newblock {\em Selecta Mathematica}, 23(3):2157–2202, 2017.

\bibitem[Gen20]{Gen2}
Naoki Genra.
\newblock Screening operators and parabolic inductions for affine
  $\mathcal{W}$-algebras.
\newblock {\em Advances in Mathematics}, 369:107179, 2020.

\bibitem[G{\"o}t90]{Gott}
Lothar G{\"o}ttsche.
\newblock The betti numbers of the hilbert scheme of points on a smooth
  projective surface.
\newblock {\em Mathematische Annalen}, 286:193--207, 1990.

\bibitem[GR19]{GaiR}
Davide Gaiotto and Miroslav Rapčák.
\newblock Vertex algebras at the corner.
\newblock {\em Journal of High Energy Physics}, 2019(1), 2019.

\bibitem[GR22]{GaiR2}
Davide Gaiotto and Miroslav Rapcak.
\newblock {Miura operators, degenerate fields and the M2-M5 intersection}.
\newblock {\em JHEP}, 01:086, 2022.

\bibitem[Gro96]{Groj}
I.~Grojnowski.
\newblock Instantons and affine algebras {I}: The {H}ilbert scheme and vertex
  operators.
\newblock {\em Mathematical Research Letters}, 3(2):275–291, 1996.

\bibitem[GRZ]{GRZ}


\bibitem[MO19]{MO}
Davesh Maulik and Andrei Okounkov.
\newblock Quantum groups and quantum cohomology.
\newblock {\em Astérisque}, 408:1–212, 2019.

\bibitem[Nak94]{Nak1}
Hiraku Nakajima.
\newblock Instantons on {ALE} spaces, quiver varieties, and {K}ac-{M}oody
  algebras.
\newblock {\em Duke Mathematical Journal}, 76(2), 1994.

\bibitem[Nak97]{Nak}
Hiraku Nakajima.
\newblock Heisenberg algebra and {H}ilbert schemes of points on projective
  surfaces.
\newblock {\em The Annals of Mathematics}, 145(2):379, 1997.

\bibitem[Nak99]{NakLec}
Hiraku Nakajima.
\newblock Lectures on {H}ilbert schemes of points on surfaces.
\newblock {\em University Lecture Series}, 1999.

\bibitem[Nek03]{Nek1}
Nikita~A. Nekrasov.
\newblock Seiberg-{W}itten prepotential from instanton counting.
\newblock {\em Advances in Theoretical and Mathematical Physics},
  7(5):831–864, 2003.

\bibitem[NY05]{NY0}
Hiraku Nakajima and Kota Yoshioka.
\newblock Instanton counting on blowup. {I}. 4-dimensional pure gauge theory.
\newblock {\em Inventiones mathematicae}, 162(2):313–355, 2005.

\bibitem[NY08]{NY1}
Hiraku Nakajima and Kōta Yoshioka.
\newblock Perverse coherent sheaves on blow-up. {I}. {A} quiver description.
\newblock {\em Advanced Studies in Pure Mathematics}, 2008.

\bibitem[PR18]{PrR}
Tomáš Procházka and Miroslav Rapčák.
\newblock Webs of {W}-algebras.
\newblock {\em Journal of High Energy Physics}, 2018(11), 2018.

\bibitem[Rap19]{Rap}
Miroslav Rap\v{c}\'{a}k.
\newblock On extensions of $\gl (m|n)$ {K}ac-{M}oody algebras and
  {C}alabi-{Y}au singularities.
\newblock {\em Journal of High Energy Physics}, 2020:42, 2019.

\bibitem[RS15]{RenS}
Jie Ren and Yan~S. Soibelman.
\newblock Cohomological {H}all algebras, semicanonical bases and
  {D}onaldson-{T}homas invariants for 2-dimensional {C}alabi-{Y}au categories
  (with an appendix by {B}en {D}avison).
\newblock {\em arXiv: Quantum Algebra}, pages 261--293, 2015.

\bibitem[RSYZ19]{RSYZ}
Miroslav Rapčák, Yan Soibelman, Yaping Yang, and Gufang Zhao.
\newblock Cohomological {H}all algebras, vertex algebras and instantons.
\newblock {\em Communications in Mathematical Physics}, 376(3):1803–1873,
  2019.

\bibitem[Sem94]{Sem}
A.~M. Semikhatov.
\newblock {Inverting the Hamiltonian reduction in string theory}.
\newblock In {\em {28th International Symposium on Particle Theory}}, 8 1994.

\bibitem[Soi16]{Soi}
Yan Soibelman.
\newblock Remarks on cohomological {H}all algebras and their representations.
\newblock {\em Arbeitstagung Bonn 2013}, page 355–385, 2016.

\bibitem[SV13]{SV}
O.~Schiffmann and E.~Vasserot.
\newblock Cherednik algebras, {W}-algebras and the equivariant cohomology of
  the moduli space of instantons on $\mathbb{A}^2$.
\newblock {\em Publications mathématiques de lIHÉS}, 118(1):213–342, 2013.

\bibitem[SW94a]{SW1}
Nathan Seiberg and Edward Witten.
\newblock Electric - magnetic duality, monopole condensation, and confinement
  in {N}=2 supersymmetric {Y}ang-{M}ills theory.
\newblock {\em Nuclear Physics}, 426:19--52, 1994.

\bibitem[SW94b]{SW2}
Nathan Seiberg and Edward Witten.
\newblock Monopoles, duality and chiral symmetry breaking in {N}=2
  supersymmetric {QCD}.
\newblock {\em Nuclear Physics}, 431:484--550, 1994.

\bibitem[VW94]{VW}
Cumrun Vafa and Edward Witten.
\newblock A strong coupling test of s-duality.
\newblock {\em Nuclear Physics B}, 431(1-2):3–77, 1994.

\bibitem[Wak86]{Wak}
Minoru Wakimoto.
\newblock {Fock representations of the affine lie algebra A1(1)}.
\newblock {\em Commun. Math. Phys.}, 104:605--609, 1986.

\bibitem[YZ14]{YZ2}
Yaping Yang and Gufang Zhao.
\newblock The cohomological hall algebra of a preprojective algebra.
\newblock {\em Proceedings of the London Mathematical Society}, 116, 2014.

\bibitem[YZ16]{YZ1}
Yaping Yang and Gufang Zhao.
\newblock On two cohomological hall algebras.
\newblock {\em Proceedings of the Royal Society of Edinburgh: Section A
  Mathematics}, 150:1581 -- 1607, 2016.

\end{thebibliography}

\end{document}